\newcommand*{\MRref}[2]{ \href{http://www.ams.org/mathscinet-getitem?mr=#1}{MR #1}}
\newcommand*{\arxiv}[1]{\href{http://www.arxiv.org/abs/#1}{arXiv: #1}}
\numberwithin{equation}{section}
\theoremstyle{plain}
\newtheorem{theorem}[equation]{Theorem}
\newtheorem{lemma}[equation]{Lemma}
\newtheorem{proposition}[equation]{Proposition}
\newtheorem{corollary}[equation]{Corollary}
\theoremstyle{definition}
\newtheorem{definition}[equation]{Definition}
\theoremstyle{remark}
\newtheorem{remark}[equation]{Remark}
\newtheorem{example}[equation]{Example}
\newtheorem{question}[equation]{Question}
\DeclareMathOperator{\Aut}{Aut}
\DeclareMathOperator{\cspn}{\overline{span}}
\DeclareMathOperator{\Ind}{Ind}
\renewcommand{\top}{{\operatorname{top}}}
\renewcommand{\H}{\mathcal H}
\newcommand*{\nb}{\nobreakdash}
\newcommand*{\Star}{\(^*\)\nobreakdash-}
\newcommand*{\dd}{\,d}
\newcommand{\cB}{\mathcal L} 
\newcommand*{\C}{\mathbb C}
\newcommand*{\Z}{\mathbb Z}
\newcommand*{\Real}{\mathbb R}
\newcommand*{\N}{\mathbb N}
\newcommand*{\Lb}{\mathcal L}
\newcommand*{\K}{\mathcal K}
\newcommand*{\M}{\mathcal{M}} 
\newcommand*{\Mat}{\mathbb{M}} 
\newcommand*{\Cst}{C^*} 
\newcommand*{\cont}{C}
\newcommand*{\contz}{\cont_0}
\newcommand*{\contc}{\cont_c}
\newcommand*{\id}{\textup{id}}
\newcommand*{\Ad}{\textup{Ad}}
\newcommand*{\U}{\mathcal U}
\newcommand*{\E}{\mathcal E}
\newcommand*{\EE}{\mathbb E}
\newcommand*{\defeq}{\mathrel{\vcentcolon=}}
\newcommand*{\congto}{\xrightarrow\sim}
\newcommand*{\braket}[2]{\langle#1\!\mid\!#2\rangle}
\newcommand*{\sbe}{\subseteq} 
\newcommand*{\F}{\mathcal F}
\newcommand*{\cstar}{\texorpdfstring{$C^*$\nobreakdash-\hspace{0pt}}{*-}}
\newcommand*{\into}{\hookrightarrow}
\newcommand*{\onto}{\twoheadrightarrow}
\newcommand*{\red}{\mathrm{r}}
\renewcommand*{\max}{\mathrm{max}}
\newcommand*{\un}{\mathrm{u}}
\newcommand{\hotimes}{\hat\otimes}
\newcommand{\eps}{\varepsilon}
\newcommand*{\Cor}{\mathfrak{Corr}}
\newcommand*{\Free}{\mathbb{F}}
\newcommand{\ie}{\emph{i.e.}}
\newcommand{\eg}{\emph{e.g.}}
\newcommand{\cf}{\emph{cf.}}
\newcommand{\as}{\operatorname{as}}
\newcommand{\jas}{\operatorname{J-as}}
\newcommand{\ind}{\operatorname{ind}}
\newcommand{\dach}{{\!\widehat{\ \ }}}
\newcommand{\Manoa}{M\=anoa}
\newcommand{\Hawaii}{Hawai\kern.05em`\kern.05em\relax i}
\begin{document}
\title[Exotic Baum-Connes conjecture]{Exotic crossed products and the \\ Baum-Connes conjecture}

\author{Alcides Buss}
\email{alcides@mtm.ufsc.br}
\address{Departamento de Matem\'atica\\
 Universidade Federal de Santa Catarina\\
 88.040-900 Florian\'opolis-SC\\
 Brazil}

\author{Siegfried Echterhoff}
\email{echters@math.uni-muenster.de}
\address{Mathematisches Institut\\
 Westf\"alische Wilhelms-Universit\"at M\"un\-ster\\
 Einsteinstr.\ 62\\
 48149 M\"unster\\
 Germany}

\author{Rufus Willett}
\email{rufus@math.hawaii.edu}
\address{Mathematics Department\\
 University of \Hawaii~at \Manoa\\
Keller 401A \\
2565 McCarthy Mall \\
 Honolulu\\
 HI 96822\\
USA}

\begin{abstract}
We study general properties of exotic crossed-product functors and characterise those which extend to functors on  equivariant \cstar{}algebra categories based on correspondences.
We show that every such functor allows the construction of a descent in $KK$-theory and we use
this to show that all crossed products by correspondence  functors of $K$-amenable groups are $KK$-equivalent.
 We also show that for second countable groups the minimal exact Morita compatible crossed-product functor
 used in the new formulation of the Baum-Connes conjecture by Baum, Guentner and Willett (\cite{Baum-Guentner-Willett})
extends to correspondences when restricted to separable $G$\nb-\cstar{}algebras.
It therefore allows a descent in $KK$-theory for separable systems.
\end{abstract}

\subjclass[2010]{46L55, 46L08, 46L80}

\thanks{Supported by Deutsche Forschungsgemeinschaft  (SFB 878, Groups, Geometry \& Actions), by CNPq/CAPES -- Brazil, and by the US NSF (DMS 1401126).}

\keywords{Exotic crossed products, Baum-Connes conjecture, correspondences, exotic group algebras.}

\maketitle

\section{Introduction}
The concept of a crossed product $A\rtimes_\alpha G$ by an action $\alpha:G\to \Aut(A)$ of a locally compact group $G$ on a \cstar{}Algebra
$A$ by *-automorphisms plays a very important role in the fields of Operator Algebras and Noncommutative Geometry. Classically, there
are two crossed-product constructions which have been used in the literature: the universal (or maximal) crossed product $A\rtimes_{\alpha,\un}G$
and the reduced (or minimal/spatial) crossed product $A\rtimes_{\alpha,\red}G$.  The universal crossed product satisfies a universal property for
covariant representations $(\pi, u)$ of the underlying system $(A,G,\alpha)$ in such a way that every such representation integrates to
a unique representation $\pi\rtimes u$ of $A\rtimes_{\alpha,\un}G$, while the reduced crossed product is the image of $A\rtimes_{\alpha,\un}G$
under the integrated form of the regular covariant representation of  $(A,G,\alpha)$.
In case $A=\C$ we recover the construction of the maximal and reduced group algebras
$C^*(G)=\C\rtimes_{\un}G$ and $C_\red^*(G)=\C\rtimes_{\red}G$ of $G$, respectively.

The constructions of maximal and reduced crossed product extend to functors between categories with several sorts of morphisms (like equivariant
\Star{}homomorphisms or correspondences) and each one of these functors has its own special features. For instance the universal crossed-product functor always preserves short exact sequences of equivariant homomorphisms, while the reduced one only does if the group $G$ is exact. On the other hand, the reduced crossed product always takes embeddings to embeddings, while the full one only does if the group $G$ is amenable.

More recently (\eg, see \cite{Brown-Guentner:New_completions, Kaliszewski-Landstad-Quigg:Exotic, Kaliszewski-Landstad-Quigg:Exotic-coactions,
Baum-Guentner-Willett, Buss-Echterhoff:Exotic_GFPA, Okayasu:Free-group}),
 there has been a growing interest in the study of exotic (or intermediate) group \cstar{}algebras $C_\mu^*(G)$ and crossed products
 $A\rtimes_{\alpha,\mu}G$, where the word ``intermediate'' indicates that they ``lie'' between the universal and reduced group algebras
or crossed products. To be more precise: both classical crossed products can be obtained as completions of the convolution algebra
$C_c(G,A)$ with respect  to the universal norm $\|\cdot\|_{\un}$ or the reduced norm $\|\cdot\|_{\red}\leq\|\cdot\|_\un$, respectively. The intermediate crossed products
$A\rtimes_{\alpha,\mu}G$ are completions of $C_c(G,A)$ with respect to a \cstar{}norm $\|\cdot\|_\mu$ such that
$$\| f \|_{\red}\leq \|f\|_{\mu}\leq \| f\|_{\un}$$
for all $f\in C_c(G,A)$. This is equivalent to saying that the identity on $C_c(G,A)$ induces canonical surjective *-homomorphisms
$$A\rtimes_{\alpha,\un}G\onto A\rtimes_{\alpha,\mu}G \onto A\rtimes_{\alpha,\red}G.$$
The main goal of this paper is to study functors $(A,\alpha)\mapsto A\rtimes_{\alpha,\mu}G$ from the category of $G$-\cstar{}algebras
into the category of \cstar{}algebras  which assign to
each $G$-algebra $(A,\alpha)$ an intermediate crossed product $A\rtimes_{\alpha,\mu}G$.

There are several reasons why researchers became interested in intermediate crossed products and group algebras. On one side intermediate
group algebras provide interesting new examples of  \cstar{}algebras attached to certain representation theoretic properties of the group. Important examples
are the group algebras $C_{E_p}^*(G)$ attached to the unitary representations of $G$ which have a dense set of matrix coefficients  in $L^p(G)$, for $1\leq p \leq \infty$,
or group algebras attached to other growth conditions on their matrix coefficients.
It is shown in \cite{Brown-Guentner:New_completions}*{Proposition 2.11} that for every discrete group $G$ and $1\leq p\leq 2$
we have $C_{E_p}^*(G)=C_\red^*(G)$. But if
$G=\Free_2$ (or any discrete group which contains $\Free_2$) then Okayasu shows in \cite{Okayasu:Free-group} that
all $C_{E_p}^*(G)$ are different for $2\leq p\leq \infty$. Very recently, a similar result has been shown by Wiersma for $\mathrm{SL}(2,\mathbb R)$ (\cite{Wiersma}).
As one important outcome of the results of this paper we shall see that for every $K$-amenable group $G$, like $G=\Free_2$ or $G=\mathrm{SL}(2,\mathbb R)$, all these different group algebras are $KK$-equivalent. On the other hand, it has been already observed in \cite{Baum-Guentner-Willett}*{Example 6.4} that there are exotic group algebra completions $C^*_\mu(G)$ of $G=\Free_2$ which do not have the same $K$-theory as $C^*_r(G)$ or $C^*(G)$. This means that the $K$\nb-theory of such algebras depends not only on the structure of the group $G$, but also on the structure of the completion $C^*_{\mu}(G)$, or more precisely, on the structure of the crossed-product functor $A\mapsto A\rtimes_\mu G$. Our results will help us to understand which properties a crossed-product functor should have in order to behave well with $K$-theory and other constructions.

This point also brings us to another important reason for the growing interest on intermediate crossed products, which is motivated by a very recent
new formulation of the Baum-Connes conjecture due to Baum, Guentner, and Willett in \cite{Baum-Guentner-Willett}.
Recall that the Baum-Connes conjecture predicted that the $K$-theory of a reduced crossed product $A\rtimes_{\alpha,\red}G$
can be computed with the help of a canonical assembly map
$$\as_{(A,G)}^{\red}:K_*^{\top}(G;A)\to K_*(A\rtimes_{\alpha,\red}G),$$
in which $K_*^{\top}(G;A)$, the topological $K$-theory of $G$ with coefficient $A$, can be computed (at least
in principle) by more classical topological methods. The original Baum-Connes conjecture stated  that this assembly
map should always be an isomorphism. But in \cite{Higson-Lafforgue-Skandalis} Higson, Lafforgue and Skandalis
provided counter examples for the conjecture which are based on a construction of non-exact groups
due to Gromov and others. Recall that a group is called {\em exact} if every short exact sequence of $G$-algebras descends
to a short exact sequence of their {\em reduced} crossed products.

In \cite{Baum-Guentner-Willett} it is shown
that for every  group $G$, there exists a crossed-product functor $(A,\alpha)\mapsto A\rtimes_{\alpha,\E}G$ which is minimal
among all exact and (in a certain sense) Morita compatible crossed-product functors for $G$.
Using the assembly map for the universal crossed product $A\rtimes_{\alpha,\un} G$ together with the canonical quotient map, we may construct
an assembly map
\begin{equation}\label{eq-as}
\as_{(A,G)}^{\mu}:K_*^{\top}(G;A)\to K_*(A\rtimes_{\alpha,\mu}G)
\end{equation}
for every intermediate crossed-product functor $\rtimes_{\mu}$.
Using $E$-theory, Baum, Guentner and Willett show that all known counter-examples to the conjecture disappear
if we replace  reduced crossed products by  $\E$-crossed products in the formulation of the conjecture.
 If $G$ is exact, the $\E$-crossed product will always be the reduced one, hence the new formulation of the conjecture
 coincides with the classical one for such groups.

Note that the authors of \cite{Baum-Guentner-Willett} use $E$-theory instead of $KK$-theory since the proof requires a
direct construction of the assembly map, which in the world of $KK$-theory would involve a descent homomorphism
$$J_G^\E:KK^G(A,B)\to KK(A\rtimes_{\E}G, B\rtimes_{\E}G)$$
and it was not clear whether such descent exists (due to exactness and Morita compatibility, it exists in $E$-theory).
It is one consequence of our results (see \S\ref{sec:minimal})
that a $KK$-descent does exist for $\rtimes_{\E}$ at least if we restrict to separable systems.

Indeed, in \S\ref{sec-prop} we give a systematic study of crossed-product functors $\rtimes_\mu$ which
are functorial for equivariant correspondences. This means that
for every pair of $G$-algebras $A,B$ and for every $G$-equivariant Hilbert $A-B$-bimodule
$\F$ in the sense of Kasparov, there is a canonical crossed-product construction $\F\rtimes_{\mu}G$
as a Hilbert $A\rtimes_{\mu}G-B\rtimes_{\mu}G$  bimodule. We call such functors {\em correspondence
functors}. We show in \S\ref{sec-prop} that there are a number of equivalent conditions which characterise
correspondence functors. For instance, it turns out  (see Theorem \ref{thm-corr}) that being a correspondence functor is
equivalent to the property that functoriality extends to $G$-equivariant completely positive maps.
But maybe  the most convenient of the conditions which characterise correspondence functors
is the {\em projection property} which requires that
for every $G$-algebra
$A$ and every $G$-invariant projection $p\in \M(A)$, the descent
$pAp\rtimes_{\mu}G\to A\rtimes_{\mu}G$ of the inclusion $pAp\into A$ is faithful (see Theorem \ref{thm-corr}).
This property is easily checked in special situations and we observe that many natural
examples of crossed-product functors, including all Kaliszewski-Landstad-Quigg functors
(or KLQ-functors) attached  to weak*-closed ideals in the Fourier-Stieltjes algebra $B(G)$ (see \ref{sec:KLQ} for details of the construction)
 are correspondence functors and that correspondence functors  are stable under certain
manipulations which construct new functors out of old ones.
In particular, for every given family of functors $\{\rtimes_{\mu_i}:i\in I\}$ there is a construction
of an infimum $\rtimes_{\inf\mu}$ of this family given in \cite{Baum-Guentner-Willett},
and it is easy to check that if all $\rtimes_{\mu_i}$ satisfy the projection property (\ie\ are correspondence functors), then so does $\rtimes_{\inf\mu}$. It follows from this and the fact (shown in \cite{Baum-Guentner-Willett}) that exactness is preserved by taking the infimum of a family of functors that there exists a {\em minimal exact correspondence functor} $\rtimes_{\E_\Cor}$
for each given locally compact group $G$. We show in Proposition \ref{prop-Morita} that this functor
coincides with the minimal exact Morita compatible functor $\rtimes_{\E}$ of \cite{Baum-Guentner-Willett}
if $G$ is second countable and if we restrict the functor to separable $G$-algebras, as one almost
always does
if one discusses the Baum-Connes conjecture.

In \S\ref{sec:descent} we prove that every correspondence functor will allow the construction of
a descent
$$J_G^\mu:KK^G(A,B)\to KK(A\rtimes_{\mu}G, B\rtimes_{\mu}G)$$
in equivariant $KK$-theory, which now allows one to use the full force of equivariant $KK$-theory in
the study of the new formulation of the Baum-Connes conjecture in \cite{Baum-Guentner-Willett}.
Moreover, using this descent together with the deep results of  \cite{Higson-Kasparov:Etheory} we can show that the
assembly map (\ref{eq-as}) is an isomorphism whenever $G$ is a-T-menable (or, slightly more general, if $G$ has a $\gamma$-element
equal to $1_G\in KK^G(\C,\C)$ in Kasparov's sense) and if $\rtimes_\mu$ is a correspondence functor on $G$.
By a result of Tu in \cite{Tu:Kamenable}, all such groups are $K$-amenable, and we show in Theorem \ref{thm-Kamenable}
that for every $K$-amenable group $G$ and every correspondence crossed-product functor $\rtimes_\mu$ on $G$
the quotient maps
$$A\rtimes_{\un}G\onto A\rtimes_{\mu}G\onto A\rtimes_\red G$$
are $KK$-equivalences (the proof closely follows the line of arguments given by Julg and Valette in
\cite{Julg-Valette:Kamenable}*{Proposition 3.4}, but the main ideas are due to Cuntz in \cite{Cuntz:Kamenable}).
In particular, since all $L^p$-group algebras $C_{E_p}^*(G)$ for $p\in [1,\infty]$ are group algebras corresponding to
KLQ-functors, which are correspondence functors, they all have isomorphic $K$-theory for
a given fixed $K$-amenable group $G$. In particular this holds for interesting groups like $G={\Free_2}$ or $G=\mathrm{SL}(2,\mathbb R)$, where $C_{E_p}^*(G)$ are all different for $p\in [2,\infty]$.

The outline of the paper is given as follows: after this introduction we start in \S\ref{sec:preliminaries} with some preliminaries
on intermediate crossed-product functors and the discussion of various known examples of such functors.  In \S\ref{sec:id} we isolate properties governing how crossed products behave with respect to ideals and multipliers that will be useful later. In \S\ref{sec-prop} we give many useful characterisations of correspondence functors: the main result here is Theorem \ref{thm-corr}.
In \S\ref{sec:duality} we show that crossed products by correspondence functors $\rtimes_\mu$ always admit
dual coactions. In particular it follows that the group algebras
 $C_\mu^*(G):=\C\rtimes_{\mu}G$ corresponding to any correspondence functor carries a dual coaction
 of $G$ and hence corresponds to a translation invariant  weak-* closed ideal $E\subseteq B(G)$ as described in
 \S\ref{sec:id}. We also prove a version of Imai-Takai duality for the dual crossed-product. Then, in \S\ref{sec:descent} we show that every correspondence functor allows
a $KK$-descent and, as an application,  we use this to show
the isomorphism of the assembly map for
correspondence functors on a-T-menable groups and the results on $K$-amenability. In \S\ref{sec:lp} we give a short discussion on
the $L^p$-group algebras and applications of our results to such algebras. In \S\ref{sec:minimal} we
show that being a correspondence functor passes to the infimum of a family of correspondence functors
and we study the relation of the minimal exact correspondence functor to the functor $\rtimes_\E$
constructed in \cite{Baum-Guentner-Willett}.  Finally in \S\ref{sec:questions} we finish with some remarks and open questions.

Part of the work on this paper took place during visits of the second author to the Federal University of Santa Catarina, and of the third author to the Westf\"{a}lische Wilhelms-Universit\"{a}t, M\"{u}nster.  We would like to thank these institutions for their hospitality.

\label{sec:introduction}

\section{Preliminaries on crossed-product functors}\label{sec:preliminaries}
 Let $(B,G, \beta)$ be a \cstar{}dynamical system.
By a covariant representation $(\pi, u)$ into the multiplier algebra $\M(D)$ of some \cstar{}algebra
$D$, we understand a \Star{}ho\-mo\-mor\-phism $\pi:B\to \M(D)$ together with a strictly continuous
\Star{}homomorphism $u: G\to \U\M(D)$ such that $\pi(\beta_s(b))=u_s\pi(b)u_s^*$.
If $\E$ is a Hilbert module (or Hilbert space) then a covariant representation on $\E$ will be the
same as a covariant representation into $\M(\K(\E))\cong \Lb(\E)$.
We say that a covariant representation is {\em nondegenerate} if $\pi$  is nondegenerate in the sense
that $\pi(B)D=D$.

If $(B,G,\beta)$ is a \cstar{}dynamical system, then
 $\contc(G,B)$ becomes a \Star{}algebra with respect to the usual convolution and involution:
$$f*g(t)\defeq\int_G f(s)\beta_s(g(s^{-1}t))\dd{s}\quad\mbox{and}\quad f^*(t)\defeq \Delta(t)^{-1}\beta_t(f(t^{-1}))^*.$$
The full (or universal) crossed product $B\rtimes_{\beta,\un} G$ is the completion of $\contc(G,B)$ with respect to the universal norm
 $\|f\|_\un:=\sup_{(\pi,u)}\|\pi\rtimes u(f)\|$ in which
$(\pi,u)$ runs through all covariant representations of $(B,G,\beta)$ and
$$\pi\rtimes u(f)=\int_G\pi(f(s))u_s\,ds$$
denotes the integrated form of a covariant representation $(\pi,u)$.
By definition of this norm,
each integrated form $\pi\rtimes u$ extends uniquely to a \Star{}representation of
$B\rtimes_{\beta,\un}G$ and this extension process gives a one-to-one correspondence
between the nondegenerate covariant \Star{}representations $(\pi,u)$ of $(B,G,\beta)$ and
the nondegenerate \Star{}representations of $B\rtimes_{\beta,\un}G$.

There is a canonical  representation $(\iota_B,\iota_G)$ of $(B,G, \beta)$ into
$\M(B\rtimes_{\beta,\un}G)$ which is determined by
$$\big(\iota_B(b)\cdot f\big)(t)= bf(t),\quad \text{and}\quad
\big(\iota_G(s)\cdot f\big)(t)=\beta_s(f(s^{-1}t)), $$
for all $b\in B, s\in G$, and $f\in C_c(G,B)$. Then, for a given nondegenerate
\Star{}homomorphism  $\Phi:B\rtimes_{\beta,\un}G\to \M(D)$ we have
$\Phi=\pi\rtimes u$ for
$$\pi=\Phi\circ i_B\quad\text{and}\quad u=\Phi\circ i_G.$$
In this sense we get the identity $B\rtimes_{\beta,\un} G=i_B\rtimes i_G(B\rtimes_{\beta,\un} G)$.

If $B=\C$, then we recover the full group algebra $C^*(G)=\C\rtimes_{\id,\un}  G$ and since every
nondegenerate covariant representation of $(\C,G,\id)$ is of the form $(1, u)$ for some unitary representation $u$ of $G$,
this gives the well-known correspondence between unitary representations of $G$ and
nondegenerate representations of $C^*(G)$. In this case we denote the canonical
representation of $G$ into $\U\M(C^*(G))$ by $u_G$.

For a locally compact group $G$, let $\lambda_G: G\to \U(L^2(G))$ denote the left regular representation given
by $(\lambda_G(s)\xi)(t)=\xi(s^{-1}t)$ and let $M: C_0(G)\to {\cB}(L^2(G))$ denote the representation
of $C_0(G)$ by multiplication operators. Then, for each system $(B,G,\beta)$, there is a canonical
covariant homomorphism $(\Lambda_B, \Lambda_G)$ into $\M(B\otimes \K(L^2(G)))$ given as follows:
Let $\tilde\beta: B\to C_b(G,B)\subseteq \M(B\otimes C_0(G))$ be the \Star{}homomorphism which sends $b\in B$
to $\big(s\mapsto \beta_{s^{-1}}(b)\big)\in C_b(G,B)$. Then
\begin{equation}\label{eq-reg-rep}
(\Lambda_B, \Lambda_G):=\big((\id_B\otimes M)\circ \tilde\beta, 1_B\otimes \lambda_G).
\end{equation}
The {\em reduced crossed product} $B\rtimes_{\beta,\red}G$ is defined as the image of $B\rtimes_{\beta,\un}G$
under the integrated form $\Lambda:=\Lambda_B\rtimes \Lambda_G$.  Note that $\Lambda$ is always faithful
on the dense subalgebra $C_c(G,B)$ of $B\rtimes_{\beta,\un}G$, so that we may regard $B\rtimes_{\beta,\red}G$
also as the completion of $C_c(G,B)$ given by the norm $\|f\|_\red=\|\Lambda(f)\|$ for $f\in C_c(G,B)$.
If $B=\C$, we get $\C\rtimes_{\red}G=\lambda_G(C^*(G))=C_\red^*(G)$, the \emph{reduced group \cstar{}algebra} of $G$.
\medskip

\subsection{Exotic crossed products}
We shall now consider general crossed-prod\-uct functors $(B,\beta)\mapsto B\rtimes_{\beta, \mu}G$ such that
the $\mu$-crossed product $B\rtimes_{\beta,\mu}G$ can be obtained as some \cstar{}completion of the
convolution algebra $C_c(G,B)$. We always want to require that the $\mu$-crossed products
lie {\em between} the full and reduced crossed products in the sense that the identity map on $C_c(G,B)$
induces surjective \Star{}homomorphisms
$$B\rtimes_{\beta,\un}G\onto B\rtimes_{\beta,\mu}G\onto B\rtimes_{\beta,\red}G$$
for all $G$\nb-algebras $(B,\beta)$.
Such crossed-product functors have been studied quite recently by several authors, and we
shall later recall
the most prominent constructions as discussed in \cite{Brown-Guentner:New_completions, Kaliszewski-Landstad-Quigg:Exotic, Baum-Guentner-Willett, Buss-Echterhoff:Exotic_GFPA, Buss-Echterhoff:Imprimitivity}.
Note that one basic requirement will be that this construction is functorial for $G$\nb-equivariant
\Star{}homomorphisms, \ie, whenever we have a $G$\nb-equivariant \Star{}homomorphism $\phi:A\to B$
between two $G$\nb-algebras $A$ and $B$,  there will be a \Star{}homomorphism
$\phi\rtimes_\mu G: A\rtimes_{\mu}G\to B\rtimes_\mu G$ given on the dense subalgebra
 $C_c(G,A)$ by  $f\mapsto \phi\circ f$.

 For any crossed-product functor $(A,\alpha)\mapsto A\rtimes_{\alpha,\mu}G$ there
is a canonical covariant homomorphism  $(i_A^\mu, i_G^\mu)$ of  $(A,G,\alpha)$ into
$\M(A\rtimes_{\alpha,\mu}G)$ given by the composition of the canonical covariant
homomorphism $(i_A,i_G)$ into the universal crossed product followed by the
quotient map $q_{A,\mu}:A\rtimes_{\alpha,\un}G\to A\rtimes_{\alpha,\mu}G$.
If we compose it with the quotient map $q_{\red}^\mu:A\rtimes_{\alpha,\mu}G\to A\rtimes_{\alpha,\red}G$,
this will give the inclusions $(i_A^\red, i_G^\red)$ into the reduced crossed product
$A\rtimes_{\alpha,\red}G$. Since the latter are known to be injective on $A$ and $G$,
the same is true $i_A^\mu$ and $i_G^\mu$.
Moreover, since $i_A^\mu:A\to \M(A\rtimes_{\alpha,\mu}G)$ is nondegenerate, it uniquely extends
to $\M(A)$ (compare with \cite[Lemma 4.2]{Baum-Guentner-Willett}).

Let us briefly discuss some particular examples of exotic crossed-product functors which
have been introduced in the recent literature (\eg, see the appendix of \cite{Baum-Guentner-Willett}).
For the discussion recall that if $\pi:A\to C$  and $\rho:A\to D$ are two \Star{}homomorphisms of a given \cstar{}algebra
$A$ into \cstar{}algebras $C$ and $D$, then $\pi$ is said to be {\em weakly contained} in $\rho$, denoted $\pi{\preceq} \rho$,  if
$\ker\rho\subseteq \ker\pi$, or, equivalently, if  $\|\pi(a)\|\leq \|\rho(a)\|$ for all $a\in A$.
The  homomorphisms $\pi$ and $\rho$
 are called {\em weakly equivalent}, denoted $\pi\approx \rho$ if their kernels coincide, \ie, if $\pi\preceq\rho$ and $\rho\preceq\pi$.
Similarly, if $\{\rho_i: i\in I\}$ is any collection of \Star{}homomorphisms, we say that $\pi$ is weakly contained
in this collection, if $\|\pi(a)\|\leq\sup_{i\in I}\|\rho_i(a)\|$. One easily checks that in case of \Star{}representations
on Hilbert spaces, this coincides with the notion of weak containment as introduced by Fell in {\cite{Fell:Weak_Containment}}. If $G$ is a group and $u,v$ are unitary representations of $G$, we write $u\preceq v$ if and only if this holds for their integrated forms on $C^*(G)$.

\subsection{Brown-Guentner crossed products.}\label{sec:BG}
The Brown-Guentner crossed products (or BG-crossed products for short) have been introduced by Brown and Guentner
in \cite{Brown-Guentner:New_completions} and can be described as follows: Let $G$ be a locally compact group and fix any unitary representation $v$ of $G$ which weakly contains the regular representation $\lambda_G$.
Then $C_v^*(G):=v(C^*(G))$ is
a \cstar{}algebra which lies between the maximal and reduced group algebras of $G$.  Clearly, any such algebra is of this form for some $v$ and is called an \emph{exotic group algebra} of $G$. Suppose now that $(B,G,\beta)$ is a system.
Let  $$I_{\beta,v}:=\cap\{\ker(\pi\rtimes u): u\preceq v\}\subseteq B\rtimes_{\beta,\un}G.$$
Then  $B\rtimes_{\beta, v_{BG}}G:=(B\rtimes_{\beta,\un}G)/I_{\beta, v}$ is called the {\em BG-crossed product}
corresponding to $v$. Note that if $B=\C$, we recover the exotic group algebra $C_v^*(G)$ up to isomorphism.

It is very easy to check that this always defines a crossed-product functor.
Note that by the definition of the notion of weak containment, the BG-crossed product only depends on the ideal
$\ker v\subseteq C^*(G)$, \ie, on the exotic group algebra $C_v^*(G)\cong C^*(G)/\ker v$.
Notice that for the trivial coefficient $B=\C$, we get $\C\rtimes_{v_{BG}}G=C^*_v(G)$.
We refer to \cite{Brown-Guentner:New_completions, Baum-Guentner-Willett} for more information on BG-crossed-product functors.

\begin{example}[{\cf \cite{Baum-Guentner-Willett}*{Lemma A.6}}]\label{ex-BG}
Let us consider the BG-crossed product corresponding to the regular representation $v=\lambda_G$.
Although the corresponding group algebra $C_v^*(G)$ is the reduced group algebra,
the BG-functor corresponding to $\lambda_G$  {\em does not} coincide with the reduced crossed-product functor
if $G$ is not amenable. To see this
consider the action $\alpha=\Ad\lambda_G$ of $G$ on $\K(L^2(G))$.
Then the BG-crossed product $\K(L^2(G))\rtimes_{\alpha, \lambda_{BG}}G$
is the full crossed product $\K(L^2(G))\rtimes_{\alpha, \un}G\cong C^*(G)\otimes \K(L^2(G))$ since
$(1_{C^*(G)}\otimes \id_{\K})\rtimes(u_G\otimes \lambda)$
is an isomorphism between $\K(L^2(G))\rtimes_{\alpha, \un}G$ and $C^*(G)\otimes \K(L^2(G))$ and $u_G\otimes\lambda\approx\lambda$.
It therefore differs from $\K(L^2(G))\rtimes_{\alpha,\red}G\cong C_\red^*(G)\otimes\K(L^2(G))$ if $G$ is not amenable.
Moreover, since $\alpha$ is Morita equivalent to the trivial action $\id$ on $\C$ via $(L^2(G), \lambda)$ and since $\C\rtimes_{\lambda_{BG}}G=C_\red^*(G)$,
we see that BG-crossed products do not preserve Morita equivalence!
\end{example}

\subsection{Kaliszewski-Landstad-Quigg crossed products.}\label{sec:KLQ}
The Kaliszewski-Land\-stad-Quigg crossed products (or KLQ-crossed products for short)
have been introduced by Kaliszewski, Landstad and Quigg
in \cite{Kaliszewski-Landstad-Quigg:Exotic}, and discussed since then in several papers (\eg, see
\cite{Buss-Echterhoff:Exotic_GFPA, Buss-Echterhoff:Imprimitivity, Baum-Guentner-Willett, Kaliszewski-Landstad-Quigg:Exotic-coactions}). The easiest way to introduce them is the following.
Choose any unitary representation $v$ of $G$ and  consider the universal covariant homomorphism
$(i_B, i_G)$ from $(B,G,\beta)$ into $\M(B\rtimes_{\beta,\un}G)$. Consider the covariant representation
$(i_B\otimes 1_{C_v^*(G)}, {i}_G\otimes v)$ of $(B, G,\beta)$ into $\M(B\rtimes_{\beta,\un}G\otimes C_v^*(G))$.
Let $J_{\beta, v}:=\ker\big((i_B\otimes 1_{C_v^*(G)})\rtimes ({i}_G\otimes v)\big)\subseteq B\rtimes_{\beta,\un}G$.
Then the KLQ-crossed product corresponding to $v$ is defined as
$$B\rtimes_{\beta, v_{KLQ}}G:=(B\rtimes_{\beta,\un}G)/J_{\beta,v}.$$

A different characterisation of these KLQ-crossed products can be obtained by considering the dual coaction
$$\widehat\beta:= (i_B\otimes 1_{C^*(G)})\rtimes (i_G\otimes u_G): B\rtimes_{\beta, \un}G\to
\M(B\rtimes_{\beta, \un}G\otimes C^*(G)).$$
It follows right from the definition that  $J_{\beta, v}=\ker(\id_{B\rtimes G}\otimes v)\circ \widehat\beta$. Moreover,
by the properties of minimal tensor products, this kernel only depends on the kernel $\ker v\subseteq C^*(G)$, so that
the KLQ-crossed product also only depends on the quotient $C_v^*(G)\cong C^*(G)/\ker v$.

Let us see what happens if we apply this to $B=\C$. Using
$\C\rtimes_\un G\cong C^*(G)$, the construction gives $\C\rtimes_{v_{KLQ}}G=C^*(G)/\ker(u_G\otimes v)$
which equals $C_v^*(G)=C^*(G)/\ker(v)$ if and only if $v$ is weakly equivalent to $u_G\otimes v$. In this case we say
that $v$ {\em absorbs the universal representation of $G$}. Since $\widehat\beta=(i_B\otimes 1_{C^*(G)})\rtimes (i_G\otimes u_G)$
is faithful on $B\rtimes_{\alpha,\un}G$, \ie, it is weakly equivalent to $i_B\rtimes i_G$,  it follows
that $(i_B\otimes 1_{C_v^*(G)})\rtimes ({i}_G\otimes v)$ is weakly equivalent to
$(i_B\otimes 1_{C^*(G)}\otimes 1_{C_v^*(G)})\rtimes(i_G\otimes u_G\otimes v)$.
This follows from the properties of the minimal tensor product together with the fact that for any covariant representation $(\pi,U)$ of $(B,G,\beta)$, we have $(\pi\otimes 1)\rtimes (U\otimes v)=((\pi\rtimes U)\otimes v)\circ\widehat\beta$.
But this implies that
$J_{\beta,v}=J_{\beta, u_G\otimes v}$, hence
$$B\rtimes_{\beta, v_{KLQ}}G\cong B\rtimes_{\beta, (u_G\otimes v)_{KLQ}}G.$$
Thus one can always restrict attention to representations $v$ which absorb $u_G$ when considering KLQ-crossed products.

It is proved in \cite{Buss-Echterhoff:Exotic_GFPA} that  $(B,G,\beta)\mapsto B\rtimes_{\beta,v_{KLQ}}G$ is a crossed-product functor. In \cite{Buss-Echterhoff:Imprimitivity} it is proved that  functoriality even extends to (equivariant) correspondences. We will have much more to say about this property in \S\ref{sec-prop} below.

Another way to look at KLQ-functors is via the one-to-one correspondence between ideals in
$C^*(G)$ and $G$\nb-invariant (for left and right translation) weak{*-}closed subspaces of the Fourier-Stieltjes
algebra $B(G)$, which is the algebra of all bounded continuous functions on $G$ which
can be realised as matrix coefficients $s\mapsto \braket{U_s\xi}{\eta}$ of strongly continuous unitary Hilbert-space $G$\nb-representations $U$.
Then $B(G)$ identifies with the Banach-space dual $\Cst(G)^*$ if we map the function $s\mapsto\braket{U_s\xi}{\eta}$
 to the linear functional on $\Cst(G)$ given by $x\mapsto \braket{U(x)\xi}{\eta}$, where we use the same letter for $U$ and its integrated form.
It is explained in \cite{Kaliszewski-Landstad-Quigg:Exotic}  that the one-to-one
correspondence between closed ideals in $C^*(G)$ and $G$\nb-invariant weak{*-}closed subspaces $E$ in $B(G)$
is given via $E\mapsto I_E:={\ }^\perp E=\{x\in C^*(G): f(x)=0\;\forall f\in E\}$. Hence,
we obtain all quotients of $C^*(G)$ as $C_E^*(G):=C^*(G)/I_E$ for some $E$.
The ideal $I_E$ will be contained in the kernel of the regular representation $\lambda_G$ if and only if
$E$ contains the Fourier-algebra $A(G)$ consisting of matrix coefficients of $\lambda_G$.
Moreover, the quotient map $v_E: C^*(G)\to C_E^*(G)$ absorbs the universal representation $u_G$ if and
only if $E$ is an ideal in $B(G)$.  As a consequence of this discussion we get:

\begin{proposition} \label{prop-BG-KLQ}
Let $G$ be a locally compact group. Then
\begin{enumerate}
\item there is a one-to-one correspondence between BG-crossed-product functors and
$G$\nb-invariant weak{*-}closed subspaces $E$ of $B(G)$ which contain the Fourier algebra $A(G)$
given by sending $E$ to the BG-functor associated to the quotient map $v_E:C^*(G)\to C_E^*(G)$.
\item Similarly, there is a one-to-one correspondence between KLQ-crossed-product functors
and weak{*-}closed ideals $E$ in $B(G)$ given by sending $E$ to the KLQ-functor associated
to the quotient map $v_E:C^*(G)\to C_E^*(G)$.
\end{enumerate}
\end{proposition}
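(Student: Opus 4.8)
The plan is to deduce both statements from the duality of \cite{Kaliszewski-Landstad-Quigg:Exotic} between closed ideals of $C^*(G)$ and $G$-invariant weak*-closed subspaces of $B(G)$ recalled just above, combined with two facts already established in this section: that a BG- (resp.\ KLQ-) crossed-product functor depends on the representation $v$ defining it only through the ideal $\ker v\subseteq C^*(G)$, and that such a functor, evaluated at the trivial coefficient algebra $B=\C$, returns the exotic group algebra $C_v^*(G)=C^*(G)/\ker v$ (in the KLQ-case after first replacing $v$ by $u_G\otimes v$, which changes neither the functor nor the relevant properties of $\ker v$). In each case the assignment $E\mapsto(\text{functor attached to }v_E)$ must be shown to be well defined, surjective onto the class of functors in question, and injective; the only point requiring genuine care is injectivity, which we extract from the fact that two crossed-product functors coincide exactly when they assign the same $C^*$-completion of $C_c(G,B)$ to \emph{every} system $(B,\beta)$ --- so equality can, and here will, already be detected at $B=\C$.

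For part (1): by definition (see \S\ref{sec:BG}) the BG-functors are precisely those attached to unitary representations $v$ with $\lambda_G\preceq v$, and each depends only on $\ker v$. Every closed ideal $I\subseteq C^*(G)$ arises as $\ker v$ for such a $v$ --- take for $v$ the representation of $G$ underlying the quotient map $C^*(G)\onto C^*(G)/I$ --- and the condition $\lambda_G\preceq v$ translates to $I\subseteq\ker\lambda_G$. Hence the BG-functors are parametrised by the closed ideals of $C^*(G)$ contained in $\ker\lambda_G$, which under the duality $E\mapsto I_E={}^\perp E$ correspond bijectively, as recalled above, to the $G$-invariant weak*-closed subspaces $E\subseteq B(G)$ with $A(G)\subseteq E$; and for such $E$ the quotient map $v_E:C^*(G)\onto C_E^*(G)=C^*(G)/I_E$ has kernel $I_E$, so the BG-functor it determines is exactly the one indexed by $I_E$. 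This proves the assignment is well defined and surjective. For injectivity, if $E_1$ and $E_2$ give the same BG-functor then, evaluating at $B=\C$, the algebras $C_{E_1}^*(G)$ and $C_{E_2}^*(G)$ agree as completions of $C_c(G)$ together with their canonical quotient maps from $C^*(G)$; hence $I_{E_1}=\ker v_{E_1}=\ker v_{E_2}=I_{E_2}$, and $E_1=E_2$ because $E\mapsto I_E$ is injective.

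For part (2) the argument is identical once the parametrisation is in place. A KLQ-functor is attached to a unitary representation $v$; it depends only on $\ker v$, and after replacing $v$ by $u_G\otimes v$ we may assume $v$ absorbs $u_G$, so that evaluation at $\C$ yields $C^*(G)/\ker(u_G\otimes v)=C_v^*(G)$. By the discussion preceding the proposition, $\ker v$ (equivalently, the weak-equivalence class of $v$) absorbs $u_G$ precisely when the $G$-invariant weak*-closed subspace $E$ with $I_E=\ker v$ is a two-sided ideal of $B(G)$; and any such ideal is automatically nonzero --- a crossed-product functor must surject onto the reduced one --- and hence contains $A(G)$: indeed $v_E\approx u_G\otimes v_E$ since $v_E$ absorbs $u_G$, while $\lambda_G\otimes v_E\preceq u_G\otimes v_E$ because $\lambda_G\preceq u_G$, and $\lambda_G\otimes v_E\approx\lambda_G$ by Fell's absorption principle, so $\lambda_G\preceq v_E$, i.e.\ $I_E=\ker v_E\subseteq\ker\lambda_G$. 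Thus $E\mapsto(\text{KLQ-functor of }v_E)$ is a well-defined map from the weak*-closed ideals of $B(G)$ to the KLQ-functors on $G$; it is surjective because, after the $u_G$-normalisation, every KLQ-functor is the one attached to such a $v_E$ (with $E$ the subspace satisfying $I_E=\ker v$, which is an ideal once $v$ absorbs $u_G$), and it is injective by exactly the evaluation-at-$\C$ argument of part (1), which here gives $C_{E_1}^*(G)=C_{E_2}^*(G)$ as quotients of $C^*(G)$ and hence $I_{E_1}=I_{E_2}$, i.e.\ $E_1=E_2$. The proof is therefore a matter of carefully assembling the preceding discussion; the one step where attention is needed is the reduction to $u_G$-absorbing representations in the KLQ-case, together with the resulting automatic inclusion $A(G)\subseteq E$ for ideals $E$, which is what makes the KLQ-functor genuinely intermediate.
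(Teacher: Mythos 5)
Your proof is correct and follows essentially the same route as the paper, which obtains the proposition directly from the preceding discussion: the Kaliszewski--Landstad--Quigg duality $E\mapsto I_E$, the fact that BG- and KLQ-functors depend only on $\ker v$ (with the reduction to $u_G$-absorbing $v$ in the KLQ case), and evaluation at the trivial coefficient $\C$ to recover $C_E^*(G)$ and hence $E$, giving injectivity. The only genuine addition is your Fell-absorption argument that a nonzero weak*-closed ideal $E$ automatically contains $A(G)$, which is a correct and harmless elaboration of a point the paper leaves implicit.
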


As for BG-crossed-product functors, we get $\C\rtimes_{v_{KLQ}}G=C^*_v(G)$, the exotic group \cstar{}algebra for the trivial coefficient algebra $B=\C$
if $v$ absorbs the universal representation $u_G$, that is, if $v=v_E$ for some $G$-invariant ideal in $B(G)$ as above.
This is not true if $E$ is not an ideal (compare also with \cite{Buss-Echterhoff:Imprimitivity}*{Proposition~2.2}).

\begin{example}\label{ex-lp}
For  each $p\in [2,\infty]$ we define
 $E_p$ to be the weak{*-}closure of $B(G)\cap L^p(G)$, which is an ideal in $B(G)$.
 Let us write  $(B,\beta)\mapsto B\rtimes_{\beta, p_{KLQ}}G$ for the corresponding KLQ-functor.
 It then follows from \cite{Okayasu:Free-group} that for any discrete group $G$ which contains the free group
${\Free_2}$ with two generators, the group algebras $C^*_{E_p}(G)$ are all different.
 Since $\C\rtimes_{p_{KLQ}}G=C^*_{E_p}(G)$ it follows from this that for such groups $G$
the KLQ-crossed-product functors associated to different $p$ are also different.
Note that for $p=2$ we just get the reduced crossed-product functor and for $p=\infty$ we get
the universal crossed-product functor as the associated KLQ-functors.

The corresponding BG-crossed-product functors
 $(B,\beta)\mapsto B\rtimes_{\beta,p_{BG}}G$ will also be different for $2\leq p\leq \infty$
 because we also have $\C\rtimes_{p_{BG}}G=C^*_{E_p}(G)$. But Example \ref{ex-BG} shows that these
 functors produce different results (and have different properties) for non-trivial coefficients.

\end{example}

\subsection{New functors from old functors.} \label{subsec-tensor}
It is shown in \cite{Baum-Guentner-Willett} that given any crossed-product functor $(B,\beta)\mapsto B\rtimes_{\beta,\mu}G$ and any unital $G$\nb-\cstar{}algebra $(D,\delta)$, one can use $D$ to construct a new functor $\rtimes_{\mu^\nu_D}$ by defining
$$B\rtimes_{\beta, \mu^\nu_D}G:=j_B\rtimes_\mu G(B\rtimes_{\beta,\mu}G)\subseteq (B\otimes_\nu D)\rtimes_{\beta\otimes\delta, \mu}G,$$
where $\otimes_\nu$  either denotes the minimal or the maximal tensor product, and $j_B:B\to B\otimes_{\nu}D, j_B(b)=b\otimes 1_D$ denotes the canonical imbedding.
A particularly interesting example is obtained in the case where $G$ is discrete by taking $D=\ell^\infty(G)$ and
$\rtimes_\mu=\rtimes_{\un}$, the universal crossed product. For  general locally compact groups this could be replaced by
the algebra $UC_b(G)$ of uniformly continuous bounded functions on $G$: we will discuss this further in Section \S\ref{sec:cub}.
For discrete $G$,  this functor will coincide with the reduced crossed-product functor if and only if $G$ is exact.
Later, in Corollary \ref{cor-tensor}, we shall extend the definition of $\rtimes_{\mu^\nu_D}$ to non-unital \cstar{}algebras $D$.

\subsection{A general source for (counter) examples}\label{subsec:counter}

The following general construction is perhaps a little unnatural.  It is, however, a very useful way to produce `counterexamples': examples of crossed-product functors that do not have good properties.

Let $\mathcal{S}$ be a collection of $G$\nb-algebras, and for any given $G$\nb-algebra $(A,\alpha)$, let $\Phi(A,\mathcal{S})$ denote the class of $G$\nb-equivariant \Star{}homomorphisms from $A$ to an element of $\mathcal{S}$.  Now define $A\rtimes_{\alpha,\mathcal{S}}G$ to be the completion of $C_c(G,A)$ for the norm
$$
\|f\|:=\max\Big\{\|f\|_{A\rtimes_{\alpha,\red}G}\quad,\quad\sup_{(\phi:A\to B)\in\Phi(A,\mathcal{S})}\|\phi\circ f\|_{B\rtimes_\un G}\Big\};
$$
this is a supremum of \cstar{}algebra norms, so a \cstar{}algebra norm, and it is clearly between the reduced and maximal completions of $C_c(G,A)$.

\begin{lemma}\label{oddfun}
The collection of completions $A\rtimes_{\mathcal{S}}G$ defines a crossed-product functor.
\end{lemma}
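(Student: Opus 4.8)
The plan is to verify the two things that ``being a crossed-product functor'' requires: first, that each $A\rtimes_{\mathcal{S}}G$ is a genuine $C^*$-completion of $C_c(G,A)$ lying between the reduced and maximal crossed products; second, that the assignment is functorial for $G$-equivariant \Star{}homomorphisms, i.e.\ that a $G$-equivariant $\psi\colon A_1\to A_2$ induces a (necessarily unique) \Star{}homomorphism $\psi\rtimes_{\mathcal{S}}G\colon A_1\rtimes_{\mathcal{S}}G\to A_2\rtimes_{\mathcal{S}}G$ restricting to $f\mapsto \psi\circ f$ on $C_c(G,A_1)$.

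The first point is essentially already observed in the statement: the displayed norm is a supremum (in fact a maximum of two) of $C^*$-seminorms on $C_c(G,A)$, hence a $C^*$-seminorm; it is actually a norm because it dominates $\|\cdot\|_{A\rtimes_{\alpha,\red}G}$, which is already a norm on $C_c(G,A)$; and it is dominated by $\|\cdot\|_{A\rtimes_{\alpha,\un}G}$ because each term is — the reduced norm trivially, and $\|\phi\circ f\|_{B\rtimes_\un G}\le\|f\|_{A\rtimes_\un G}$ because $\phi\rtimes_\un G$ is a \Star{}homomorphism, hence norm-decreasing. Thus the identity on $C_c(G,A)$ induces the required surjections $A\rtimes_{\alpha,\un}G\onto A\rtimes_{\alpha,\mathcal{S}}G\onto A\rtimes_{\alpha,\red}G$. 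I would spell this out in one short paragraph.

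For functoriality, the key observation is that post-composition with a fixed $G$-equivariant $\psi\colon A_1\to A_2$ carries $\Phi(A_2,\mathcal{S})$ into $\Phi(A_1,\mathcal{S})$: if $\phi\colon A_2\to B$ is $G$-equivariant with $B\in\mathcal{S}$, then $\phi\circ\psi\colon A_1\to B$ is too. Using this, for $f\in C_c(G,A_1)$ one estimates
\[
\|\psi\circ f\|_{A_2\rtimes_{\mathcal{S}}G}
=\max\Big\{\|\psi\circ f\|_{A_2\rtimes_\red G}\ ,\ \sup_{\phi\in\Phi(A_2,\mathcal{S})}\|\phi\circ\psi\circ f\|_{B\rtimes_\un G}\Big\}
\le\max\Big\{\|f\|_{A_1\rtimes_\red G}\ ,\ \sup_{\phi'\in\Phi(A_1,\mathcal{S})}\|\phi'\circ f\|_{B\rtimes_\un G}\Big\}
=\|f\|_{A_1\rtimes_{\mathcal{S}}G},
\]
where the reduced term is controlled by functoriality of the reduced crossed product and the supremum term shrinks because we are taking it over the possibly larger family $\Phi(A_1,\mathcal{S})\supseteq\{\phi\circ\psi:\phi\in\Phi(A_2,\mathcal{S})\}$. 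Hence $f\mapsto\psi\circ f$ is $\|\cdot\|_{\mathcal{S}}$-contractive from $C_c(G,A_1)$ to $C_c(G,A_2)$ and extends to the desired $\psi\rtimes_{\mathcal{S}}G$ on completions; it is a \Star{}homomorphism because it is one on the dense \Star{}subalgebra $C_c(G,A_1)$. Uniqueness is automatic by density, and the identities $\id\rtimes_{\mathcal{S}}G=\id$ and $(\psi_2\circ\psi_1)\rtimes_{\mathcal{S}}G=(\psi_2\rtimes_{\mathcal{S}}G)\circ(\psi_1\rtimes_{\mathcal{S}}G)$ follow by checking on $C_c$.

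There is no serious obstacle here; the only point demanding a little care is the bookkeeping in the supremum estimate — making sure that passing from $\Phi(A_2,\mathcal{S})$ to $\Phi(A_1,\mathcal{S})$ goes in the direction that \emph{decreases} the norm, so that the induced map is contractive rather than merely bounded. Everything else is the routine ``a $C^*$-seminorm bounded between $\red$ and $\un$, compatible with $C_c$-level maps, extends to the completions'' argument.
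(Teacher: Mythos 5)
Your proof is correct and follows the same route as the paper: the whole point is that post-composition with a $G$-equivariant $\psi\colon A_1\to A_2$ carries $\Phi(A_2,\mathcal{S})$ into $\Phi(A_1,\mathcal{S})$, which makes $f\mapsto\psi\circ f$ contractive for the $\mathcal{S}$-norms, exactly as in the paper's one-line argument. The extra verification that the norm is a $C^*$-norm between the reduced and universal ones is already noted in the paper's construction preceding the lemma, so your write-up is just a more detailed version of the same proof.
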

\begin{proof}
This follows immediately from the fact that if $\psi:A\to B$ is any $G$\nb-equivariant \Star{}homomorphism and $\phi:B\to C$ is an element of $\Phi(B,\mathcal{S})$, then $\psi\circ \phi$ is a member of $\Phi(A,\mathcal{S})$.
\end{proof}

Here is an example of the sort of `bad property' the functors above can have.  We will look at some similar cases in Example \ref{ex-ideal-property} and Example \ref{sepex} below.

\begin{example}\label{ex:matrix}
For a crossed-product functor $\rtimes_\mu$, write $C^*_\mu (G)$ for the exotic group algebra $\C\rtimes_{\id,\mu}G$, and for a \cstar{}algebra $A$ let $\Mat_2(A)$ denote the \cstar{}algebra of $2\times 2$ matrices over $A$.  If $\rtimes_\mu$ is either a KLQ- or a BG-crossed product then it is not difficult to check that
\begin{equation}\label{matrices}
\Mat_2(C^*_\mu(G))\cong \Mat_2(\C)\rtimes_{\id,\mu}G.
\end{equation}
 However, for any non-amenable $G$ there is a crossed product where this isomorphism fails!  Indeed, in the notation above take $\mathcal{S}=\{(\C,\id)\}$ and $\rtimes_\mu=\rtimes_\mathcal{S}$.  Then $\Mat_2(C^*_\mu(G))=\Mat_2(C^*(G))$ but $\Mat_2(\C)\rtimes_{\id,\mu}G=\Mat_2(C^*_\red(G))$ (the latter follows as $\Mat_2(\C)$ has no \Star{}homomorphisms to $\C$).  As  $G$ is amenable if and only if $C^*_\red(G)$ admits a non-zero finite dimensional representation, and as $C^*(G)$ always admits a non-zero one-dimensional representation, the isomorphism in line \eqref{matrices} is impossible for this crossed product and any non-amenable $G$.
\end{example}

\section{The ideal property and generalised homomorphisms}\label{sec:id}

In this section, we study two fundamental properties that a crossed product may or may not have: the ideal property, and functoriality under generalised homomorphisms.  In the next section we will use these ideas to investigate correspondence functors.

  \begin{definition}\label{def-gen-hom}
A crossed-product functor $(A,\alpha)\mapsto A\rtimes_{\alpha,\mu}G$ is said to be
functorial for {\em generalised homomorphisms}  if for any (possibly degenerate) $G$\nb-equivariant \Star{}homomorphism
 $\phi:A\to \M(B)$  there exists a \Star{}homomorphism $\phi\rtimes_{\mu}G:A\rtimes_{\mu}G\to \M(B\rtimes_{\mu}G)$
 which is given on the level of functions $f\in C_c(G,A)$ by $f\mapsto \phi\circ f$ in the
 sense that $\phi\rtimes_\mu G (f)g=(\phi\circ f)*g$ for all $g\in C_c(G,B)$.
\end{definition}

An alternative (and equivalent) definition of functoriality for generalised homomorphisms would be to ask that
for every $G$\nb-equivariant \Star{}homomorphism
 $\phi:A\to \M(B)$ and every  \Star{}representation $\pi\rtimes u$ of $B\rtimes_{\beta,\un}G$ which factors through
$B\rtimes_{\beta,\mu}G$, the representation $(\pi\circ \phi)\rtimes u$
factors through $A\rtimes_{\alpha,\mu}G$.

It follows from the definition that
if a crossed-product functor is functorial for generalised homomorphisms, then it
will automatically send nondegenerate $G$\nb-equivariant \Star{}homomorphisms
$\phi:A\to \M(B)$ to nondegenerate \Star{}homomorphisms $\phi\rtimes_\mu G:A\rtimes_{\mu}G
\to \M(B\rtimes_{\mu}G)$, since in this case $\phi\rtimes_\mu G(C_c(G,A))C_c(G,B)$ will
be inductive limit dense in $C_c(G,B)$. Note also that whenever the composition
of two generalised $G$\nb-equivariant morphisms $\phi:A\to \M(B)$ and $\psi:B\to \M(D)$
makes sense -- \eg, if the image of $\phi$ lies in $B$ or if $\psi$ is nondegenerate (in which
case it uniquely extends to $\M(B)$), we get
$$(\psi\circ \phi)\rtimes_\mu G=(\psi\rtimes_{\mu}G)\circ (\phi\rtimes_\mu G),$$
since both morphisms agree on the dense subalgebra $C_c(G,A)\subseteq A\rtimes_{\alpha,\mu}G$.
We shall see below that functoriality for generalised homomorphisms  is equivalent
to the following ideal property:

\begin{definition}\label{def-ideal}
A crossed-product functor $(A,\alpha)\mapsto A\rtimes_{\alpha,\mu}G$ satisfies the {\em ideal property},
if for every $G$-invariant closed ideal in a $G$-algebra $A$, the inclusion map $\iota:I\into A$ descends
to an injective *-homomorphism $\iota\rtimes G: I\rtimes_{\mu}G\into A\rtimes_{\mu}G$.
\end{definition}

\begin{lemma}\label{lem-ideal}
Let $(A,\alpha)\mapsto A\rtimes_{\alpha,\mu}G$ be a crossed-product functor. Then the following are
equivalent:
\begin{enumerate}
\item $(A,\alpha)\mapsto A\rtimes_{\alpha,\mu}G$ is functorial for generalised homomorphisms;
\item $(A,\alpha)\mapsto A\rtimes_{\alpha,\mu}G$ is functorial for nondegenerate generalised homomorphisms;
\item $(A,\alpha)\mapsto A\rtimes_{\alpha,\mu}G$ has the ideal property.
\end{enumerate}
\end{lemma}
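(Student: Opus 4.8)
The plan is to prove the cycle of implications $(1)\Rightarrow(2)\Rightarrow(3)\Rightarrow(1)$. The implication $(1)\Rightarrow(2)$ is trivial, since nondegenerate generalised homomorphisms form a subclass of all generalised homomorphisms. So the work is in $(2)\Rightarrow(3)$ and $(3)\Rightarrow(1)$.

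For $(2)\Rightarrow(3)$, suppose $I\subseteq A$ is a $G$-invariant closed ideal. The first step is to make $\iota\rtimes_\mu G\colon I\rtimes_\mu G\to A\rtimes_\mu G$ into a nondegenerate generalised homomorphism: although $\iota\colon I\into A$ is typically degenerate, we can view it as a nondegenerate $G$-equivariant \Star{}homomorphism $I\to \M(I)$ composed with the inclusion $\M(I)\cong$ (idealiser of $I$ in $\M(A)$) — more cleanly, observe that $I$ sits inside $\M(I)$ and the inclusion $I\into A$ extends to a nondegenerate map $I\to\M(A)$ landing in the idealiser. Actually the cleanest route: $\iota\colon I\to A$ is already a (possibly degenerate) $G$-equivariant \Star{}homomorphism into $\M(A)$, but to get injectivity we instead consider the nondegenerate $G$-equivariant \Star{}homomorphism $\operatorname{id}_I\colon I\to\M(I)$ together with the fact that $I\rtimes_\mu G$ is an ideal in $A\rtimes_\mu G$ at the level of $C_c$-functions. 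The key point is: compose $\iota\rtimes_\mu G$ with the regular representation. Since $I\rtimes_\red G\into A\rtimes_\red G$ is injective (reduced crossed products have the ideal property), and $I\rtimes_\mu G\onto I\rtimes_\red G$, $A\rtimes_\mu G\onto A\rtimes_\red G$ are compatible with $\iota$, injectivity of $\iota\rtimes_\mu G$ would follow if we knew $\ker(I\rtimes_\mu G\onto I\rtimes_\red G)$ maps onto $\ker(A\rtimes_\mu G\onto A\rtimes_\red G)\cap(I\rtimes_\mu G)$ — but that is not automatic. So instead use (2) directly: the point of (2) is that a \Star{}representation of $I\rtimes_\un G$ factoring through $I\rtimes_\mu G$ can be induced up; concretely, given a faithful representation $\pi\rtimes u$ of $A\rtimes_\mu G$, restrict $\pi$ to $I$, extend to $\M(I)$, and functoriality for the nondegenerate map $I\to\M(I)$ — wait, this needs more care. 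Let me restate.

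For $(2)\Rightarrow(3)$ the clean argument: let $\pi\rtimes u$ be a faithful nondegenerate representation of $A\rtimes_\mu G$ on a Hilbert space $\H$. Then $\pi\colon A\to\cB(\H)$ restricts to $\pi|_I\colon I\to\cB(\H)$, which may be degenerate; let $\H_0=\overline{\pi(I)\H}$, a $u$-invariant subspace, and let $\pi_0\colon I\to\cB(\H_0)$, $u_0=u|_{\H_0}$ be the nondegenerate compression. Then $\pi_0\rtimes u_0$ is a nondegenerate representation of $I\rtimes_\un G$; I must show it factors through $I\rtimes_\mu G$, for then $\|\iota\rtimes_\mu G(f)\|\geq\|(\pi_0\rtimes u_0)(f)\|=\|(\pi\rtimes u)(\iota\circ f)\|=\|(\iota\rtimes_\mu G)(f)\|_{A\rtimes_\mu G}$ wait that gives equality and hence injectivity. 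To see $\pi_0\rtimes u_0$ factors through $I\rtimes_\mu G$: the inclusion $\iota\colon I\to A$ together with $\pi\rtimes u$ gives a representation of $I\rtimes_\un G$ that factors through $I\rtimes_\mu G$ precisely because $\iota$, viewed as a $G$-equivariant homomorphism $I\to\M(A)$, when fed through functoriality for (possibly degenerate) generalised homomorphisms... — but we only assumed (2), nondegenerate functoriality. Here is where I use the standard trick: extend $\pi\rtimes u\colon A\rtimes_\mu G\to\cB(\H)$ to the multiplier algebra and note $\M(A)$ is the idealiser; the map $I\to\M(A)$ is nondegenerate as a map into $\M(A)$? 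No — $\overline{I\cdot\M(A)}=\overline{I\cdot A\cdot\text{stuff}}$, hmm, $\overline{IA}=I\neq\M(A)$ in general. So $I\to\M(A)$ is not nondegenerate either.

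The resolution, and the step I expect to be the main obstacle, is to handle the degeneracy correctly: one uses that $I\rtimes_\mu G$ is by \emph{definition} a completion of $C_c(G,I)$ lying between reduced and universal, and the representation $\pi_0\rtimes u_0$ of $I\rtimes_\un G$ is weakly contained in the restriction to $C_c(G,I)$ of the representation $\pi\rtimes u$ of $A\rtimes_\mu G$ — indeed they \emph{agree} on $C_c(G,I)$ up to the harmless Hilbert-space compression, so $\|(\pi_0\rtimes u_0)(f)\|\leq\|(\pi\rtimes u)(\iota\circ f)\|\leq\|\iota\circ f\|_{A\rtimes_\mu G}\leq\|f\|_{\mu\text{ on }C_c(G,I)}$, the last step because $\rtimes_\mu$ is a functor so $\iota\rtimes_\mu G$ is norm-decreasing. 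This shows $\pi_0\rtimes u_0$ extends to $I\rtimes_\mu G$ with no need of (2) at all — so actually the ideal property's injectivity is: $\|f\|_{I\rtimes_\mu G}\geq\|(\pi_0\rtimes u_0)(f)\|=\|(\pi\rtimes u)(\iota\circ f)\|=\|\iota\circ f\|_{A\rtimes_\mu G}=\|(\iota\rtimes_\mu G)(f)\|$. Wait — but then $(3)$ holds for \emph{every} crossed-product functor, which is false. The gap: $\|(\pi_0\rtimes u_0)(f)\|=\|(\pi\rtimes u)(\iota\circ f)\|$ requires $\pi_0\rtimes u_0$ to be a faithful enough representation of whatever $I\rtimes_\mu G$ is, i.e. it requires $\pi_0\rtimes u_0$ to \emph{not factor through a smaller quotient}, and \emph{this} is exactly what (2) buys us — (2) guarantees that the nondegenerate extension process is compatible with $\rtimes_\mu$, so that no collapse occurs. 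Concretely, under (2), apply functoriality to the nondegenerate $G$-equivariant \Star{}homomorphism $I\to\M(I)$ (identity) to identify $I\rtimes_\mu G\into\M(I\rtimes_\mu G)$ faithfully, then relate $\M(I\rtimes_\mu G)$ to the idealiser of $I\rtimes_\mu G$ inside $\M(A\rtimes_\mu G)$ via the nondegenerate homomorphism — I will assemble this using the alternative formulation of (2) given after Definition \ref{def-gen-hom}. This is the delicate bookkeeping and it is the heart of the lemma.

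For $(3)\Rightarrow(1)$, let $\phi\colon A\to\M(B)$ be a $G$-equivariant \Star{}homomorphism. The strategy is the standard one: form the $G$-algebra $C\defeq\phi(A)+B$ inside $\M(B)$ (a $G$-invariant \Star{}subalgebra; one checks it is closed, or passes to its closure, using that $B$ is an ideal in it), so that $B\triangleleft C$ is a $G$-invariant ideal and $\phi$ factors as $A\to C$ followed by the inclusion. Then $B\rtimes_\mu G\into C\rtimes_\mu G$ is injective by the ideal property (3). The \Star{}homomorphism $A\to C$ is an honest (genuine, though possibly degenerate as a map to $C$ if $\phi$ was degenerate) $G$-equivariant \Star{}homomorphism; if it is nondegenerate we directly get $A\rtimes_\mu G\to C\rtimes_\mu G$, compose with $B\rtimes_\mu G\into C\rtimes_\mu G$ to land in $\M(B\rtimes_\mu G)$ (using that $B\rtimes_\mu G$ is an ideal in $C\rtimes_\mu G$, hence $C\rtimes_\mu G\subseteq\M(B\rtimes_\mu G)$), and check compatibility with $C_c$-functions. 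If $A\to C$ is degenerate, first enlarge: replace $C$ by $C_1\defeq\phi(A)+B+\C\cdot 1_{\M(B)}$... actually simpler, note $A\to C$ is nondegenerate as a map $A\to\M(B)$ iff $\phi$ was; in the degenerate case use that we may always reduce to $B=\overline{\phi(A)B}$ first. The main obstacle here is again the (non)degeneracy juggling: ensuring $C\rtimes_\mu G$ embeds in $\M(B\rtimes_\mu G)$ requires knowing $B\rtimes_\mu G$ is an essential ideal of $C\rtimes_\mu G$, which follows from the ideal property together with $B$ being essential in $C$ (true since $B\triangleleft C\subseteq\M(B)$). I would finish by verifying the composition law and the formula $\phi\rtimes_\mu G(f)g=(\phi\circ f)*g$ on $C_c$, which is routine.
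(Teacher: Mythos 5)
Your implication $(1)\Rightarrow(2)$ and your argument for $(3)\Rightarrow(1)$ are fine, but $(2)\Rightarrow(3)$ — which you yourself flag as ``the heart of the lemma'' — is left as an admitted placeholder, and the missing step is precisely the one idea the proof needs. The correct move is to apply hypothesis (2) not to $\id_I\colon I\to\M(I)$ (whose descent is just the canonical embedding $I\rtimes_\mu G\into\M(I\rtimes_\mu G)$, which exists anyway and by itself gives nothing), but to the canonical \emph{nondegenerate} $G$\nb-equivariant \Star{}homomorphism $\phi\colon A\to\M(I)$ that exists because $I$ is an ideal in $A$. Condition (2) then yields $\phi\rtimes_\mu G\colon A\rtimes_\mu G\to\M(I\rtimes_\mu G)$, and one checks on $C_c(G,I)$ that $(\phi\rtimes_\mu G)\circ(\iota\rtimes_\mu G)=(\phi\circ\iota)\rtimes_\mu G$ is exactly the canonical embedding of $I\rtimes_\mu G$ into its multiplier algebra; since that composition is injective, so is $\iota\rtimes_\mu G$. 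Your final sketch instead proposes to ``relate $\M(I\rtimes_\mu G)$ to the idealiser of $I\rtimes_\mu G$ inside $\M(A\rtimes_\mu G)$'', which is circular at this stage (whether $I\rtimes_\mu G$ sits inside $A\rtimes_\mu G$ is exactly what is in question), and your Hilbert-space compression attempt cannot close: the equality $\|(\pi_0\rtimes u_0)(f)\|=\|(\pi\rtimes u)(\iota\circ f)\|$ is in fact automatic (the operator vanishes on $\H_0^\perp$), and what is missing is the lower bound $\|(\pi_0\rtimes u_0)(f)\|\geq\|f\|_{I\rtimes_\mu G}$, i.e.\ faithfulness of $\pi_0\rtimes u_0$ on $I\rtimes_\mu G$ — and to get that you would again need the map $A\to\M(I)$, so nothing is gained by passing to representations.

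For $(3)\Rightarrow(1)$ your route differs mildly from the paper's but works: you factor $\phi$ through the $G$\nb-algebra $C=\phi(A)+B\subseteq\M(B)$ and use the ideal property for $B\triangleleft C$, whereas the paper uses the larger algebra $\M_c(B)$ of $G$\nb-continuous multipliers; both are legitimate, and yours is slightly more economical. Note, however, that your worry about essentialness is unnecessary: you do not need $C\rtimes_\mu G$ to \emph{embed} in $\M(B\rtimes_\mu G)$, only the canonical \Star{}homomorphism $C\rtimes_\mu G\to\M(B\rtimes_\mu G)$ coming from the fact that (the faithful image of) $B\rtimes_\mu G$ is an ideal in $C\rtimes_\mu G$, which exists for any ideal inclusion; after composing with $A\rtimes_\mu G\to C\rtimes_\mu G$ and checking the convolution formula on $C_c(G,A)$ you are done. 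So the verdict is: the cycle you propose is the right shape, $(3)\Rightarrow(1)$ is essentially correct, but $(2)\Rightarrow(3)$ has a genuine gap whose repair is the short argument via $A\to\M(I)$ described above.
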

\begin{proof}
(1) $\Rightarrow$ (2) is trivial.
(2)$\Rightarrow$ (3):
Let $A$ be a $G$-algebra and $I\sbe A$ a $G$\nb-invariant closed ideal. Let $\iota:I\hookrightarrow A$ denote the inclusion map.
 By functoriality, we get a \Star{}homomorphism $\iota\rtimes_\mu G\colon I\rtimes_\mu G\to A\rtimes_\mu G$.
 Let $\phi\colon A\to \M(I)$ be the canonical map, which is a nondegenerate $G$-equivariant \Star{}homomorphism. By (2), it induces a (nondegenerate) \Star{}homomorphism $\phi\rtimes_\mu G\colon A\rtimes_\mu G\to \M(I\rtimes_\mu G)$.
Notice that $\phi\circ \iota\colon I\to \M(I)$ is the canonical inclusion map, which is   a nondegenerate generalised homomorphism, so it induces a \Star{}homomorphism $(\phi\circ\iota)\rtimes_\mu G\colon I\rtimes_\mu G\to\M(I\rtimes_\mu G)$.
It is then easily checked on $C_c(G,I)$ that $(\phi\circ\iota)\rtimes_\mu G$ coincides with the
  the canonical embedding $I\rtimes_\mu G\into \M(I\rtimes_\mu G)$. In particular,
  $(\phi\circ\iota)\rtimes_\mu G=(\phi\rtimes_\mu G)\circ(\iota\rtimes_\mu G)$ is injective, which implies the injectivity of $\iota\rtimes_\mu G$, as desired.
\medskip
\\
(3) $\Rightarrow$ (1): Assume now that the $\mu$-crossed-product functor has the ideal property and
let $A$ and $B$ be $G$\nb-algebras and $\phi:A\to \M(B)$ a $G$\nb-equivariant \Star{}homomorphism.
Let $\M_c(B)=\{m\in \M(B): s\mapsto \beta_s(m)\;\text{is norm continuous}\}$ be the subalgebra of $\M(B)$
of $G$\nb-continuous elements. Then $\phi$ takes values in $\M_c(B)$ and by functoriality we
see that we get a \Star{}homomorphism $\phi\rtimes_{\mu}G:A\rtimes_{\mu}G\to \M_c(B)\rtimes_{\mu}G$.
Now by the ideal property, we also have a faithful inclusion $\iota\rtimes_{\mu}G: B\rtimes_{\mu}G
\to \M_c(B)\rtimes_{\mu}G$, hence we may regard $B\rtimes_{\mu}G$ as an ideal in $\M_c(B)\rtimes_{\mu}G$.
But this implies that there is a canonical \Star{}homomorphism $\Phi:\M_c(B)\rtimes_{\mu}G\to \M(B\rtimes_{\mu}G)$.
It is then straightforward to check that the composition
$\Phi\circ (\phi\rtimes_{\mu}G): A\rtimes_{\mu}G\to \M(B\rtimes_{\mu}G)$ is given on functions in
$C_c(G,A)$ by sending $f\in C_c(G,A)$ to $\phi\circ f\in C_c(G, \M(B))$ as required.
\end{proof}

\begin{remark}\label{rem:klq bg id}
It is easy to see directly that all KLQ- and all BG-crossed products have the ideal property.

Recall from \cite{Baum-Guentner-Willett} that a crossed-product functor $A\mapsto A\rtimes_\mu G$ is \emph{exact} if it preserves short exact sequences, that is, if every short exact sequence $0\to I\to A\to B\to 0$ of $G$\nb-\cstar{}algebras induces a short
exact sequence $0\to I\rtimes_\mu G\to A\rtimes_\mu G\to B\rtimes_\mu G\to 0$ of \cstar{}algebras. In particular, all exact functors satisfy the ideal property, but the ideal property alone is far from being enough for exactness since the reduced crossed-product functor always satisfies the ideal property. By definition, the \emph{group $G$ is exact} if $A\mapsto A\rtimes_\red G$ is exact.
\end{remark}

\begin{example}\label{ex-ideal-property}
Every non-amenable locally compact group admits a crossed-product functor which does not satisfy the ideal property.  To see this, we use the construction of Section \ref{subsec:counter}.  Let $\mathcal{S}$ be the collection of $G$\nb-algebras consisting of only $C_0([0,1))$ equipped with the trivial action $\id$.  For each $G$\nb-algebra $(A,\alpha)$ we define $A\rtimes_{\alpha,\mathcal{S}}G$ as the completion of $C_c(G,A)$
with respect to the norm
$$\|f\|_\mathcal{S}:= \max\{\|f\|_{\red}, \sup_{\phi}\|\phi\circ f\|_{\un}\},$$
where the supremum is taken over all $G$\nb-equivariant \Star{}homomorphisms $\phi:A\to C_0([0,1))$.  As explained in Section \ref{subsec:counter}, this is a crossed-product functor.

To see that it does not have the ideal property, consider the
short exact sequence of (trivial) $G$\nb-algebras
$$0\to C_0([0,1))\to C([0,1])\to \C\to 0.$$
Observe that $A\rtimes_\mu G= A\rtimes_\red G$ for every unital $G$-algebra $A$ since there is no non-zero
homomorphism $A\to \contz([0,1))$. In particular, $\cont([0,1])\rtimes_\mu G=\cont([0,1])\rtimes_\red G\cong \cont([0,1])\otimes C^*_\red(G)$. On the other hand, we obviously have $C_0([0,1))\rtimes_{\mu}G= C_0([0,1))\rtimes_{\un}G\cong \cont([0,1))\otimes C^*(G)$.
Hence, if $G$ is not amenable, the canonical map  $\iota\rtimes_{\mu}G: C_0([0,1))\rtimes_{\mu}G\to C([0,1])\rtimes_{\mu}G$ will not be injective.
\end{example}

For crossed-product functors which enjoy the ideal property we have the  important result:

\begin{lemma}\label{tensor-with-spaces}
Let $(A,\alpha)\mapsto A\rtimes_{\alpha,\mu}G$ be a crossed-product functor with the ideal property and let
$X$ be a  locally compact Hausdorff space.
Let $(j_A, j_X)$ denote the canonical inclusions of $A$ and  $C_0(X)$ into $\M(A\otimes C_0(X))$,
respectively and let $i_{A\otimes C_0(X)}:A\otimes C_0(X)\to \M\big((A\otimes C_0(X))\rtimes_{\alpha\otimes \id_X, \mu}G\big)$ be the canonical map.
Then
$$\Psi_X:=(j_A\rtimes_\mu G)\times (i_{A\otimes C_0(X)}\circ j_X) : A\rtimes_{\alpha,\mu}G\otimes C_0(X)
\stackrel{\cong}{\longrightarrow} (A\otimes C_0(X))\rtimes_{\alpha\otimes\id_X, \mu}G$$
is an isomorphism.
\end{lemma}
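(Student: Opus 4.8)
The plan is to exhibit $\Psi_X$ as an isomorphism by constructing an inverse, or rather by showing that $\Psi_X$ is a surjective $*$-homomorphism which is faithful. First I would check that $\Psi_X$ is well-defined: the pair $(j_A\rtimes_\mu G, i_{A\otimes C_0(X)}\circ j_X)$ consists of a $*$-homomorphism $A\rtimes_{\alpha,\mu}G\to\M((A\otimes C_0(X))\rtimes_{\alpha\otimes\id_X,\mu}G)$ and a $*$-homomorphism $C_0(X)\to \M((A\otimes C_0(X))\rtimes_{\alpha\otimes\id_X,\mu}G)$ with commuting ranges (this is because $j_A$ and $j_X$ commute in $\M(A\otimes C_0(X))$, and $j_X$ takes values in the fixed-point algebra, so $i_{A\otimes C_0(X)}\circ j_X$ commutes with the $G$-part as well as with $j_A\rtimes_\mu G$). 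Hence by the universal property of the minimal tensor product—using that $C_0(X)$ is nuclear—we get a $*$-homomorphism $\Psi_X$ out of $A\rtimes_{\alpha,\mu}G\otimes C_0(X)$. On the dense subalgebra $C_c(G,A)\odot C_c(X)$ one computes that $\Psi_X$ sends an elementary tensor $f\otimes\varphi$, with $f\in C_c(G,A)$ and $\varphi\in C_c(X)$, to the function $t\mapsto f(t)\otimes\varphi\in C_c(G, A\otimes C_0(X))$; in particular $\Psi_X$ has dense range, hence is surjective.

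For injectivity I would use a standard density/ideal argument. The issue is that $\Psi_X$ is a priori a quotient of a $*$-homomorphism and we must rule out any kernel. The trick is to reduce to the unital case $X$ compact, where $C(X)$ sits \emph{inside} (not just the multiplier algebra of) the algebras in question, by writing an arbitrary locally compact $X$ as a directed union of open relatively compact subsets, or equivalently as an increasing limit obtained from a net of functions $\varphi_i\in C_c(X)$ with $0\le\varphi_i\le 1$ forming an approximate unit for $C_0(X)$; then $A\otimes C_0(X)$ is the increasing union of the $C^*$-subalgebras generated by $A\otimes C_b(X)_i$-type pieces, and since crossed-product functors commute with such inductive limits (the crossed product is a completion of $C_c(G,-)$ and $C_c(G,-)$ is continuous for increasing unions), both sides of the claimed isomorphism are inductive limits and it suffices to treat each stage. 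At a finite stage one has a unital commutative $C^*$-algebra $D$ with a trivial $G$-action, and then $A\otimes D\to \M((A\otimes D)\rtimes_{\mu}G)$, restricted to $D=1\otimes D$, lands in the center of the multiplier algebra; the ideal property enters here to guarantee that the various corner/ideal inclusions one uses to peel off $D$ are faithful.

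Actually the cleanest route, which I would take, avoids inductive limits and uses the ideal property directly as follows. Replace $X$ first by its one-point compactification $X^+$; then $C_0(X)$ is a $G$-invariant ideal in $C(X^+)$, so $A\otimes C_0(X)$ is a $G$-invariant ideal in $A\otimes C(X^+)$, and by the ideal property $(A\otimes C_0(X))\rtimes_\mu G$ is an ideal in $(A\otimes C(X^+))\rtimes_\mu G$. On the other side, $A\rtimes_{\alpha,\mu}G\otimes C_0(X)$ is an ideal in $A\rtimes_{\alpha,\mu}G\otimes C(X^+)$ by exactness of the minimal tensor product in the (nuclear) second variable. These two ideal inclusions are compatible with $\Psi_{X^+}$ and $\Psi_X$, so it is enough to prove the statement for the unital algebra $C(X^+)$, i.e.\ to assume $X$ compact and $C_0(X)=C(X)$ unital. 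In the unital case, $1_X$ is a $G$-invariant projection (indeed the unit) in $\M(A\otimes C(X))$ — more usefully, for any $\varphi\in C(X)$ the element $j_X(\varphi)$ is a $G$-invariant multiplier — and I would construct the inverse of $\Psi_X$ explicitly: the canonical $G$-equivariant nondegenerate inclusion $A\otimes C(X)\to \M(A\otimes C(X))$ together with the observation that $C(X)\hookrightarrow\M((A\otimes C(X))\rtimes_\mu G)$ has range commuting with everything gives, via the universal property of $A\rtimes_{\alpha,\mu}G\otimes C(X)$ and the fact that $C(X)$ is nuclear, a $*$-homomorphism $(A\otimes C(X))\rtimes_\mu G\to A\rtimes_{\alpha,\mu}G\otimes C(X)$ that is inverse to $\Psi_X$ on the dense subalgebra $C_c(G, A\odot C(X))=C_c(G,A)\odot C(X)$, hence everywhere. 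The one subtlety is knowing that the map $C(X)\to \M((A\otimes C(X))\rtimes_\mu G)$, $\varphi\mapsto i_{A\otimes C(X)}(1\otimes\varphi)$, is injective; this follows because composing with the quotient to the reduced crossed product and then with the canonical faithful conditional-expectation picture recovers the ordinary (reduced) crossed product isomorphism $(A\otimes C(X))\rtimes_{\red}G\cong (A\rtimes_{\red}G)\otimes C(X)$, where the corresponding statement is classical.

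The main obstacle is the injectivity of $\Psi_X$; surjectivity and well-definedness are routine. The ideal property is doing the essential work in two places: it lets us pass from the locally compact $X$ to its compactification, and—should one instead argue via corners—it guarantees faithfulness of the inclusions $pAp\rtimes_\mu G\to A\rtimes_\mu G$ that one would otherwise need. I would present the compactification reduction first, then the unital case via the explicit inverse, citing the classical $C_0(X)$-factorization for the reduced crossed product to pin down faithfulness of $\varphi\mapsto i_{A\otimes C(X)}(1\otimes\varphi)$.
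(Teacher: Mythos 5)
Your reduction of the general case to the compact case is exactly the paper's argument: pass to the one-point compactification $X_\infty$, use the ideal property to see that $(A\otimes C_0(X))\rtimes_\mu G\to (A\otimes C(X_\infty))\rtimes_\mu G$ is injective, note compatibility of this square with $\Psi_X$ and $\Psi_{X_\infty}$, and conclude injectivity of $\Psi_X$ from density and the compact case. (The paper does not reprove the compact case; it cites \cite{Baum-Guentner-Willett}*{Lemma 4.3}, which holds for \emph{any} crossed-product functor when $X$ is compact.)

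The gap is in your treatment of the compact case. You propose to build the inverse $(A\otimes C(X))\rtimes_\mu G\to A\rtimes_{\alpha,\mu}G\otimes C(X)$ from ``the universal property of $A\rtimes_{\alpha,\mu}G\otimes C(X)$ and the fact that $C(X)$ is nuclear,'' using the commuting copies of $A\otimes C(X)$ and $C(X)$ in multiplier algebras. But the universal property of a (maximal, hence here also minimal) tensor product only produces $*$-homomorphisms \emph{out of the tensor product} --- that is, it produces $\Psi_X$ itself, which you already have. It gives no map out of $(A\otimes C(X))\rtimes_\mu G$, and an exotic crossed product has no universal property for covariant pairs: a covariant representation of $(A\otimes C(X),G)$ integrates to the \emph{universal} crossed product, and whether it factors through the $\mu$-completion is precisely the nontrivial point. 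As written, the construction of the inverse is therefore circular. The standard repair (and in essence the cited BGW argument) is to use, for each $x\in X$, the honest $G$-equivariant surjection $\mathrm{ev}_x:A\otimes C(X)\to A$; ordinary functoriality of $\rtimes_\mu$ gives $\mathrm{ev}_x\rtimes_\mu G$, and bundling these over $x$ (continuity in $x$ is checked on $C_c(G,A\otimes C(X))$ and extends by contractivity) yields a $*$-homomorphism $(A\otimes C(X))\rtimes_\mu G\to C(X,A\rtimes_{\alpha,\mu}G)\cong A\rtimes_{\alpha,\mu}G\otimes C(X)$ inverting $\Psi_X$ on the dense subalgebra. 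A smaller caveat: your earlier alternative via inductive limits assumes that exotic crossed products commute with increasing unions, which is not automatic for a general functor, so it is good that you abandoned that route; also note that in the non-compact case the very definition of $j_A\rtimes_\mu G$ (with $j_A$ landing only in $\M(A\otimes C_0(X))$) already uses functoriality for generalised homomorphisms, i.e.\ Lemma \ref{lem-ideal} and hence the ideal property.
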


\begin{remark} We should remark that if $X$ is compact, the result holds for any crossed-product functor
$\rtimes_\mu$. The proof of this fact is given for the case $X=[0,1]$ in  \cite[Lemma 4.3]{Baum-Guentner-Willett}, but
the same arguments will work for arbitrary compact Hausdorff spaces  as well. We will use this fact in the proof below.
\end{remark}

\begin{proof}
Let $X_\infty=X\cup\{\infty\}$ denote the one-point compactification of $X$. Then the above remark implies that
the lemma is true for $X_\infty$. Consider
the  diagram
$$
\begin{CD}
(A\otimes C_0(X))\rtimes_{\mu}G @>>> (A\otimes C(X_\infty))\rtimes_{\mu}G\\
@V\Psi_X VV   @V\Psi_{X_\infty}V\cong V  \\
A\rtimes_{\alpha,\mu}G\otimes C_0(X) @>>> A\rtimes_{\alpha,\mu}G\otimes C(X_\infty) \\
\end{CD}
$$
in which the horizontal maps are induced by the canonical inclusion $\contz(X)\into \cont(X_\infty)$. One  checks on elements of the form $f\otimes g$ with $f\in C_0(X)$ and
$g\in C_c(G,A)$ that the diagram commutes and that $\Psi_X$ has dense image in $A\rtimes_{\alpha,\mu}G\otimes C_0(X)$.
By the ideal property, the upper horizontal map is injective, which then implies that $\Psi_X$ is injective as well.
\end{proof}

 \section{Correspondence functors}\label{sec-prop}

 In this section we continue our study of properties that a crossed-product functor may or may not have.
 One particularly important property we would like to have is that the functor extends to a functor
 on the $G$\nb-equivariant correspondence category, in which the objects are $G$\nb-algebras and
 the morphisms are $G$\nb-equivariant correspondences. The importance of this comes from the fact that
 functoriality for correspondences allows the construction of a descent in equivariant $KK$-theory
$$J_G^\mu:KK^G(A,B)\to KK(A\rtimes_\mu G, B\rtimes_\mu G)$$
for our exotic functor $\rtimes_\mu$; we discuss this in \S\ref{sec:descent}.  The main result of this section is Theorem \ref{thm-corr}, which gives several equivalent characterisations of functors that extend to the correspondence category.

We start by recalling some background about correspondences.

\begin{definition}
If $A$ and $B$ are \cstar{}algebras, a \emph{correspondence from $A$ to $B$} (or simply an $A-B$ correspondence) is a pair $(\E, \phi)$ consisting of a Hilbert $B$-module $\E$ together with a (possibly degenerate)
\Star{}homomorphism $\phi:A\to \Lb(\E)$.

If $(A,\alpha)$ and $(B,\beta)$ are $G$\nb-algebras, then a
$G$\nb-equivariant $A-B$ correspondence $(\E, \phi,\gamma)$ consists of an $A-B$ correspondence
$(\E,\phi)$ together with a strongly continuous action $\gamma:G\to \Aut(\E)$ such that
$$\braket{\gamma_s(\xi)}{\gamma_s(\eta)}_B=\beta_s(\braket{\xi}{\eta}_B),\quad
\gamma_s(\xi\cdot b)=\gamma_s(\xi)\beta_s(b)$$
$$\quad\text{and}\quad\gamma_s(\phi(a)\xi)=\phi(\alpha_s(a))\gamma_s(\xi)$$
for all $a\in A, b\in B, \xi,\eta\in \E$ and $s\in G$.
\end{definition}

\begin{definition}\label{def-equivalence}
We say that two $G$\nb-equivariant $A-B$ correspondences $(\E, \phi, \gamma)$ and $(\E', \phi', \gamma')$ are {\em isomorphic}
 if there exists an isomorphism $U:\E\to \E'$
which preserves all structures. We  say that $(\E, \phi, \gamma)$ and $(\E', \phi', \gamma')$ are {\em equivalent}
if there exists an isomorphism between $(\phi(A)\E,\phi, \gamma)$ and $(\phi'(A)\E, \phi',\gamma')$.
In particular, every correspondence  $(\E, \phi, \gamma)$ is equivalent to the nondegenerate correspondence
$(\phi(A)\E, \phi,\gamma)$.
\end{definition}

We should note at this point that  $\phi(A)\E=\{\phi(a)\xi: a\in A, \xi\in \E\}$ will be a closed $G$\nb-invariant
Hilbert $B$-submodule of $\E$ -- just apply Cohen's factorisation theorem to the closed submodule
$\cspn\phi(A)\E$ to see that this must coincide with $\phi(A)\E$.

Note that allowing general (\ie, possibly degenerate) correspondences makes it more straightforward
to view an arbitrary \Star{}homomorphism $\phi:A\to B$ as a correspondence: it will be represented
by the correspondence $(B,\phi)$ (or $(B,\phi,\beta)$ in the equivariant setting)
where $B$ is regarded as a Hilbert $B$-module in the canonical way. Of course, by our definition of equivalence,
$\phi:A\to B$ will also be represented by the correspondence $(\phi(A)B, \phi)$.
Composition of an $(A,\alpha)-(B,\beta)$ correspondence $(\E,\phi, \gamma)$ with a $(B,\beta)-(D,\delta)$ correspondence
$(\F,\psi,\tau)$ is given by the  internal tensor product construction
$(\E\otimes_\psi\F, \phi\otimes 1, \gamma\otimes\tau)$ (or just $(\E\otimes_\psi\F, \phi\otimes 1)$ in the non-equivariant case).
The following lemma is folklore and we omit a proof.

\begin{lemma}\label{lem-cor-composition}
The construction of internal tensor products is associative up to isomorphism of correspondences.
Moreover, we have canonical isomorphisms
$$\E\otimes_\psi\F\cong \E\otimes_{\psi}(\psi(B)\F)\quad\text{and}\quad (\phi(A)\E)\otimes_\psi\F\cong (\phi\otimes 1)(A)(\E\otimes_{\psi}\F).$$
\end{lemma}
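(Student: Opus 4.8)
The plan is to verify all three assertions directly on elementary tensors. Recall that for a Hilbert $B$\nb-module $\E$ and a $B$\nb-$D$ correspondence $(\F,\psi)$ the module $\E\otimes_\psi\F$ is the completion of the algebraic balanced tensor product (so that $\xi b\otimes\eta=\xi\otimes\psi(b)\eta$) with respect to the $D$\nb-valued inner product determined by $\braket{\xi_1\otimes\eta_1}{\xi_2\otimes\eta_2}_D=\braket{\eta_1}{\psi(\braket{\xi_1}{\xi_2}_B)\eta_2}_D$, after dividing out length\nb-zero vectors; the left $A$\nb-action is $(\phi\otimes 1)(a)(\xi\otimes\eta)=(\phi(a)\xi)\otimes\eta$, and in the equivariant setting $G$ acts by $(\gamma\otimes\tau)_s(\xi\otimes\eta)=\gamma_s(\xi)\otimes\tau_s(\eta)$. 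In each of the claimed isomorphisms the underlying map is the obvious one on elementary tensors; once one checks that it is isometric for the relevant base\nb-algebra\nb-valued inner products and has dense range, it extends to a unitary of Hilbert modules, and then $A$\nb-linearity and (in the equivariant case) $G$\nb-equivariance follow by continuity from the evident identities on elementary tensors.

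For associativity, let $(\mathcal G,\rho,\sigma)$ be a further correspondence with $\rho\colon D\to\Lb(\mathcal G)$ and consider the densely defined map
$$(\xi\otimes\eta)\otimes\zeta\longmapsto\xi\otimes(\eta\otimes\zeta),\qquad (\E\otimes_\psi\F)\otimes_\rho\mathcal G\longrightarrow\E\otimes_\psi(\F\otimes_\rho\mathcal G),$$
where on the target the left action of $B$ on $\F\otimes_\rho\mathcal G$ is understood to be $\psi\otimes 1$. Expanding both sides, the inner product of $(\xi_1\otimes\eta_1)\otimes\zeta_1$ with $(\xi_2\otimes\eta_2)\otimes\zeta_2$ comes out as $\braket{\zeta_1}{\rho\big(\braket{\eta_1}{\psi(\braket{\xi_1}{\xi_2}_B)\eta_2}_D\big)\zeta_2}_E$, and exactly the same expression is obtained for $\xi_1\otimes(\eta_1\otimes\zeta_1)$ and $\xi_2\otimes(\eta_2\otimes\zeta_2)$ on the other side; hence the map is isometric, and since the elementary tensors are total on both sides it extends to a unitary. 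On elementary tensors it visibly intertwines the left $A$\nb-actions $(\phi\otimes 1)\otimes 1$ and $\phi\otimes 1$ and, equivariantly, the diagonal $G$\nb-actions $(\gamma\otimes\tau)\otimes\sigma$ and $\gamma\otimes(\tau\otimes\sigma)$, so it is an isomorphism of correspondences.

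For the two remaining isomorphisms we use inclusions of Hilbert submodules. The inclusion $\psi(B)\F\into\F$ induces a map $\E\otimes_\psi(\psi(B)\F)\to\E\otimes_\psi\F$ which is isometric because the inner product formula on the left involves only vectors of the form $\psi(b)\eta$, and which has dense range: for an approximate unit $(u_i)$ of $B$ the balancing relation gives $\xi\otimes\eta=\lim_i(\xi u_i)\otimes\eta=\lim_i\xi\otimes\psi(u_i)\eta$ with each $\psi(u_i)\eta\in\psi(B)\F$ (using $\|\xi u_i-\xi\|\to 0$). Hence it is a unitary, and it is $A$\nb-linear and $G$\nb-equivariant on elementary tensors. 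Similarly, the inclusion $\phi(A)\E\into\E$ induces an isometry $(\phi(A)\E)\otimes_\psi\F\to\E\otimes_\psi\F$ whose range is the closed span of the elementary tensors $(\phi(a)\xi)\otimes\eta=(\phi\otimes 1)(a)(\xi\otimes\eta)$; by Cohen's factorisation theorem (as recalled before the statement of the lemma) this closed span is exactly $(\phi\otimes 1)(A)(\E\otimes_\psi\F)$, giving the isomorphism $(\phi(A)\E)\otimes_\psi\F\cong(\phi\otimes 1)(A)(\E\otimes_\psi\F)$ of correspondences.

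There is no serious obstacle here -- which is why the statement is folklore -- but one must keep careful track at each step of which left action and which base\nb-algebra\nb-valued inner product is in play, in particular in the possibly degenerate cases; and the single point which is not a purely algebraic identity is the density of the image of $\E\otimes_\psi(\psi(B)\F)$ in $\E\otimes_\psi\F$, which relies on combining the balancing relation with an approximate unit of $B$ rather than on a formal manipulation.
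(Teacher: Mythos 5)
Your proof is correct: checking on elementary tensors that the canonical maps preserve the base-algebra-valued inner products and have dense range (using an approximate unit of $B$ for the first isomorphism and Cohen's factorisation for the second), and then noting compatibility with the left actions and the $G$\nb-actions, is exactly the standard argument. The paper itself declares this lemma folklore and omits the proof, so your write-up supplies precisely the expected reasoning and contains no gaps worth flagging beyond the routine point (which you handle) that well-definedness and isometry are verified on the algebraic tensor product before passing to the completion.
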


The above lemma together with \cite[Theorem 2.8]{Echterhoff-Kaliszewski-Quigg-Raeburn:Categorical} allows the
following definition:

\begin{definition}\label{def-cor}
There is a unique category $\Cor(G)$, which we call the {\em $G$-equivariant correspondence category},
in which the objects are $G$\nb-\cstar{}algebras and
the morphisms between two objects $(A,\alpha)$ and $(B,\beta)$ are equivalence classes of
$(A,\alpha)-(B,\beta)$ correspondences $(\E,\phi,\gamma)$ with composition of morphisms
given by internal tensor products. We  write $\Cor$ for the correspondence category
of \cstar{}algebras without group actions (\ie, where $G=\{e\}$ is the trivial group).
\end{definition}

By our definition of equivalence of correspondences together with the above lemma, our category $\Cor(G)$ is isomorphic to the category $\mathcal A(G)$ defined in \cite[Theorem 2.8]{Echterhoff-Kaliszewski-Quigg-Raeburn:Categorical} in which the objects are \cstar{}algebras and the morphisms are isomorphism classes of {\em nondegenerate} $G$\nb-equivariant correspondences.
More precisely, the isomorphism $\Cor(G)\congto \mathcal{A}(G)$ is given by the assignment $[(\E,\phi,\gamma)]\to [(\phi(A)\E, \phi,\gamma)]$.

For every object $(A,\alpha)$ of $\Cor(G)$, the identity morphism on $(A,\alpha)$ is given by the
equivalence class of the correspondence $(A,\id_A,\alpha)$. It has been shown in
\cite[Lemma 2.4 and Remark 2.9]{Echterhoff-Kaliszewski-Quigg-Raeburn:Categorical} that the invertible morphisms in $\Cor(G)\cong\mathcal{A}(G)$ are precisely the equivalence classes of equivariant Morita-equivalence bimodules (which we simply call {\em equivalence bimodules} below).

\medskip

We now want to discuss under what conditions one can extend a crossed-product functor
$(A,\alpha)\to A\rtimes_{\alpha,\mu}G$, which is functorial for $G$\nb-equivariant \Star{}homomorphisms
to a functor from  the correspondence category $\Cor(G)$ to the correspondence category
$\Cor=\Cor(\{e\})$.  For this it is necessary
to say what the functor should do on morphisms.

For this recall that if
$(\E,\phi,\gamma)$ is a $G$\nb-equivariant correspondence
from $(A,\alpha)$ to $(B,\beta)$, then there is a canonical construction of a ``correspondence'' $(C_c(G,\E), \phi\rtimes_cG)$
on the level of continuous functions with compact supports where the  actions and inner products  are given by
\begin{align*}
\braket{x}{y}_{C_c(G,B)}(t)\defeq &\int_G\beta_{s^{-1}}(\braket{x(s)}{y(st)}_B)\dd{s}\\
(x \cdot \varphi)(t)\defeq &\int_G x(s)\beta_s(\varphi(s^{-1}t))\dd{s}
\end{align*}
for all $x,y\in C_c(G,\E)$, $\varphi\in C_c(G,B)$, and
\begin{equation}\label{eq-left-action}
(\phi\rtimes_cG(f)x)(t) \defeq \int_G \phi(f(s)) \gamma_s(x(s^{-1}t))\dd{s},
\end{equation}
for all $x\in C_c(G,\E)$ and $f\in C_c(G,A)$. It is well known (\eg, see \cite{Combes:Crossed_Morita, Kasparov:Novikov})
that this construction completes to a correspondence $(\E\rtimes_{\gamma,\un}G, \phi\rtimes_\un G)$ between the
maximal crossed products $A\rtimes_{\alpha,\un}G$ and $B\rtimes_{\beta,\un}G$. We simply  regard
$\braket{x}{y}_{C_c(G,B)}$ as an element of $B\rtimes_{\beta,\un}G$ and take completion with respect
to the corresponding norm $\|x\|_\un:=\sqrt{\| \braket{x}{x}_{C_c(G,B)}\|_\un}$ on $C_c(G,\E)$.

On the right hand side we can do a similar procedure in complete generality by regarding $\braket{x}{y}_{C_c(G,B)}$ as an element in
$B\rtimes_{\beta,\mu}G$ for any exotic crossed-product functor $\rtimes_\mu$: the completion $\E\rtimes_{\gamma,\mu}G$ of $C_c(G,\E)$ with respect to $\|x\|:=\sqrt{\| \braket{x}{x}_{C_c(G,B)}\|_\mu}$
will always be a Hilbert $B\rtimes_{\beta,\mu}G$-module. Moreover, we have a canonical isomorphism of Hilbert $B\rtimes_{\beta,\mu}G$-modules:
\begin{equation}\label{eq:CanonicalIsom}
(\E\otimes_{\gamma,\un}G)\otimes_{B\rtimes_{\beta,\un}G}(B\rtimes_{\beta,\mu}G)\congto \E\rtimes_{\gamma,\mu}G,
\end{equation}
sending $x\otimes b$ to $x\cdot b$, where the universal crossed product acts on the $\mu$-crossed product via the quotient map.

The problem  we need to address is the question whether the left action of $C_c(G,A)$ on $C_c(G,\E)$ given by
(\ref{eq-left-action}) will always extend to $A\rtimes_{\alpha,\mu}G$. We should also want that
$\K(\E)\rtimes_{\Ad\gamma,\mu}G\cong \K(\E\rtimes_{\gamma,\mu}G)$ if the left action
 exists.
 Indeed, we shall see below (Remark \ref{bgnocor}) that this fails for all BG-functors different from the universal crossed product,
 which is the BG-functor corresponding to the full group algebra $C^*(G)$. On the other hand,
it has been already shown in \cite[{Section~2}]{Buss-Echterhoff:Imprimitivity}
that all KLQ-functors extend to the correspondence category.
We shall give a short {alternative} argument for this at the end of this section.

We shall now introduce a few conditions that a functor may or may not have, which are related
to these questions.

\begin{definition}\label{def-properties}
Let $(A,\alpha)\to A\rtimes_{\alpha,\mu}G$ be a crossed-product functor. Then we say
\begin{enumerate}
\item the functor is {\em strongly Morita compatible} if for every $G$\nb-equivariant $(A,\alpha)-(B,\beta)$
equivalence bimodule $(\E,\gamma)$, the left action of $C_c(G, A)$ on $C_c(G,\E)$ extends to an action of
$A\rtimes_{\alpha,\mu}G$ on $\E\rtimes_{\gamma,\mu}G$ and makes $\E\rtimes_{\gamma,\mu}G$
an $A\rtimes_{\alpha,\mu}G-B\rtimes_{\beta,\mu}G$ equivalence bimodule.
\item The functor has the {\em projection property} (resp. {\em full projection property}) if for every $G$\nb-algebra $A$ and every
$G$\nb-invariant (resp. full) projection $p\in \M(A)$, the inclusion $\iota: pAp\into A$ descends to
a  faithful homomorphism $\iota\rtimes_{\mu}G: pAp\rtimes_{\alpha,\mu}G\to A\rtimes_{\alpha,\mu}G$.
\item The functor has the {\em hereditary-subalgebra property} if for every hereditary
$G$\nb-invariant subalgebra $B$ of $A$, the inclusion $\iota:B\into A$ descends to a  faithful map
$\iota\rtimes_{\mu}G: B\rtimes_{\alpha,\mu}G\to A\rtimes_{\alpha,\mu}G$.
\item The functor has the \emph{cp map property} if for any completely positive and $G$-equivariant map $\phi:A\to B$ of $G$-algebras, the map
$$
C_c(G,A)\to C_c(G,B),~~~f\mapsto \phi\circ f
$$
extends to a completely positive map from $A\rtimes_{\alpha,\mu} G$ to $B\rtimes_{\beta,\mu} G$.
\end{enumerate}
\end{definition}

\begin{remark}\label{rem-projection}
The (full) projection property can be reformulated as follows:
For every $G$\nb-algebra $A$ and for every $G$\nb-invariant (full) projection $p\in \M(A)$, we have $pAp\rtimes_{\alpha,\mu}G=\tilde{p} (A\rtimes_{\alpha,\mu}G)\tilde{p}$, where $\tilde{p}$ denotes the canonical image of $p$ in $\M(A\rtimes_{\alpha,\mu}G)$.
This means that the closure of $C_c(G, pAp)=\tilde{p} C_c(G,A)\tilde{p}$ inside
$A\rtimes_{\alpha,\mu}G$ coincides with $pAp\rtimes_{\alpha,\mu}G$.
\end{remark}

Recall that the ideal property has been introduced in Definition~\ref{def-ideal}.
Since ideals are hereditary subalgebras it is weaker than the
hereditary-subalgebra property. Recall also from Lemma \ref{lem-ideal} that the
ideal property is equivalent to the property that the functor extends to
generalised homomorphisms $\phi:A\to \M(B)$.

\begin{theorem}\label{thm-corr}
Let $(A,\alpha)\to A\rtimes_{\alpha,\mu}G$ be a crossed-product functor. Then the
following conditions are equivalent:
\begin{enumerate}
\item The functor extends  to a correspondence functor from $\Cor(G)$ to $\Cor$ sending
an equivalence class of a correspondence $(\E,\phi,\gamma)$ to the equivalence class
of the correspondence $(\E\rtimes_{\gamma, \mu}G, \phi\rtimes_{\mu}G)$.
\item  For every $G$\nb-equivariant (right)  Hilbert $(B,\beta)$-module $(\E,\gamma)$, the
left action of $\K(\E)$ on $\E$ descends to an isomorphism
$\K(\E)\rtimes_{\Ad\gamma,\mu}G\cong \K(\E\rtimes_{\gamma,\mu}G)$.
\item The functor is strongly Morita compatible and has the ideal property.
\item The functor has the hereditary subalgebra property.
\item  The functor has the projection property.
\item The functor has the cp map property.
\end{enumerate}
If the above equivalent conditions hold, we say that $(A,\alpha)\mapsto A\rtimes_{\alpha,\mu}G$ is a \emph{correspondence crossed-product functor}.
\end{theorem}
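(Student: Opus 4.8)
The plan is to prove Theorem~\ref{thm-corr} by establishing a cycle of implications
$(1)\Rightarrow(2)\Rightarrow(3)\Rightarrow(4)\Rightarrow(5)\Rightarrow(6)\Rightarrow(1)$,
supplemented where convenient by direct shortcuts between the weaker ``faithfulness of an inclusion'' properties (ideal, hereditary, projection) using Lemma~\ref{tensor-with-spaces} and Lemma~\ref{lem-ideal}. The guiding principle is that all six conditions are ways of saying that the left action of $C_c(G,A)$ on $C_c(G,\E)$ is bounded for the $\mu$-norm, and that the compact-operator algebra behaves correctly under descent; once one knows $\K(\E)\rtimes_\mu G\cong\K(\E\rtimes_\mu G)$ for all equivariant Hilbert modules, everything else follows by bootstrapping along the standard ``linking algebra'' arguments.

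For $(1)\Rightarrow(2)$: given an equivariant Hilbert $B$-module $(\E,\gamma)$, regard it as a correspondence from $\K(\E)$ to $B$; applying the functor of~(1) yields a correspondence $(\E\rtimes_\mu G,\,(\Ad\gamma)\rtimes_\mu G)$ from $\K(\E)\rtimes_\mu G$ to $B\rtimes_\mu G$, and the issue is to identify the left action with an isomorphism onto $\K(\E\rtimes_\mu G)$. Functoriality applied to the identity correspondence on $\K(\E)$ and the observation that $\K(\E)$ acts by an equivalence bimodule over itself forces the descended left action to be an equivalence bimodule structure, which pins down the isomorphism. For $(2)\Rightarrow(3)$: strong Morita compatibility is immediate from~(2) applied to a $G$-equivariant equivalence bimodule (where $\K(\E)\cong A$), and the ideal property follows by applying~(2) to $\E=I$ viewed as a Hilbert module over $A$ with $\K(I)\cong I$, identifying $I\rtimes_\mu G$ with its image. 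For $(3)\Rightarrow(4)$: a hereditary subalgebra $B\subseteq A$ sits as a full corner in the ideal $I=\cl{AB A}$ it generates, i.e.\ $B$ and $I$ are Morita equivalent via $\cl{BA}$; strong Morita compatibility descends this to a Morita equivalence $B\rtimes_\mu G\sim I\rtimes_\mu G$, the ideal property gives $I\rtimes_\mu G\hookrightarrow A\rtimes_\mu G$ faithfully, and composing shows $B\rtimes_\mu G\to A\rtimes_\mu G$ is faithful. $(4)\Rightarrow(5)$ is trivial since $pAp$ is a hereditary subalgebra of $A$ (using that $i_A^\mu$ extends to $\M(A)$, as recorded after Definition~\ref{def-ideal}).

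The two implications I expect to carry the real analytic weight are $(5)\Rightarrow(6)$ and $(6)\Rightarrow(1)$. For $(5)\Rightarrow(6)$: use Stinespring/Kasparov-Stinespring to write a $G$-equivariant c.p.\ map $\phi:A\to B$ as a compression $\phi(a)=V^*\pi(a)V$ where $\pi:A\to\Lb(\E)$ is an equivariant $*$-homomorphism into an equivariant Hilbert $B$-module and $V\in\Lb(B,\E)$ is equivariant; equivalently, realize $\phi$ through a corner of a $G$-equivariant $*$-homomorphism into an equivariant matrix/linking algebra, and then apply the projection property to cut down. The delicate point is continuity/equivariance of the dilation and checking that the induced map on $C_c(G,A)$ is exactly $f\mapsto\phi\circ f$; here the remark after Definition~\ref{def-ideal} (the canonical maps extend to multipliers) and Lemma~\ref{tensor-with-spaces} will be used to handle non-unital corners. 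For $(6)\Rightarrow(1)$: given an equivariant correspondence $(\E,\phi,\gamma)$ from $A$ to $B$, the rank-one-type maps $a\mapsto \braket{\xi}{\phi(a)\xi}_B$ are $G$-equivariant and completely positive, so the cp map property bounds $\braket{x}{\phi\rtimes_c G(f)x}$ in the $\mu$-norm on $B\rtimes_\mu G$ for $x\in C_c(G,\E)$, which is exactly what is needed for the left action~\eqref{eq-left-action} to extend to $A\rtimes_\mu G$ on $\E\rtimes_\mu G$; functoriality (compatibility with internal tensor products) then follows from the isomorphism~\eqref{eq:CanonicalIsom} and associativity of internal tensor products (Lemma~\ref{lem-cor-composition}), together with the previously-established $\K(\E)\rtimes_\mu G\cong\K(\E\rtimes_\mu G)$ to check that equivalence classes are sent to equivalence classes and composition is respected. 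The main obstacle throughout is bookkeeping with degenerate actions and multiplier algebras: ensuring that every ``obvious'' identification of $C_c$-level correspondences actually lands in the right multiplier algebra and is equivariant, for which Lemma~\ref{lem-ideal}, Lemma~\ref{tensor-with-spaces}, and the isomorphism~\eqref{eq:CanonicalIsom} are the essential tools.
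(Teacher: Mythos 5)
Your cycle reproduces the paper's overall architecture, and your $(5)\Rightarrow(6)$ (equivariant Stinespring dilation compressed by a $G$-invariant projection) is exactly the paper's argument. The genuine problem is your mechanism for $(6)\Rightarrow(1)$. The maps $a\mapsto \braket{\xi}{\phi(a)\xi}_B$ are \emph{not} $G$-equivariant for a general vector $\xi\in\E$: one has $\beta_s\big(\braket{\xi}{\phi(a)\xi}_B\big)=\braket{\gamma_s\xi}{\phi(\alpha_s(a))\gamma_s\xi}_B$, so equivariance would force $\psi_{\gamma_s\xi}=\psi_\xi$, which fails unless $\xi$ is invariant. Hence the cp map property cannot be applied to these maps, and the proposed direct bound on $\braket{x}{\phi\rtimes_c G(f)x}$ needed to extend the left action \eqref{eq-left-action} collapses (even with equivariance, a bound for each fixed $\xi$ would not give the operator inequality needed for $x\in C_c(G,\E)$). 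Compounding this, you then invoke ``the previously-established $\K(\E)\rtimes_\mu G\cong\K(\E\rtimes_{\gamma,\mu}G)$'': in your cycle that is condition (2), derived from (1), so using it inside the proof of $(6)\Rightarrow(1)$ is circular. The repair is what the paper does: first prove $(6)\Rightarrow(5)$ by composing the equivariant maps $pAp\to A\to pAp$ (inclusion, then compression by $p$, which is cp) and noting the descended composition is the identity; then prove $(5)\Rightarrow(1)$ by deducing the ideal property (Lemma \ref{lem-projection}) and condition (2) (Lemma \ref{lem-Morita}, via linking algebras), so that the left action comes from functoriality for generalised homomorphisms, $A\rtimes_\mu G\to\M(\K(\E)\rtimes_{\Ad\gamma,\mu}G)\cong\Lb(\E\rtimes_{\gamma,\mu}G)$, rather than from a direct norm estimate.

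Two smaller points are glossed in a way that hides real content. In $(1)\Rightarrow(2)$, for a non-full module $\E$ the correspondence from $\K(\E)$ to $B$ is not an isomorphism in $\Cor(G)$, so one must first establish the ideal property from (1) (the paper's Lemma \ref{lem-corr-to-ideal}) in order to reduce to the full case. In $(3)\Rightarrow(4)$, the existence of a descended Morita equivalence between $B\rtimes_\mu G$ and $I\rtimes_\mu G$ does not by itself give injectivity of the specific homomorphism $\iota\rtimes_\mu G$; one has to identify the closure of $C_c(G,B)$ inside $A\rtimes_\mu G$ with $\K\big((BA)\rtimes_\mu G\big)$ and match it with the abstract $B\rtimes_\mu G$ by a map extending the identity on $C_c(G,B)$, as in the proof of $(2)\Rightarrow(3)$ of Lemma \ref{lem-Morita}. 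These two are fixable along the lines you already sketch, but the $(6)\Rightarrow(1)$ step as written would fail.
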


We prepare the proof of Theorem \ref{thm-corr} with some lemmas:

\begin{lemma}\label{lem-corr-to-ideal}
Suppose that the functor $(A,\alpha)\to A\rtimes_{\alpha,\mu}G$ extends to a correspondence
functor from $\Cor(G)$ to $\Cor$. Then it has the ideal property.
\end{lemma}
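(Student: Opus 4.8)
The plan is to show that if the functor $\rtimes_\mu$ extends to a correspondence functor on $\Cor(G)$, then for every $G$-invariant closed ideal $I$ in a $G$-algebra $A$, the descent $\iota \rtimes_\mu G : I \rtimes_\mu G \to A \rtimes_\mu G$ of the inclusion $\iota : I \into A$ is injective. The key observation is that an inclusion of an ideal, viewed as a morphism in $\Cor(G)$, is actually an \emph{invertible} morphism onto a natural sub-object, because the relevant correspondence is an equivariant Morita equivalence bimodule; a correspondence functor preserves invertible morphisms, and a crossed product of an equivalence bimodule is again an equivalence bimodule, in particular full, which will force injectivity.

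More precisely, first I would recall that for a $G$-invariant closed ideal $I \trianglelefteq A$, the module $I$ itself (regarded as a Hilbert $I$-module via its own multiplication, with the obvious left $A$-action coming from $A \into \M(I)$) is an $A$--$I$ equivalence bimodule \emph{onto its image}: the subtlety is that $\phi(A) I$ need not be all of a would-be object, so the cleanest route is to regard $(I, \iota)$ as the $A$--$I$ correspondence associated to the generalised homomorphism $A \to \M(I)$, which is nondegenerate, and note that $I$ is a $G$-equivariant $A$--$I$ equivalence bimodule (the left inner product $\langle \xi, \eta\rangle_A = \xi\eta^*$ lands in $I \subseteq A$, so this is actually a Morita equivalence between $I$ and $I$ in a degenerate sense — one must be a little careful). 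Rather than fight this, I would instead directly use the correspondence $(\E, \phi, \gamma) = (A, \iota, \alpha)$ from $I$ to $A$ determined by the inclusion $\iota : I \into A$: under the correspondence functor this goes to $(A \rtimes_\mu G,\ \iota \rtimes_\mu G)$, the correspondence from $I \rtimes_\mu G$ to $A \rtimes_\mu G$ representing $\iota \rtimes_\mu G$. Now the point is that the composition of correspondences $[(A,\iota,\alpha)]$ with the conditional-expectation-type correspondence $[(A, \id_A, \alpha)]$ restricted appropriately, or more simply the fact that $\iota : I \into A$ followed by the canonical $A \to \M(I)$ recovers the identity on $I$ (exactly as in the proof of Lemma~\ref{lem-ideal}, $(2)\Rightarrow(3)$), shows that $\iota \rtimes_\mu G$ composed with the descent of $A \to \M(I)$ equals the canonical inclusion $I \rtimes_\mu G \into \M(I \rtimes_\mu G)$, which is injective. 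Hence $\iota \rtimes_\mu G$ is injective.

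Thus the concrete steps are: (i) observe that a correspondence functor, being functorial, in particular sends a $G$-equivariant $*$-homomorphism $\psi : A \to B$ (viewed as the correspondence $(B,\psi,\beta)$) to $\psi \rtimes_\mu G$, and sends generalised homomorphisms $A \to \M(B)$ to the corresponding maps on crossed products, since the correspondence $(B, \phi, \beta)$ attached to $\phi : A \to \M(B)$ makes sense and functoriality of $\Cor(G) \to \Cor$ on this morphism produces $\phi \rtimes_\mu G : A\rtimes_\mu G \to \Lb(B\rtimes_\mu G) = \M(B\rtimes_\mu G)$; (ii) deduce that $\rtimes_\mu$ is functorial for generalised homomorphisms in the sense of Definition~\ref{def-gen-hom}, checking compatibility with the $C_c$-level formulas, which is routine since tensoring correspondences over $C_c$ matches the convolution formulas; (iii) invoke Lemma~\ref{lem-ideal}, specifically the implication $(1) \Rightarrow (3)$ — functoriality for generalised homomorphisms implies the ideal property — to conclude.

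The main obstacle I anticipate is purely bookkeeping: carefully matching the internal-tensor-product composition in $\Cor(G)$ (and its image in $\Cor$ under the functor) with the honest composition of $*$-homomorphisms and generalised homomorphisms, i.e.\ verifying that the correspondence functor, applied to the morphism represented by $(B,\phi,\beta)$ for $\phi : A \to \M(B)$, genuinely produces the map acting on $C_c(G,A)$ by $f \mapsto \phi \circ f$ in the sense of Definition~\ref{def-gen-hom}. Once that identification is in hand, the result is immediate from Lemma~\ref{lem-ideal}. I would keep this identification at the level of the dense $C_c$-subalgebras and cite Lemma~\ref{lem-cor-composition} and the canonical isomorphism \eqref{eq:CanonicalIsom} to avoid any completion-level subtleties.
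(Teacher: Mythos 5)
Your proposal is correct and, once the initial detour through equivalence bimodules is set aside, its concrete steps (view a generalised homomorphism $\phi:A\to\M(B)$ as the correspondence $(B,\phi,\beta)$, note that the $\mu$-crossed product of the Hilbert module $B$ is just $B\rtimes_\mu G$ so the correspondence functor supplies the left action $\phi\rtimes_\mu G:A\rtimes_\mu G\to\M(B\rtimes_\mu G)$, then invoke Lemma~\ref{lem-ideal}) are exactly the paper's argument. The only difference is presentational: the paper does not need the bookkeeping you worry about beyond observing that the prescribed left action on $C_c(G,A)$ is the one required by Definition~\ref{def-gen-hom}.
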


\begin{proof}
Let $\phi:A\to \M(B)$ be any $G$\nb-equivariant generalised homomorphism
from $A$ to $B$. Consider the $(A,\alpha)-(B,\beta)$ correspondence $(B,\phi,\beta)$.
It is clear that the $\mu$-crossed product of $B$ by $G$, if $B$ is viewed as a Hilbert $B$-module,
is equal to the \cstar{}algebra-crossed product $B\rtimes_\mu G$. Hence the left action of
$A\rtimes_\mu G$ on the module exists if and only if our functor is functorial for generalised
homomorphisms. By Lemma \ref{lem-ideal} this is equivalent to the ideal property.
\end{proof}

\begin{lemma}\label{lem-projection}
Let $(A,\alpha)\to A\rtimes_{\alpha,\mu}G$  be a crossed-product functor. Then the following are
equivalent:
\begin{enumerate}
\item $\rtimes_\mu$ has the projection property.
\item $\rtimes_\mu$ has the full projection property and the ideal property.
\end{enumerate}
\end{lemma}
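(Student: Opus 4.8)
The plan is to prove the two implications $(1)\Rightarrow(2)$ and $(2)\Rightarrow(1)$ separately. In $(1)\Rightarrow(2)$ the full projection property is immediate, a full projection being in particular a projection, so the only real content there is that the projection property forces the ideal property. The implication $(2)\Rightarrow(1)$ I would obtain by reducing an arbitrary $G$-invariant projection to the ideal it generates.

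For $(2)\Rightarrow(1)$: given a $G$-invariant projection $p\in\M(A)$, set $I:=\overline{ApA}$, the closed ideal of $A$ generated by $p$, which is $G$-invariant because $p$ is. Using an approximate unit of $A$ together with $p^3=p$ one checks the elementary facts that $p\in\M(I)$, that $pIp=pAp$, and that $\overline{IpI}=I$, so that $p$ is a \emph{full} $G$-invariant projection of $\M(I)$. Applying the full projection property to the pair $(I,p)$ gives a faithful descent $pAp\rtimes_{\mu}G=pIp\rtimes_{\mu}G\into I\rtimes_{\mu}G$, and the ideal property applied to $I\trianglelefteq A$ gives a faithful descent $I\rtimes_{\mu}G\into A\rtimes_{\mu}G$. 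The composite is injective and agrees on $C_c(G,pAp)$ with the descent of the inclusion $pAp\into A$; hence $\rtimes_{\mu}$ has the projection property.

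The substantial half is showing, in $(1)\Rightarrow(2)$, that the projection property implies the ideal property. Given a $G$-invariant closed ideal $I$ of a $G$-algebra $(A,\alpha)$, I would form the $G$-algebra
$$L:=\begin{pmatrix} A & I \\ I & I\end{pmatrix}\subseteq\Mat_2(A),$$
which is closed under the matrix operations and carried into itself by $\id\otimes\alpha$ precisely because $I$ is $G$-invariant. The diagonal matrix units $e_{11},e_{22}$ are $G$-invariant projections in $\M(L)$ with $e_{11}Le_{11}\cong A$ and $e_{22}Le_{22}\cong I$ as $G$-algebras. Applying the projection property to $L$ and to each of these projections, together with the reformulation in Remark~\ref{rem-projection}, identifies $A\rtimes_{\mu}G\cong\tilde{e}_{11}(L\rtimes_{\mu}G)\tilde{e}_{11}$ and $I\rtimes_{\mu}G\cong\tilde{e}_{22}(L\rtimes_{\mu}G)\tilde{e}_{22}$. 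The device linking these two orthogonal corners is the off-diagonal matrix unit $e_{12}$: it is \emph{not} a multiplier of $L$, but it is a $G$-invariant multiplier of the ideal $J:=\Mat_2(I)\trianglelefteq L$, with $e_{12}^{*}e_{12}=e_{22}$ and $e_{12}e_{12}^{*}=e_{11}$. I would check that the pointwise assignment $\xi\mapsto e_{12}\xi e_{12}^{*}$ sends $C_c(G,e_{22}Le_{22})$ into $C_c(G,e_{11}Le_{11})$, is multiplicative and $*$-preserving, and is isometric for $\|\cdot\|_{\mu}$ (here one uses $e_{12}^{*}e_{12}=e_{22}$ and that $\xi$ is already supported in the $e_{22}$-corner); it therefore extends to an injective $*$-homomorphism
$$I\rtimes_{\mu}G\cong\tilde{e}_{22}(L\rtimes_{\mu}G)\tilde{e}_{22}\longrightarrow\tilde{e}_{11}(L\rtimes_{\mu}G)\tilde{e}_{11}\cong A\rtimes_{\mu}G.$$
A direct computation on $C_c(G,I)$ shows that this composite is exactly the canonical descent $\iota\rtimes_{\mu}G$ of the inclusion $\iota\colon I\into A$; being a composite of injective maps it is injective, which is the ideal property (equivalently, by Lemma~\ref{lem-ideal}, functoriality for generalised homomorphisms).

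The main obstacle is this last step. The projection property is a statement purely about corners, whereas the ideal property concerns ideals, and bridging them rests on the slightly subtle point that $e_{12}$, although not a multiplier of $L$, nonetheless implements after descent an isometric transfer between the two relevant corners of $L\rtimes_{\mu}G$; checking that this transfer is well defined on the completions and that it reproduces the canonical map $\iota\rtimes_{\mu}G$ is where care is needed. The construction of $L$, the corner identifications via Remark~\ref{rem-projection}, and all the computations entering $(2)\Rightarrow(1)$ are routine.
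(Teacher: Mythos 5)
Your reduction of (2)~$\Rightarrow$~(1) via $I=\overline{ApA}$ is exactly the paper's argument, and your hard direction starts the same way as the paper's: form the $2\times 2$ algebra $L$ built from $A$ and $I$ (the paper takes $L=\left(\begin{smallmatrix} I&I\\ I&A\end{smallmatrix}\right)$, which is your $L$ with the corners swapped) and apply the projection property to the two diagonal projections to identify the diagonal corners of $L\rtimes_\mu G$ with $I\rtimes_\mu G$ and $A\rtimes_\mu G$. The divergence, and the gap, lies in how you pass between the two corners. You conjugate by the off-diagonal matrix unit $e_{12}$ and assert that $\xi\mapsto e_{12}\xi e_{12}^*$ is isometric for $\|\cdot\|_\mu$, ``using $e_{12}^*e_{12}=e_{22}$''. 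That algebraic identity does yield isometry \emph{once} $e_{12}$ is known to act as an adjointable multiplier on a $C^*$-algebra whose norm restricts to the $\mu$-norm on $C_c(G,\Mat_2(I))$ (then $\|\xi\|=\|e_{12}^*(e_{12}\xi e_{12}^*)e_{12}\|\leq\|e_{12}\xi e_{12}^*\|\leq\|\xi\|$). But that is precisely what is not available at this point: $e_{12}$ is not a multiplier of $L$, so it has no canonical image in $\M(L\rtimes_\mu G)$, and you cannot route through $\M(\Mat_2(I))\to\M(\Mat_2(I)\rtimes_\mu G)$ together with an embedding $\Mat_2(I)\rtimes_\mu G\subseteq L\rtimes_\mu G$, because faithfulness of the descent of an ideal inclusion is the ideal property --- the very statement being proved. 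As written, the isometry is asserted rather than proved, and it is the crux of the lemma, not a routine verification.

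The gap is fixable, but it needs an argument. For instance, let $J'$ denote the closure of $C_c(G,\Mat_2(I))$ inside $L\rtimes_\mu G$ (an ideal, since $\Mat_2(I)$ is a $G$-invariant ideal of $L$); pointwise multiplication by elements of $\Mat_2(I)$ preserves $C_c(G,\Mat_2(I))$ and is bounded because it is implemented by $i_L^\mu$, so one gets a nondegenerate map $\Mat_2(I)\to\M(J')$ which extends strictly to $\M(\Mat_2(I))$; this produces an honest partial isometry $\tilde{e}_{12}\in\M(J')$, and your argument then runs inside $\M(J')$, whose norm restricts correctly by construction. The paper instead avoids $e_{12}$ altogether: it views the off-diagonal corner $\tilde{p}(L\rtimes_\mu G)\tilde{q}$ with its two Hilbert module structures and uses only the $C^*$-identity in $L\rtimes_\mu G$: for $x\in C_c(G,pLq)=C_c(G,I)$ one has $\|x\|_{L\rtimes_\mu G}^2=\|x^**x\|=\|x*x^*\|$, where $x^**x$ and $x*x^*$ lie in the two diagonal corners, whose norms are already identified with those of $A\rtimes_\mu G$ and $I\rtimes_\mu G$ by the projection property; this gives $\|x\|_{A\rtimes_\mu G}=\|x\|_{I\rtimes_\mu G}$ directly. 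Either repair is fine, but one of them has to be carried out.
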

\begin{proof} (2) $\Rightarrow$ (1) Let $p\in \M(A)$ be a $G$\nb-invariant projection. Then $I:=\overline{ApA}$ is a
$G$\nb-invariant ideal and $p$ can be viewed as a full $G$\nb-invariant projection in $\M(I)$. Then $pAp=pIp$ and the map
$\iota\rtimes_{\mu} G: pAp\rtimes_{\mu} G\to A\rtimes_\mu G$ is the composition of the faithful inclusions
$$pAp\rtimes_\mu G\stackrel{\iota\rtimes_\mu G}{\hookrightarrow} I\rtimes_\mu G \stackrel{\iota\rtimes_\mu G}{\hookrightarrow}
A\rtimes_\mu G.$$

For the converse direction, assume that $\rtimes_\mu$ has the projection property. Let $I$ be a $G$\nb-invariant ideal in the $G$\nb-algebra
$A$. Consider
the algebra $L:=\left(\begin{matrix} I& I\\ I&A\end{matrix}\right)$ with the obvious $G$\nb-action.
Then  $p=\left(\begin{smallmatrix} 1 & 0\\  0 & 0\end{smallmatrix}\right)$ and $q=\left(\begin{smallmatrix} 0 & 0\\  0 & 1\end{smallmatrix}\right)$
are opposite $G$\nb-invariant projections in $\M(L)$ and by the projection property we see that $p$ and $q$ map to opposite
projections $\tilde{p}$ and $\tilde{q}$  in $\M(L\rtimes_{\mu}G)$ such that
$I\rtimes_{\mu} G=\tilde{p}(L\rtimes_{\mu}G)\tilde{p}$ and $A\rtimes_{\mu}G=\tilde{q}(L\rtimes_{\mu}G)\tilde{q}$.
One easily checks that the right Hilbert  $A\rtimes_\mu G$-module $\tilde{p}(L\rtimes_{\mu}G)\tilde{q}$
is the completion $I\rtimes_{\tilde\mu}G$ of $C_c(G,I)= C_c(G, pLq)$ with respect to the norm given by the inclusion
$C_c(G,I)\hookrightarrow A\rtimes_\mu G$. On the other hand,
regarding $\tilde{p}(L\rtimes_{\mu}G)\tilde{q}$ as a left Hilbert  $I\rtimes_{\mu} G$-module in the canonical way,
it will be the completion of  $C_c(G,I)=C_c(G, pLq)$  with respect to the norm on $I\rtimes_{\mu}G$. Hence
$I\rtimes_{\tilde\mu}G=\tilde{p}(L\rtimes_{\mu}G)\tilde{q}=I\rtimes_{\mu}G$ and the result follows.
\end{proof}

The next lemma deals with various versions of Morita compatibility:

\begin{lemma}\label{lem-Morita}
Let $\rtimes_\mu$ be a crossed-product functor for $G$. Then the following are equivalent:
\begin{enumerate}
\item For each full $G$\nb-equivariant Hilbert $(B,\beta)$-module $(\E,\gamma)$ the left action
of $\K(\E)$ on $\E$ descends to an isomorphism $\K(\E)\rtimes_{\Ad\gamma,\mu}G\cong \K(\E\rtimes_{\gamma,\mu}G)$.
\item $\rtimes_\mu$ is strongly Morita compatible.
\item $\rtimes_{\mu}$ has the hereditary subalgebra property for {\em full} $G$\nb-invariant hereditary subalgebras
$B\subseteq A$ (\ie, $\cspn ABA=A$).
\item $\rtimes_{\mu}$ has the full projection property.
\end{enumerate}
\end{lemma}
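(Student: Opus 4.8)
The goal is to prove the equivalence of four characterizations of a crossed-product functor $\rtimes_\mu$: (1) the Morita-type isomorphism $\K(\E)\rtimes_{\Ad\gamma,\mu}G\cong\K(\E\rtimes_{\gamma,\mu}G)$ for all \emph{full} equivariant Hilbert modules, (2) strong Morita compatibility, (3) the hereditary-subalgebra property for \emph{full} hereditary subalgebras, and (4) the full projection property. I would prove this as a cycle of implications, exploiting that fullness makes $\K(\E)$ and $B$ Morita equivalent in the strong, linking-algebra sense, so no ``ideal property'' subtleties arise (contrast Lemma~\ref{lem-projection}).

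The cleanest route is $(1)\Rightarrow(2)\Rightarrow(4)\Rightarrow(3)\Rightarrow(1)$, using the linking algebra throughout. For $(1)\Rightarrow(2)$: given a $G$\nb-equivariant $(A,\alpha)-(B,\beta)$ equivalence bimodule $(\E,\gamma)$, fullness on \emph{both} sides gives $A\cong\K(\E)$ (as $G$-algebras, via the left action) and $B\cong\K(\E)^{\mathrm{op}}$-side is handled symmetrically; applying (1) to $(\E,\gamma)$ viewed as a full Hilbert $B$-module yields $A\rtimes_{\alpha,\mu}G\cong\K(\E)\rtimes_{\Ad\gamma,\mu}G\cong\K(\E\rtimes_{\gamma,\mu}G)$, and since $\E\rtimes_{\gamma,\mu}G$ is already a full Hilbert $B\rtimes_{\beta,\mu}G$-module by the tensor-product description \eqref{eq:CanonicalIsom}, the left action of $A\rtimes_{\alpha,\mu}G$ through this isomorphism makes $\E\rtimes_{\gamma,\mu}G$ an equivalence bimodule. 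One must check the left action agrees with the convolution formula \eqref{eq-left-action} on $C_c(G,\E)$, which is a routine integral computation. For $(2)\Rightarrow(4)$: given a full $G$-invariant projection $p\in\M(A)$, the module $\E:=pA$ is a $G$-equivariant $pAp-A$ equivalence bimodule (fullness of $p$ is exactly $\overline{ApA}=A$, i.e.\ $\E$ is full as a right Hilbert $A$-module, while $\K(\E)\cong pAp$ automatically); strong Morita compatibility then gives that $\E\rtimes_\mu G=\overline{\tilde pC_c(G,A)}$ is a $pAp\rtimes_\mu G-A\rtimes_\mu G$ equivalence bimodule, and identifying $\K(\E\rtimes_\mu G)$ with the corner $\tilde p(A\rtimes_\mu G)\tilde p$ (via the reformulation in Remark~\ref{rem-projection}) yields $pAp\rtimes_\mu G\cong\tilde p(A\rtimes_\mu G)\tilde p$, i.e.\ the full projection property.

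For $(4)\Rightarrow(3)$: given a full $G$-invariant hereditary subalgebra $B\subseteq A$, I would pass to the linking-algebra trick as in Lemma~\ref{lem-projection}. Form $L$ containing $A$ and $B$ as complementary full hereditary subalgebras cut out by $G$-invariant projections — concretely, one can realize $L$ inside $\M(A\otimes\K)$ or use that a full hereditary subalgebra together with its Morita-equivalence module $\overline{BA}$ generates a linking algebra $L=\left(\begin{smallmatrix}B&\overline{BA}\\ \overline{AB}&A\end{smallmatrix}\right)$ with complementary projections $p,q$ that are \emph{full} in $\M(L)$ (this is where fullness of $B$ in $A$ is used). The full projection property applied to both $p$ and $q$ gives $B\rtimes_\mu G=\tilde p(L\rtimes_\mu G)\tilde p$ and $A\rtimes_\mu G=\tilde q(L\rtimes_\mu G)\tilde q$ as corners of the same algebra, and comparing the two norms on $C_c(G,B)$ (the one from $B\rtimes_\mu G$ versus the one from the inclusion $C_c(G,B)\hookrightarrow A\rtimes_\mu G$) via the off-diagonal corner $\tilde p(L\rtimes_\mu G)\tilde q$ forces them to coincide, giving the hereditary-subalgebra property. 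Finally $(3)\Rightarrow(1)$: given a full $G$-equivariant Hilbert $(B,\beta)$-module $(\E,\gamma)$, form the linking algebra $L(\E)=\left(\begin{smallmatrix}\K(\E)&\E\\ \tilde\E&B\end{smallmatrix}\right)$ with its canonical $G$-action; both $\K(\E)$ and $B$ sit in $L(\E)$ as full $G$-invariant hereditary subalgebras (fullness of $\E$ gives this), so by (3) both $\K(\E)\rtimes_\mu G$ and $B\rtimes_\mu G$ are corners of $L(\E)\rtimes_\mu G$ cut by complementary projections, and the off-diagonal corner is simultaneously $\K(\E)\rtimes_\mu G-B\rtimes_\mu G$ equivalence bimodule and (by the same norm-comparison as above, plus \eqref{eq:CanonicalIsom}) equal to $\E\rtimes_{\gamma,\mu}G$; hence $\K(\E\rtimes_{\gamma,\mu}G)\cong\K(\E)\rtimes_{\Ad\gamma,\mu}G$.

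\textbf{Main obstacle.} The technical heart is verifying that the linking-algebra constructions are genuinely $G$-equivariant crossed-product functor inputs and that the various completions of $C_c(G,\E)$ — as a Hilbert module over $B\rtimes_\mu G$, as an off-diagonal corner of $L(\E)\rtimes_\mu G$, and via the internal tensor product \eqref{eq:CanonicalIsom} — all agree, and that under these identifications the left action of $\K(\E)\rtimes_\mu G$ is the one coming from the convolution formula \eqref{eq-left-action}. The subtle point throughout is keeping track of which algebra a given inner product $\braket{x}{y}_{C_c(G,B)}$ is completed in, and checking that the corner projections in the linking algebras really are \emph{full} (so that no appeal to the ideal property is needed, unlike in Lemma~\ref{lem-projection}); the rest is a sequence of routine but bookkeeping-heavy identifications on the dense subalgebras.
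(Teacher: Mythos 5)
Your proposal is correct, and it draws on the same toolkit as the paper --- linking algebras, the passage between full corners and full $G$\nb-invariant projections, and strong Morita compatibility applied to concrete bimodules such as $pA$ or $BA$ --- but it organises the implications differently. The paper proves (1) $\Leftrightarrow$ (2) $\Rightarrow$ (3) $\Rightarrow$ (4) $\Rightarrow$ (2): the equivalence (1) $\Leftrightarrow$ (2) is essentially immediate because a full Hilbert $(B,\beta)$-module is a $\K(\E)$--$B$ equivalence bimodule; (2) $\Rightarrow$ (3) is done directly with the bimodule $BA$, identifying $(B\rtimes_{\tilde\mu}G)(A\rtimes_\mu G)$ with $(BA)\rtimes_\mu G$ and taking compact operators; (3) $\Rightarrow$ (4) is trivial; and only (4) $\Rightarrow$ (2) needs a linking algebra. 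Your cycle (1) $\Rightarrow$ (2) $\Rightarrow$ (4) $\Rightarrow$ (3) $\Rightarrow$ (1) forces you to prove (4) $\Rightarrow$ (3) and (3) $\Rightarrow$ (1) directly, at the cost of two linking-algebra arguments (for $B\subseteq A$ and for $\K(\E\oplus B)$) together with the norm comparison on $C_c(G,B)$ through the off-diagonal corner; these do go through as you sketch, the decisive points being that both corner projections are full (so the full projection property applies to each), that the left corner acts faithfully, hence isometrically, on the off-diagonal corner, and that the $L\rtimes_\mu G$-norm of $x^*x\in C_c(G,A)$ agrees with the $A\rtimes_\mu G$-norm, so that left convolution by $f\in C_c(G,B)$ has operator norm at most $\|f\|_{A\rtimes_\mu G}$. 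What the paper's ordering buys is economy (a single linking-algebra argument, valid for an arbitrary equivalence bimodule, and no separate proofs of (4) $\Rightarrow$ (3) or (3) $\Rightarrow$ (1)); what yours buys is that your (2) $\Rightarrow$ (4) via $pA$ is a clean shortcut, being the paper's (2) $\Rightarrow$ (3) specialised to the corner $pAp$. One small correction: the identification $\K(\E\rtimes_\mu G)\cong\tilde p(A\rtimes_\mu G)\tilde p$ there is not obtained ``via Remark \ref{rem-projection}'' (which merely restates the projection property), but from the elementary facts that, by the C*-identity and \eqref{eq:CanonicalIsom}, $\E\rtimes_\mu G$ is the closure of $\tilde pC_c(G,A)$ inside $A\rtimes_\mu G$, hence equals $\tilde p(A\rtimes_\mu G)$, whose algebra of compact operators is the corner $\tilde p(A\rtimes_\mu G)\tilde p$; with that fixed, your argument is complete.
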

\begin{proof}
We   prove (1) $\Leftrightarrow$ (2) $\Rightarrow$ (3)
$\Rightarrow$ (4) $\Rightarrow$ (2).
\medskip
\\
(1) $\Leftrightarrow$ (2) follows from the fact that if $(\E,\gamma)$ is a $G$\nb-equivariant $(A,\alpha)- (B,\beta)$ equivalence bimodule,
then the left action of $A$ on $\E$ induces a $G$\nb-equivariant isomorphism $(A,\alpha)\cong (\K(\E),\Ad\gamma)$ and, conversely,
that every full $G$\nb-equivariant Hilbert $B$-module is a $G$\nb-equivariant $\K(\E)-B$ equivalence bimodule.
\medskip
\\
(2) $\Rightarrow$ (3): Recall that a closed subalgebra $B\subseteq A$ is a {\em full} hereditary subalgebra, if $BAB\subseteq B$
and $A=\cspn ABA $. Then $BA$ will be a $B-A$-equivalence bimodule.
Let $B\rtimes_{\alpha,\tilde\mu}G:=\iota\rtimes_\mu G(B\rtimes_{\beta,\mu}G)$; this coincides with
the closure of $C_c(G,B)$ inside $A\rtimes_{\alpha,\mu}G$. Then a simple computation
on the level of functions with compact supports shows that $B\rtimes_{\alpha,\tilde\mu}G$ is a full hereditary
subalgebra of  $A\rtimes_{\alpha,\mu}G$. For functions $f, g\in C_c(G,B)*C_c(G,A)\subseteq C_c(G, BA)$
it is clear that the $A\rtimes_{\alpha,\mu}G$-valued inner products given by viewing
$f,g$ as elements of $(B\rtimes_{\alpha,\tilde\mu}G)(A\rtimes_{\alpha,\mu}G)$ or as elements of
the module $(BA)\rtimes_{\mu}G$ coincide. Thus we see that the inclusion
$C_c(G,B)*C_c(G,A)\hookrightarrow C_c(G, BA)$ extends to an isomorphism
$(B\rtimes_{\alpha,\tilde\mu}G)(A\rtimes_{\alpha,\mu}G)\cong (BA)\rtimes_{\mu}G$
of Hilbert $A\rtimes_{\alpha,\mu}G$-modules.
But this induces an isomorphism of the compact operators
$$B\rtimes_{\alpha,\tilde\mu}G\cong \K((B\rtimes_{\beta,\tilde\mu}G)(A\rtimes_{\alpha,\mu}G))
\cong \K((BA)\rtimes_{\mu}G)\cong B\rtimes_{\alpha,\mu}G,$$
which extends the identity map on $C_c(G,B)$, where
the last isomorphism follows from the strong Morita invariance of our functor.
\medskip
\\
(3) $\Rightarrow$ (4): follows from the fact that if $p\in \M(A)$ is a full $G$\nb-invariant projection, then
$pAp$ is a full $G$\nb-invariant  hereditary subalgebra of $A$.
\medskip
\\
(4) $\Rightarrow$ (2): Assume that $(A,\alpha)\to A\rtimes_{\alpha,\mu}G$ satisfies the full projection
property. Let $(\E,\gamma)$  be a $(A,\alpha)-(B,\beta)$ equivalence bimodule.
Then  we can form the linking algebra $L=\left(\begin{matrix} A & \E\\ \E^* & B\end{matrix}\right)$
equipped with the action $\sigma=\left(\begin{matrix} \alpha & \gamma\\ \gamma^* & \beta\end{matrix}\right)$.
Then  $p=\left(\begin{matrix} 1 & 0\\  0 & 0\end{matrix}\right)$ and $q=\left(\begin{matrix} 0 & 0\\  0 & 1\end{matrix}\right)$
are opposite full projections in $\M(L)$ and by the projection property we see that $p$ and $q$ map to opposite full projections
$\tilde{p}$ and $\tilde{q}$  in $\M(L\rtimes_{\sigma,\mu}G)$ such that
$\tilde{p}(L\rtimes_{\sigma,\mu}G)\tilde{q}$ is an equivalence bimodule between
$A\rtimes_{\alpha, \mu}G= \tilde{p}(L\rtimes_{\sigma,\mu}G)\tilde{p}$ and
$B\rtimes_{\beta,\mu}G= \tilde{q}(L\rtimes_{\sigma,\mu}G)\tilde{q}$.
One easily checks that $C_c(G,\E)=C_c(G,pLq)$  equipped with the norm coming from the $B\rtimes_{\beta,\mu}G$-valued
inner product embeds isometrically and densely into
$\tilde{p}(L\rtimes_{\sigma,\mu}G)\tilde{q}$, hence we get
$\E\rtimes_{\gamma,\mu}G=\tilde{q}(L\rtimes_{\sigma,\mu}G)\tilde{q}$.
Therefore $\E\rtimes_{\gamma,\mu}G$ becomes an $A\rtimes_{\alpha,\mu}G-B\rtimes_{\beta,\mu}G$ equivalence bimodule.
\end{proof}

We are now ready for the

\begin{proof}[Proof of Theorem \ref{thm-corr}]
We  prove (1) $\Rightarrow$ (2) $\Rightarrow$ (3) $\Rightarrow$ (4)  $\Rightarrow$ (5) $\Rightarrow$ (6) $\Rightarrow$ (5) $\Rightarrow$ (1).
\medskip
 \\
(1) $\Rightarrow$ (2) Assume that $(A,\alpha)\mapsto A\rtimes_{\alpha,\mu}G$ extends to a correspondence
functor. Then by Lemma \ref{lem-corr-to-ideal} we know that the functor has the ideal property.
Let $(\E,\gamma)$ be a Hilbert $(B,\beta)$-module. Let $I=\cspn\braket{\E}{\E}_B$. Then
the ideal property implies that viewing $(\E,\gamma)$ as a Hilbert $(I,\beta)$-module or as a Hilbert $(B,\beta)$-module
does not change the induced norm $\|\cdot\|_\mu$ on $C_c(G,\E)$, and hence in both cases we get
the same completion $\E\rtimes_{\gamma,\mu}G$. Thus we may assume without loss of generality
that $\E$ is a full Hilbert $B$-module.  But then the equivalence class of $(\E,\gamma)$ will be an isomorphism
from $(\K(\E),\Ad\gamma)$ to $(B,\beta)$ in the
equivariant correspondence category $\Cor(G)$, which by the properties of a functor must be sent
to an isomorphism from $\K(\E)\rtimes_{\Ad\gamma,\mu}G$ to $B\rtimes_{\beta,\mu}G$ in $\Cor$,
which are equivalence classes of equivalence bimodules.
Since the left action of $\K(\E)\rtimes_{\Ad\gamma,\mu}G$ on $\E\rtimes_{\gamma, \mu} G$ is obviously
nondegenerate, it follows that $\E\rtimes_{\gamma,\mu}G$ is a $\K(\E)\rtimes_{\Ad\gamma,\mu}G-B\rtimes_{\beta,\mu}G$
equivalence bimodule.  This implies the desired isomorphism.
\medskip
\\
(2) $\Rightarrow$ (3) It follows clearly from (2) applied to full Hilbert modules
that the functor $(A,\alpha)\mapsto A\rtimes_{\alpha,\mu}G$
is strongly Morita compatible. To see that it  has the ideal property,  let $I$ be a $G$\nb-invariant closed ideal in
$A$ for some $G$\nb-algebra $(A,\alpha)$. Regard $(I,\alpha)$ as a $G$\nb-invariant Hilbert $(A,\alpha)$-module
in the canonical way. To avoid confusion, we write $I_A$ for the Hilbert $A$-module $I$.
We then have $(I,\alpha)=(\K(I_A),\Ad\gamma)$. Now if the descent
$\iota\rtimes_{\mu}G:I\rtimes_\mu G\to A\rtimes_{\mu}G$ is not injective, it induces a new norm $\|\cdot\|_{\tilde\mu}$
 on $C_c(G,I)$ which is (at least on some elements of $C_c(G,I)$) strictly smaller than the given
 norm $\|\cdot\|_\mu$. It is then easily checked
 that
 $I_A\rtimes_{\mu}G=I\rtimes_{\tilde\mu}G$. This also implies $\K(I_A\rtimes_{\mu}G)=I\rtimes_{\tilde\mu}G$
 which will be a proper quotient of $\K(I_A)\rtimes_{\mu}G=I\rtimes_{\mu}G$, which contradicts (2).
\medskip
\\
(3) $\Rightarrow$ (4) Let $B\subseteq A$ be a $G$\nb-invariant hereditary subalgebra of $A$.
Then $BA$ will be a $B-ABA$-equivalence bimodule, where $ABA$ denotes the closed ideal of $A$ generated by $B$.
Since, by assumption,  the $\mu$-crossed-product functor satisfies the ideal property, we may assume without loss
of generality that $B$ is {\em full} in the sense that $ABA=A$. But then the result follows from
part (2) $\Leftrightarrow$ (4) of Lemma \ref{lem-Morita}.
\medskip
\\
(4) $\Rightarrow$ (5) This follows from the fact that corners are hereditary subalgebras.
\medskip
\\
(5) $\Rightarrow$ (6) Let $\phi:A\to B$ be a $G$\nb-equivariant completely positive map between $G$\nb-algebras; we may assume $\phi$ is non-zero.  Multiplying $\phi$ by a suitable positive constant, we may assume that $\phi$ is contractive.  It then follows from \cite[Proposition 2.2.1]{Brown-Ozawa} that the (unital) map defined on unitisations by
$$
\widetilde{\phi}:\widetilde{A}\to \widetilde{B},~~~\widetilde{\phi}(a+z1_A)=\phi(a)+z1_B
$$
is also completely positive and contractive; it is moreover clearly equivariant for the extended actions of $G$.

Now, assume that $\widetilde{B}$ is represented faithfully, non-degenerately, and covariantly on a Hilbert space $\H$.  Identify $\widetilde{B}$ with its image in $\Lb(\H)$, and write $u$ for the given representation of $G$ on $\H$.   Define a bilinear form on the algebraic tensor product $\widetilde{A}\odot \H$ by
$$
\Big\langle \sum_{i=1}^n a_i\otimes \xi_i,\sum_{j=1}^m b_j\otimes \eta_j\Big\rangle \defeq\sum_{i,j}\langle \xi_i,\widetilde{\phi}(a_i^*b_j)\eta_j\rangle.
$$
As $\widetilde{\phi}$ is completely positive, this form is positive semi-definite, so separation and completion gives a Hilbert space $\H'$.  If $\alpha$ is the action of $G$ on $\widetilde{A}$, it follows from the fact that $\phi$ is equivariant that the formula
$$
v_g:\sum_{i=1}^n a_i\otimes \xi_i\mapsto \sum_{i=1}^n \alpha_g(a_i)\otimes u_g\xi_i
$$
defines a unitary action of $G$ on $\H'$.  Moreover, the action of $\widetilde{A}$ on $\widetilde{A}\odot \H$ defined by
$$
\pi(a)\cdot\Big(\sum_{i=1}^n a_i\otimes \xi_i\Big)\defeq\sum_{i=1}^n aa_i\otimes \xi_i
$$
gives rise to a bounded representation of $\widetilde{A}$ on $\H'$ by the Cauchy-Schwarz inequality for $\phi$.  We also write $\pi$ for the corresponding representation of $\widetilde{A}$ on $\H'$ and note that this is moreover covariant for $v$.

The formula
$$
V:\H\to \H',~~~\xi\mapsto 1_{\widetilde{A}}\otimes \xi
$$
is easily seen to define an equivariant isometry with adjoint given by
$$
V^*:\H'\to \H,~~~\sum_{i=1}^n a_i\otimes \xi_i\mapsto \sum_{i=1}^n \widetilde{\phi}(a_i)\xi_i.
$$
Using these formulas, one sees that for all $a\in \widetilde{A}$ we have
\begin{equation}\label{comp}
\widetilde{\phi}(a)=V^*\pi(a)V
\end{equation}
as elements of $\Lb(\H)$.

To complete the proof, let $C$ be the $C^*$-subalgebra of $\Lb(\H')$ generated by $\pi(\widetilde{A})$, $VV^*\pi(\widetilde{A})$, $\pi(\widetilde{A})VV^*$ and $V\widetilde{B}V^*$.  Note that the action $\Ad v$ of $G$ on $C$ induced by the unitary representation $v$ is norm continuous, so $C$ is a $G$-algebra with the induced action.  Moreover, the \Star{}homomorphism
$$
\pi:\widetilde{A}\to \Lb(\H')
$$
is equivariant and has image in $C$, so gives rise to a \Star{}homomorphism on crossed products
\begin{equation}\label{indbypi}
\pi\rtimes_\mu G:\widetilde{A}\rtimes_\mu G \to C\rtimes_\mu G
\end{equation}
by functoriality of $\rtimes_\mu$.  Note that by construction of $C$, $p\defeq VV^*$ is in the multiplier algebra of $C$ and $pCp=V\widetilde{B}V^*$, so we have an equivariant \Star{}isomorphism
\begin{equation}\label{waskappa}
\kappa:pCp\to B,~~~x\mapsto V^*xV.
\end{equation}
Now, if we denote by $\tilde{p}$ the element of $\M(C\rtimes_\mu G)$ induced by $p$, then by the projection property we have a \Star{}isomorphism
\begin{equation}\label{projuse}
\tilde{p}(C\rtimes_\mu G)\tilde{p}\cong pCp\rtimes_\mu G
\end{equation}
and so a \Star{}isomorphism
\begin{equation}\label{indbyp}
\psi:\tilde{p}(C\rtimes_\mu G)\tilde{p}\to \widetilde{B}\rtimes_\mu G
\end{equation}
defined as the map on crossed products induced by the composition of the isomorphism in line \eqref{projuse} and the {\Star}isomorphism on crossed products induced by the isomorphism in line \eqref{waskappa}.

Now, consider the composition of maps
$$
A\rtimes_\mu G \to \widetilde{A}\rtimes_\mu G \stackrel{\pi\rtimes_\mu G}{\to} C\rtimes_\mu G \stackrel{x\mapsto \tilde{p}x\tilde{p}}{\longrightarrow} \tilde{p}(C\rtimes_\mu G)\tilde{p}\stackrel{\psi}{\to} \widetilde{B}\rtimes_\mu G
$$
where the first map is the \Star{}homomorphism on crossed products induced from the equivariant inclusion $A\mapsto \widetilde{A}$, the second is as in line \eqref{indbypi}, the third is compression by $\tilde{p}$, and the fourth is $\psi$ as in line \eqref{indbyp}.  Each map appearing in the sequence is completely positive, whence the composition is completely positive.  Checking the image of $C_c(G,A)$ using the formula in line \eqref{comp} shows that the map agrees with the map $f\mapsto \phi\circ f$ from $A\rtimes_\mu G$ to $D$, where $D$ is the image of $B\rtimes_\mu G$ under the canonical map of this $C^*$-algebra into $\widetilde{B}\rtimes_\mu G$.  However, as $\rtimes_\mu$ has the projection property, it also has the ideal property, and so $D$ is just a copy of $B\rtimes_\mu G$.  This completes the proof.
\medskip
\\
(6) $\Rightarrow$ (5) Let $\rtimes_\mu$ be a cp-functorial crossed product for $G$.  Let $p$ be a $G$-invariant projection in the multiplier algebra of a $G$-algebra $A$, and consider the $G$-equivariant maps
$$
pAp \to A \to pAp,
$$
where the first map is the canonical inclusion, and the second map is compression by $p$; note that the first map is a \Star{}homomorphism, the second is completely positive, and the composition of the two is the identity on $pAp$.  Functoriality then gives \Star{}homomorphisms on crossed products
$$
pAp\rtimes_\mu G \to A\rtimes_\mu G \to pAp\rtimes_\mu G
$$
whose composition is the identity; in particular, the first \Star{}homomorphism is injective, which is the projection property.
\medskip
\\
(5) $\Rightarrow$ (1) Assume that $(A,\alpha)\to A\rtimes_{\alpha,\mu}G$ satisfies the projection
property. We first show that this implies (2), \ie, for any Hilbert $(B,\beta)$-module $(\E,\gamma)$  we get
$\K(\E)\rtimes_{\Ad\gamma,\mu}G\cong \K(\E\rtimes_{\gamma,\mu}G)$ in the canonical way.
By Lemma \ref{lem-projection} we know that $\rtimes_\mu$  satisfies the ideal property.
Hence we may assume without loss of generality that $\E$ is a full Hilbert $B$-module.
The result then follows from (4) $\Leftrightarrow$ (1) in Lemma~\ref{lem-Morita}.

To see that $(A,\alpha)\to A\rtimes_{\alpha,\mu}G$ extends to a correspondence functor we can now use
the fact that by the ideal property  we get functoriality for generalised homomorphisms. Hence
for any $G$\nb-equivariant correspondence $(\E,\phi,\gamma)$ from $(A,\alpha)$ to $(B,\beta)$,
the homomorphism $\phi:A\to\Lb(\E)\cong \M(\K(\E))$ descends to a \Star{}homomorphism
$$A\rtimes_\mu G\to \M(\K(\E)\rtimes_{\Ad\gamma,\mu}G)\cong \Lb(\E\rtimes_{\gamma,\mu}G),$$
and therefore provides a correspondence $(\E\rtimes_{\gamma,\mu}G, \phi\rtimes_\mu G)$
from $A\rtimes_{\alpha,\mu}G$ to $B\rtimes_{\beta,\mu}G$. We already saw above that it
preserves isomorphisms (\ie, equivalence bimodules) and compatibility with compositions
is proved on the level of functions with compact supports as in \cite[Chapter 3]{Echterhoff-Kaliszewski-Quigg-Raeburn:Categorical}.
\end{proof}

We now give a few applications of Theorem \ref{thm-corr}. It is already shown in \cite[Corollary 2.9]{Buss-Echterhoff:Imprimitivity}
that all KLQ-functors extend to correspondences.
We now give a very short argument for this:

\begin{corollary}\label{cor-KLQproj}
Every KLQ-functor has the projection property and therefore extends to a functor from $\Cor(G)$ to $\Cor$.
\end{corollary}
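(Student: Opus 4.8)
The plan is to verify the projection property for an arbitrary KLQ-functor; the assertion that it then extends to a functor $\Cor(G)\to\Cor$ is immediate from the implication (5)$\Rightarrow$(1) of Theorem~\ref{thm-corr}. So fix a unitary representation $v$ of $G$, and recall that for a $G$-algebra $(B,\beta)$ the KLQ-norm on $C_c(G,B)$ is the one induced by the covariant representation $(i_B\otimes 1_{C^*_v(G)},\, i_G\otimes v)$ of $(B,G,\beta)$ into $\M(B\rtimes_{\beta,\un}G\otimes C^*_v(G))$; equivalently, $B\rtimes_{\beta,v_{KLQ}}G$ is the image of $B\rtimes_{\beta,\un}G$ under $(\id_{B\rtimes_\un G}\otimes v)\circ\widehat\beta$, with $\widehat\beta$ the dual coaction.

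Now let $(A,\alpha)$ be a $G$-algebra and $p\in\M(A)$ a $G$-invariant projection, and write $\tilde p\in\M(A\rtimes_{\alpha,\un}G)$ for the image of $p$ under $i_A^\un$ extended to $\M(A)$. First I would recall the classical fact that $\iota\rtimes_\un G$ identifies $pAp\rtimes_\un G$ with the corner $\tilde p(A\rtimes_{\alpha,\un}G)\tilde p$, restricting to the identity on $C_c(G,pAp)\subseteq C_c(G,A)$; this is just the projection property for $\rtimes_\un$, which holds since $\rtimes_\un$ is a correspondence functor. Next, $G$-invariance of $p$ yields the elementary identities $i_G^{pAp}(s)=\tilde p\, i_G^A(s)=i_G^A(s)\tilde p$ and $i^\un_{pAp}(pap)=i^\un_A(pap)$ (both checked on $C_c(G,pAp)$). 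Feeding these into the integrated form, for $f\in C_c(G,pAp)$ one gets that the image of $f$ under $(i_{pAp}\otimes 1)\rtimes(i_G^{pAp}\otimes v)$ equals $(\tilde p\otimes 1)\,\Theta_A(f)\,(\tilde p\otimes 1)$, where $\Theta_A$ is the integrated form of $(i_A\otimes 1,\, i_G^A\otimes v)$; and since $f$ takes values in $pAp$ this compression is trivial, i.e.\ it equals $\Theta_A(f)$. As $\tilde p\otimes 1$ is a projection in $\M(A\rtimes_\un G\otimes C^*_v(G))$ that fixes this element, passing from $\M(A\rtimes_\un G\otimes C^*_v(G))$ to the corresponding corner of it does not change the norm, and one concludes $\|f\|_{v_{KLQ},\,pAp}=\|f\|_{v_{KLQ},\,A}$ for all $f\in C_c(G,pAp)$. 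Hence $\iota$ descends to a faithful (indeed isometric) embedding $pAp\rtimes_{v_{KLQ}}G\to A\rtimes_{v_{KLQ}}G$, which is exactly the projection property, and Theorem~\ref{thm-corr} finishes the proof.

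The conceptual content behind the computation is that the dual coaction passes to corners: $\widehat\alpha(\tilde p)=\tilde p\otimes 1_{C^*(G)}$ (because $\widehat\alpha\circ i_A^\un=i_A^\un\otimes 1$), so $\widehat\alpha$ restricts to the dual coaction $\widehat{p\alpha p}$ on $\tilde p(A\rtimes_\un G)\tilde p\cong pAp\rtimes_\un G$; composing with $\id\otimes v$ then shows the $v_{KLQ}$-completion of $C_c(G,pAp)$ does not depend on whether $C_c(G,pAp)$ is regarded inside $pAp\rtimes_\un G$ or inside $A\rtimes_\un G$. I expect the only mildly fiddly point to be the bookkeeping with multiplier algebras of corners — identifying $\M(pAp\rtimes_\un G\otimes C^*_v(G))$ compatibly with the corner $(\tilde p\otimes 1)\M(A\rtimes_\un G\otimes C^*_v(G))(\tilde p\otimes 1)$ — but this is routine precisely because $\tilde p\otimes 1$ is a projection and all the elements in play are fixed by compressing with it.
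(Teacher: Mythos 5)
Your argument is correct and is essentially the paper's proof: the paper establishes the same norm identity by observing that the defining maps $(\id\otimes v)\circ\widehat{\alpha}$ for $pAp$ and for $A$ fit into a commuting square over $\M(pAp\rtimes_{\un}G\otimes C_v^*(G))\to\M(A\rtimes_{\un}G\otimes C_v^*(G))$, with injectivity of the right-hand map coming from the projection property of $\rtimes_{\un}$ — exactly the content of your compression computation with $\tilde p\otimes 1$. The only cosmetic difference is that the paper first normalises $v$ to absorb $u_G$, which, as your argument shows, is not needed here.
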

\begin{proof} Let $v:C^*(G)\to C_v^*(G)$ be a quotient map which absorbs the universal representation $u_G$.
Then the corollary follows from commutativity of the diagram
$$
\begin{CD}
pAp\rtimes_{v_{KLQ}} G  @>(\id_{pAp\rtimes G}\otimes v)\circ \widehat{\alpha} >> \M(pAp\rtimes_{\un} G\otimes C_v^*(G))\\
@V\iota\rtimes_\mu G VV @VV\iota\rtimes_\un G \otimes \id_{C_v^*(G)} V\\
A\rtimes_{v_{KLQ}} G  @>(\id_{A\rtimes G}\otimes v)\circ \widehat{\alpha} >> \M(A\rtimes_{\un} G\otimes C_v^*(G))\\
\end{CD}
$$
together with the fact that the projection property holds for the universal crossed-product functor.
\end{proof}

\begin{remark}\label{bgnocor}
On the other hand, the Brown-Guentner functors of Section \ref{sec:BG} are almost never correspondence
functors. In case of the BG-functor corresponding to the regular representation $\lambda_G$ this is
well illustrated in Example \ref{ex-BG} above.
In fact, it has been already observed in \cite[Lemma A.6]{Baum-Guentner-Willett} that a BG-crossed product is
Morita compatible in their sense (see Remark \ref{rem-ext2} and \S\ref{sec:minimal} for more discussion of this property) if and only if it coincides with the universal
crossed product. Since every correspondence functor is also Morita compatible (see Corollary \ref{cor-ext} and Remark \ref{rem-ext2} below), we
also see that the universal crossed-product functors are the only BG-functors which are also correspondence functors.
\end{remark}

The next corollary shows that if we derive a functor from a correspondence functor via tensoring with a  $G$\nb-algebra
$(D,\delta)$, then the new functor is also a correspondence functor. At the same  time we
 extend the tensor-product construction
of \S \ref{subsec-tensor} to the case of non-unital $G$-algebras:

\begin{corollary}\label{cor-tensor} Suppose that $(A,\alpha)\mapsto A\rtimes_{\alpha,\mu}G$ is a crossed-product
functor which satisfies the ideal property and let $(D,\delta)$ be any $G$\nb-algebra. Then there
is a  crossed-product functor $\rtimes_{\mu_D^\nu}$, also satisfying the ideal property, defined as
$$A\rtimes_{\alpha,\mu_D^\nu}G:=j_A\rtimes_\mu G(A\rtimes_{\alpha,\mu}G)\subseteq
\M((A\otimes_{\nu}D)\rtimes_{\alpha\otimes \delta, \mu}G),$$
where $\otimes_\nu$ is either the minimal or the maximal tensor product of $A$ with $D$
and $j_A:A\to \M(A\otimes_\nu D)$ denotes the canonical inclusion. Moreover,  if $\rtimes_\mu$ is a correspondence
functor, then so is $\rtimes_{\mu_D^\nu}$.
\end{corollary}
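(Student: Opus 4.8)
The plan is to verify first that $\rtimes_{\mu_D^\nu}$ is well-defined as a crossed-product functor, then that it inherits the ideal property, and finally that it inherits the correspondence property via the characterisation in Theorem \ref{thm-corr}. For the first point, I would begin by checking that for non-unital $(D,\delta)$ the formula $A\rtimes_{\alpha,\mu_D^\nu}G:=j_A\rtimes_\mu G(A\rtimes_{\alpha,\mu}G)$ still makes sense: since $\rtimes_\mu$ has the ideal property it is functorial for generalised homomorphisms by Lemma \ref{lem-ideal}, and $j_A:A\to\M(A\otimes_\nu D)$ is such a generalised $G$\nb-equivariant homomorphism, so $j_A\rtimes_\mu G:A\rtimes_{\alpha,\mu}G\to\M((A\otimes_\nu D)\rtimes_{\alpha\otimes\delta,\mu}G)$ exists. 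One then checks on $C_c(G,A)$ that the composite $q_{A,\red}$-image is not killed, so the image is a completion of $C_c(G,A)$ lying between reduced and universal; functoriality in $(A,\alpha)$ follows because a $G$-equivariant $\ast$-homomorphism $A\to A'$ induces $A\otimes_\nu D\to A'\otimes_\nu D$ compatibly with the $j$'s, exactly as in \S\ref{subsec-tensor}.

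For the ideal property of $\rtimes_{\mu_D^\nu}$: if $I\sbe A$ is a $G$\nb-invariant ideal, I would use that $I\otimes_\nu D$ is an ideal (for $\otimes_{\min}$ this needs a short argument; for $\otimes_{\max}$ it is automatic) and that the square relating $I\rtimes_{\mu_D^\nu}G\to A\rtimes_{\mu_D^\nu}G$ to $(I\otimes_\nu D)\rtimes_\mu G\to (A\otimes_\nu D)\rtimes_\mu G$ commutes; the bottom map is injective by the ideal property of $\rtimes_\mu$, and since $I\rtimes_{\mu_D^\nu}G$ is by definition the closure of $j_I\rtimes_\mu G(C_c(G,I))$ sitting inside $\M((I\otimes_\nu D)\rtimes_\mu G)$, injectivity passes to the top. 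Here one must be careful that the inclusion $\M((I\otimes_\nu D)\rtimes_\mu G)\to\M((A\otimes_\nu D)\rtimes_\mu G)$ induced by the ideal inclusion is faithful on the relevant corner, which again follows from the ideal property applied to $I\otimes_\nu D\sbe A\otimes_\nu D$.

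For the correspondence claim I would use the projection-property characterisation (Theorem \ref{thm-corr}(5)): let $p\in\M(A)$ be a $G$\nb-invariant projection. Then $p\otimes 1\in\M(A\otimes_\nu D)$ (using that $\M(A)\otimes 1\sbe\M(A\otimes_\nu D)$) is a $G$\nb-invariant projection with $(p\otimes 1)(A\otimes_\nu D)(p\otimes 1)=pAp\otimes_\nu D$. Since $\rtimes_\mu$ is a correspondence functor it has the projection property, so $\widetilde{p\otimes 1}\bigl((A\otimes_\nu D)\rtimes_\mu G\bigr)\widetilde{p\otimes 1}=(pAp\otimes_\nu D)\rtimes_\mu G$ canonically. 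Compressing the defining inclusion $A\rtimes_{\mu_D^\nu}G\sbe\M((A\otimes_\nu D)\rtimes_\mu G)$ by $\widetilde{p\otimes 1}$ and matching things up on $C_c(G,pAp)$ then identifies $pAp\rtimes_{\mu_D^\nu}G$ with the corner $\widetilde{p}\,(A\rtimes_{\mu_D^\nu}G)\,\widetilde{p}$, which is exactly the projection property for $\rtimes_{\mu_D^\nu}$; by Theorem \ref{thm-corr} it is therefore a correspondence functor.

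The step I expect to be the main obstacle is the bookkeeping around multiplier algebras in the non-unital case: one must check that $j_A\rtimes_\mu G$ lands in the multiplier algebra (not merely a larger multiplier-of-multiplier object), that the corner $\widetilde{p\otimes 1}\bigl((A\otimes_\nu D)\rtimes_\mu G\bigr)\widetilde{p\otimes 1}$ really restricts to the sub-$C^*$-algebra $A\rtimes_{\mu_D^\nu}G$ and equals $pAp\rtimes_{\mu_D^\nu}G$ there, and — for the minimal tensor product — that $I\otimes_{\min}D$ is genuinely an ideal of $A\otimes_{\min}D$ with the expected corner behaviour. None of this is deep, but it is exactly where a careless argument would go wrong, so I would do these verifications explicitly on elementary tensors and functions in $C_c(G,\cdot)$.
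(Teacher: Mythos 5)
The overall architecture you propose is right, and your projection-property argument is in substance the paper's own: the paper likewise applies the projection property of $\rtimes_\mu$ to the $G$\nb-invariant projection $p\otimes 1\in\M(A\otimes_\nu D)$, whose corner is $pAp\otimes_\nu D$, and then identifies $pAp\rtimes_{\mu_D^\nu}G$ with the corresponding corner by comparing things on $C_c(G,pAp)$. But there is a genuine gap earlier, at exactly the point you defer as ``bookkeeping'': you never prove that $\rtimes_{\mu_D^\nu}$ is functorial for \emph{all} equivariant \Star{}homomorphisms, and for non-unital $D$ this is the actual content of the corollary rather than a routine verification. The appeal to \S\ref{subsec-tensor} does not help, because there $D$ is unital, so $j_A$ maps into $A\otimes_\nu D$ itself and one simply applies $\rtimes_\mu$ to $\phi\otimes\id_D$. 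For non-unital $D$ and a \emph{degenerate} equivariant map $\phi:A\to A'$ (subalgebra inclusions, unitisation maps, maps into $\M_c(B)$ --- all of which a crossed-product functor must handle), $\phi\otimes\id_D$ does not extend to $\M(A\otimes_\nu D)$, so no map intertwining $j_A$ and $j_{A'}$ is available a priori, and the required estimate $\|j_{A'}\rtimes_\mu G(\phi\circ f)\|\le\|j_A\rtimes_\mu G(f)\|$ does not follow from anything you set up (your easy argument works only for nondegenerate $\phi$, where $(\phi\otimes\id_D)\rtimes_\mu G$ extends to multiplier algebras). The paper closes precisely this gap with the relative multiplier algebra $\M_D(A\otimes_\nu D)=\{m\in\M(A\otimes_\nu D):m(1\otimes D)\sbe A\otimes_\nu D\}$: $j_A$ takes values in its $G$\nb-continuous part $\M_{D,c}(A\otimes_\nu D)$; by \cite[Proposition A.6]{Echterhoff-Kaliszewski-Quigg-Raeburn:Categorical} the map $\phi\otimes\id_D$ extends to $\M_D(A\otimes_\nu D)$ for arbitrary, possibly degenerate, $\phi:A\to\M(B)$; and the ideal property of $\rtimes_\mu$ makes $(A\otimes_\nu D)\rtimes_\mu G$ an (essential) ideal in $\M_{D,c}(A\otimes_\nu D)\rtimes_\mu G$, so the latter embeds into $\M((A\otimes_\nu D)\rtimes_\mu G)$. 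With this device, functoriality --- even for generalised homomorphisms, hence the ideal property via Lemma \ref{lem-ideal} --- drops out of one commuting square of honest $G$\nb-algebras; without it (or an equivalent substitute) the first assertion of the corollary is unproved.

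Two smaller remarks. Your direct attack on the ideal property is a legitimate alternative to the paper's route, but as written it invokes a non-existent map: for a non-essential ideal $J\sbe B$ there is no canonical inclusion $\M(J)\to\M(B)$. The correct comparison goes the other way: the restriction homomorphism $\M(B)\to\M(J)$, with $B=(A\otimes_\nu D)\rtimes_\mu G$ and $J$ the image of $(I\otimes_\nu D)\rtimes_\mu G$ (injectively embedded by the ideal property of $\rtimes_\mu$), sends $j_A\rtimes_\mu G(f)$ to $j_I\rtimes_\mu G(f)$ for $f\in C_c(G,I)$, and it is isometric on these elements because both $j_A\rtimes_\mu G(f)$ and its adjoint multiply $B$ into $J$; this repairs your argument. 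Similarly, ``the $q_{A,\red}$-image is not killed'' is weaker than what is needed; one should note that composing with the quotients onto reduced crossed products and using that $j_A\rtimes_\red G$ is isometric shows the new norm dominates the reduced norm.
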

\begin{proof} Let $\M_D(A\otimes_\nu D)$ denote the closed subalgebra of $\M(A\otimes_\mu D)$
which consists of all elements $m$ such that  $m(1\otimes_\nu D)\subseteq A\otimes_\nu D$.
Then for any (possibly degenerate) \Star{}homomorphism $\phi:A\to \M(B)$
the  \Star{}homomorphism $\phi\otimes \id_D: A\otimes_{\nu} D\to \M(B\otimes_\nu D)$
extends uniquely to $\M_D(A\otimes_\nu D)$ (\eg, see \cite[Proposition~A.6]{Echterhoff-Kaliszewski-Quigg-Raeburn:Categorical}).
Now let $\M_{D,c}(A\otimes_\nu D)$ denote the set
of $G$\nb-continuous elements in $\M_D(A\otimes_\nu D)$.
Then it follows from the  ideal property that $(A\otimes_\nu D)\rtimes_\mu G$ is an essential
ideal in $\M_{D,c}(A\otimes_\nu D)\rtimes_{\mu}G$, which implies that there is a canonical faithful
inclusion of $\M_{D,c}(A\otimes_\nu D)\rtimes_{\mu}G$ into $\M((A\otimes_\nu D)\rtimes_{\mu}G)$.
Hence, we can alternatively define
$A\rtimes_{\alpha,\mu_D^\nu}G$ as the image of $j_A\rtimes_\mu G$ inside $\M_{D,c}(A\otimes_\nu D)\rtimes_{\mu}G$.
A similar argument applies if we replace $\M_{D,c}(A\otimes_\nu D)$ by the \cstar{}algebra $\M_c(A\otimes_\nu D)$
of $G$\nb-continuous elements in $\M(A\otimes_\nu D)$.

Now let  $\phi:A\to \M(B)$ be a $G$\nb-equivariant \Star{}homomorphism. Then the diagram
$$
\begin{CD}
A\rtimes_{\mu}G @>\quad\quad\quad\phi\rtimes_{\mu}G\quad\quad\quad >> \M(B\rtimes_\mu G)\\
@V j_A\rtimes_{\mu} G VV       @VV j_B\rtimes_{\mu} G V\\
\M_{D,c}(A\otimes_\nu D)\rtimes_\mu G @>> (\phi\otimes_\nu \id_D)\rtimes_\mu G > \M_c(B\otimes_\nu D)\rtimes_\mu G
\subset \M((B\otimes_\nu D)\rtimes_\mu G)
\end{CD}
$$
commutes,  since $(j_B\otimes_\nu\id_D)\circ \phi=(\phi\otimes _\nu \id_D)\circ j_A$. It follows that
the bottom arrow restricts to a well-defined \Star{}homomorphism from
$A\rtimes_{\mu_D^\nu}G$  into $\M(B\rtimes_{\mu_D^\nu}G)$. It follows that $\rtimes_{\mu_D^\nu}$
is a functor for generalised homomorphisms.
By Lemma \ref{lem-ideal} this also proves that the functor $\rtimes_{\mu_D^\nu}$ has the ideal property.

Assume now that $\rtimes_\mu$ is a correspondence functor. To see that this is then also true
for $\rtimes_{\mu_D^\nu}$, let $p\in \M(A)$ be any $G$\nb-invariant
projection and let $\iota:pAp\to A$ be the inclusion map. Consider the commutative diagram
$$
\begin{CD}
pAp\rtimes_{\mu}G @>\quad\quad\quad\iota \rtimes_{\mu}G\quad\quad\quad >> A\rtimes_\mu G\\
@V j_{pAp}\rtimes_{\mu} G VV       @VV j_A\rtimes_{\mu} G V\\
\M_{D,c}(pAp\otimes_\nu D)\rtimes_\mu G @>> (\iota \otimes_\nu \id_D)\rtimes_\mu G > \M_c(A\otimes_\nu D)\rtimes_\mu G
\subset \M((A\otimes_\nu D)\rtimes_\mu G)
\end{CD}
$$
Since $\rtimes_\mu$ satisfies the projection property, we have injectivity of
$$(\iota\otimes\id_D)\rtimes_\mu G: (pAp\otimes_\nu D)\rtimes_{\mu}G\hookrightarrow (A\otimes_\nu D)\rtimes_{\mu}G$$
which then extends to a unique  injective \Star{}homomorphism
$\M_{D,c}(pAp\otimes_\nu D)\rtimes_\mu G\to \M((A\otimes_\nu D)\rtimes_\mu G)$.
Thus the lower horizontal map in the diagram is injective. But then the commutativity of the diagram
implies injectivity of
$\iota\rtimes_{\mu_D^\nu}G: pAp\rtimes_{\mu_G^\nu}G\to A\rtimes_{\mu_G^\nu}G$, as desired.
\end{proof}

\begin{remark}\label{rem-exact-D} In \cite[Lemma 5.4]{Baum-Guentner-Willett} it is shown that if $\rtimes_\mu$ is exact and $D$ is unital, then $\rtimes_{\mu_D^{\max}}$ is also exact.  We should note that for non-unital $D$ this need not be true. To see an example, let $G$ be any non-exact group,
 let $D=C_0(G)$ equipped with the translation action and let $\rtimes_\mu=\rtimes_\un$. Then
  $$(A\otimes C_0(G))\rtimes_{\un}G\cong (A\otimes C_0(G))\rtimes_{\red} G\cong A\otimes \K(L^2(G))$$
  and the map $j_A\rtimes_\un G: A\rtimes_\un G\to \M(A\otimes C_0(G))\rtimes_{\un}G$ factors
  through the (faithful) map $j_A\rtimes_\red G: A\rtimes_\red G\to \M(A\otimes C_0(G))\rtimes_{\un}G)$.
Therefore we get $\rtimes_{\un_{C_0(G)}^{\max}}=\rtimes_\red$, which by the choice of $G$ is not
exact.
\end{remark}

We conclude this section by showing that a correspondence crossed-product functor allows a
nice description of $\K(\E\rtimes_{\mu}G, \F\rtimes_{\mu}G)$ in which $(\E,\gamma)$ and $(\F,\tau)$
are two $G$-equivariant Hilbert $(B,\beta)$-modules.  This will be useful for our discussion of $KK$-theory in the next section.  For this we first observe that
$\K(\E,\F)$ can be regarded as a  $\K(\F)-\K(\E)$ correspondence with respect
to the canonical left action of $\K(\F)$ on $\K(\E,\F)$ given by composition of
operators and with the $\K(\E)$-valued inner product given by
$$\braket{T}{S}_{\K(\E)}=T^*\circ S.$$
The $G$-actions $\gamma$ and $\nu$ induce an action $\Ad(\gamma,\tau)$ of $G$ on
$\K(\E,\F)$ by
$$\Ad(\gamma,\tau)_s(T)=\tau_{s^{-1}}T\gamma_s.$$
Then $(\K(\E,\F), \Ad(\gamma,\tau))$ becomes a $G$-equivariant  $(\K(\E),\Ad\gamma)-(\K(\F),\Ad\tau)$ correspondence.
If $\rtimes_{\mu}$ is any crossed-product functor for $G$,   we may consider the crossed product
$\K(\E,\F)\rtimes_{\Ad(\gamma,\tau),\mu}G$ as a completion of $C_c(G,\K(\E,\F))$ as
described at the beginning of this section.

\begin{lemma}\label{lem-iso}
Suppose that $\rtimes_\mu$ is a correspondence crossed-product functor. Then there is a canonical
isomorphism
$$\K(\E,\F)\rtimes_{\Ad(\gamma,\tau),\mu}G\cong \K(\E\rtimes_{\gamma,\mu}G, \F\rtimes_{\tau,\mu}G)$$
which sends a function $f\in C_c(G, \K(\E,\F))\subseteq \K(\E,\F)\rtimes_{\mu}G$ to the operator
$T_f\in \K(\E\rtimes_{\mu}G, \F\rtimes_{\mu}G)$ given on the dense submodule $C_c(G,\E)\subseteq \E\rtimes_{\gamma,\mu}G$ by the convolution formula
$$(T_f\xi)(s)=\int_G f(t)\gamma_s(\xi(t^{-1}s))\,dt.$$
\end{lemma}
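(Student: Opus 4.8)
The plan is to realise both sides as blocks of a single crossed product. Set $\mathcal G:=\E\oplus\F$ with the diagonal action $\gamma\oplus\tau$; this is a $G$-equivariant Hilbert $(B,\beta)$-module, and since the $B\rtimes_{\beta,\mu}G$-valued inner product on $C_c(G,\mathcal G)=C_c(G,\E)\oplus C_c(G,\F)$ is the orthogonal sum of those for $\E$ and $\F$, one checks directly that $\mathcal G\rtimes_{\gamma\oplus\tau,\mu}G\cong(\E\rtimes_{\gamma,\mu}G)\oplus(\F\rtimes_{\tau,\mu}G)$ as Hilbert $B\rtimes_{\beta,\mu}G$-modules. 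Because $\rtimes_\mu$ is a correspondence functor, Theorem~\ref{thm-corr}(2) applied to $\mathcal G$ provides a canonical isomorphism
$$
\Theta\colon\K(\mathcal G)\rtimes_{\Ad(\gamma\oplus\tau),\mu}G\;\congto\;\K(\mathcal G\rtimes_\mu G)\;\cong\;\K\bigl((\E\rtimes_{\gamma,\mu}G)\oplus(\F\rtimes_{\tau,\mu}G)\bigr),
$$
which, as in the proof of that theorem, is the descent of the canonical left action of $\K(\mathcal G)$ on $\mathcal G$ and hence is implemented on $C_c(G,\K(\mathcal G))$ by the convolution formula~\eqref{eq-left-action}.

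Next I would cut out the off-diagonal block. Let $p,q\in\Lb(\mathcal G)=\M(\K(\mathcal G))$ be the $G$-invariant projections onto the summands $\E$ and $\F$; then $p\K(\mathcal G)p=\K(\E)$, $q\K(\mathcal G)q=\K(\F)$ and $q\K(\mathcal G)p=\K(\E,\F)$, carrying exactly the left $\K(\F)$-action, the $\K(\E)$-valued inner product and the action $\Ad(\gamma,\tau)$ described just before the lemma, the last of these being the restriction of $\Ad(\gamma\oplus\tau)$. Writing $\tilde p,\tilde q$ for the canonical images of $p,q$ in $\M(\K(\mathcal G)\rtimes_\mu G)$, I would check that $\Theta$ sends $\tilde p$ and $\tilde q$ to the projections of $\mathcal G\rtimes_\mu G$ onto $\E\rtimes_{\gamma,\mu}G$ and $\F\rtimes_{\tau,\mu}G$. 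Then $\Theta$ restricts to an isomorphism of Hilbert modules
$$
\tilde q\,\bigl(\K(\mathcal G)\rtimes_\mu G\bigr)\,\tilde p\;\congto\;\K\bigl(\E\rtimes_{\gamma,\mu}G,\,\F\rtimes_{\tau,\mu}G\bigr).
$$

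It then remains to identify $\tilde q\,(\K(\mathcal G)\rtimes_\mu G)\,\tilde p$ with $\K(\E,\F)\rtimes_{\Ad(\gamma,\tau),\mu}G$. The map $m\mapsto\tilde q m\tilde p$ is continuous with closed range, and it carries the dense subset $C_c(G,\K(\mathcal G))$ onto $\tilde q C_c(G,\K(\mathcal G))\tilde p=C_c(G,q\K(\mathcal G)p)=C_c(G,\K(\E,\F))$, so the block in question is the closure of $C_c(G,\K(\E,\F))$ inside $\K(\mathcal G)\rtimes_\mu G$. By the projection property (Remark~\ref{rem-projection}) the diagonal block $\tilde p(\K(\mathcal G)\rtimes_\mu G)\tilde p$ is $\K(\E)\rtimes_{\Ad\gamma,\mu}G$ via the canonical map on $C_c$, so the right $\K(\E)\rtimes_\mu G$-valued inner product on our block is $x^{*}*y$ computed in $\K(\mathcal G)\rtimes_\mu G$; a routine change of variables with the convolution and involution formulas shows that for $x,y\in C_c(G,\K(\E,\F))$ this coincides with the inner product $\braket{x}{y}_{C_c(G,\K(\E))}$ used to define $\K(\E,\F)\rtimes_{\Ad(\gamma,\tau),\mu}G$. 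Hence the two completions agree, and feeding $f\in C_c(G,\K(\E,\F))$ through $\Theta$ — using that $(\gamma\oplus\tau)_s$ restricts to $\gamma_s$ on $\E$ — turns formula~\eqref{eq-left-action} into the convolution formula for $T_f$ in the statement.

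Apart from this last step, the argument is formal bookkeeping with the $2\times2$ matrix picture. The points I expect to need genuine care are verifying that $\Theta$ carries $\tilde p,\tilde q$ to the expected projections and that the Hilbert-module inner product inherited from the ambient crossed product $\K(\mathcal G)\rtimes_\mu G$ really matches the one built into the definition of $\K(\E,\F)\rtimes_{\Ad(\gamma,\tau),\mu}G$. The essential input is Theorem~\ref{thm-corr}(2), which is precisely where the hypothesis that $\rtimes_\mu$ be a correspondence functor is used.
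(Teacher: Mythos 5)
Your proposal is correct and follows essentially the same route as the paper: realise both sides as the off-diagonal corner of the crossed product of $\K(\E\oplus\F)$, use Theorem \ref{thm-corr}(2) (together with $(\E\oplus\F)\rtimes_\mu G\cong \E\rtimes_\mu G\oplus\F\rtimes_\mu G$) to identify that crossed product with $\K(\E\rtimes_\mu G\oplus\F\rtimes_\mu G)$, and compare the two block decompositions on $C_c$-functions. Your extra verification that the corner's inherited inner product agrees with the one defining $\K(\E,\F)\rtimes_{\Ad(\gamma,\tau),\mu}G$ is a point the paper treats more briskly, but it is the same argument.
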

\begin{proof} Let $\E\oplus \F$ denote the direct sum of the Hilbert B-modules $\E$ and $\F$.  Let $p,q\in \Lb(\E\oplus \F)$
denote the orthogonal projections to $\E$ and $\F$.
This gives a canonical  decomposition
$$\K(\E\oplus \F)\cong \left(\begin{matrix} \K(\E) & \K(\F,\E)\\ \K(\E,\F)& \K(\F)\end{matrix}\right)$$
by identifying $\K(\E)\cong p\K(\E\oplus \F)p$, $\K(\F,\E)=p\K(\E\oplus \F)q$ and so on.
The projections $p$ and $q$ are  $G$\nb-invariant and therefore map to
opposite $G$\nb-invariant projections $\tilde p$ and $\tilde q$ in
$\M(\K(\E\oplus \F)\rtimes_{\Ad({\gamma\oplus\tau}),\mu}G)$ under the canonical map.
Taking crossed products it follows from the properties of a correspondence functor shown in
Theorem \ref{thm-corr} that we get a decomposition
\begin{align*}
\K\big((\E\oplus \F)\rtimes_{\gamma\oplus\tau,\mu}G\big)&\cong
\K(\E\oplus \F)\rtimes_{\Ad({\gamma\oplus\tau}),{\mu}}G\\
&=\left(\begin{matrix} \K(\E)\rtimes_{\Ad\gamma,\mu}G & \tilde{p}\big(\K(\F\oplus\E)\rtimes_\mu G\big)\tilde{q}\\
\tilde{q}\big(\K(\F\oplus\E)\rtimes_\mu G\big)\tilde{p}& \K(\F)\rtimes_{\Ad\tau,\mu}G\end{matrix}\right)
\end{align*}
Notice that we have the identity
$$(\E\oplus\F)\rtimes_{\gamma\oplus\tau,\mu}G
\cong \E\rtimes_{\gamma,\mu}G\oplus  \F\rtimes_{\tau,\mu }G,$$
which follows directly from the definition of the inner products on $C_c(G,\E\oplus \F)$.
It implies the  decomposition
\begin{align*}
 \K((\E\oplus F)\rtimes_{\gamma\oplus \tau,\mu}G)&\cong \K(\E\rtimes_{\gamma, \mu} G \oplus \F\rtimes_{\tau,\mu} G)\\
&\cong
\left(\begin{matrix} \K(\E\rtimes_{\gamma, \mu}G) & \K(\E\rtimes_{\gamma,\mu} G, \F\rtimes_{\tau,\mu}G)\\
\K(\F\rtimes_{\tau,\mu} G, \E\rtimes_{\gamma, \mu}G)& \K(\F\rtimes_{\tau, \mu}G)\end{matrix}\right)\\
&\cong
\left(\begin{matrix} \K(\E)\rtimes_{\Ad\gamma, \mu}G & \K(\E\rtimes_{\gamma,\mu} G, \F\rtimes_{\tau,\mu}G)\\
\K(\F\rtimes_{\tau,\mu} G, \E\rtimes_{\gamma, \mu}G)& \K(\F)\rtimes_{\Ad\tau, \mu}G\end{matrix}\right).
\end{align*}
Comparing both isomorphisms on functions in $C_c(G, \K(\E\oplus \F))$, we see that they agree
and that the isomorphism of the upper  right corner is given on the level of functions in
$C_c(G, \K(\E,\F))=\tilde{p}C_c(G, \K(\E\oplus\F))\tilde{q}$ as in the statement.
\end{proof}

\section{Duality for correspondence functors} \label{sec:duality}
We saw in Corollary \ref{cor-KLQproj} that all KLQ-functors are correspondence functors.
Recall from Section \ref{sec:KLQ} that the group algebras $C_v^*(G)\cong \C\rtimes_{v_{KLQ}}G$
corresponding to KLQ-functors are precisely those group algebras which correspond to
weak equivalence classes of
unitary representations $v:G\to \U(\H_v)$ which absorb the universal representation
$u_G: G\to \U(C^*(G))$ of $G$ in the sense that $v$ is weakly equivalent to $v\otimes u_G$.
This just means that the comultiplication
$$\delta_G: C^*(G)\to \M(C^*(G)\otimes C^*(G))$$
on $C^*(G)$ which is given as the integrated form of $u_G\otimes u_G$ factors through a
coaction
$$\delta_v: C_v^*(G)\to \M(C_v^*(G)\otimes C^*(G))$$
of $C^*(G)$ on $C_v^*(G)$. As discussed in Section \ref{sec:KLQ}, it is shown in \cite{Kaliszewski-Landstad-Quigg:Exotic}
that these group algebras are in one-to-one correspondence to the translation invariant weak-* closed
ideals $E\subseteq B(G)$ of the Fourier-Stieltjes algebra $B(G)$.  Moreover, it is also shown in \cite{Kaliszewski-Landstad-Quigg:Exotic}
that for any system $(A,G,\alpha)$ the dual coaction
$$\widehat{\alpha}=(i_A^u\otimes \id_G)\rtimes (i_G^u\otimes u_G):A\rtimes_{\alpha,u}G
\to \M(A\rtimes_{\alpha,u}G\otimes C^*(G))$$
of $C^*(G)$ on the full crossed product $A\rtimes_{\alpha,u}G$ factors through a dual coaction
$$\widehat{\alpha}_{v_{KLQ}}:A\rtimes_{\alpha,v_{KLQ}}G
\to \M(A\rtimes_{\alpha,v_{KLQ}}G\otimes C^*(G)).$$
We refer to \cite[Appendix A]{Echterhoff-Kaliszewski-Quigg-Raeburn:Categorical} for a survey on duality theory for actions and coactions of groups.

We shall now show that the existence of a dual coaction holds true for every
correspondence functor $\rtimes_\mu$, so that versions of Imai-Takai and Katayama
duality will hold for the corresponding double (or triple) crossed products.
In particular, this implies that the group algebra $C_\mu^*(G)=\C\rtimes_\mu G$ of a
correspondence functor must correspond to a translation invariant
weak-* closed ideal $E$ in $B(G)$.

We start with a result about exterior equivalent actions, which is interesting in its own right.
For this recall that two actions $\alpha,\beta:G\to \Aut(A)$ on a given C*-algebra $A$ are called
{\em exterior equivalent} if there exists a strictly continuous map $u: G\to\U\M(A)$ such that
for all $s,t\in G$ we have
\begin{equation}\label{eq-ext}
\alpha_s=\Ad u_s \circ \beta_s\quad\text{and}\quad u_{st}=u_s\beta_s(u_t).
\end{equation}
It is a well-known fact that exterior equivalent actions give isomorphic full (and reduced) crossed products.
Indeed, it is fairly straightforward to check that the map
$$\Phi_c: C_c(G,A)\to C_c(G,A); \Phi_c(f)(s)=f(s)u_s$$
extends to an isomorphism $\Phi_u: A\rtimes_{\alpha, u}G\congto A\rtimes_{\beta,u}G$.
A short computation shows that it is a $*$-isomorphism on the level of $C_c(G,A)$.
Then one observes that there is a one-to-one correspondence between nondegenerate
covariant homomorphisms $(A,G,\beta)$ and $(A,G,\alpha)$
such that for any covariant homomorphism $(\pi, V)$ for $(A,G,\beta)$
the pair $(\pi, (\pi\circ u)\cdot V)$ is the corresponding covariant representation of $(A, G,\alpha)$.
The result then follows from the fact that
$\pi\rtimes (\pi\circ u\cdot V)=(\pi\rtimes V)\circ \Phi_c$ for all
$f\in C_c(G,A)$. Alternatively, the isomorphism is given as the integrated form of the
covariant homomorphism $(i_A^\beta, (i_A^\beta\circ u)\cdot i_G^\beta)$ of $(A,G,\alpha)$
into $\M(A\rtimes_{\beta, u}G)$. We refer to \cite[Lemma 3.3]{PR} for a detailed proof in the more general
setting of twisted actions (with roles of $\alpha$ and $\beta$ changed).  We then have

\begin{lemma}\label{lem-ext}
Suppose that $\rtimes_\mu$ is a strongly Morita compatible crossed-product functor for $G$.
Suppose further that $\alpha,\beta:G\to \Aut(A)$ are exterior equivalent with respect to the one-cocycle
$u:G\to\U\M(A)$. Then the isomorphism $\Phi_u: A\rtimes_{\alpha, u}G\congto A\rtimes_{\beta,u}G$ factors
through an isomorphism $\Phi_\mu: A\rtimes_{\alpha, \mu}G\congto A\rtimes_{\beta,\mu}G$.
\end{lemma}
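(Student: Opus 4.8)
The plan is to realise the exterior equivalence between $\alpha$ and $\beta$ as a $G$\nb-equivariant Morita self-equivalence of $A$, feed it into strong Morita compatibility, and then read off the resulting $*$\nb-isomorphism on the level of compactly supported functions.

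\emph{Step 1: an equivariant equivalence bimodule.} I would first build a $G$\nb-equivariant $(A,\alpha)-(A,\beta)$ equivalence bimodule $(\E,\gamma)$: let $\E:=A$, regarded as the standard right Hilbert $(A,\beta)$\nb-module (so $\xi\cdot b=\xi b$ and $\braket{\xi}{\eta}_A=\xi^*\eta$), with $A\cong\K(\E)$ acting on the left by left multiplication, and set $\gamma_s(\xi):=u_s\beta_s(\xi)$. Using the cocycle identities \eqref{eq-ext} — together with the identity $\beta_s(u_{s^{-1}})=u_s^*$, an immediate consequence of them — one checks that $\gamma$ is a strongly continuous action compatible with the right module structure, the inner product and the left action, and that the induced action $\Ad\gamma$ on $\K(\E)\cong A$ equals $\alpha$ (here one uses $\alpha_s=\Ad u_s\circ\beta_s$). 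This is the only genuine computation, and the cocycle relations enter at each of the axioms; it is otherwise routine.

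\emph{Step 2: applying strong Morita compatibility.} By Definition~\ref{def-properties}(1) applied to $(\E,\gamma)$, the completion $\E\rtimes_{\gamma,\mu}G$ is an $A\rtimes_{\alpha,\mu}G-A\rtimes_{\beta,\mu}G$ equivalence bimodule, with the left $A\rtimes_{\alpha,\mu}G$\nb-action extending the formula \eqref{eq-left-action}. The key observation is that, because $\E=A$ is the \emph{standard} Hilbert $(A,\beta)$\nb-module, the right action of $C_c(G,A)$ on $C_c(G,\E)=C_c(G,A)$ is literally the convolution product and the $A\rtimes_{\beta,\mu}G$\nb-valued inner product on $C_c(G,\E)$ is literally $\braket{f}{g}=f^**g$ (a one-line change of variables involving the modular function identifies $\int_G\beta_{s^{-1}}(f(s)^*g(st))\dd s$ with $(f^**g)(t)$). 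Hence the identity map on $C_c(G,A)$ extends to an isomorphism of right Hilbert $A\rtimes_{\beta,\mu}G$\nb-modules between $\E\rtimes_{\gamma,\mu}G$ and $A\rtimes_{\beta,\mu}G$ viewed as a module over itself. Since $\K(C_C)\cong C$ acting by left multiplication, the equivalence-bimodule left action then produces an isomorphism $\Phi_\mu:A\rtimes_{\alpha,\mu}G\congto A\rtimes_{\beta,\mu}G$.

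\emph{Step 3: identifying $\Phi_\mu$ with $\Phi_u$.} It remains to check that $\Phi_\mu$ extends $\Phi_c$, hence factors $\Phi_u$. For $f\in C_c(G,A)\subseteq A\rtimes_{\alpha,\mu}G$, the element $\Phi_\mu(f)$ is determined by the requirement that left convolution by it agrees with the operator \eqref{eq-left-action} on $C_c(G,A)$; comparing $(\Phi_\mu(f)*x)(t)=\int_G\Phi_\mu(f)(s)\beta_s(x(s^{-1}t))\dd s$ with $\int_G f(s)u_s\beta_s(x(s^{-1}t))\dd s$ forces $\Phi_\mu(f)(s)=f(s)u_s$, which is exactly $\Phi_c(f)(s)$. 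Thus $\Phi_\mu$ and $\Phi_u$ both restrict to $\Phi_c$ on the dense subalgebra $C_c(G,A)$, and since the canonical quotient maps $A\rtimes_{\alpha,\un}G\to A\rtimes_{\alpha,\mu}G$ and $A\rtimes_{\beta,\un}G\to A\rtimes_{\beta,\mu}G$ are the identity on $C_c(G,A)$, it follows by density that $\Phi_\mu$ is the map induced by $\Phi_u$ on the $\mu$\nb-completions. The only point needing care besides Step~1 is the elementary remark that $s\mapsto f(s)u_s$ again lies in $C_c(G,A)$, which uses strict continuity of $u$ and uniform convergence on compacta. (Alternatively, Steps~1--2 could be run through the $2\times2$ linking algebra $\big(\begin{smallmatrix}A&\E\\\E^*&A\end{smallmatrix}\big)$ and the full projection property of Lemma~\ref{lem-Morita}, but the Hilbert-bimodule formulation keeps $\Phi_c$ most transparent, which is what Step~3 needs.)
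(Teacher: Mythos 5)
Your proof is correct, and it starts exactly as the paper does: the same $G$\nb-equivariant $(A,\alpha)-(A,\beta)$ equivalence bimodule $(\E,\gamma)$ with $\E=A$ and $\gamma_s=u_s\beta_s(\cdot)$ (including the identity $\beta_s(u_{s^{-1}})=u_s^*$ needed to see $\Ad\gamma=\alpha$ on $\K(\E)\cong A$), followed by the same application of strong Morita compatibility to obtain the $A\rtimes_{\alpha,\mu}G-A\rtimes_{\beta,\mu}G$ equivalence bimodule $\E\rtimes_{\gamma,\mu}G$. Where you diverge is in the endgame. The paper does not identify $\Phi_\mu$ directly; instead it reduces the lemma to a statement about representations (a covariant representation $(\pi,V)$ of $(A,G,\beta)$ factors through the $\mu$\nb-crossed product if and only if $(\pi,(\pi\circ u)\cdot V)$ does for $(A,G,\alpha)$) and proves this via the Rieffel correspondence: induction of representations through $A\rtimes_{\gamma,\mu}G$ versus $A\rtimes_{\gamma,\un}G$, using Combes' description of induced covariant representations and the unitary $A\otimes_A\H\cong\H$. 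You instead observe that, since the right-module structure and inner product on $C_c(G,\E)=C_c(G,A)$ do not involve $\gamma$ and coincide with twisted convolution and $f^**g$, the module $\E\rtimes_{\gamma,\mu}G$ is literally the standard module $A\rtimes_{\beta,\mu}G$, so the equivalence-bimodule left action gives an isomorphism onto $\K(A\rtimes_{\beta,\mu}G)\cong A\rtimes_{\beta,\mu}G$ which you then compute on $C_c(G,A)$ to be $\Phi_c$. This is a somewhat more direct and self-contained way to extract the factorisation (no appeal to the Rieffel correspondence or to the explicit form of induced representations), at the small cost of the density/uniqueness bookkeeping in your Step 3; the paper's route has the side benefit of recording the explicit bijection of covariant representations, which is the form in which the lemma is actually used elsewhere.
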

\begin{proof}
Using the above described correspondence of covariant representations of $A\rtimes_{\alpha,u}G$ and $A\rtimes_{\beta,u}G$
it suffices to show that the integrated form of a covariant representation $(\pi, V)$ of $(A,G,\beta)$ factors through
$A\rtimes_{\beta,\mu}G$ if and only if the corresponding covariant representation $(\pi, (\pi\circ u)\cdot V)$
of $(A,G,\alpha)$ factors through $A\rtimes_{\alpha,\mu}G$. To see this consider $A$
as the standard  $A-A$ equivalence bimodule. Then the
 action $\gamma:G\to \Aut(A)$ given by $\gamma_s(a)=u_s\beta_s(a)$  makes $A$ into a
$G$-equivariant $(A,\alpha)-(A,\beta)$ equivalence bimodule. Since $\rtimes_\mu$ is strongly Morita compatible
we obtain an $A\rtimes_{\alpha,\mu}G-A\rtimes_{\beta,\mu}G$ equivalence bimodule $A\rtimes_{\gamma,\mu}G$,
which is a quotient of the corresponding $A\rtimes_{\alpha,u}G-A\rtimes_{\beta,u}G$ equivalence bimodule
$A\rtimes_{\gamma,u}G$. It follows then from the Rieffel-correspondence
that induction of representations from $A\rtimes_{\beta,\mu}G$ to $A\rtimes_{\beta,\mu}G$
via $A\rtimes_{\gamma,\mu}G$ is the same as induction from
$A\rtimes_{\beta,u}G$ to $A\rtimes_{\beta,u}G$
via $A\rtimes_{\gamma,u}G$, if we regard representations
of the $\mu$-crossed products as representations of the universal crossed product via pull-back with the
quotient maps. This induction process can  be described on the
level of covariant representations by inducing a covariant representation $(\pi, V)$ of $(A,G,\beta)$ via the
$G$\nb-equivariant module $(A,\gamma)$ (see {\cite{Combes:Crossed_Morita}}):
 if $(\pi,V)$ acts on the Hilbert space $\H$, then the induced covariant representation $(\ind^A\pi, \ind^AV)$
acts on the Hilbert space $A\otimes_A\H$ via
$$\ind^A\pi(a)(b\otimes \xi)=ab\otimes \xi\quad\text{and}\quad \ind^AV_s(b\otimes \xi)=\gamma_s(b)\otimes V_s\xi$$
for all $a\in A$, $s\in G$. But then a short calculation shows that the
 canonical isomorphism $A\otimes_A\H\cong \H$ given by  $b\otimes \xi\mapsto \pi(b)\xi$
tranfers this representation to $(\pi, (\pi\circ u)\cdot V)$. Hence induction via $A\rtimes_{\gamma,\mu}G$
gives the desired correspondence of representations.
\end{proof}

\begin{remark}\label{rem-ext}
(1) The conclusion of the above lemma can definitely fail for crossed-product functors $\rtimes_\mu$ which are
not strongly Morita compatible. To see an example let $G$ be any non-amenable group.
Let $\lambda=\lambda_G:G\to \U(L^2(G))=\U\M(\K(L^2(G)))$ denote the regular representation of $G$.
Then $\lambda$ induces an exterior equivalence between the actions $\Ad\lambda$ and $\id_\K$
of $G$ on $\K=\K(L^2(G))$.
 Note that all irreducible covariant representations of
$(\K, G,\id_\K)$ are equivalent to one of the form $(\id_\K\otimes 1_\H, 1_{L^2(G)}\otimes V)$ on
$L^2(G)\otimes \H$ for some  Hilbert space $\H$, where $V:G\to \U(\H)$ is an irreducible unitary representation of $G$.
This  follows easily from the isomorphism $\K\rtimes_{\id,u}G\cong \K\otimes C^*(G)$.

Now let $\rtimes_{\lambda_{BG}}$ be the Brown-Guentner functor
corresponding to the regular representation (compare Section \ref{sec:BG}).
Since the unitary part $1_{L^2(G)}\otimes V\sim V$ is weakly contained in $\lambda$ if and only if
$V$ is weakly contained in $\lambda$ it follows that
$\K\rtimes_{\id, \lambda_{BG}}G\cong \K\otimes C_r^*(G)\cong \K\rtimes_{\id,r}G$.
On the other hand, the covariant representation of $(\K,G,\Ad\lambda)$
corresponding to $(\id_\K\otimes 1_\H, 1_{L^2(G)}\otimes V)$ is given by
$(\id_\K\otimes 1_\H, \lambda \otimes V)$. Since by Fell's trick
$\lambda\otimes V\sim \lambda$, we see that
the integrated form of every irreducible covariant representation of $(\K, G,\Ad\lambda)$
factors through $\K\rtimes_{\Ad\lambda, \lambda_{BG}}G$, which implies that this crossed
product coincides with the universal crossed product $\K\rtimes_{\Ad\lambda, u}G\cong \K\otimes C^*(G)$.
Since $G$ is not amenable, this is not (canonically)
isomorphic to $\K\rtimes_{\Ad\lambda, \lambda_{BG}}G\cong \K\otimes C_r^*(G)$.

(2)  Recall that two systems $(A,G,\alpha)$ and $(B,G,\beta)$ are {\em conjugate}
if there exists an isomorphism $\Psi: A\to B$ such that the action $\alpha^\Psi:=\Psi\circ \alpha\circ \Psi^{-1}$ on
$B$ coincides with $\beta$
and they are called {\em outer conjugate} if $\alpha^\Psi$
is exterior equivalent to $\beta$. Since by functoriality, conjugate actions give isomorphic crossed products
for any crossed-product functor $\rtimes_\mu$, it follows from Lemma \ref{lem-ext} that
outer conjugate actions give isomorphic $\mu$-crossed product if $\rtimes_\mu$
is strongly Morita compatible.
\end{remark}

As a corollary of Lemma \ref{lem-ext} we get

\begin{corollary}\label{cor-ext} Suppose that $\alpha:G\to \Aut(A)$ is an action and that $U:G\to \U(\H)$
is a unitary representation of $G$. Assume further
 that $\rtimes_\mu$ is a strongly Morita compatible crossed-product functor
for $G$ and let $(i_A^\mu, i_G^\mu)$ be the canonical covariant homomorphism of $(A,G,\alpha)$ into
$\M(A\rtimes_{\alpha,\mu}G)$. Then $(i_A^\mu\otimes \id_{\K(\H)}, i_G^\mu\otimes U)$ is a covariant homomorphism
of the system $(A\otimes \K(\H), G,\alpha\otimes\Ad U)$ into $\M(A\rtimes_{\alpha,\mu}G\otimes \K(\H)$
whose integrated form factors through an isomorphism
$$\Phi_\mu: (A\otimes\K(\H))\rtimes_{\alpha\otimes\Ad U, \mu}G\congto A\rtimes_{\alpha,\mu}G\otimes \K(\H).$$
\end{corollary}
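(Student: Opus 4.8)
The plan is to realise the system $(A\otimes\K(\H),\alpha\otimes\Ad U)$ as the ``compact operators'' of a suitable $G$\nb-equivariant Hilbert module over $(A,\alpha)$ and then feed this into strong Morita compatibility. Concretely, I would set $\E:=A\otimes\H$, regarded as a Hilbert $A$\nb-module in the usual way (inner product $\braket{a\otimes\xi}{b\otimes\eta}_A=\braket{\xi}{\eta}a^*b$, right action $(a\otimes\xi)\cdot b=ab\otimes\xi$), and define $\gamma_s(a\otimes\xi):=\alpha_s(a)\otimes U_s\xi$. A routine verification shows that $(\E,\gamma)$ is a full, strongly continuous $G$\nb-equivariant Hilbert $(A,\alpha)$\nb-module, that $\K(\E)=A\otimes\K(\H)$ via $\theta_{a\otimes\xi,b\otimes\eta}=ab^*\otimes\theta_{\xi,\eta}$, and that under this identification the induced action $\Ad\gamma$ on $\K(\E)$ is exactly $\alpha\otimes\Ad U$. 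Hence $(\E,\gamma)$ is a $G$\nb-equivariant $(A\otimes\K(\H),\alpha\otimes\Ad U)$--$(A,\alpha)$ equivalence bimodule.

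Next, since $\rtimes_\mu$ is strongly Morita compatible, the equivalence (1)$\Leftrightarrow$(2) of Lemma~\ref{lem-Morita} applies to the full module $(\E,\gamma)$ and yields an isomorphism $\K(\E)\rtimes_{\Ad\gamma,\mu}G\cong\K(\E\rtimes_{\gamma,\mu}G)$ realised by the left action of $C_c(G,\K(\E))$ on $C_c(G,\E)$ given by formula (\ref{eq-left-action}). On the other hand, the $A\rtimes_{\alpha,\mu}G$\nb-valued inner product and the right action on $C_c(G,\E)$ depend only on $A$ and $\alpha$ (they do not involve $\gamma$), so the map $C_c(G,A)\odot\H\to C_c(G,\E)$, $h\otimes\xi\mapsto(s\mapsto h(s)\otimes\xi)$, extends to an isomorphism of Hilbert $A\rtimes_{\alpha,\mu}G$\nb-modules $\E\rtimes_{\gamma,\mu}G\cong(A\rtimes_{\alpha,\mu}G)\otimes\H$ (external tensor product), whence $\K(\E\rtimes_{\gamma,\mu}G)\cong(A\rtimes_{\alpha,\mu}G)\otimes\K(\H)$. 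Composing these identifications produces an isomorphism
$$\Phi_\mu:(A\otimes\K(\H))\rtimes_{\alpha\otimes\Ad U,\mu}G=\K(\E)\rtimes_{\Ad\gamma,\mu}G\xrightarrow{\ \cong\ }(A\rtimes_{\alpha,\mu}G)\otimes\K(\H).$$

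It then remains to identify $\Phi_\mu$ with the integrated form of $(i_A^\mu\otimes\id_{\K(\H)},i_G^\mu\otimes U)$. That this pair is a covariant homomorphism of $(A\otimes\K(\H),G,\alpha\otimes\Ad U)$ into $\M(A\rtimes_{\alpha,\mu}G\otimes\K(\H))$ is immediate: the covariance relation reduces on elementary tensors $a\otimes T$ to that of $(i_A^\mu,i_G^\mu)$, and $i_G^\mu\otimes U$ is strictly continuous. Its integrated form is a priori a homomorphism out of the universal crossed product $(A\otimes\K(\H))\rtimes_{\alpha\otimes\Ad U,\un}G$; to see it factors through the $\mu$\nb-completion and equals $\Phi_\mu$, I would compare the two maps on the dense subalgebra $C_c(G,A\otimes\K(\H))$. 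Tracing a function with value $a\otimes T$ at $s$ through the construction of $\Phi_\mu$ (using formula (\ref{eq-left-action}) and the module identification above) gives the adjointable operator $\int_G i_A^\mu(a)i_G^\mu(s)\otimes TU_s\,ds$ on $(A\rtimes_{\alpha,\mu}G)\otimes\H$, which is exactly the value of $(i_A^\mu\otimes\id_{\K(\H)})\rtimes(i_G^\mu\otimes U)$ on that function; density finishes the argument.

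I expect the only real difficulty to be bookkeeping: keeping the several module identifications consistent and pinning down the concrete form of the Morita-compatibility isomorphism of Lemma~\ref{lem-Morita} precisely enough to run the $C_c$\nb-level comparison. An alternative organisation, which some readers may find cleaner, is to first observe that $\alpha\otimes\Ad U$ and $\alpha\otimes\id_{\K(\H)}$ on $A\otimes\K(\H)$ are exterior equivalent via the one-cocycle $s\mapsto 1_A\otimes U_s$, invoke Lemma~\ref{lem-ext} to descend the untwisting isomorphism to the $\mu$\nb-crossed products, and then apply the untwisted ($U=1$) case of the above to identify $(A\otimes\K(\H))\rtimes_{\alpha\otimes\id,\mu}G$ with $A\rtimes_{\alpha,\mu}G\otimes\K(\H)$.
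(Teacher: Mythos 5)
Your argument is correct, but its main route is organised differently from the paper's. The paper first observes that $s\mapsto 1_A\otimes U_s$ implements an exterior equivalence between $\alpha\otimes\Ad U$ and $\alpha\otimes\id_{\K(\H)}$ and invokes Lemma \ref{lem-ext} to untwist at the level of $\mu$-crossed products; only then does it apply strong Morita compatibility to the \emph{untwisted} equivalence bimodule $(A\otimes\H,\alpha\otimes 1_\H)$ and compose the two isomorphisms -- this is precisely the ``alternative organisation'' you sketch at the end. Your main route instead applies strong Morita compatibility (via Lemma \ref{lem-Morita}) directly to the twisted full module $(A\otimes\H,\alpha\otimes U)$, using the key observation that the right Hilbert $A\rtimes_{\alpha,\mu}G$-module $\E\rtimes_{\gamma,\mu}G$ is insensitive to the twist $\gamma$ (only the $A$-valued inner product and the $A$-action enter the completion), so $\E\rtimes_{\gamma,\mu}G\cong (A\rtimes_{\alpha,\mu}G)\otimes\H$ holds verbatim for the twisted action; the $C_c$-level computation with formula (\ref{eq-left-action}) then correctly identifies the descended left action with the integrated form of $(i_A^\mu\otimes\id_{\K(\H)}, i_G^\mu\otimes U)$, which simultaneously gives the factorisation through the $\mu$-completion. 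What your route buys is a shorter one-step argument in which the unitary $U$ stays visible in the final covariant pair, with no composition with an untwisting isomorphism; what the paper's route buys is that the exterior-equivalence statement (Lemma \ref{lem-ext}) is isolated as a result of independent interest (used elsewhere, e.g.\ in Remark \ref{rem-ext}), after which only the trivial-twist case of the module identification is required.
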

\begin{proof} We first observe that the homomorphism $1_A\otimes U:G\to \U\M(A\otimes \K(\H))$
implements an exterior equivalence between $(A\otimes \K(\H), \alpha\otimes \Ad U)$ and
$(A\otimes \K(\H),\alpha\otimes\id_{\K(\H)})$, hence by Lemma \ref{lem-ext} we get an isomorphism
$$(A\otimes\K(\H))\rtimes_{\alpha\otimes\Ad U, \mu}G\congto (A\otimes\K(\H))\rtimes_{\alpha\otimes\id_\K, \mu}G$$
given by the covariant homomorphism
$(i_{A\otimes\K(\H)}^\mu, i_{A\otimes \K(\H)}^\mu\circ (1_A\otimes U)\cdot i_G^\mu)$.
We also have an isomorphism $(A\otimes \K(\H))\rtimes_{\alpha\otimes\id_\K,\mu}G\cong A\rtimes_{\alpha,\mu}G\otimes \K(\H)$
given by the integrated form of the covariant homomorphism $(i_A^\mu\otimes\id_\K, i_G^\mu\otimes 1_\H)$.
To see this we consider the $(A\otimes\K(\H),\alpha\otimes \id_\K)-(A,\alpha)$ equivalence bimodule
$(A\otimes \H, \alpha\otimes 1_\H)$. It is easy to check that the
 map
$C_c(G,A)\otimes\H\to C_c(G,A\otimes \H); f\otimes\xi\mapsto[s\mapsto f(s)\otimes \xi]$ is isometric with respect to the
$A\rtimes_{\alpha,\mu}G$-valued inner products and therefore  extends to an isomorphism
 $A\rtimes_{\alpha,\mu}G\otimes\H\cong (A\otimes \H)\rtimes_{\alpha\otimes 1_\H,\mu}G$.
 In this picture, the left action of
$(A\otimes\K(\H))\rtimes_{\alpha\otimes\id_\K, \mu}G$ on $A\rtimes_{\alpha,\mu}G\otimes\H$
is given by the integrated form of $(i_A^\mu\otimes\id_\K, i_G^\mu\otimes 1_\H)$.
By strong Morita compatibility this integrates to an isomorphism
$$(A\otimes\K(\H))\rtimes_{\alpha\otimes\id_\K, \mu}G\cong \K(A\rtimes_{\alpha,\mu}G\otimes\H)=A\rtimes_{\alpha,\mu}G\otimes \K(\H).$$
The result now follows from composing  this isomorphism with the integrated form of
$(i_{A\otimes\K(\H)}^\mu,  (i_{A\otimes \K(\H)}^\mu\circ (1_A\otimes U))\cdot i_G^\mu)$.
\end{proof}

\begin{remark}\label{rem-ext2}  Recall that in \cite{Baum-Guentner-Willett} Baum, Guentner and Willett define a crossed product functor
$\rtimes_\mu$ to be {\em Morita compatible} if the isomorphism
$$\Phi_\mu: (A\otimes\K(\H))\rtimes_{\alpha\otimes\Ad U, \mu}G\congto A\rtimes_{\alpha,\mu}G\otimes \K(\H)$$
holds in the special case where $\H=\ell^2(\N)\otimes L^2(G)$ and $U=1_{\ell^2(\N)}\otimes  \lambda_G$.
Thus the above corollary shows that strong Morita compatibility in our sense implies
Morita compatibility in the sense of \cite{Baum-Guentner-Willett}. We refer to Section \ref{sec:minimal} below
for a more detailed comparison of the two notions of Morita compatibility. In particular
we shall see in Proposition \ref{prop-Morita} below that both conditions are equivalent if we restrict our attention
to systems $(A,G,\alpha)$ with  $A$
$\sigma$-unital.
\end{remark}

We are now ready for the main result of this section which shows that all crossed products
coming from correspondence functors admit dual coactions and enjoy Imai-Takai
duality. Recall for this that the dual coaction
$$\widehat{\alpha}: A\rtimes_{\alpha,u}G\to \M(A\rtimes_{\alpha,u}G\otimes C^*(G))$$
is given as the integrated form of the covariant homomorphism $(i_A^u\otimes 1_G, i_G^u\otimes u_G)$,
where $u_G$ denotes the universal representation of $G$.

\begin{theorem}\label{thm-dual}
Suppose that $\rtimes_\mu$ is a correspondence crossed-product functor for $G$. Then the dual coaction
$\widehat{\alpha}$ of $G$ on $A\rtimes_{\alpha,u}G$
factors to give a dual coaction
$$\widehat{\alpha}_\mu: A\rtimes_{\alpha,\mu}G\to \M(A\rtimes_{\alpha,\mu}G\otimes C^*(G))$$
given by the integrated form of $(i_A^\mu\otimes 1_{C^*(G)}, i_G^\mu\otimes u_G)$.
Moreover, the double dual crossed product $A\rtimes_{\alpha,\mu}G\rtimes_{\widehat{\alpha}_\mu}\widehat{G}$
is isomorphic to $A\otimes \K(L^2(G))$ such that this isomorphism transfers the double dual action
$\widehat{\widehat{\alpha}_\mu}:G\to \Aut( A\rtimes_{\alpha,\mu}G\rtimes_{\widehat{\alpha}_\mu}\widehat{G})$
to the action $\alpha\otimes\Ad\rho:G\to \Aut(A\otimes \K(L^2(G)))$. In particular, it follows that
$$A\rtimes_{\alpha,\mu}G\rtimes_{\widehat{\alpha}_\mu}\widehat{G}
\rtimes_{\widehat{\widehat{\alpha}}_\mu,\mu}G\cong A\rtimes_{\alpha,\mu}G\otimes \K(L^2(G)).$$
\end{theorem}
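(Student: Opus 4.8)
The statement breaks into three parts: (i) the dual coaction $\widehat\alpha$ descends to a coaction $\widehat\alpha_\mu$ on $A\rtimes_{\alpha,\mu}G$; (ii) the Imai--Takai identification $A\rtimes_{\alpha,\mu}G\rtimes_{\widehat\alpha_\mu}\widehat G\cong A\otimes\K(L^2(G))$ together with the identification of the double dual action as $\alpha\otimes\Ad\rho$; and (iii) the Katayama-type statement about the triple crossed product. Part (iii) is immediate from (ii): by functoriality of $\rtimes_\mu$ and (ii) we get $A\rtimes_{\alpha,\mu}G\rtimes_{\widehat\alpha_\mu}\widehat G\rtimes_{\widehat{\widehat\alpha}_\mu,\mu}G\cong(A\otimes\K(L^2(G)))\rtimes_{\alpha\otimes\Ad\rho,\mu}G$, and the latter is isomorphic to $A\rtimes_{\alpha,\mu}G\otimes\K(L^2(G))$ by Corollary~\ref{cor-ext} applied with the right regular representation $U=\rho$ on $\H=L^2(G)$. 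So the content is in (i) and (ii).

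For (i) the plan is to realise $\widehat\alpha=(i_A^u\otimes 1_{C^*(G)})\rtimes(i_G^u\otimes u_G)$ as a composition of maps each compatible with the quotient $q_{A,\mu}\colon A\rtimes_{\alpha,u}G\to A\rtimes_{\alpha,\mu}G$. View $C^*(G)$ as a $G$\nb-algebra with the trivial action and let $j_A\colon A\to\M(A\otimes C^*(G))$ be the nondegenerate $G$\nb-equivariant inclusion $a\mapsto a\otimes 1$: since a correspondence functor has the ideal property, it is functorial for generalised homomorphisms (Lemma~\ref{lem-ideal}), so $j_A\rtimes_\mu G$ is defined and $j_A\rtimes_u G$ factors through $q_{A,\mu}$. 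The unitary $1_A\otimes u_G\in\U\M(A\otimes C^*(G))$ implements an exterior equivalence between $\alpha\otimes\id$ and $\alpha\otimes\Ad u_G$, so by Lemma~\ref{lem-ext} (a correspondence functor is strongly Morita compatible) the resulting isomorphism $\Phi_u$ of the universal crossed products descends to an isomorphism $\Phi_\mu$ of the $\mu$\nb-crossed products. Composing with the canonical identification of $(A\otimes C^*(G))\rtimes_{\alpha\otimes\id}G$ with the appropriate completion of $(A\rtimes_\alpha G)\otimes C^*(G)$ — again compatible with $q_{A,\mu}$ by the same ideal-property/Morita-compatibility considerations — one sees that $\widehat\alpha$ is carried to a map killing $\ker q_{A,\mu}$. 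Hence it factors to a $*$\nb-homomorphism $\widehat\alpha_\mu\colon A\rtimes_{\alpha,\mu}G\to\M(A\rtimes_{\alpha,\mu}G\otimes C^*(G))$, necessarily the integrated form of $(i_A^\mu\otimes 1_{C^*(G)})\rtimes(i_G^\mu\otimes u_G)$, and the coaction identity is inherited from that of $\widehat\alpha$ as it need only be checked on $C_c(G,A)$.

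For (ii) one argues concretely, as in the classical Imai--Takai theorem. Represent $A\rtimes_{\alpha,\mu}G$ faithfully and nondegenerately on a Hilbert space $\H$ and write $(\pi,U)$ for the corresponding covariant representation of $(A,G,\alpha)$; then by definition of the crossed product by a coaction, $A\rtimes_{\alpha,\mu}G\rtimes_{\widehat\alpha_\mu}\widehat G$ is the $C^*$\nb-algebra in $\M(\K(\H\otimes L^2(G)))$ generated by $\{\pi(a)\otimes 1\}$, $\{U_s\otimes\lambda_s\}$ and $\{1\otimes M_f:f\in C_0(G)\}$. Conjugating by the unitary $\mathcal W\in\U(\H\otimes L^2(G))$, $(\mathcal W\zeta)(s)=U_s\zeta(s)$, replaces these by $\{[s\mapsto\pi(\alpha_{s^{-1}}(a))]\}$, $\{1\otimes\lambda_s\}$ and $\{1\otimes M_f\}$. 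The last two families generate $1_\H\otimes\K(L^2(G))$ because $C_0(G)\rtimes_{\lt,\red}G\cong\K(L^2(G))$, while $\cspn\{[s\mapsto\alpha_{s^{-1}}(a)f(s)]:a\in A,f\in C_0(G)\}=A\otimes C_0(G)$, so the algebra generated is $\pi(A)\otimes\K(L^2(G))\cong A\otimes\K(L^2(G))$ ($\pi$ is injective on $A$ since $i_A^\mu$ is and a faithful nondegenerate representation is injective on the multiplier algebra). Finally, the double dual action $\widehat{\widehat\alpha}_\mu$ — trivial on $\widehat\alpha_\mu(A\rtimes_{\alpha,\mu}G)$ and right translation on $1\otimes C_0(G)$ — is implemented by $\Ad(1_\H\otimes\rho_t)$; conjugation by $\mathcal W$ turns this into $\Ad(U_t\otimes\rho_t)$, and inspection on the generators of $\pi(A)\otimes\K(L^2(G))$ identifies it with $\alpha\otimes\Ad\rho$, using the standard identification of the right-translation action of $G$ on $C_0(G)\rtimes_{\lt,\red}G$ with $\Ad\rho$ on $\K(L^2(G))$. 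The only input beyond the classical case is the existence of $\widehat\alpha_\mu$: once one is in the concrete picture, there is no ``$\mu$'' left.

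The main obstacle is part (i). Unlike the descent of $*$\nb-homomorphisms along $q_{A,\mu}$, the descent of $\widehat\alpha$ genuinely uses the correspondence-functor hypothesis — it fails, for example, for the Brown--Guentner functor attached to $\lambda_G$ (cf.\ Remark~\ref{bgnocor}) — and the care needed is in matching the minimal/maximal tensor completions of the coacting algebra $C^*(G)$ and in verifying that the identification of $(A\otimes C^*(G))\rtimes_{\alpha\otimes\id,\mu}G$ with the corresponding completion of $(A\rtimes_{\alpha,\mu}G)\otimes C^*(G)$ is available for a correspondence functor and compatible with $q_{A,\mu}$. With (i) in hand, (ii) is the classical Imai--Takai argument and (iii) is a direct appeal to Corollary~\ref{cor-ext}.
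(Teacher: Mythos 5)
Your parts (ii) and (iii) are essentially fine. Part (iii) is the same appeal to Corollary~\ref{cor-ext} (with $U=\rho$) that the paper makes. Part (ii) takes a genuinely different route: the paper never redoes the Imai--Takai computation, but instead observes that the equivariant quotient maps give surjections $A\rtimes_{\un}G\rtimes\widehat G\onto A\rtimes_{\mu}G\rtimes\widehat G\onto A\rtimes_{\red}G\rtimes\widehat G$ whose composite is an isomorphism by the classical full/reduced Imai--Takai theorem, forcing each arrow to be an isomorphism; your direct computation on $\H\otimes L^2(G)$ is a valid alternative and, as you note, contains no $\mu$ once $\widehat\alpha_\mu$ exists. (Small caveat: the coaction crossed product is the closed span of the products $(\id\otimes\lambda)\widehat\alpha_\mu(b)(1\otimes M_f)$, not the $C^*$-algebra generated by the three families you list -- the latter contains multipliers such as $\pi(a)\otimes 1$ -- but with the correct definition your computation goes through.)

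The genuine gap is in part (i), which is the only place the correspondence hypothesis has to do real work. Your argument rests on ``the canonical identification of $(A\otimes C^*(G))\rtimes_{\alpha\otimes\id,\mu}G$ with the appropriate completion of $(A\rtimes_{\alpha}G)\otimes C^*(G)$,'' and no such identification is canonical or available: for an exotic functor, the crossed product of $A\otimes D$ by a trivially-acted-upon noncommutative $D$ is itself a genuinely exotic completion -- this is precisely the point of the constructions $\rtimes_{\mu_D^\nu}$ in \S\ref{subsec-tensor} and Corollary~\ref{cor-tensor} -- and Lemma~\ref{tensor-with-spaces} only provides such an identification for commutative $C_0(X)$. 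What your chain of maps actually requires is that the covariant pair $(i_A^\mu\otimes\id_{C^*(G)},\,i_G^\mu\otimes 1)$ (equivalently, after your exterior-equivalence step, $(i_A^\mu\otimes 1,\,i_G^\mu\otimes u_G)$) be bounded by the $\mu$-norm, which is a statement of essentially the same nature as the one being proved; at this point the reduction is circular. The missing idea is the replacement of $C^*(G)$ by compact operators: represent $C^*(G)$ faithfully on a Hilbert space $\H$ as the integrated form of a unitary representation $U$, so that it suffices to integrate the pair $(i_A^\mu\otimes 1_\H,\,i_G^\mu\otimes U)$ inside $\M(A\rtimes_{\alpha,\mu}G\otimes\K(\H))$; then Corollary~\ref{cor-ext} (strong Morita compatibility, using the equivalence module $A\otimes\H$ and the exterior equivalence implemented by $1_A\otimes U$) gives the isomorphism $(A\otimes\K(\H))\rtimes_{\alpha\otimes\Ad U,\mu}G\cong A\rtimes_{\alpha,\mu}G\otimes\K(\H)$, and composing with the descent of the generalised homomorphism $\id_A\otimes 1_\H\colon A\to\M(A\otimes\K(\H))$ (ideal property, Lemma~\ref{lem-ideal}) exhibits the desired factorisation. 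So the passage through $\K(\H)$ is not bookkeeping to be ``matched'' at the end; it is the step that makes the Morita-compatibility machinery applicable, and without it your composition does not land in $\M(A\rtimes_{\alpha,\mu}G\otimes C^*(G))$.
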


Note that the map $\widehat{\alpha}_\mu$, if it exists, is automatically faithful, since the
composition $\big(\id_{A\rtimes_{\alpha,\mu}G}\otimes 1_G)\circ \widehat{\alpha}_G$,
with $1_G$ the trivial representation of $G$, is the integrated form of $(i_A^\mu,i_G^\mu)$, hence
the identity on $A\rtimes_{\alpha,\mu}G$. In the language of \cite{Buss-Echterhoff:Exotic_GFPA}*{Definition~4.5}, the above result says that $\widehat{\alpha}_\mu$ satisfies $\mu$-duality or is a $\mu$-coaction.

\begin{proof} For the first assertion it suffices to show that the integrated form of
$(i_A^\mu\otimes 1_{C^*(G)}, i_G^\mu\otimes u_G)$ factors through $A\rtimes_{\alpha,\mu}G$. Then  all requirements for a coaction carry over to $\widehat{\alpha}_\mu$.
For this let us represent $C^*(G)$ faithfully on Hilbert space via the integrated form
of a unitary representation $U: G\to \U(\H)$. Then it suffices to show that the covariant representation
$(i_A^\mu\otimes 1_\H, i_G^\mu\otimes U)$ of $(A,G,\alpha)$ into $\M(A\rtimes_{\alpha,\mu}G\otimes \K(\H))$
factors through $A\rtimes_{\alpha,\mu}G$. Since $\rtimes_\mu$ is Morita compatible, we
know from Corollary \ref{cor-ext} that the covariant representation
$(i_A^\mu\otimes \id_\K, i_G^\mu\otimes U)$ of $(A\otimes \K(\H), G, \alpha\otimes \Ad U)$ into
$\M(A\rtimes_{\alpha,\mu}G\otimes \K(\H))$ factors through an isomorphism
$$\Phi_\mu: (A\otimes\K(\H))\rtimes_{\alpha\otimes\Ad U, \mu}G\congto A\rtimes_{\alpha,\mu}G\otimes \K(\H).$$
Since $\rtimes_\mu$ is functorial for generalised homomorphisms,
the $G$-equivariant $*$-homo\-morphism $\id_A\otimes 1_\H: A\to \M(A\otimes \K(\H))$ induces a
$*$-homomorphism $A\rtimes_{\alpha,\mu}G\to \M( (A\otimes\K(\H))\rtimes_{\alpha\otimes\Ad U, \mu}G)$
given by sending a function $f\in C_c(G,A)$ to the function $[s\mapsto (f(s)\otimes 1_\H)]\in C_c(G, \M(A\otimes \K(\H)))$.
The composition of both maps, which is well defined on $A\rtimes_{\alpha,\mu}G$, is given
by the integrated form of $(i_A^\mu\otimes 1_\H, i_G^\mu\otimes U)$.

Imai-Takai duality now follows from the well-known Imai-Takai duality theorems for the full and
reduced crossed products (see \cite{IT} for the reduced version or \cite[Theorem A.67]{Echterhoff-Kaliszewski-Quigg-Raeburn:Categorical} for
a discussion of the full and reduced cases): By construction, both quotient maps
$$A\rtimes_{\alpha,u}G\stackrel{q_\mu}{\onto} A\rtimes_{\alpha,\mu}G\stackrel{q_r}{\onto}A\rtimes_{\alpha,r}G$$
are equivariant for the coactions $\widehat{\alpha}$, $\widehat{\alpha}_\mu$ and $\widehat{\alpha}_r$, respectively.
They therefore induce quotient maps
$$A\rtimes_{\alpha,u}G\rtimes_{\widehat{\alpha}}\widehat{G}\stackrel{q_\mu\rtimes\widehat{G}}{\onto}
A\rtimes_{\alpha,\mu}G\rtimes_{\widehat{\alpha}_\mu}\widehat{G}\stackrel{q_r\rtimes\widehat{G}}{\onto}
A\rtimes_{\alpha,r}G\rtimes_{\widehat{\alpha}_r}\widehat{G}.$$
However, the composition is an isomorphism by  \cite[Proposition A.6.1]{Echterhoff-Kaliszewski-Quigg-Raeburn:Categorical}, hence
Imai-Takai duality for $\mu$-crossed products follows from Imai-Takai duality for
full and/or reduced crossed products.
The last assertion of the theorem is then
a direct consequence of Corollary \ref{cor-ext}.
\end{proof}

\begin{corollary}\label{cor-ideal}
Suppose that $\rtimes_\mu$ is a correspondence crossed-product functor for $G$.
Then the group algebra $C_\mu^*(G):=\C\rtimes_{\mu}G$ corresponds to
a translation invariant weak-* closed ideal $E\subseteq B(G)$ as in Proposition \ref{prop-BG-KLQ}.
\end{corollary}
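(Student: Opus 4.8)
The plan is to apply Theorem~\ref{thm-dual} to the trivial one-point system $(\C,G,\id)$ and then translate the resulting dual coaction into the language of the Fourier--Stieltjes algebra via the dictionary recalled in Section~\ref{sec:KLQ}. For $A=\C$ with the trivial action we have $\C\rtimes_{\id,u}G=C^*(G)$, the canonical covariant homomorphism $(i_\C^u,i_G^u)$ is $(1,u_G)$, and hence the dual coaction $\widehat{\id}$ is the integrated form of $u_G\otimes u_G$, that is, the comultiplication $\delta_G\colon C^*(G)\to\M(C^*(G)\otimes C^*(G))$. Since $\rtimes_\mu$ is a correspondence functor, Theorem~\ref{thm-dual} shows that $\delta_G$ factors through a coaction $\delta_\mu\colon C_\mu^*(G)\to\M(C_\mu^*(G)\otimes C^*(G))$ of $C^*(G)$ on $C_\mu^*(G)=\C\rtimes_\mu G$, compatible with the canonical quotient map $v\colon C^*(G)\onto C_\mu^*(G)$ coming from the intermediate functor property.

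Next I would identify $C_\mu^*(G)$ as an exotic group algebra $C_E^*(G)$. Because $\rtimes_\mu$ is an intermediate crossed-product functor, the canonical surjections $C^*(G)\onto C_\mu^*(G)\onto C_\red^*(G)$ present $C_\mu^*(G)$ as a quotient $C^*(G)/I_\mu$ with $I_\mu\subseteq\ker\lambda_G$. By the one-to-one correspondence between closed ideals of $C^*(G)$ and $G$\nb-invariant weak-$*$ closed subspaces of $B(G)$ recalled in Section~\ref{sec:KLQ} (from \cite{Kaliszewski-Landstad-Quigg:Exotic}), we have $I_\mu={}^\perp E$ for $E:=\{f\in B(G):f(x)=0\ \forall x\in I_\mu\}$, a $G$\nb-invariant weak-$*$ closed subspace of $B(G)$; moreover $A(G)\subseteq E$ since $I_\mu\subseteq\ker\lambda_G$. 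Thus $C_\mu^*(G)=C_E^*(G)$ and $v=v_E$.

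Finally I would invoke the characterisation of $u_G$-absorbing quotients. As recalled in Sections~\ref{sec:KLQ} and~\ref{sec:duality}, again following \cite{Kaliszewski-Landstad-Quigg:Exotic}, the comultiplication $\delta_G$ factors through a coaction on $C_E^*(G)$ precisely when $v_E$ absorbs the universal representation $u_G$, and the latter holds precisely when $E$ is an ideal of $B(G)$ (with respect to pointwise multiplication). Since the first of these conditions was established in the first paragraph, $E$ is a translation invariant weak-$*$ closed ideal of $B(G)$ containing $A(G)$, and $C_\mu^*(G)=C_E^*(G)$ corresponds to $E$ exactly as in Proposition~\ref{prop-BG-KLQ}. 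As every ingredient beyond Theorem~\ref{thm-dual} is already available, there is no substantial obstacle here; the only point needing a little care is the two successive uses of the Kaliszewski--Landstad--Quigg dictionary --- first to name the subspace $E$ attached to the quotient $C^*(G)\onto C_\mu^*(G)$, and then to read off that the descended comultiplication forces $E$ to be multiplicatively closed.
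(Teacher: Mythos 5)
Your proposal is correct and follows essentially the same route as the paper: apply Theorem \ref{thm-dual} to the trivial system $(\C,G,\id)$ to see that the quotient representation $v$ absorbs $u_G$ (equivalently, that the comultiplication descends to a coaction on $C_\mu^*(G)$), and then invoke the Kaliszewski--Landstad--Quigg dictionary from \S\ref{sec:KLQ} to identify $C_\mu^*(G)$ with $C_E^*(G)$ for a translation-invariant weak-$*$ closed ideal $E\subseteq B(G)$. The paper's proof is just a more compressed version of this, verifying the weak equivalence $v\sim v\otimes u_G$ directly by composing with the trivial representation rather than citing the ``absorbing $\Leftrightarrow$ ideal'' characterisation in two steps.
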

\begin{proof}  Let $v: G\to \U\M(C_\mu^*(G))$ be the homomorphism which integrates
to the quotient map $q_\mu: C^*(G)\to C_\mu^*(G)$. Then it follows from
Theorem \ref{thm-dual} that the integrated form of $v\otimes u_G$ factors through $C_\mu^*(G)$.
Since $(\id_{C_\mu^*(G)}\otimes 1_G)\circ (v\otimes u_G)=v$ we see that
$v\sim v\otimes u_G$ and result  follows from the discussions in Section \ref{sec:KLQ}.
\end{proof}

\section{KK-descent and the Baum-Connes conjecture}\label{sec:descent}
In this section we want to show that every crossed-product functor
$(A,\alpha)\mapsto A\rtimes_{\alpha,\mu}G$ which extends to
a functor $\Cor(G)\to \Cor$ as discussed in the previous section
will always allow a descent in Kasparov's bivariant $KK$-theory.
Let us briefly recall the cycles  in Kasparov's equivariant KK-theory group
$KK^G(A,B)$ in which $(A,\alpha)$ and $(B,\beta)$ are $G$\nb-algebras.
As in \cite{Kasparov:Novikov} we allow $\Z/2\Z$\nb-graded \cstar{}algebras and Hilbert modules.
In this case all $G$\nb-actions have to commute with  the given  gradings, so the grading
on the objects $(A,\alpha)$ and morphisms $(\E,\gamma)$ will always
induce canonical gradings on their crossed products $A\rtimes_{\alpha,\mu}G$
and $\E\rtimes_{\gamma,\mu}G$, respectively.
For convenience and to make sure that Kasparov products always exist we shall
assume that $A$ and $B$  are separable and that $G$ is second countable.

If  $(A,\alpha)$ and $(B,\beta)$ are two $G$\nb-algebras, we define $\EE^G(A,B)$ to be the set of
all quadruples  $(\E, \gamma, \phi, T)$ such that  $(\E,\phi, \gamma)$ is a
correspondence from $(A,\alpha)$ to $(B,\beta)$ such that $\E$ is countably generated,
and $T\in \Lb(\E)$ is an operator of degree one satisfying the following conditions:
\begin{enumerate}
\item $s\mapsto \phi(\alpha_s(a))T$ is continuous  for all $a\in A$, and
\item $[T,\phi(a)], (T-T^*)\phi(a), (T^2-1)\phi(a), (\Ad\gamma_s(T)-T)\phi(a)\in \K(\E)$
\end{enumerate}
for all $a\in A$, where we use the graded commutator $[\cdot,\cdot]$ in (2).
Two Kasparov cycles $(\E_1, \gamma_1, \phi_1, T_1)$  and
$(\E_2, \gamma_2, \phi_2, T_2)$ are  {\em isomorphic} if there exists
a bijection $u:\E_1\to \E_2$ which preserves all structures.
  If $G$ is trivial, there is no action $\gamma$ on $\E$
and the elements in $\EE(A,B)$ are just given by triples $(\E,\phi,T)$ with the obvious properties.

Write $B[0,1]$ for the algebra of continuous functions from
the unit interval $[0,1]$ to $B$ and $\epsilon_t: B[0,1]\to B$ for the evaluation at $t\in[0,1]$.
The action of $G$ on $B[0,1]=B\otimes C([0,1])$ is given by $\beta\otimes \id$.
A {\em homotopy} between two elements  $(\E_0, \gamma_0, \phi_0, T_0)$  and
$(\E_1, \gamma_1, \phi_1, T_1)$ in $\EE^G(A,B)$ is an element
$(\E,\gamma, \phi, T)\in \EE^G(A, B[0,1])$ such that
$$(\E_i, \gamma_i, \phi_i, T_i)\cong (\E\hotimes_{\eps_i}B, \gamma\hotimes 1, \phi\hotimes 1, T\hotimes 1), \quad i=0,1,$$
where, for each $t\in [0,1]$, $\E\hotimes_{\eps_t}B$ denotes the balanced tensor product $\E\otimes_{B[0,1]}B$
in which $B[0,1]$ acts on $B$ via $\epsilon_t$.
Then Kasparov defines $KK^G(A,B)$ as the set of homotopy classes in $\EE^G(A,B)$. It is an abelian group with respect to addition given by
taking direct sums of elements in $\EE^G(A,B)$.

Recall also that for $i=0,1$ one may define $KK_0^G(A,B):=KK^G(A,B)$ and $KK_1^G(A,B):=
KK^G(C_0(\mathbb R)\otimes A,B)\cong KK^G(A, C_0(\mathbb R)\otimes B)$ where $G$ acts trivially on $C_0(\mathbb R)$ (the last isomorphism follows from
Kasparov's Bott-periodicity theorem for $KK$-theory).

Note that if $\E$ is a graded module, then there  are natural gradings on the
crossed products determined by applying the given gradings pointwise to
functions with compact supports. The balanced tensor product of two
graded modules carries the diagonal grading. The following proposition extends
the descent given by Kasparov in \cite{Kasparov:Novikov} for full and reduced
crossed products to arbitrary correspondence crossed-product functors.

\begin{proposition}\label{prop-descent}
Let $(A,\alpha)\mapsto A\rtimes_{\alpha,\mu}G$ be a correspondence crossed-product functor. Then there is a
well-defined descent homomorphism
$$J_G^\mu: KK_{i}^G(A,B)\to KK_{i}(A\rtimes_{\alpha,\mu}G, B\rtimes_{\beta,\mu}G), \quad i=0,1,$$
given by sending a class $[\E,\gamma, \phi,T]\in KK^G_{i}(A,B)$ to the class
$$[\E\rtimes_{\gamma,\mu}G, \phi\rtimes_\mu G, T_\mu]\in KK_{i}(A\rtimes_{\alpha,\mu}G, B\rtimes_{\beta,\mu}G),$$
where $T_\mu\in \Lb(\E\rtimes_{\gamma,\mu}G)$ is the unique continuous extension
of the operator $\tilde{T}:C_c(G,\E)\to C_c(G,\E)$ given by $(\tilde{T}\xi)(s):=T(\xi(s))$ for all $s\in G$.
The descent is compatible with Kasparov products, \ie, we have
$$J_G^\mu(x\otimes_B y)=J_G^\mu(x)\otimes_{B\rtimes_{\beta,{\mu}}G}J_G^\mu(y)$$
for all $x\in KK^G_{i}(A,B)$ and $y\in KK^G_{j}(B,D)$ and $J_G^\mu(1_B)=1_{B\rtimes_{\beta, \mu}G}$.
\end{proposition}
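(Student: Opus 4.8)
The plan is to obtain $J_G^\mu$ from Kasparov's descent $J_G^{\un}$ for the universal crossed product (\cite{Kasparov:Novikov}), transporting everything along the canonical isomorphism \eqref{eq:CanonicalIsom} and using the correspondence-functor machinery of Theorem~\ref{thm-corr}. First I would check that, for a cycle $(\E,\gamma,\phi,T)\in\EE^G(A,B)$, the triple $(\E\rtimes_{\gamma,\mu}G,\ \phi\rtimes_\mu G,\ T_\mu)$ is a Kasparov cycle in $\EE(A\rtimes_{\alpha,\mu}G,B\rtimes_{\beta,\mu}G)$. The left action is built by regarding $\phi$ as a $G$-equivariant generalised homomorphism $A\to\Lb(\E)=\M(\K(\E))$: since $\rtimes_\mu$ has the ideal property (Lemma~\ref{lem-corr-to-ideal}) it is functorial for generalised homomorphisms (Lemma~\ref{lem-ideal}), and $\K(\E)\rtimes_{\Ad\gamma,\mu}G\cong\K(\E\rtimes_{\gamma,\mu}G)$ by Theorem~\ref{thm-corr}, so $\phi$ descends to $\phi\rtimes_\mu G\colon A\rtimes_{\alpha,\mu}G\to\M(\K(\E\rtimes_{\gamma,\mu}G))=\Lb(\E\rtimes_{\gamma,\mu}G)$. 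By \eqref{eq:CanonicalIsom} the module $\E\rtimes_{\gamma,\mu}G$ is $(\E\rtimes_{\gamma,\un}G)\otimes_{q_{B,\mu}}(B\rtimes_{\beta,\mu}G)$, hence countably generated, and under this identification $T_\mu=T_\un\otimes 1$ is adjointable with $(T_\mu)^*=(T^*)_\mu$; gradings are applied pointwise to functions of compact support as before.

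\textbf{Cycle conditions and well-definedness.} The heart of the verification is the observation that, under \eqref{eq:CanonicalIsom} and the identifications $\K(\E\rtimes_\un G)\cong\K(\E)\rtimes_{\Ad\gamma,\un}G$, $\K(\E\rtimes_\mu G)\cong\K(\E)\rtimes_{\Ad\gamma,\mu}G$, the canonical $*$-homomorphism $\Lb(\E\rtimes_\un G)\to\Lb(\E\rtimes_\mu G)$, $S\mapsto S\otimes 1$, restricts on compacts to the quotient map $\K(\E)\rtimes_{\Ad\gamma,\un}G\to\K(\E)\rtimes_{\Ad\gamma,\mu}G$; this is checked on $C_c(G,\K(\E))$, where both maps are given by the same convolution formula as in Lemma~\ref{lem-iso}. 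Consequently, for $y\in A\rtimes_\un G$ the elements $[T_\mu,(\phi\rtimes_\mu G)(q_{A,\mu}(y))]$, $(T_\mu-T_\mu^*)(\phi\rtimes_\mu G)(q_{A,\mu}(y))$ and $(T_\mu^2-1)(\phi\rtimes_\mu G)(q_{A,\mu}(y))$ are the images under this quotient of the corresponding elements for $J_G^\un$, which lie in $\K(\E\rtimes_\un G)$; since $q_{A,\mu}$ is surjective and the target carries the trivial $G$-action (so there is no continuity or $G$-invariance condition left to check), $(\E\rtimes_\mu G,\phi\rtimes_\mu G,T_\mu)$ is a genuine cycle. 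Running the same argument with $B[0,1]$ in place of $B$, using $B[0,1]\rtimes_\mu G\cong(B\rtimes_\mu G)[0,1]$ (valid for any crossed-product functor since $[0,1]$ is compact, see the remark after Lemma~\ref{tensor-with-spaces}) together with compatibility of the descent with the evaluation maps $\eps_t$, shows that homotopic cycles have homotopic descents; as the construction visibly respects direct sums, $J_G^\mu$ is a well-defined group homomorphism, and the case $i=1$ reduces to $i=0$ via Lemma~\ref{tensor-with-spaces} applied to $C_0(\mathbb R)\otimes A$. Finally, $1_B$ is represented by the identity correspondence $(B,\beta,\id,0)$, which the descent sends to the identity correspondence $(B\rtimes_{\beta,\mu}G,\id,0)$ representing $1_{B\rtimes_{\beta,\mu}G}$.

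\textbf{Compatibility with Kasparov products.} Here I would follow Kasparov's argument. Represent $x\in KK^G(A,B)$ and $y\in KK^G(B,D)$ by cycles $(\E_1,\gamma_1,\phi_1,F_1)$ and $(\E_2,\gamma_2,\phi_2,F_2)$, and choose, via Kasparov's technical theorem, a representative $(\E_1\hotimes_{\phi_2}\E_2,\ \phi_1\hotimes 1,\ S)$ of $x\otimes_B y$ in which $S$ is an $F_2$-connection satisfying the positivity condition. Applying the correspondence functor (Theorem~\ref{thm-corr}), which sends internal tensor products to internal tensor products, identifies $(\E_1\hotimes_{\phi_2}\E_2)\rtimes_\mu G$ with $(\E_1\rtimes_\mu G)\hotimes_{\phi_2\rtimes_\mu G}(\E_2\rtimes_\mu G)$ compatibly with the left actions. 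It then remains to verify that $S_\mu$ is an $(F_2)_\mu$-connection and satisfies the positivity condition over $A\rtimes_\mu G$; but, just as in the verification of the cycle conditions above, each of these is an ``element of $\K$'' (respectively ``positive modulo $\K$'') statement obtained by applying the quotient $\cdot\rtimes_\un G\to\cdot\rtimes_\mu G$ --- a positive $*$-homomorphism carrying compacts to compacts --- to the corresponding statement for $J_G^\un$, which holds because Kasparov's descent turns connections into connections and preserves positivity (and is itself product-compatible). Hence $J_G^\mu(x\otimes_B y)$ is a Kasparov product of $J_G^\mu(x)$ and $J_G^\mu(y)$, and by uniqueness of Kasparov products (all algebras involved remain separable) it equals $J_G^\mu(x)\otimes_{B\rtimes_{\beta,\mu}G}J_G^\mu(y)$. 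The main obstacle is precisely this last step: one must make sure that the connection and positivity conditions underlying the $KK$-product genuinely survive the passage from the universal to the $\mu$-crossed product, which is where \eqref{eq:CanonicalIsom} and the identity $\K(\E)\rtimes_\mu G=\K(\E\rtimes_\mu G)$ are used essentially, and some care with the creation operators $\xi\mapsto\xi\hotimes(\,\cdot\,)$ is needed in order to reduce the connection condition to a dense set.
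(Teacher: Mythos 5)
Your proposal is correct and follows essentially the same route as the paper: one checks that the descended triple is a Kasparov cycle because the relevant commutators and defects come from $\K(\E)\rtimes_{\Ad\gamma,\mu}G\cong\K(\E\rtimes_{\gamma,\mu}G)$ (equivalently, are images of the universal-descent elements under the quotient), handles homotopies and the $i=1$ case via Lemma~\ref{tensor-with-spaces}, and transports Kasparov's product argument, with the connection condition controlled by the identification of off-diagonal compacts. The only cosmetic difference is that you route the cycle and product verifications through the universal descent and the quotient map rather than checking directly on $C_c(G,\K(\E))$, and where you say ``care with the creation operators is needed'' the paper makes this precise by invoking Lemma~\ref{lem-iso} applied to $\K(\E_2,\E_1\hotimes_B\E_2)$.
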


\begin{proof}
We basically follow the proof of \cite[Theorem on p. 172]{Kasparov:Novikov}.
First observe that since $\Lb(\E\rtimes_{\gamma,\mu}G)=\M(\K(\E\rtimes_{\gamma,\mu}G))=\M(\K(\E)\rtimes_{\Ad\gamma,\mu}G)$,
the operator $T_\mu$ is just the image of $T$ under the extension of the canonical  inclusion
from $\K(\E)$ into $\M(\K(\E)\rtimes_{\Ad\gamma,\mu}G)$ to $\M(\K(\E))\cong \Lb(\E)$. It therefore exists.

Next, the conditions for $(\E\rtimes_{\gamma,\mu}G, \phi\rtimes_\mu G, T_\mu)$ follow from the fact
that for any $a\in C_c(G,A)$ the elements $[\tilde{T},\phi(a)], (\tilde{T}^2-1)\phi(a), (\tilde{T^*}-\tilde{T})\phi(a)$ (as operators on $C_c(G,\E)$) lie in $C_c(G,\K(\E))$, and hence in  $\K(\E)\rtimes_{\Ad\gamma,\mu}G\cong \K(\E\rtimes_{\gamma, \mu}G)$.
This shows that $(\E\rtimes_{\beta,\mu}G, \phi\rtimes_\mu G, T_\mu)$ determines a class in
$KK(A\rtimes_{\alpha,\mu}G,B\rtimes_{\beta,\mu}G)$.

To see that this class does not depend on
the choice of the representative $(\E,\gamma, \phi,T)\in \EE^G(A,B)$ we need
to observe that the procedure preserves homotopy. So assume now that
$(\E,\gamma, \phi,T)\in \EE^G(A,B[0,1])$. By Lemma \ref{tensor-with-spaces}
we know that $B[0,1]\rtimes_{\beta\otimes\id,\mu}G\cong (B\rtimes_{\beta,\mu}G)[0,1]$,
hence $(\E\rtimes_{\gamma,\mu}G, \phi\rtimes_\mu G, T_\mu)\in \EE(A\rtimes_{\alpha,\mu}G, B\rtimes_{\beta,\mu}G[0,1])$. It follows then from correspondence functoriality of our crossed-product functor that
evaluation of $(\E\rtimes_{\gamma,\mu}G, \phi\rtimes_\mu G, T_\mu)$ at any $t\in [0,1]$
coincides with the descent of the evaluation of
$(\E,\gamma, \phi, T)$ at $t$ (this implies that the modules coincide, but a short
look at the operator on functions with compact supports also shows that the operators
coincide). This shows that the descent gives a well-defined homomorphism of  $KK$-groups.
Using the isomorphism $(C_0(\mathbb R)\otimes A)\rtimes_{\mu}G\cong C_0(\mathbb R)\otimes (A\rtimes_{\mu}G)$ if $G$ acts
trivially on $\mathbb R$,
which follows from Lemma \ref{tensor-with-spaces}, it follows that it preserves the  dimension of
the $KK$-groups.

Finally the fact that the descent is compatible with Kasparov products follows from
the same arguments as used in the original proof of Kasparov as given
on \cite[page 173]{Kasparov:Novikov} for the full and reduced crossed products
together with an application of Lemma \ref{lem-iso} to $\K(\E_2, \E_1\hotimes_B\E_2)$.
\end{proof}

Suppose now that $(A,G,\alpha)$ is a \cstar{}dynamical system with $A$ separable and $G$ second countable.  Suppose that $A\rtimes_{\alpha,\nu}G$ is any quotient of the universal crossed product
$A\rtimes_{\alpha,\un}G$ with quotient map $q_{\nu}:A\rtimes_{\alpha,\un}G\to A\rtimes_{\alpha,\nu}G$. Using the Baum-Connes assembly map
$$\as_{(A,G)}^\un: K_*^\top(G;A)\to K_*(A\rtimes_{\alpha,\un}G)$$
for the full crossed product, we obtain an assembly map
$$\as_{(A,G)}^\nu:K_*^\top(G;A)\to K_*(A\rtimes_{\alpha,\nu}G)$$
by putting
\begin{equation}\label{as0}
\as_{(A,G)}^\nu=q_{\nu,*}\circ \as_{(A,G)}^\un.
\end{equation}
Of course, in general we cannot assume that
this map has any good properties, since we should assume that there is a huge variety
of  quotients of $A\rtimes_{\alpha,\un}G$ with different K-theory groups.  If the \cstar{}algebra $A\rtimes_{\alpha,\nu}G$ lies
between the maximal and reduced crossed products, then the Baum-Connes conjecture predicts that $\as_{(A,G)}^\nu$ is split injective, but we cannot say more than that.

On the other hand, if $G$ is $K$-amenable, then the quotient map
$q_\red:A\rtimes_{\alpha,\un}G\to A\rtimes_{\alpha,\red}G$ induces an isomorphism
of K-theory groups, and there might be a chance that a similar result will hold for
intermediate crossed products. For arbitrary intermediate crossed products this is not true,
as follows from \cite[Example 6.4]{Baum-Guentner-Willett}, where an easy
counter example is given that applies to any $K$-amenable\footnote{The discussion in \cite[Example 6.4]{Baum-Guentner-Willett} is for a-T-menable, non-amenable groups, but the same argument applies if one assumes $K$-amenability in place of a-T-menability.}, non-amenable group. However, we shall
see below that it holds for all correspondence crossed-product functors
for any $K$-amenable group!

We need to recall the construction of the assembly map:
For this assume that $G$ is a second countable locally compact group and that $\underline{EG}$ is
a universal proper $G$\nb-space, \ie, $\underline{EG}$ is a locally compact
proper $G$\nb-space such that for every locally compact $G$\nb-space $Y$ there is a
(unique up to $G$\nb-homotopy) continuous $G$\nb-map $\varphi:Y\to\underline{EG}$.
The topological $K$-theory of $G$ with coefficient $A$ is defined as
$$K_*^{\top}(G;A):=\lim_{X\subseteq \underline{EG}}KK_*^G(C_0(X),A),$$
where $X\subseteq \underline{EG}$ runs through all $G$\nb-compact  (which means that $X$ is
a closed $G$\nb-invariant subset with $G\backslash X$ compact) subsets of $\underline{EG}$.

Now, using properness, for any such $G$\nb-compact subset $X\subseteq\underline{EG}$ we may choose a
cut-off function $c\in C_c(G)^+$ such that $\int_{G} c^2(s^{-1}x)\, ds=1$ for all $x\in X$.
Define $p_X(s,x)=\Delta_G(s)^{-1/2} c(x)x(s^{-1}x)$. It follows from the properties of $c$ that
the function
$p_X\in C_c(G, C_0(X))\subseteq C_0(X)\rtimes G$ is an orthogonal projection.  As any two cut-off functions with the properties above are homotopic,
the $K$-theory class $[p_X]\in K_*(C_0(X)\rtimes G)$ does not depend on the choice of
the cut-off function $c$. Note also that proper actions on spaces are always amenable,
hence the universal and reduced crossed products coincide. This also implies that
all intermediate crossed products coincide.
Using this and the above defined descent in KK-theory, we may now construct a
direct assembly map for any correspondence crossed-product functor
$(A,\alpha)\mapsto A\rtimes_{\alpha,\mu}G$ and separable $G$-algebra $A$ given by the composition
$${\jas}_{X}^\mu: KK_*^G(C_0(X), A)\stackrel{J^\mu}{\longrightarrow} KK_*(C_0(X)\rtimes G, A\rtimes_{\alpha,\mu}G)
\stackrel{[p_X]\otimes \cdot}{\longrightarrow} K_*(A\rtimes_{\alpha,\mu}G).$$
Note that this is precisely the same construction as given by Baum, Connes and Higson
in \cite{Baum-Connes-Higson:BC} for the universal or reduced crossed products, and as in those cases it is
easy to check that the maps are compatible with the limit structure and therefore define a
direct assembly map
\begin{equation}\label{as1}
\jas_{(A,G)}^\mu: K_*^\top(G;A)\to K_*(A\rtimes_{\alpha,\mu}G);
\end{equation}
the notation $\jas$ is meant to recall that we used the descent functor in the definition.  However, the next result shows that this assembly map is the same as that in line \eqref{as0} above: it is a $KK$-theoretic version of \cite[Proposition 4.5]{Baum-Guentner-Willett}.

\begin{lemma}\label{twoas}
Assume $(A,G,\alpha)$ is a \cstar{}dynamical system with $A$ separable and $G$ second countable.  Assume $\rtimes_\mu$ is a correspondence crossed-product functor for $G$. Then the diagram
$$
\xymatrix{ & & K_*(A\rtimes_{\un}G)\ar[d]^{(q_\mu)_*}\\
K_*^\top(G;A) \ar[urr]^{\as^{\un}_{(A,G)}}   \ar[rr]^{\jas^{\mu}_{(A,G)}}  \ar[drr]_{\as^{\red}_{(A,G)}} & &  K_*(A\rtimes_{\mu}G)\ar[d]^{(q^\mu_\red)_*}\\
& & K_*(A\rtimes_{\red}G)}
$$
commutes. In particular, the assembly maps $\as_{(A,G)}^\mu$ and $\jas_{(A,G)}^\mu$ of lines \eqref{as0} and \eqref{as1} above agree.
\end{lemma}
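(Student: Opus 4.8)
The plan is to reduce the whole statement to one naturality property of the descent, namely that each quotient map intertwines the full descent with the $\mu$-descent. Concretely, I would first show that for every $G$-compact $X\subseteq\underline{EG}$ and every $x\in KK_*^G(C_0(X),A)$,
$$(q_\mu)_*\bigl(J_G^\un(x)\bigr)=J_G^\mu(x)\quad\text{in }KK_*(C_0(X)\rtimes G,\,A\rtimes_{\alpha,\mu}G),$$
and, with the quotient $q_\red^\mu\colon A\rtimes_{\alpha,\mu}G\to A\rtimes_{\alpha,\red}G$ in place of $q_\mu$,
$$(q_\red^\mu)_*\bigl(J_G^\mu(x)\bigr)=J_G^\red(x).$$
Here we use that a proper $G$-space carries an amenable action, so that $C_0(X)\rtimes_\un G=C_0(X)\rtimes_\mu G=C_0(X)\rtimes_\red G$ and the source algebra of all three descents is literally the same $C^*$-algebra, which we write $C_0(X)\rtimes G$.

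To prove the first identity I would argue at the level of Kasparov cycles. Write $[q_\mu]\in KK(A\rtimes_{\alpha,\un}G,\,A\rtimes_{\alpha,\mu}G)$ for the class of the $*$-homomorphism $q_\mu$; then $(q_\mu)_*$ is right Kasparov multiplication by $[q_\mu]$, so $(q_\mu)_*(J_G^\un(x))=J_G^\un(x)\otimes_{A\rtimes_{\alpha,\un}G}[q_\mu]$. If $(\E,\gamma,\phi,T)$ represents $x$, this product is represented by the internal tensor product cycle
$$\Bigl((\E\rtimes_{\gamma,\un}G)\otimes_{q_\mu}(A\rtimes_{\alpha,\mu}G),\ (\phi\rtimes_\un G)\otimes 1,\ T_\un\otimes 1\Bigr).$$
By the canonical isomorphism \eqref{eq:CanonicalIsom} (with the coefficient algebra $B$ replaced by $A$) the underlying Hilbert module is exactly $\E\rtimes_{\gamma,\mu}G$; and a direct check on the dense subspace $C_c(G,\E)$, using the formula \eqref{eq-left-action} for the left action and the description of $T_\mu$ in Proposition \ref{prop-descent} as the image of $T$ under $\K(\E)\hookrightarrow\M(\K(\E)\rtimes_{\Ad\gamma,\mu}G)\cong\Lb(\E\rtimes_{\gamma,\mu}G)$, shows that $(\phi\rtimes_\un G)\otimes 1$ and $T_\un\otimes 1$ correspond to $\phi\rtimes_\mu G$ and $T_\mu$. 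Hence the cycle is $(\E\rtimes_{\gamma,\mu}G,\phi\rtimes_\mu G,T_\mu)$, which represents $J_G^\mu(x)$. The identity $(q_\red^\mu)_*J_G^\mu=J_G^\red$ follows verbatim.

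Granting these naturality statements the rest is formal. By construction $\jas_X^\nu$ equals $[p_X]\otimes_{C_0(X)\rtimes G}J_G^\nu(\cdot)$ for $\nu\in\{\un,\mu,\red\}$, and for $\nu=\un,\red$ this is the Baum--Connes--Higson assembly map, so $\jas_X^\un=\as_X^\un$ and $\jas_X^\red=\as_X^\red$. Associativity of the Kasparov product then gives
$$q_{\mu,*}\bigl(\as_X^\un(x)\bigr)=[p_X]\otimes\bigl(q_{\mu,*}J_G^\un(x)\bigr)=[p_X]\otimes J_G^\mu(x)=\jas_X^\mu(x),$$
and likewise $q_{\red,*}^\mu(\jas_X^\mu(x))=[p_X]\otimes J_G^\red(x)=\as_X^\red(x)$. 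Since the three assembly maps and the induced maps $q_{\mu,*}$, $q_{\red,*}^\mu$ are all compatible with the inductive limit over $G$-compact $X\subseteq\underline{EG}$, passing to the limit yields the asserted commuting triangles on $K_*^\top(G;A)$. In particular $q_{\mu,*}\circ\as^\un_{(A,G)}=\jas^\mu_{(A,G)}$, so by definition \eqref{as0} the maps $\as^\mu_{(A,G)}$ and $\jas^\mu_{(A,G)}$ coincide.

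The main obstacle is the cycle-level computation in the second paragraph: identifying the internal tensor product of the full-descent cycle with the $*$-homomorphism $q_\mu$ with the $\mu$-descent cycle, operator and left action included. The module part is precisely \eqref{eq:CanonicalIsom}, but one must also match $T_\un\otimes 1$ with $T_\mu$ and $(\phi\rtimes_\un G)\otimes 1$ with $\phi\rtimes_\mu G$; this is a routine but slightly delicate unwinding of definitions on $C_c(G,\E)$, and it is exactly here that one uses that $\rtimes_\mu$ is a correspondence functor, so that $\K(\E)\rtimes_{\Ad\gamma,\mu}G\cong\K(\E\rtimes_{\gamma,\mu}G)$ and $T_\mu$ is defined in the first place, as in Proposition \ref{prop-descent}.
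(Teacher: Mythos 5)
Your proposal is correct and follows essentially the same route as the paper's own proof: reduce to the identity $J^\un(x)\otimes[q_\mu]=J^\mu(x)$ (and its reduced analogue), verify it on cycles via the canonical isomorphism \eqref{eq:CanonicalIsom} matching the left action and the operator on $C_c(G,\E)$, and then conclude by associativity of the Kasparov product and compatibility with the inductive limit over $G$-compact subsets of $\underline{EG}$. No substantive differences to report.
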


\begin{proof}
Let $[q_\mu]\in KK(A\rtimes_{\un}G,A\rtimes_\mu G)$ be the $KK$-class induced by the quotient \Star{}homomorphism
$q_\mu:A\rtimes_{\un} G \onto A\rtimes_\mu G$.
For commutativity of the top triangle, it suffices to show that for any $G$-compact subset $X$ of $\underline{EG}$ the diagram
$$
\xymatrix{ KK_*^G(C_0(X),A) \ar[dd]^{J^\mu} \ar[r]^-{J^\un} & KK_*(C_0(X)\rtimes G , A\rtimes_{\alpha,\un} G) \ar[d]^{[p_X]\otimes\cdot} \\
& KK_*(\C,A\rtimes_{\alpha,\un} G) \ar[d]^{\cdot \otimes [q_\mu]} \\
KK_*(C_0(X)\rtimes G , A\rtimes_{\alpha,\mu} G )\ar[r]^-{[p_X]\otimes \cdot} & KK_*(\C,A\rtimes_{\alpha,\mu} G) }
$$
commutes (we have used that $C_0(X)\rtimes G=C_0(X)\rtimes_\mu G$ to identify the group in the bottom left corner).  In symbols, commutativity of this diagram means that for any $x\in KK_*^G(C_0(X),A)$ we have
$$
([p_X]\otimes J^\un(x))\otimes [q_\mu]=[p_X]\otimes J^\mu(x).
$$
Using associativity of the Kasparov product, it thus suffices to show that for any $x\in KK_*^G(C_0(X),A)$,  $J^\un(x)\otimes[q_{\mu}]=J^\mu(x)$.

Indeed, say $x$ is represented by the quadruple $(\mathcal{E},\gamma,\phi,T)$.  Then $J^\un(x)\otimes[q_{\mu}]$ and $J^\mu(x)$ are represented by the triples
$$
(\mathcal{E}\rtimes_{\gamma,\un}G\otimes_{B\rtimes_{\beta} G}B\rtimes_{\beta,\mu} G,\phi\rtimes_\mu G \otimes 1,T_\un\otimes 1) \text{ and }(\mathcal{E}\rtimes_{\gamma,\mu}G,\phi\rtimes_\mu G ,T_\mu)
$$
respectively.  It is easy to see that the canonical isomorphism
$\mathcal{E}\rtimes_{\gamma,\un}G\otimes_{B\rtimes_{\beta,\un} G}B\rtimes_{\beta,\mu} G\congto \mathcal{E}\rtimes_{\gamma,\mu}G$
of Equation~\eqref{eq:CanonicalIsom} sends $\phi\rtimes_\mu G \otimes 1$ to $\phi\rtimes_\mu G$ and $T_\un\otimes 1$ to $T_\mu$, and hence
induces an isomorphism of these triples.

This shows commutativity of the upper diagram. Commutativity of the lower one follows by similar arguments.
\end{proof}

Recall now that a group $G$ is said to satisfy the {\em strong Baum-Connes conjecture}
(or that $G$ has a $\gamma$-element equal to one), if
there exists an $\underline{EG}\rtimes G$-algebra $\mathcal B$ (which means that
there is a $G$\nb-equivariant nondegenerate \Star{}homomorphism $C_0(\underline{EG})$ into
the center $Z\M(\mathcal B)$ of the multiplier algebra $\M(\mathcal B)$),  together with classes
$D\in KK_*^G(\mathcal B, \C)$ and $\eta\in KK_*^G(\C,\mathcal B)$ such that
$\eta\otimes_{\mathcal B} D =1_G\in KK^G(\C,\C)$. It was shown by
Higson and Kasparov in \cite{Higson-Kasparov:Etheory} that all a-T-menable groups
satisfy the strong Baum-Connes conjecture. We then get

\begin{theorem}\label{thm-gamma}
Suppose that $G$ is a locally compact group which satisfies the strong Baum-Connes conjecture
and let $(A,\alpha)\to A\rtimes_{\alpha,\mu}G$ be a correspondence crossed-product functor.
Then the $\mu$-assembly map
$$\as^\mu_{(A,G)}: K_*^\top(G;A)\to K_*(A\rtimes_{\alpha,\mu}G)$$
is an isomorphism for all $G$\nb-algebras $(A,\alpha)$.
\end{theorem}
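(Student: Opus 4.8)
The plan is to run a Dirac--dual-Dirac argument in the style of Julg--Valette and Higson--Kasparov, using the $KK$-descent $J_G^\mu$ of Proposition~\ref{prop-descent} together with its multiplicativity to push the abstract factorisation $\eta\otimes_{\mathcal B}D=1_G$ down to the level of $\mu$-crossed products, and then reducing the isomorphism statement to the case of the proper coefficient algebra $\mathcal B\otimes A$, where it is classical.

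First I would replace $\as_{(A,G)}^\mu$ by the descent-assembly map $\jas_{(A,G)}^\mu$; this is allowed by Lemma~\ref{twoas}. The advantage is that $\jas^\mu$ is manifestly natural with respect to $KK^G$: since $\jas_X^\mu=[p_X]\otimes J^\mu(\,\cdot\,)$ on each $KK_*^G(C_0(X),A)$, the multiplicativity $J_G^\mu(y\otimes_Ax)=J_G^\mu(y)\otimes J_G^\mu(x)$ from Proposition~\ref{prop-descent} together with associativity of Kasparov products shows, after passing to the inductive limit over $G$-compact $X\subseteq\underline{EG}$, that for every $x\in KK^G(A,A')$ and $z\in K_*^\top(G;A)$ one has $\jas_{(A',G)}^\mu(z\otimes_Ax)=\jas_{(A,G)}^\mu(z)\otimes J_G^\mu(x)$. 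In other words, $\jas^\mu$ is a natural transformation between the functors $A\mapsto K_*^\top(G;A)$ and $A\mapsto K_*(A\rtimes_{\alpha,\mu}G)$ from $KK^G$ to abelian groups.

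Next, for the given $(A,\alpha)$ I would form the coefficient-twisted Dirac and dual-Dirac classes $D_A:=\tau_A(D)\in KK_*^G(\mathcal B\otimes A,A)$ and $\eta_A:=\tau_A(\eta)\in KK_*^G(A,\mathcal B\otimes A)$, where $\tau_A$ denotes exterior product with $1_A\in KK^G(A,A)$ (taking $\mathcal B$ separable, which is possible as $G$ is second countable). Functoriality of the exterior product gives $\eta_A\otimes_{\mathcal B\otimes A}D_A=1_A$. Feeding $\eta_A$ and $D_A$ into the naturality square above, and using $J_G^\mu(\eta_A)\otimes J_G^\mu(D_A)=J_G^\mu(1_A)=1_{A\rtimes_{\alpha,\mu}G}$ once more, I obtain a commuting diagram
$$
\begin{CD}
K_*^\top(G;A) @>{\jas_{(A,G)}^\mu}>> K_*(A\rtimes_{\alpha,\mu}G)\\
@V{\eta_{A,*}}VV @VV{J_G^\mu(\eta_A)_*}V\\
K_*^\top(G;\mathcal B\otimes A) @>{\jas_{(\mathcal B\otimes A,G)}^\mu}>> K_*((\mathcal B\otimes A)\rtimes_\mu G)\\
@V{D_{A,*}}VV @VV{J_G^\mu(D_A)_*}V\\
K_*^\top(G;A) @>{\jas_{(A,G)}^\mu}>> K_*(A\rtimes_{\alpha,\mu}G)
\end{CD}
$$
in which both vertical composites are the identity. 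An elementary diagram chase then shows that $\jas_{(A,G)}^\mu$, being a retract of $\jas_{(\mathcal B\otimes A,G)}^\mu$ in this strong sense (both squares commute and both vertical composites are identities), is an isomorphism as soon as $\jas_{(\mathcal B\otimes A,G)}^\mu$ is. So it remains to treat the coefficient algebra $\mathcal B\otimes A$.

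Finally, since the equivariant nondegenerate homomorphism $C_0(\underline{EG})\to Z\M(\mathcal B)$ induces one $C_0(\underline{EG})\to Z\M(\mathcal B\otimes A)$, the algebra $\mathcal B\otimes A$ is a proper $G$-algebra; its action is therefore amenable, so all its crossed products coincide and $(\mathcal B\otimes A)\rtimes_\un G=(\mathcal B\otimes A)\rtimes_\mu G=(\mathcal B\otimes A)\rtimes_\red G$. Consequently $\jas_{(\mathcal B\otimes A,G)}^\mu=\as_{(\mathcal B\otimes A,G)}^\mu$ is identified with the usual reduced assembly map $\as_{(\mathcal B\otimes A,G)}^\red$, which is an isomorphism by the standard results on the Baum--Connes conjecture for proper coefficient algebras; the a-T-menable case is recovered by invoking \cite{Higson-Kasparov:Etheory} to produce $\mathcal B$, $D$ and $\eta$. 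I expect the main obstacle to be organisational rather than conceptual: one must set up the naturality of $\jas^\mu$ carefully at the level of the inductive limit defining $K_*^\top(G;A)$, and verify that the exterior-product classes $\tau_A(D)$ and $\tau_A(\eta)$ genuinely inherit the factorisation $\eta\otimes_{\mathcal B}D=1_G$ -- everything else is formal once Proposition~\ref{prop-descent} and Lemma~\ref{twoas} are in hand.
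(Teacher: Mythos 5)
Your proof is correct, but it takes a genuinely different route from the paper's. The paper does not rerun the Dirac--dual-Dirac machinery at the level of $\rtimes_\mu$: it quotes the known isomorphism of the \emph{reduced} assembly map for groups satisfying the strong Baum--Connes conjecture (\cite{Tu:Kamenable}, \cite{Higson-Kasparov:Etheory}), notes that such groups are $K$-amenable, invokes Theorem \ref{thm-Kamenable} to see that $(q_\red^\mu)_*$ is an isomorphism on $K$-theory (indeed a $KK$-equivalence), and then reads off the result from the commuting triangle of Lemma \ref{twoas}, since $\as^\red_{(A,G)}=(q_\red^\mu)_*\circ\jas^\mu_{(A,G)}$ with two of the three maps isomorphisms. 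Your argument is instead the ``direct'' proof the paper only alludes to after the statement: you use multiplicativity of $J_G^\mu$ (Proposition \ref{prop-descent}) to make $\jas^\mu$ natural in the coefficient algebra, cut with $\eta_A=\tau_A(\eta)$ and $D_A=\tau_A(D)$ so that $\jas^\mu_{(A,G)}$ becomes a retract of $\jas^\mu_{(\mathcal B\otimes A,G)}$, and then use that for the $\underline{EG}$-proper algebra $\mathcal B\otimes A$ all exotic crossed products coincide, reducing to the proper-coefficient case. Both arguments are sound; the paper's is shorter because Theorem \ref{thm-Kamenable} is proved anyway and yields the stronger $KK$-equivalence statement, whereas yours is self-contained relative to the descent machinery but needs the classical fact that assembly is an isomorphism for proper coefficient algebras (Kasparov--Skandalis, Chabert--Echterhoff--Meyer) --- an external input of comparable weight to Tu's theorem, which you should cite explicitly rather than call ``standard''. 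Two small points, largely shared with the paper itself: the descent and Lemma \ref{twoas} are only available for second countable $G$ and separable coefficients, so your diagram (and the separable choice of $\mathcal B$) tacitly restricts to that setting; and when identifying $\jas^\mu_{(\mathcal B\otimes A,G)}$ with $\as^\red_{(\mathcal B\otimes A,G)}$ you should justify that a $C_0(\underline{EG})$-$G$-algebra has full $=$ reduced (hence all intermediate) crossed products, namely amenability of proper actions together with the fibrewise result of \cite{ad:amen}, exactly as in the remark following the proof of Theorem \ref{thm-Kamenable}.
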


This can be proved directly in the same way as the special case dealing with the reduced crossed product.  However, we shall obtain the result as
a consequence of the known  isomorphism of the assembly map for the reduced crossed products
(\eg, see \cite{Tu:Kamenable}*{Theorem 2.2} or \cite{Higson-Kasparov:Etheory}*{Theorem 1.1}) together
with Lemma \ref{twoas} above and Theorem \ref{thm-Kamenable} below, which in particular implies that for every $K$-amenable group
$G$ the quotient map $A\rtimes_{\un}G\onto A\rtimes_{\mu}G$ induces an isomorphism in $K$-theory.

Recall that a locally compact group $G$ is called {\em $K$-amenable} in the sense of Cuntz and
Julg-Valette
(\cite{Cuntz:Kamenable, Julg-Valette:Kamenable})
if the unit element
$1_G\in KK^G(\C,\C)$ can be represented by a cycle $(\H, \gamma, T)$ in which
the unitary representation $\gamma:G\to \U(\H)$  on the Hilbert space $\H$ is weakly
contained in the regular representation. In other words, its integrated form factors through
a $*$-representation $\gamma: C_r^*(G)\to \Lb(\H)$. (The left action of $\C$ on $\H$ is assumed to be
by scalar multiplication, so we omit it in our notation.)
It is shown by Tu in \cite{Tu:Kamenable}*{Theorem 2.2} that every
locally compact group which satisfies the strong Baum-Connes conjecture is also $K$-amenable
and it  is shown by Julg and Valette in \cite{Julg-Valette:Kamenable}*{Proposition 3.4}
 that for any $K$-amenable group
the regular representation
$$A\rtimes_{\alpha,\un}G\onto A\rtimes_{\alpha,\red}G$$
induces a $KK$-equivalence for any $G$\nb-\cstar{}algebra $(A,\alpha)$.
We shall now extend this result to  correspondence functors:

\begin{theorem}[{\cf~\cite{Julg-Valette:Kamenable}*{Proposition 3.4}}]\label{thm-Kamenable}
Suppose that $G$ is $K$-amenable and let $\rtimes_\mu$ be any correspondence functor for $G$.
Then both  maps in the  sequence
$$A\rtimes_{\alpha,\un}G\;\stackrel{q_\mu}{\onto}\;A\rtimes_{\alpha,\mu}G\;\stackrel{q_\red^\mu}{\onto}\;A\rtimes_{\alpha,\red}G$$
are $KK$-equivalences.
\end{theorem}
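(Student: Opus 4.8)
The plan is to follow the Cuntz--Julg-Valette argument, which is now available for arbitrary correspondence functors thanks to the $KK$-descent of Proposition~\ref{prop-descent}. We work throughout in the separability setting of \S\ref{sec:descent} (the general case then follows by a standard inductive-limit argument). It suffices to prove the following claim: \emph{for every correspondence crossed-product functor $\rtimes_\nu$ and every $G$-algebra $(A,\alpha)$, the quotient map $q^\nu_\red\colon A\rtimes_{\alpha,\nu}G\onto A\rtimes_{\alpha,\red}G$ is a $KK$-equivalence.} Indeed, applying this with $\nu=\mu$ gives that $q^\mu_\red$ is a $KK$-equivalence, and applying it with $\nu=\un$ (the universal crossed product has the projection property, hence is a correspondence functor by Theorem~\ref{thm-corr}) gives that $q_\red=q^\mu_\red\circ q_\mu$ is a $KK$-equivalence; since $[q_\red]=[q_\mu]\otimes_{A\rtimes_{\alpha,\mu}G}[q^\mu_\red]$ with the left-hand side and the second factor invertible, $[q_\mu]$ is invertible as well.

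To prove this claim, let $x=[\H,\gamma,F]\in KK^G(\C,\C)$ be a cycle representing $1_G$ with $\gamma\preceq\lambda_G$, as provided by $K$-amenability. The coefficient homomorphism $KK^G(\C,\C)\to KK^G(A,A)$ is unital, so $1_A$ is represented by $(\E,\phi,\gamma\otimes\alpha,F\otimes 1)$ with $\E=\H\otimes A$ and $\phi\colon A\to\Lb(\E)$, $\phi(a)=1_\H\otimes a$ (viewing $A$ as acting on itself by left multiplication). By Proposition~\ref{prop-descent}, the descent
$$
J_G^\nu(1_A)=\big[\E\rtimes_{\gamma\otimes\alpha,\nu}G,\ \phi\rtimes_\nu G,\ F_\nu\big]
$$
equals $1_{A\rtimes_{\alpha,\nu}G}$; likewise, since the reduced crossed product is itself a correspondence functor, $J_G^\red(1_A)=1_{A\rtimes_{\alpha,\red}G}$.

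The crucial point --- and the step I expect to be the main obstacle --- is that, \emph{because $\gamma\preceq\lambda_G$}, the left action $\phi\rtimes_\nu G\colon A\rtimes_{\alpha,\nu}G\to\Lb(\E\rtimes_{\gamma\otimes\alpha,\nu}G)$ factors through the quotient $q^\nu_\red$; write $\phi\rtimes_\nu G=\psi\circ q^\nu_\red$ with $\psi\colon A\rtimes_{\alpha,\red}G\to\Lb(\E\rtimes_{\gamma\otimes\alpha,\nu}G)$. This is exactly the place where genuine $K$-amenability (and not just the existence of some cycle) is used, and it is the analogue for correspondence functors of the corresponding step in \cite{Julg-Valette:Kamenable}*{Proposition~3.4}; one argues via Fell's absorption principle, realising the $G$-equivariant correspondence $(\E,\phi,\gamma\otimes\alpha)$ together with its left $A$-action through reduced crossed products, and then uses the correspondence-functor identities of Theorem~\ref{thm-corr} and Lemma~\ref{lem-iso} to see that passing to $\nu$-crossed products does not enlarge the algebra acting on the left beyond $A\rtimes_{\alpha,\red}G$.

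Granting this factorisation, put $z_\nu:=[\E\rtimes_{\gamma\otimes\alpha,\nu}G,\ \psi,\ F_\nu]\in KK(A\rtimes_{\alpha,\red}G,\ A\rtimes_{\alpha,\nu}G)$. Pulling back $z_\nu$ along $q^\nu_\red$ gives
$$
[q^\nu_\red]\otimes_{A\rtimes_{\alpha,\red}G}z_\nu=\big[\E\rtimes_{\gamma\otimes\alpha,\nu}G,\ \phi\rtimes_\nu G,\ F_\nu\big]=J_G^\nu(1_A)=1_{A\rtimes_{\alpha,\nu}G}.
$$
Conversely, pushing $z_\nu$ forward along $q^\nu_\red$ and using the canonical isomorphism \eqref{eq:CanonicalIsom} (with $\red$ in place of $\mu$), namely $(\E\rtimes_{\gamma\otimes\alpha,\nu}G)\otimes_{A\rtimes_{\alpha,\nu}G}(A\rtimes_{\alpha,\red}G)\cong\E\rtimes_{\gamma\otimes\alpha,\red}G$ --- under which $\psi\otimes 1$ and $F_\nu\otimes 1$ become $\phi\rtimes_\red G$ and $F_\red$ --- we get
$$
z_\nu\otimes_{A\rtimes_{\alpha,\nu}G}[q^\nu_\red]=\big[\E\rtimes_{\gamma\otimes\alpha,\red}G,\ \phi\rtimes_\red G,\ F_\red\big]=J_G^\red(1_A)=1_{A\rtimes_{\alpha,\red}G}.
$$
Hence $[q^\nu_\red]$ is invertible with inverse $z_\nu$, which proves the claim and, by the reduction of the first paragraph, the theorem.
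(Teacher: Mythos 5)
Your proposal follows the same route as the paper's proof: reduce to showing that $q^\nu_\red$ is a $KK$-equivalence for every correspondence functor (applying this to $\nu=\mu$ and $\nu=\un$), descend the cycle $(\H\otimes A,\,1_\H\otimes\id_A,\,\gamma\otimes\alpha,\,T\otimes 1)$ representing $1_A$, factor its left action through $A\rtimes_{\alpha,\red}G$ using $\gamma\preceq\lambda_G$, and compute the two Kasparov products. Your handling of the converse product via the analogue of \eqref{eq:CanonicalIsom} is in fact more explicit than the paper's, which at that point simply defers to \cite{Julg-Valette:Kamenable}*{Proposition 3.4}, and it is correct: both $\psi\otimes 1$ and $\phi\rtimes_\red G$ agree on the dense image of $C_c(G,A)$.

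The one genuine hole is exactly the step you flag as ``the main obstacle'': the factorisation of $\phi\rtimes_\nu G$ through $q^\nu_\red$. Invoking Fell absorption together with Theorem \ref{thm-corr} and Lemma \ref{lem-iso} is not yet an argument, and Lemma \ref{lem-iso} is not the relevant tool here. What the paper actually uses is Corollary \ref{cor-ext}, i.e.\ strong Morita compatibility: it identifies $(\H\otimes A)\rtimes_{\gamma\otimes\alpha,\mu}G\cong \H\otimes(A\rtimes_{\alpha,\mu}G)$ so that the left action $\Psi$ of $A\rtimes_{\alpha,\mu}G$ becomes the integrated form of $(1_\H\otimes i_A^\mu,\ \gamma\otimes i_G^\mu)$, and then, since the integrated form of $\gamma$ factors through $C^*_\red(G)$, writes $\Psi=(\tilde\gamma\otimes\id_{A\rtimes_\mu G})\circ\big((1_{C^*_\red(G)}\otimes i_A^\mu)\rtimes(\lambda_G\otimes i_G^\mu)\big)$, where the second map factors through $A\rtimes_{\alpha,\red}G$ by the standard regular-representation argument as in the KLQ discussion of \S\ref{sec:KLQ}. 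This is precisely where the correspondence-functor hypothesis does real work beyond merely providing a descent: without (strong) Morita compatibility the identification of the module $\E\rtimes_{\nu}G$ with $\H\otimes(A\rtimes_{\nu}G)$, and hence the whole factorisation, can fail --- Remark \ref{rem-ext}(1) gives exactly such a failure for the BG-functor attached to $\lambda_G$. So your outline is the right one, but to constitute a proof you must supply this step, for instance as above.
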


\begin{proof}[Proof of Theorem \ref{thm-Kamenable}]
It is enough to show that the  quotient map $q_\red^\mu:A\rtimes_{\alpha,\mu}G\to A\rtimes_{\alpha,\red}G$ is a $KK$-equivalence.
Then the result for the first map follows from composing the $KK$-equivalence in case $\rtimes_\mu=\rtimes_{\un}$ with the
inverse of $[q_\red^\mu]$ in $KK(A\rtimes_{\alpha,\red}G, A\rtimes_{\alpha,\mu}G)$.
We closely follow the arguments given in the proof of  \cite{Julg-Valette:Kamenable}*{Proposition 3.4}.

So assume that $(\H, \gamma, T)$ is a cycle for $1_G\in KK^G(\C,\C)$ such that $\gamma$ is weakly
contained in $\lambda_G$ and let $(A,\alpha)$ be any $G$\nb-\cstar{}algebra. Since $1_G\otimes_\C 1_A=1_A$, it follows that
 $1_A\in KK^G(A,A)$ is represented by the cycle
$(\H\otimes A, 1_\H\otimes \id_A, \gamma\otimes \alpha, T\otimes 1)$.
Using the isomorphism
$$(\H\otimes A)\rtimes_{\mu}G\cong \H\otimes (A\rtimes_{\mu}G)$$
of Corollary \ref{cor-ext}
we see that $J_\mu(1_A)=1_{A\rtimes_{\mu}G}\in KK(A\rtimes_{\mu}G, A\rtimes_{\mu}G)$ is represented by
the cycle
$$\big(\H\otimes A\rtimes_{\mu} G, \Psi, T\otimes 1\big)$$
The action  $\Psi$ of $A\rtimes_{\alpha,\mu}G$ on  $\H\otimes (A\rtimes_{\mu}G)$ is given by the integrated form of the
covariant homomorphism  $(1_\H\otimes i_A^\mu, \gamma\otimes i_G^\mu)$ which is
just the composition
of the homomorphism $(1_\H\otimes\id_A)\rtimes_\mu G$
of $A\rtimes_{\alpha,\mu}G$ into $\M\big((\K(\H)\otimes A)\rtimes_{\Ad\gamma\otimes \alpha, \mu}G\big)$
with the  left action of  $(\K(\H)\otimes A)\rtimes_{\Ad\gamma\otimes \alpha, \mu}G$ onto $\H\otimes A\rtimes_{\alpha,\mu}G$,
which,
 by Corollary \ref{cor-ext}, is given by the integrated form  of the
covariant homomorphism $(\id_{\K(\H)}\otimes i_A^\mu, \gamma\otimes i_G^\mu)$.

To see that this action factors through $A\rtimes_{\alpha, \red}G$ we first observe that
$C_\red^*(G)\otimes A\rtimes_{\alpha,\mu}G$ acts on
$\H\otimes A\rtimes_{\alpha,\mu}G$ via $\gamma\otimes \id_{A\rtimes_{\mu}G}$, since $\gamma$ is weakly contained in $\lambda_G$
by assumption. Moreover, the covariant homomorphism
$$(1_{C_\red^*(G)}\otimes i_A^\mu)\rtimes (\lambda_G\otimes i_G^\mu):A\rtimes_{\alpha,\un}G \to \M(C_\red^*(G)\otimes A\rtimes_{\alpha,\mu}G)$$
factors through $A\rtimes_{\alpha,\red}G$, since this is already true if we replace $A\rtimes_{\alpha,\mu}G$ by $A\rtimes_{\alpha,\un}G$ (compare the
discussion on KLQ-functors in \S\ref{sec:KLQ}). Now one  checks on generators that
$$\Psi= (\gamma\otimes \id_{A\rtimes_{\mu}G})\circ \big((1_{C_\red^*(G)}\otimes i_A^\mu)\rtimes (\lambda_G\otimes i_G^\mu)\big),$$
hence the result. Thus, replacing the left action of $A\rtimes_{\alpha,\mu}G$ by the left action of $A\rtimes_{\alpha,\red}G$, we obtain an element $x_A\in KK(A\rtimes_{\alpha,\red}G,A\rtimes_{\alpha,\mu}G)$ such that $1_{A\rtimes_{\mu}G}=(q_\red^\mu)^*(x_A)= [q_\red^\mu]\otimes_{A\rtimes_\red G}x_A$.
The converse equation $x_A\otimes_{A\rtimes_\un G}[q_\red^\mu]=1_{A\rtimes_\red G}$ follows word for word as
in \cite{Julg-Valette:Kamenable}*{Proposition 3.4} and we omit further details.
\end{proof}

\begin{remark} In case of a-T-menable groups a different line of argument could be used to obtain the
above proposition. In fact, it is shown in \cite{Higson-Kasparov:Etheory}*{Theorems 8.5 and 8.6} that the
elements $D\in KK^G(\mathcal B,\C)$ and $\eta\in KK^G(\C,\mathcal B)$ which implement the
validity of the strong Baum-Connes conjecture can  be chosen to be $KK^G$-equivalences.
Since for proper actions the full and reduced crossed products (and hence all exotic crossed products) coincide,
we can use the descents for $\rtimes_\mu$ and $\rtimes_{\un}$  to obtain the following chain of $KK$-equivalences for a given $G$\nb-\cstar{}algebra $(A,\alpha)$:
$$A\rtimes_{\mu}G\sim_{KK^G} (A\otimes \mathcal B)\rtimes_\mu G\cong (A\otimes \mathcal B)\rtimes_\un G\sim_{KK^G} A\rtimes_{\un}G.$$
One can deduce from this that the quotient maps
$$A\rtimes_{\un}G\onto A\rtimes_{\mu}G\onto A\rtimes_{\red}G$$
are  also $KK$-equivalences.
\end{remark}

For the special case of KLQ-functors, a similar line of argument as used above gives the following result:

\begin{theorem}\label{thm-KLQ}
Suppose that $\rtimes_{v_{KLQ}}$ is a KLQ-crossed product functor corresponding to the universally absorbing
representation $v: G\to \U\M(C_v^*(G))$. Suppose that the unit $1_G\in KK^G(\C,\C)$ is represented by
a cycle $(\H, \gamma, T)$ such that $\gamma$ is weakly contained in $v$. Then, for every system $(A,G,\alpha)$
the quotient map $q_v: A\rtimes_{\alpha,u}G\onto A\rtimes_{\alpha, v_{KLQ}}G$ is a $KK$-equivalence.
In particular, the quotient map $v: C^*(G)\to C_v^*(G)$  is a $KK$-equivalence.
\end{theorem}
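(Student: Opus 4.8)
The strategy is to mimic the proof of Theorem~\ref{thm-Kamenable}, with the reduced crossed product replaced by the $v_{KLQ}$-crossed product and with the universal crossed product now in the place of the general $\mu$-crossed product used there. By Corollary~\ref{cor-KLQproj} the functor $\rtimes_{v_{KLQ}}$ is a correspondence functor, hence it is strongly Morita compatible and admits a $KK$-descent $J_G^{v_{KLQ}}$ by Proposition~\ref{prop-descent}; the same holds for $\rtimes_{\un}$. Since $\gamma$ is weakly contained in $v$, its integrated form annihilates $\ker v\subseteq C^*(G)$ and therefore factors as $\gamma=\bar\gamma\circ v$ for a nondegenerate \Star{}representation $\bar\gamma\colon C_v^*(G)\to\Lb(\H)$.

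First I would use $1_G\otimes_\C 1_A=1_A$ to represent $1_A\in KK^G(A,A)$ by the cycle $(\H\otimes A,\,1_\H\otimes\id_A,\,\gamma\otimes\alpha,\,T\otimes 1)$, and then apply the descent $J_G^{\un}$ of Proposition~\ref{prop-descent}. Using the isomorphism $(\H\otimes A)\rtimes_{\un}G\cong\H\otimes(A\rtimes_{\alpha,\un}G)$ coming from Corollary~\ref{cor-ext}, this shows that $1_{A\rtimes_{\alpha,\un}G}=J_G^{\un}(1_A)$ is represented by the cycle
$$\bigl(\H\otimes A\rtimes_{\alpha,\un}G,\ \Psi,\ T\otimes 1\bigr),$$
in which $\Psi$ is the action of $A\rtimes_{\alpha,\un}G$ on $\H\otimes(A\rtimes_{\alpha,\un}G)$ given by the integrated form of the covariant homomorphism $(1_\H\otimes i_A^{\un},\,\gamma\otimes i_G^{\un})$.

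The key step is to observe that $\Psi$ factors through $A\rtimes_{\alpha,v_{KLQ}}G$. Indeed, checking on the generators $i_A^{\un}(a)$ and $i_G^{\un}(s)$ one gets
$$\Psi=(\bar\gamma\otimes\id_{A\rtimes_{\alpha,\un}G})\circ W,\qquad W:=(1_{C_v^*(G)}\otimes i_A^{\un})\rtimes(v\otimes i_G^{\un}),$$
and flipping the two tensor legs identifies $W\colon A\rtimes_{\alpha,\un}G\to\M(C_v^*(G)\otimes A\rtimes_{\alpha,\un}G)$ with the homomorphism $(i_A^{\un}\otimes 1_{C_v^*(G)})\rtimes(i_G^{\un}\otimes v)$, whose kernel is $J_{\alpha,v}$ by the very definition of the KLQ-crossed product recalled in \S\ref{sec:KLQ}. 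Hence $\Psi$ factors through $(A\rtimes_{\alpha,\un}G)/J_{\alpha,v}=A\rtimes_{\alpha,v_{KLQ}}G$, and replacing the left action of $A\rtimes_{\alpha,\un}G$ by the induced action of $A\rtimes_{\alpha,v_{KLQ}}G$ yields a Kasparov cycle representing a class $x_A\in KK(A\rtimes_{\alpha,v_{KLQ}}G,\,A\rtimes_{\alpha,\un}G)$. By construction, pre-composing this action with $q_v$ returns $\Psi$, so $[q_v]\otimes_{A\rtimes_{v_{KLQ}}G}x_A=1_{A\rtimes_{\alpha,\un}G}$.

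It remains to prove the reverse identity $x_A\otimes_{A\rtimes_{\alpha,\un}G}[q_v]=1_{A\rtimes_{\alpha,v_{KLQ}}G}$; this I would do word for word as in the proof of Theorem~\ref{thm-Kamenable} and in \cite{Julg-Valette:Kamenable}*{Proposition 3.4}, since the Kasparov product on the left is represented by the cycle obtained by tensoring with $A\rtimes_{\alpha,v_{KLQ}}G$ over $q_v$, which after the identification $(\H\otimes A)\rtimes_{v_{KLQ}}G\cong\H\otimes(A\rtimes_{\alpha,v_{KLQ}}G)$ from Corollary~\ref{cor-ext} is exactly the cycle representing $J_G^{v_{KLQ}}(1_A)=1_{A\rtimes_{\alpha,v_{KLQ}}G}$. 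This shows that $q_v$ is a $KK$-equivalence. Finally, applying the statement to $A=\C$ with the trivial action gives $A\rtimes_{\alpha,\un}G=C^*(G)$ and $A\rtimes_{\alpha,v_{KLQ}}G=\C\rtimes_{v_{KLQ}}G=C_v^*(G)$ (the last equality because $v$ is universally absorbing, see \S\ref{sec:KLQ}), with $q_v=v$, so $v\colon C^*(G)\to C_v^*(G)$ is a $KK$-equivalence. I expect the main obstacle to be the verification that $\Psi$ factors through $A\rtimes_{\alpha,v_{KLQ}}G$ together with the careful bookkeeping needed to identify the two Kasparov products with the units $1_{A\rtimes_{\alpha,\un}G}$ and $1_{A\rtimes_{\alpha,v_{KLQ}}G}$; everything else is a routine adaptation of the corresponding steps in the proof of Theorem~\ref{thm-Kamenable}.
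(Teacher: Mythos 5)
Your proposal is correct and follows essentially the same route as the paper: you represent the descent of $1_A$ by the cycle $(\H\otimes A\rtimes_{\alpha,\un}G,\Psi,T\otimes 1)$, factor $\Psi$ through $A\rtimes_{\alpha,v_{KLQ}}G$ using $\gamma\preceq v$ together with the defining kernel $J_{\alpha,v}$ of the KLQ-construction, and obtain the class $x_A$ inverse to $[q_v]$, exactly as in the paper's proof. The only (harmless) divergence is cosmetic: where the paper defers the identity $x_A\otimes_{A\rtimes_{\un}G}[q_v]=1_{A\rtimes_{v_{KLQ}}G}$ to the Julg--Valette argument, you verify it directly by identifying the pushforward cycle with $J_G^{v_{KLQ}}(1_A)$ via Corollary \ref{cor-ext}, which is valid since $\rtimes_{v_{KLQ}}$ is a correspondence functor and hence admits the descent of Proposition \ref{prop-descent}.
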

\begin{proof} As in the proof of Theorem \ref{thm-Kamenable} we can represent the descent of $1_A\in KK^G(A,A)$
for the maximal crossed products by the cycle $(\H\otimes A\rtimes_{\alpha,u}G, \Psi, T\otimes 1)$
where $A\rtimes_{\alpha,u}G$ acts on $\H\otimes A\rtimes_{\alpha,u}G$ via the integrated form of the covariant homomorphism
$(1_\H\otimes i_A^u, \gamma\otimes i_G^u)$. Since $\gamma$ is weakly contained in $v$, this action can be written as
the composition of the representation $(1_{C_v^*(G)}\otimes i_A^u)\rtimes (v\otimes i_G^u)$ of $A\rtimes_{\alpha,u}G$
into $\M(C_v^*(G)\otimes A\rtimes_{\alpha,u}G)$ with
$\tilde\gamma \otimes \id_{A\rtimes_u G}: C_v^*(G)\otimes A\rtimes_{\alpha,u}G\to B(\H\otimes A\rtimes_{\alpha,u}G)$,
 where $\tilde\gamma: C_v^*(G)\to B(\H)$ denotes the unique factorisation to $C_v^*(G)$ of the integrated
 form of $\gamma$ -- which exists since $\gamma$ is weakly contained in $v$. By the definition of KLQ-functors,
the representation $(1_{C_v^*(G)}\otimes i_A^u)\rtimes (v\otimes i_G^u)$, and hence  also
$(1_\H\otimes i_A^u)\rtimes (\gamma\otimes i_G^u)$
factors through $A\rtimes_{\alpha, v_{KLQ}}G$, which gives an element
$x_A\in KK(A\rtimes_{\alpha, v_{KLQ}}G, A\rtimes_{\alpha,u}G)$ which is a $KK$-inverse
of the quotient map $q_v$.
\end{proof}

\begin{example} Suppose that $N$ is a closed normal subgroup of $G$ such that
$G/N$ is $K$-amenable. Let $v=\lambda_{G/N}\otimes u_G$ be the interior tensor product
of the regular representation of $G/N$ with the universal representation of $G$.
Then $v$ is universally absorbing and we may consider the corresponding KLQ-functor
$(A, G,\alpha)\mapsto A\rtimes_{v_{KLQ}}G$.
Since $G/N$ is $K$-amenable, there is a
representative $(\H,\gamma, T)$ of $1_{G/N}\in KK^{G/N}(\C,\C)$ such that $\gamma$ is weakly contained in
$\lambda_{G/N}$. Pulling this back to $G$ via the quotient map $q:G\to G/N$ shows that
$(\H,\gamma, T)$ is also a representative of $1_G\in KK^G(\C,\C)$ if we regard $\gamma$ as a representation of $G$.
Since $\gamma\prec\lambda_{G/N}= \lambda_{G/N}\otimes 1_G\prec \lambda_{G/N}\otimes u_G=v$
we see that $\gamma$ is weakly contained in $v$ and the theorem implies a $KK$-equivalence between
$A\rtimes_{\alpha,u}G$ and $A\rtimes_{\alpha, v_{KLQ}}G$ for any system $(A,G,\alpha)$.

More generally, suppose that $G$ is a locally compact group such that $1_G\in KK^G(\C,\C)$ is represented
by some Kasparov cycle $(\H,\gamma, T)$. Let $v=\gamma\otimes u_G$. Then $v$ is universally absorbing and the
theorem applies to the KLQ-functor $\rtimes_{v_{KLQ}}$.
\end{example}

\section{$L^p$ examples}\label{sec:lp}

Some of the most interesting examples of exotic crossed products come from conditions on the decay of matrix coefficients.  In this section, we use examples to explore some of the phenomena that can occur.  There is no doubt much more to say here.

For a locally compact group $G$ and $p\in [1,\infty]$, let $E_p{\sbe B(G)}$ denote the weak{*}-closure of the intersection $L^p(G)\cap B(G)$.  This is clearly an ideal in $B(G)$, and so gives rise to a KLQ-crossed product, and in particular a completion $C^*_{E_p}(G)$ of the group algebra.  Recall from \cite[Proposition 2.11 and Proposition 2.12]{Brown-Guentner:New_completions}\footnote{This reference only studies the discrete case, but the same proofs work for general locally compact groups.} that $C^*_{E_p}(G)$ is always the reduced completion for $p\leq 2$, and that $C^*_{E_p}(G)=C^*_\un(G)$ for some $p<\infty$ if and only if $G$ is amenable. Of course, since $E_\infty=B(G)$, we always get $C_{E_\infty}^*(G)=C^*_\un(G)$.

Similarly, let $E_0\sbe B(G)$ denote the weak{*-}closure of $C_0(G)\cap B(G)$, which is also an ideal. For countable discrete groups, Brown and Guentner \cite[Corollary 3.4]{Brown-Guentner:New_completions} have shown that $E_0=B(G)$ if and only if $G$ is a-T-menable.  In \cite{Jolissaint} Jolissaint extended this to general second countable\footnote{The techniques of Brown and Guentner, and of Jolissaint, in fact show that $E_0=B(G)$ if and only if $G$ has the \emph{Haagerup approximation property} in the sense of \cite[1.1.1 (2)]{CCJJV} without any second countability assumptions.  The Haagerup approximation property is equivalent to all the other possible definitions of a-T-menability in the second countable case, but not in general: see  \cite[Section 1.1.1 and Theorem 2.1.1]{CCJJV} for more details on this.} locally compact groups.

The following theorem records some properties of the algebras $C^*_{E_p}(G)$ for the free group on two generators.

\begin{theorem}\label{f2lp}
Let $G={\Free_2}$ be the free group on two generators.  Then one has the following results.
\begin{enumerate}
\item For $p\in [{2},\infty]$ the ideals $E_p$, or equivalently the $C^*$-algebras $C^*_{E_p}(G)$, are all different (in the sense that the identity on $C_c(G)$ does not extend to isomorphisms of these algebras for different parameters $p$).
\item The union of the ideals $E_p$ for $p\in [2,\infty)$ is weak*-dense in $B(G)=E_0=E_\infty$.
\item The $C^*$-algebras $C^*_{E_p}(G)$, $p\in [{2},\infty]$, all have the same $K$-theory (in the strong sense that the canonical quotient maps between them induce $KK$-equivalences).
\end{enumerate}
\end{theorem}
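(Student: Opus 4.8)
The plan is to prove the three assertions in turn; the only one requiring genuine work is (2), since (1) is Okayasu's theorem and (3) follows at once from Theorem~\ref{thm-Kamenable}. \emph{For (1)}, I would use Proposition~\ref{prop-BG-KLQ}(2): distinct weak-$*$-closed ideals $E$ of $B(G)$ give distinct KLQ-functors, and in particular distinct group algebras $C^*_E(G)=C^*(G)/I_E$ in the sense that the identity on $C_c(G)$ does not extend to an isomorphism (the converse implication being obvious). Thus the algebras $C^*_{E_p}(\Free_2)$, $p\in[2,\infty]$, are pairwise distinct in the stated sense exactly when the ideals $E_p$ are, which is precisely Okayasu's theorem \cite{Okayasu:Free-group}. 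Along the way I would record the two identifications in the statement: $E_\infty=B(G)$ because every matrix coefficient is bounded, and $E_0=B(G)$ because $\Free_2$ is a-T-menable, by the Brown--Guentner characterisation of the Haagerup property \cite[Corollary~3.4]{Brown-Guentner:New_completions} recalled above.

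\emph{For (3)} I would apply Theorem~\ref{thm-Kamenable}. Each $E_p$, $p\in[2,\infty]$, is a weak-$*$-closed ideal of $B(G)$, so the associated KLQ-functor is a correspondence functor by Corollary~\ref{cor-KLQproj}, and $\C\rtimes_{(E_p)_{KLQ}}G=C^*_{E_p}(G)$ since $E_p$ is an ideal. As $\Free_2$ is a-T-menable it is $K$-amenable (e.g.\ by \cite{Tu:Kamenable} and \cite{Higson-Kasparov:Etheory}, or directly \cite{Cuntz:Kamenable}), so Theorem~\ref{thm-Kamenable} with $A=\C$ shows the quotient map $C^*(G)=C^*_{E_\infty}(G)\onto C^*_{E_p}(G)$ is a $KK$-equivalence for every $p\in[2,\infty]$. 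For $2\le p\le q\le\infty$ the inclusion $E_p\subseteq E_q$ yields a canonical quotient map $C^*_{E_q}(G)\onto C^*_{E_p}(G)$ through which $C^*(G)\onto C^*_{E_p}(G)$ factors; since both maps out of $C^*(G)$ are $KK$-equivalences, so is $C^*_{E_q}(G)\onto C^*_{E_p}(G)$. (Alternatively one may quote Theorem~\ref{thm-KLQ}, since $v_{E_p}$ weakly contains $\lambda_G$ and $1_G\in KK^G(\C,\C)$ is represented by a cycle $(\H,\gamma,T)$ with $\gamma\prec\lambda_G\prec v_{E_p}$.)

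\emph{For (2)}, the heart of the matter, I would proceed as follows. Since $\ell^p(G)\subseteq\ell^q(G)$ for $p\le q$ and every element of $B(G)$ is bounded, the family $\{E_p\}_{p\in[2,\infty)}$ is increasing, so $E:=\bigcup_{p\in[2,\infty)}E_p$ is an ideal of $B(G)$. Next I would bring in the Haagerup functions $h_r(g):=r^{|g|}$ for $0\le r<1$, where $|g|$ denotes the word length: by Haagerup's classical computation each $h_r$ is positive-definite, hence lies in $B(G)$ with $\|h_r\|_{B(G)}=h_r(e)=1$, and counting the $4\cdot 3^{n-1}$ reduced words of length $n$ shows $\sum_{g\in G}r^{p|g|}<\infty$ as soon as $3r^p<1$, so $h_r\in\ell^p(G)\cap B(G)\subseteq E_p\subseteq E$ for all sufficiently large finite $p$. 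Then, for an arbitrary $g\in B(G)$, I would use that $E$ is an ideal to get $h_rg\in E$, note the uniform bound $\|h_rg\|_{B(G)}\le\|h_r\|_{B(G)}\|g\|_{B(G)}=\|g\|_{B(G)}$, and observe that $h_r(s)g(s)\to g(s)$ for each $s\in G$ as $r\uparrow 1$. Since a uniformly bounded net in $B(G)=C^*(G)^*$ that converges pointwise on $G$ converges weak-$*$ to its pointwise limit --- which one checks on the dense subalgebra $\C[G]=\spn\{u_g:g\in G\}$ of $C^*(G)$ and then extends using the uniform norm bound --- this yields $h_rg\to g$ weak-$*$, hence $g$ in the weak-$*$-closure of $E$; as $g$ was arbitrary that closure equals $B(G)=E_0=E_\infty$. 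I expect this last paragraph --- in particular verifying that $E$ is an ideal (directedness of the $E_p$) and the pointwise-to-weak-$*$ passage --- to be the only step requiring more than a citation, and even it is entirely routine.
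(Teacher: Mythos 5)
Your proposal is correct, and for parts (1) and (3) it follows the paper's own route: (1) is exactly Okayasu \cite{Okayasu:Free-group} (the translation between distinctness of the ideals $E_p$ and distinctness of the completions $C^*_{E_p}(G)$ via the bijection $E\mapsto I_E$ is the right bookkeeping), and (3) is Theorem \ref{thm-Kamenable} applied with $A=\C$ to the KLQ-functors for the ideals $E_p$, using that $\Free_2$ is a-T-menable hence $K$-amenable; your two-out-of-three remark for the intermediate quotients $C^*_{E_q}(G)\onto C^*_{E_p}(G)$ is a harmless extra precision. The only place you diverge is (2): the paper simply cites Okayasu's work for the weak-$*$ density of $\bigcup_{p<\infty}E_p$, whereas you give a direct, self-contained argument using the Haagerup functions $h_r(g)=r^{|g|}$, the ideal property of the union, the uniform bound $\|h_r g\|_{B(G)}\le\|g\|_{B(G)}$, and the standard fact that bounded pointwise convergence on $G$ implies weak-$*$ convergence in $B(G)=C^*(G)^*$. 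This argument is sound, and in fact it is precisely the proof of the paper's own Lemma \ref{lem:e0} (stated just after the theorem, for a general finitely generated group admitting a negative type function dominating the word length, via Schoenberg's theorem and exponential growth bounds) specialised to $\Free_2$ with $\psi=|\cdot|$. So your route buys a self-contained and more transparent proof of (2) that generalises beyond free groups, at the cost of a page of routine estimates, while the paper's citation is shorter and also delivers the finer statement that $E_q$ is the weak-$*$ closure of $\bigcup_{p<q}E_p$ for each $q$.
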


\begin{proof}
Properties (1) and (2) follow from Okayasu's work \cite{Okayasu:Free-group}.  Property (3) is a consequence of Theorem \ref{thm-Kamenable} since
${\Free_2}$ is a-T-menable and the  corresponding KLQ-functors are correspondence functors.
\end{proof}

It seems interesting to ask exactly which other groups have properties (1), (2) and (3) from Theorem \ref{f2lp}.  We now discuss this.

First, we look at property (1).   If a group $G$ has this property, then clearly $G$ must be non-amenable, otherwise all the spaces $E_p$, $p\in \{0\}\cup [1,\infty]$, are the same.  It is easy to see that property (1) for $\Free_2$ implies that this property holds for all discrete groups that contain ${\Free_2}$ as a subgroup.  It is thus quite conceivable that it holds for all non-amenable discrete groups.

Property (1) also holds for some non-discrete groups: it was observed recently  by Wiersma (see \cite{Wiersma}*{\S 7}), using earlier results of Elias Stein, that (1)  holds for $SL(2,\mathbb R)$. Indeed,  it is shown in \cite{Wiersma}*{Theorem 7.3} that $p\mapsto E_p=\overline{B(G)\cap L^p(G)}^{w*}$ is a bijection between $[2,\infty]$ and the set of  weak{*-}closed ideals $E\subseteq B(G)$ for $G=SL(2,\mathbb R)$.  However, property (1) does not hold {for} all non-amenable {locally compact groups} as the following example shows.

\begin{example}\label{ex:nodiff}
Let $G$ be a non-compact simple Lie group with finite center and rank at least two.  It is well-known that such a group is non-amenable.  Cowling \cite[page 233]{Cowling}\footnote{See \cite[Corollary 3.3.13]{Howe} for more detail in the special case when $G=SL(n,\Real)$, $n\geq 3$.} has shown that there exists $p\in (2,\infty)$ (depending on $G$) such that all non-trivial irreducible unitary representations of $G$ have all their matrix coefficients in $L^{p}(G)$.  Hence for all $q\in [p,\infty)$ we have that $C^*_{E_q}(G)=C^*_{E_{p}}(G)$.  Indeed, this follows as the sets of irreducible representations of these $C^*$-algebras are the same: they identify canonically with the set of all non-trivial irreducible unitary representations of $G$.
\end{example}

We now look at property (2).  As written, the property can only possibly hold for a-T-menable groups.  Simple a-T-menable Lie groups of rank one (\ie\ $SU(n,1)$ and $SO(n,1)$) have the property by already cited results of Cowling \cite[page 233]{Cowling}.  For discrete groups, one has the following sufficient condition for property (2) to hold.

\begin{lemma}\label{lem:e0}
Say $G$ is a finitely generated discrete group, and fix a word length $l:G\to \N$.  Assume moreover there exists a negative type function $\psi:G\to \Real_+$ such that $\psi(g)\geq c\cdot l(g)$ for some $c>0$ and all $g\in G$.  Then the union $\cup E_p$ is weak{*-}dense in $B(G)$.
\end{lemma}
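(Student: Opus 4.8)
The plan is to produce, for each $p \in [2,\infty)$, enough elements of $E_p$ to approximate an arbitrary $\varphi \in B(G)$ in the weak* topology. Since $B(G) = C^*(G)^*$ and the weak* topology on the unit ball is metrizable when $G$ is countable, it suffices to approximate each positive-definite function $\varphi$ of the form $g \mapsto \langle U_g\xi,\xi\rangle$ pointwise on $G$ by elements of $\cup_p E_p$ that stay uniformly bounded in $B(G)$-norm. The natural device is to multiply $\varphi$ by the Gaussian-type damping functions built from $\psi$: set $\varphi_t(g) := e^{-t\psi(g)}\varphi(g)$ for $t > 0$.

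**The two ingredients.** First I would recall Schoenberg's theorem: since $\psi$ is a negative type function, $g \mapsto e^{-t\psi(g)}$ is positive-definite for every $t > 0$, and it has norm $1$ in $B(G)$ (it equals $1$ at the identity). Hence $\varphi_t$, being a product of two positive-definite functions, lies in $B(G)$ with $\|\varphi_t\|_{B(G)} \le \|\varphi\|_{B(G)}$; moreover $\varphi_t(g) \to \varphi(g)$ pointwise as $t \to 0$. Second — and this is where the hypothesis $\psi(g) \ge c\cdot l(g)$ enters — I would show each $\varphi_t$ actually lies in some $E_p$. We have $|\varphi_t(g)| \le \|\varphi\|_\infty e^{-tc\, l(g)}$, so
$$
\sum_{g\in G} |\varphi_t(g)|^p \le \|\varphi\|_\infty^p \sum_{g\in G} e^{-tcp\, l(g)} = \|\varphi\|_\infty^p \sum_{n=0}^\infty \#\{g : l(g)=n\}\, e^{-tcp n}.
$$
Because $G$ is finitely generated, the sphere sizes $\#\{g: l(g)=n\}$ grow at most exponentially, say $\le C\beta^n$; so for $p$ large enough that $e^{-tcp}\beta < 1$ (equivalently any fixed $p$ once we note we only need \emph{some} $p$), the series converges and $\varphi_t \in \ell^p(G) \cap B(G) \subseteq E_p$. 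Thus $\varphi_t \in \cup_{p\in[2,\infty)} E_p$ for every $t>0$.

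**Conclusion.** Combining the two ingredients: $\{\varphi_t : t > 0\}$ is a net in $\cup_p E_p$, bounded in $B(G)$-norm by $\|\varphi\|_{B(G)}$, converging pointwise on $G$ to $\varphi$. Pointwise convergence of a norm-bounded net in $B(G) = C^*(G)^*$ implies weak* convergence — one checks the pairing against $C_c(G)$, which is dense in $C^*(G)$, and uses the uniform bound to pass to all of $C^*(G)$. Since the positive-definite functions span a weak*-dense subspace of $B(G)$ (indeed every element of $B(G)$ is a linear combination of four positive-definite functions), it follows that $\cup_p E_p$ is weak*-dense in $B(G)$.

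**Expected main obstacle.** The analytic heart is genuinely easy once set up; the only point requiring care is the bookkeeping in the last step — a net that converges pointwise and is norm-bounded need not converge weak* in full generality, but it does here precisely because $C_c(G) \subseteq C^*(G)$ is dense and the net is uniformly bounded, so an $\varepsilon/3$ argument closes the gap. A secondary subtlety is making sure the reduction to positive-definite functions is legitimate (using that $B(G)$ is the span of its positive cone and that weak* density is preserved under taking the closed span); neither is serious, but both should be stated rather than glossed over.
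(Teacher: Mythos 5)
Your argument is correct and is essentially the paper's proof: multiply by $e^{-t\psi}$ (positive definite by Schoenberg), use $\psi(g)\geq c\cdot l(g)$ together with the exponential bound on ball sizes coming from finite generation to place each $\varphi_t$ in some $E_p$, and pass from pointwise convergence of the norm-bounded net to weak* convergence by pairing against $C_c(G)$. The only (harmless) detour is your reduction to positive-definite $\varphi$; it is unnecessary, since $B(G)$ is a Banach algebra under pointwise multiplication, so $\varphi\cdot e^{-t\psi}$ lies in $B(G)$ with norm at most $\|\varphi\|_{B(G)}$ for arbitrary $\varphi\in B(G)$, exactly as in the paper.
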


\begin{proof}
Let $\phi$ be any element of $B(G)$.  For $t>0$ let $\phi_t(g)=e^{-t\psi(g)}$, which is positive type by Schoenberg's theorem.  For any fixed $t>0$ and all $p$ suitably large, the function $\phi_t$ will be in $l^p(G)$: indeed this follows from the estimate on $\psi$ and as there exists $b>0$ such that $|\{g\in G~:~l(g)\leq r\}|\leq e^{br}$ for any $r>0$ (the latter fact is an easy consequence of finite generation).  Hence the functions $\phi\cdot \phi_t$ are all in some $E_p$ (where $p$ will in general vary as $t$ does), and they converge pointwise, whence weakly, to $\phi$.
\end{proof}
The condition in the lemma is closely related to the \emph{equivariant compression} of $G$ \cite{Guentner-Kaminker: equi}, and has been fairly well-studied.  It is satisfied for example by finitely generated free groups, but also much more generally: for example, it can be easily deduced from \cite{Niblo-Campbell:cat} that any group acting properly cocompactly by isometries on a CAT(0) cube complex has this property, and from \cite[Section 2.6]{Bekka-delaHarpe-Valette} that any group acting properly cocompactly by isometries on a real hyperbolic space does.  Note, however, that \cite{Austin:comp} implies that the lemma only applies to a subclass of finitely generated a-T-menable groups.

Finally, we look at property (3).  By Theorem \ref{thm-Kamenable} it holds for all second countable $K$-amenable groups (in particular, all a-T-menable groups).  The following example shows that it fails in general.

\begin{example}\label{ex:spn1}
Say $G=SP(n,1)$, and $n\geq 3$.  Prudhon \cite{Prudhon:sp} has shown that there are infinitely many irreducible representations $(\pi,\H_\pi)$ of $G$ -- the so-called \emph{isolated series} representations -- such that there is a direct summand of $C^*(G)$ isomorphic to $\mathcal{K}(\H_\pi)$, and moreover that the corresponding direct summand $K_0(\mathcal{K}(\H_\pi))\cong \Z$ of the group $K_0(C^*(G))$ is in the kernel of the natural map $\lambda_*:K_0(C^*(G))\to K_0(C^*_\red(G))$.  It follows from the previously cited work of Cowling \cite[page 233]{Cowling} that any non-trivial irreducible representation of $SP(n,1)$ extends to $C^*_{E_p}(G)$ for some $p<\infty$.  Combining all of this, it follows that there exists $p\in (2,\infty)$ such that $C^*_{E_p}(G)$ has a direct summand isomorphic to $\K(\H)$ for some separable Hilbert space $\H$, and that the corresponding direct summand $K_0(\mathcal{K}(\H))\cong \Z$ of $K_0(C^*_{E_p}(G))$ is in the kernel of the map on $K$-theory induced by the natural quotient $C^*_{E_p}(G)\to C^*_{E_2}(G)$.  Hence property (3) fails for $SP(n,1)$ for $n\geq 3$.

One can deduce from work of Pierrot \cite{Pierrot:sl} that property (3) fails for $SL(4,\C)$ and $SL(5,\C)$ in a similar way.
\end{example}

The work of Prudhon and Pierrot cited above uses detailed knowledge of the representation theory of the groups involved.  It would be interesting to have more directly accessible examples, or techniques that showed similar results for discrete groups.  For example, it is natural to guess that similar failures of property (3) occur for discrete hyperbolic groups with property (T), and for lattices in higher rank simple Lie groups; proving such results seems to require new ideas, however.

\section{Morita compatibility and the minimal exact correspondence functor}\label{sec:minimal}

We now want to relate our results to the constructions of \cite{Baum-Guentner-Willett}, in which a new
version of the Baum-Connes conjecture is discussed. In fact, the authors consider a certain minimal exact and
Morita compatible (in a sense explained below)
crossed-product functor, which they call $\rtimes_{\E}$, and the new version of the Baum-Connes conjecture
formulated by Baum, Guentner and Willett in that paper  asserts that the Baum-Connes assembly map should always
be an isomorphism if we replace  reduced crossed products by  $\rtimes_{\E}$ crossed products.
We should note that if $G$ is exact, then $\rtimes_{\E}= \rtimes_{\red}$, so that in this case  the new conjecture coincides
with the old one. However, all known counter examples for the original conjecture
are due to the existence of non-exact groups and cannot be shown to be counterexamples to the reformulated conjecture. The most remarkable result of \cite{Baum-Guentner-Willett}
shows that many such examples actually do satisfy the reformulated conjecture.

Note that  Baum, Guentner and Willett also constructed a direct assembly map for the crossed-product functor $\rtimes_{\E}$
where they used the descent in $E$-theory instead of $KK$-theory, since at the time of writing it was not clear to them
whether the $KK$-theory descent exists
(\eg, see the open question \cite[Question 8.1 (vii)]{Baum-Guentner-Willett}). In this section we show that
this is indeed the case, so that $KK$-methods do apply to the reformulation of  the Baum-Connes conjecture.

We start the discussion by showing that the projection property is inherited
by the infimum of a collection of crossed-product functors which satisfy this property.
Recall from \cite[Lemma 3.7]{Baum-Guentner-Willett} that, starting with a collection of crossed-product functors
$\{\rtimes_\mu: \mu\in \Sigma\}$ (where we {\em do not} assume  that $\Sigma$ will be a set),
a new crossed-product functor $(A,\alpha)\mapsto A\rtimes_{\alpha, \mu_{\inf}}G$
can be constructed, which should be understood as the {\em infimum} of the functors in the collection
$\{\rtimes_\mu: \mu\in \Sigma\}$.
The crossed product $ A\rtimes_{\alpha, \mu_{\inf}}G$ is the unique quotient of $A\rtimes_{\alpha,\un}G$ such that
the set $(A\rtimes_{\alpha,\mu_{\inf}}G)\dach$ of equivalence classes of irreducible representations
 (denoted $S_{\mu_{\inf}}(A)$ in \cite{Baum-Guentner-Willett})
is given by the formula
\begin{equation}\label{eq-inf-dach}
(A\rtimes_{\alpha,\mu_{\inf}}G)\dach=\bigcap_{\mu\in \Sigma} (A\rtimes_{\alpha,\mu}G)\dach,
\end{equation}
where we view each dual space $(A\rtimes_{\alpha,\mu}G)\dach$ as a subset of $(A\rtimes_{\alpha,\un}G)\dach$
via composition with the quotient maps $q_{A,\mu}:A\rtimes_{\alpha,\un}G\to A\rtimes_{\alpha,\mu}G$.
 Alternatively, one may define
$A\rtimes_{\alpha, \mu_{\inf}}G$ as the quotient $(A\rtimes_{\alpha,\un}G)/I_{\mu_{\inf}}$ with
$$I_{\mu_{\inf}}=\overline{\sum_{\mu\in \Sigma} I_\mu},$$
the closed sum of all ideals $I_\mu:=\ker q_{A,\mu}$,  $\mu\in \Sigma$. Note that this does not cause
any set theoretic problems, since the collection of ideals $\{I_\mu: \mu\in \Sigma\}$ is a set.

It has been shown in \cite[Lemma 3.7]{Baum-Guentner-Willett} that $(A,\alpha)\mapsto A\rtimes_{\alpha, \mu_{\inf}}G$ is indeed
a crossed-product functor. Moreover, it is shown in \cite[Theorem 3.8]{Baum-Guentner-Willett} that taking the infimum of
 a collection of exact crossed-product functors will again give an exact crossed-product functor.
 We now show

 \begin{proposition}\label{prop-inf-corr}
Let  $\{\rtimes_\mu:\mu\in \Sigma\}$ be a collection of correspondence crossed-product functors.
Then its infimum $\rtimes_{\mu_{\inf}}$ will be a correspondence functor as well.
\end{proposition}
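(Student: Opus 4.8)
The plan is to exploit the characterization of correspondence functors by the projection property (Theorem \ref{thm-corr}, condition (5)): it suffices to show that if every $\rtimes_\mu$ with $\mu\in\Sigma$ has the projection property, then so does $\rtimes_{\mu_{\inf}}$. So fix a $G$\nb-algebra $(A,\alpha)$ and a $G$\nb-invariant projection $p\in\M(A)$. We must show that the canonical map $\iota\rtimes_{\mu_{\inf}}G\colon pAp\rtimes_{\alpha,\mu_{\inf}}G\to A\rtimes_{\alpha,\mu_{\inf}}G$ is injective, equivalently (Remark \ref{rem-projection}) that the closure of $C_c(G,pAp)=\tilde p\,C_c(G,A)\tilde p$ inside $A\rtimes_{\alpha,\mu_{\inf}}G$ is a faithful copy of $pAp\rtimes_{\alpha,\mu_{\inf}}G$.

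The key step is to understand the $\mu_{\inf}$\nb-norm on $C_c(G,pAp)$ in two ways and show they coincide. On one hand, $pAp\rtimes_{\alpha,\mu_{\inf}}G$ is the infimum completion of $C_c(G,pAp)$, so its norm is $\|f\|=\sup_{\mu\in\Sigma}\|f\|_{pAp\rtimes_{\alpha,\mu}G}$ (this is the norm whose irreducible representations are $\bigcap_\mu(pAp\rtimes_{\alpha,\mu}G)\dach$; equivalently the quotient of $pAp\rtimes_\un G$ by $\overline{\sum_\mu I_\mu}$). On the other hand, the norm induced on $C_c(G,pAp)$ by the inclusion into $A\rtimes_{\alpha,\mu_{\inf}}G$ is $\|f\|'=\|f\|_{A\rtimes_{\alpha,\mu_{\inf}}G}=\sup_{\mu\in\Sigma}\|f\|_{A\rtimes_{\alpha,\mu}G}$, where now $f$ is regarded as an element of $C_c(G,A)$. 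For each individual $\mu\in\Sigma$, the projection property for $\rtimes_\mu$ gives $\|f\|_{pAp\rtimes_{\alpha,\mu}G}=\|f\|_{A\rtimes_{\alpha,\mu}G}$ for all $f\in C_c(G,pAp)$. Taking the supremum over $\mu\in\Sigma$ on both sides yields $\|f\|=\|f\|'$, which is exactly the assertion that $\iota\rtimes_{\mu_{\inf}}G$ is isometric, hence injective.

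The one point needing care — and the main (though mild) obstacle — is to justify that the $\mu_{\inf}$\nb-norm really is the supremum of the $\mu$\nb-norms both upstairs (on $C_c(G,A)$) and downstairs (on $C_c(G,pAp)$). This follows from the description in \eqref{eq-inf-dach} together with the fact that for any ${}^*$-algebra with a family of $C^*$-completions, the completion whose irreducible representations form the intersection of the given families carries the norm $\|x\|=\sup_\mu\|x\|_\mu$; equivalently, $\ker q_{\mu_{\inf}}=\bigcap_\mu\ker q_\mu$, so $\|x\|_{\mu_{\inf}}=\sup_\mu\|q_\mu(x)\|=\sup_\mu\|x\|_\mu$. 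One should also note that $\tilde p$, the image of $p$ in $\M(A\rtimes_{\alpha,\un}G)$, maps to the corresponding image in each $\M(A\rtimes_{\alpha,\mu}G)$ and in $\M(A\rtimes_{\alpha,\mu_{\inf}}G)$ compatibly with the quotient maps, so there is no ambiguity in identifying $C_c(G,pAp)$ with $\tilde p\,C_c(G,A)\tilde p$ across all these completions. With these identifications in hand the argument above goes through verbatim, and $\rtimes_{\mu_{\inf}}$ has the projection property, hence is a correspondence functor by Theorem \ref{thm-corr}.
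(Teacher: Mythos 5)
There is a genuine gap, and it lies exactly at the point you flag as ``the one point needing care'': you have identified the infimum functor with the \emph{supremum} of the family. By the construction recalled just before the proposition, $A\rtimes_{\alpha,\mu_{\inf}}G$ is the quotient of $A\rtimes_{\alpha,\un}G$ by the \emph{closed sum} $I_{\mu_{\inf}}=\overline{\sum_{\mu\in\Sigma}I_\mu}$ of the kernels, equivalently the completion whose dual space is the intersection $\bigcap_\mu(A\rtimes_{\alpha,\mu}G)\dach$ as in \eqref{eq-inf-dach}; it is \emph{not} the quotient by $\bigcap_\mu I_\mu$. Consequently $\|f\|_{\mu_{\inf}}\leq\|f\|_\mu$ for every $\mu$, so $\|f\|_{\mu_{\inf}}\leq\inf_\mu\|f\|_\mu$, and your key formula $\|f\|_{\mu_{\inf}}=\sup_\mu\|f\|_\mu$ is false (that formula describes the supremum completion, whose kernel is $\bigcap_\mu I_\mu$; a family of irreducible representations determines the norm of the \emph{quotient by the intersection of their kernels}, not a supremum over the individual norms). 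With the correct definition your argument collapses: one cannot replace $\sup$ by $\inf$, because the infimum of $C^*$-norms need not be a $C^*$-norm, and the inequality $\|f\|_{pAp\rtimes_{\mu_{\inf}}G}\leq\|f\|_{A\rtimes_{\mu_{\inf}}G}$ on $C_c(G,pAp)$ is precisely the projection property you are trying to prove, not something that follows from comparing the two infimum constructions normwise.

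The reduction to the projection property via Theorem \ref{thm-corr} is the right first move and is exactly what the paper does; the missing ingredient is a description of the infimum that interacts well with compression by $p$. The paper obtains this at the level of irreducible representations: Lemma \ref{lem-hereditary} and Corollary \ref{cor-projection} show that $\rtimes_\mu$ has the projection property if and only if $(pAp\rtimes_{\alpha,\mu}G)\dach=\{\pi|_{pAp}\rtimes U:\pi\rtimes U\in(A\rtimes_{\alpha,\mu}G)\dach,\ \pi(p)\neq 0\}$, the identification being implemented by the (bijective) Rieffel induction via the equivalence bimodule $pA\,\overline{ApA}$. Since this identification does not depend on $\mu$, it is compatible with taking intersections over $\Sigma$, and applying \eqref{eq-inf-dach} to both $pAp$ and $A$ transports the projection property from each $\rtimes_\mu$ to $\rtimes_{\mu_{\inf}}$. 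If you want to salvage your norm-based formulation, you would need the analogous statement in that language: every representation of $C_c(G,pAp)$ bounded by all the $\mu$-norms extends (via induction) to a representation of $C_c(G,A)$ bounded by all the $\mu$-norms; this is again the content of Lemma \ref{lem-hereditary}, so some such representation-theoretic input cannot be avoided.
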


We need a lemma:

\begin{lemma}\label{lem-hereditary}
Suppose that $(A,\alpha)$ is a $G$\nb-algebra and $B\subseteq A$ is a $G$\nb-invariant hereditary subalgebra.
For each covariant representation $(\pi,U)$ of $(A,G,\alpha)$ on some Hilbert space (or Hilbert module) $\H$
let us write  $(\pi|_B, U)$ for the covariant representation of $(B,G,\alpha)$ which acts on the
subspace (or submodule) $\H|_B:=\pi(B)\H$ via the restriction $\pi|_B$ and the given unitary representation $U$.
Then
\begin{equation}\label{eq-hereditary-rep}
(B\rtimes_{\alpha,\un}G)\dach=\{\pi|_B\rtimes U: \pi\rtimes U\in (A\rtimes_{\alpha,\un}G)^\dach, \pi(B)\neq\{0\}\}.
\end{equation}
In particular, if $p\in \M(A)$ is a $G$\nb-invariant projection, then
\begin{equation}\label{eq-projection-rep}
(pAp\rtimes_{\alpha,\un}G)\dach=\{\pi|_{pAp}\rtimes U: \pi\rtimes U\in (A\rtimes_{\alpha,\un}G)^\dach, \pi(p)\neq 0\}.
\end{equation}
\end{lemma}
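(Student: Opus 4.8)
The plan is to deduce the lemma from the classical correspondence between the spectrum of a $C^*$\nb-algebra and that of a hereditary subalgebra, after first realising $B\rtimes_{\alpha,\un}G$ as a hereditary subalgebra of $A\rtimes_{\alpha,\un}G$. First I would record that the universal crossed-product functor is itself a correspondence functor: it has the projection property (this is the classical fact already used in the proof of Corollary~\ref{cor-KLQproj}; it also follows from Kasparov's descent, or from exactness together with Morita compatibility of $\rtimes_\un$), so by Theorem~\ref{thm-corr} it has the hereditary-subalgebra property. Hence the descent $\iota\rtimes_\un G\colon B\rtimes_{\alpha,\un}G\to A\rtimes_{\alpha,\un}G$ is an injective $*$\nb-homomorphism, and a short computation on the dense subalgebras, using $BAB\subseteq B$ (so that $C_c(G,B)*C_c(G,A)*C_c(G,B)\subseteq C_c(G,B)$), shows that its image $C$ is a hereditary subalgebra of $D:=A\rtimes_{\alpha,\un}G$.

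Next I would invoke the standard fact that for a hereditary subalgebra $C\subseteq D$ the assignment $[\rho]\mapsto\big[\rho|_{\overline{\rho(C)\H_\rho}}\big]$ is a bijection from $\{[\rho]\in\widehat{D}:\rho(C)\neq 0\}$ onto $\widehat{C}$. If a self-contained argument is wanted, this follows from the Rieffel correspondence applied to the full corner $C\subseteq I:=\overline{DCD}$, together with the observation that an irreducible representation of $D$ vanishes on $C$ exactly when it vanishes on the ideal $I$. Applying this with the $C$ and $D$ above, and using that an irreducible representation of $A\rtimes_{\alpha,\un}G$ is the integrated form $\pi\rtimes U$ of an irreducible covariant representation $(\pi,U)$ of $(A,G,\alpha)$, one obtains \eqref{eq-hereditary-rep} after checking three routine points: that $\overline{(\pi\rtimes U)(B\rtimes_{\alpha,\un}G)\H}$ equals the closed subspace $\pi(B)\H=\H|_B$ (Cohen factorisation for the identification of this closed subspace with $\pi(B)\H$, and an approximate-unit plus bump-function argument for the equality of the two closed subspaces); that $U$ restricts to this $G$\nb-invariant subspace and that the restriction of $\pi\rtimes U$ to $C$ is precisely the integrated form $\pi|_B\rtimes U$ of the restricted covariant system; and that $(\pi\rtimes U)(B\rtimes_{\alpha,\un}G)\neq 0$ if and only if $\pi(B)\neq 0$.

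Then \eqref{eq-projection-rep} is simply the special case $B=pAp$, which is a $G$\nb-invariant hereditary subalgebra of $A$ (a corner). Here I would only add that for a nondegenerate covariant representation $(\pi,U)$ the homomorphism $\pi$ extends uniquely to $\M(A)$, $\pi(p)$ is then a projection, and by nondegeneracy $\pi(pAp)\H=\pi(p)\H$ while $\pi(pAp)\neq 0$ exactly when $\pi(p)\neq 0$; substituting these into \eqref{eq-hereditary-rep} gives \eqref{eq-projection-rep}.

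The step I expect to be the main obstacle is the bookkeeping that matches the embedding $B\rtimes_{\alpha,\un}G\hookrightarrow A\rtimes_{\alpha,\un}G$ with restriction of covariant representations, that is, verifying that the abstract restriction $\rho|_{\overline{\rho(C)\H_\rho}}$ really is the integrated form $\pi|_B\rtimes U$ and not merely weakly equivalent to it; staying in the nondegenerate setting throughout is what makes integration and disintegration of covariant representations, and the extension of $\pi$ to multipliers, available. The purely $C^*$\nb-algebraic input — the description of the spectrum of a hereditary subalgebra — is entirely standard, so no genuinely new difficulty should arise on that side.
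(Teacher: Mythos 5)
Your argument is correct, and it takes a different route from the paper's. The paper stays at the level of covariant representations: it first reduces to the ideal $I=\overline{ABA}$ (using the standard description of $(I\rtimes_{\un}G)\dach$ inside $(A\rtimes_{\un}G)\dach$ and the observation that $\pi(ABA)=0$ iff $\pi(B)=0$), then, for $B$ full, uses the $G$\nb-equivariant $B$--$A$ equivalence bimodule $BA$ and Combes' induction of covariant representations, identifying the induced representation on $BA\otimes_\pi\H$ with $(\pi|_B,U)$ on $\pi(B)\H$ via the explicit unitary $ba\otimes\xi\mapsto\pi(ba)\xi$. You instead push the hereditarity through the crossed product: using the (classical) projection/hereditary-subalgebra property of $\rtimes_{\un}$ you realise $B\rtimes_{\un}G$ as a hereditary subalgebra of $A\rtimes_{\un}G$ (your convolution computation $C_c(G,B)*C_c(G,A)*C_c(G,B)\subseteq C_c(G,B)$ is exactly what is needed), and then quote the non-equivariant spectral correspondence for hereditary subalgebras plus the bookkeeping identifying $\overline{(\pi\rtimes U)(B\rtimes_{\un}G)\H}$ with $\pi(B)\H$ and the restricted representation with the integrated form $\pi|_B\rtimes U$; this is essentially the same mechanism the paper itself uses later in Corollary \ref{cor-projection} for corners of $\mu$-crossed products, so there is no circularity (Theorem \ref{thm-corr} and Lemmas \ref{lem-projection}, \ref{lem-Morita} precede this lemma and do not use it, and the needed property of $\rtimes_{\un}$ is classical, resting on Combes' strong Morita compatibility and exactness of full crossed products). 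What each approach buys: the paper's proof is self-contained at the covariant-representation level and does not presuppose any regularity of the functor $\rtimes_{\un}$ beyond standard ideal theory, while yours front-loads that (well-known) input and in exchange reduces the equivariant statement to a purely classical non-equivariant fact, which makes the verification shorter. One small caveat: of your parenthetical alternative justifications for the projection property of $\rtimes_{\un}$, the route via ``exactness together with Morita compatibility'' in the sense of \cite{Baum-Guentner-Willett} would, via Proposition \ref{prop-Morita}, require $\sigma$-unitality, so you should lean on strong Morita compatibility (Combes) plus the ideal property, or simply on the classical fact cited in Corollary \ref{cor-KLQproj}; this does not affect the correctness of your main argument.
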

\medskip

\begin{proof} We first consider the ideal $I:=\overline{ABA}$. It is then well known that
$$(I\rtimes_{\alpha,\un}G)\dach=\{\pi|_I\rtimes U: \pi\rtimes U\in (A\rtimes_{\alpha,\un}G)\dach, \pi(I)\neq \{0\}\}.$$
Note that we then have $\pi(I)\H=\H$ if $\pi(I)\neq \{0\}$, since $\pi\rtimes U$ is irreducible and $\pi(I)\H$
is $\pi\rtimes U$-invariant.

Note that $\pi(ABA)=\pi(A)\pi(B)\pi(A)=\{0\}$ if and only if $\pi(B)=\{0\}$, which
follows by approximating $b\in B$ by $a_i b a_i$, where $(a_i)$ runs through a bounded approximate unit of $A$.
Using this and the above observation for the  ideal $I=\overline{ABA}$, we may now assume that
$B$ is full in the sense that $\overline{ABA}=A$ and to show that in this situation we have
$$(B\rtimes_{\alpha,\un}G)\dach=\{\pi|_B\rtimes U : \pi\rtimes U\in (A\rtimes_{\alpha,\un}G)\dach\}.$$
For this we observe that $(BA,\alpha)$ is a $G$\nb-equivariant $B-A$ equivalence  bimodule with
$B$- and $A$-valued inner products given by $_B\braket{c}{d}=cd^*$ and $\braket{c}{d}_A=c^*d$
for $c,d\in BA$. Thus, $BA$ induces a bijection between the
equivalence classes of  irreducible covariant representations
of  $(A,G,\alpha)$ to those of $(B,G,\alpha)$ by sending a representation $(\pi,U)$ to the
$BA$-induced representation $(\Ind^{BA}\pi, \Ind^{BA}U)$  on the Hilbert space $(BA)\otimes_{\pi}\H$
on which $\Ind^{BA}\pi$ is given by the left action of $B$ on the first factor $BA$ and
 $\Ind^{BA}U=\alpha\otimes U$
(\eg, see \cite{Combes:Crossed_Morita,Echterhoff:Morita_twisted}). Now observe that there is a unique unitary operator
$V:  BA\otimes_{\pi}\H\to \pi(B)\H$ given on
elementary tensors by sending $ba\otimes \xi$ to $\pi(ba)\xi$,  which clearly intertwines
$(\Ind^{BA}\pi, \Ind^{BA}U)$ with $(\pi|_B, U)$. This proves (\ref{eq-hereditary-rep}) and (\ref{eq-projection-rep}) then
follows from the fact that $\pi(pAp)\neq 0$ if and only if $\pi(p)\neq 0$.
\end{proof}

\begin{corollary}\label{cor-projection}
A given crossed-product functor
 $(A,\alpha)\to A\rtimes_{\alpha,\mu}G$ satisfies the projection property (hence is a correspondence functor)
if and only if
\begin{equation}\label{eq-projection-rep1}
(pAp\rtimes_{\alpha,\mu}G)^\dach=\{\pi|_{pAp}\rtimes U: \pi\rtimes U\in (A\rtimes_{\alpha,\mu}G)\dach, \pi(p)\neq 0\}.
\end{equation}
holds for every $G$\nb-invariant projection $p\in \M(A)$.
\end{corollary}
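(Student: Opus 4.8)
The plan is to obtain the corollary by combining the description of $(pAp\rtimes_{\alpha,\un}G)\dach$ from Lemma~\ref{lem-hereditary}, the reformulation of the projection property in Remark~\ref{rem-projection}, and the standard correspondence between the irreducible representations of a \cstar{}algebra and those of its corners. Fix a $G$\nb-algebra $(A,\alpha)$ and a $G$\nb-invariant projection $p\in\M(A)$, and let $\tilde p$ denote the image of $p$ in the multiplier algebra of whichever crossed product is under consideration (under the nondegenerate, hence $\M(A)$-extendable, canonical map); these are compatible under the quotient maps. As usual I would regard $(pAp\rtimes_{\alpha,\mu}G)\dach$ and $(A\rtimes_{\alpha,\mu}G)\dach$ as subsets of $(pAp\rtimes_{\alpha,\un}G)\dach$ and $(A\rtimes_{\alpha,\un}G)\dach$ via the quotient maps.

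First I would record two routine facts. (i) The universal crossed product has the projection property, so $\iota\rtimes_{\un}G\colon pAp\rtimes_{\alpha,\un}G\to A\rtimes_{\alpha,\un}G$ is faithful with image the corner $\tilde p(A\rtimes_{\alpha,\un}G)\tilde p$ (see Remark~\ref{rem-projection}); under this identification a short computation on $C_c(G,pAp)=\tilde p\, C_c(G,A)\,\tilde p$ shows that, for any irreducible $\pi\rtimes U$ of $A\rtimes_{\alpha,\un}G$ with $\pi(p)\neq 0$, the representation $\pi|_{pAp}\rtimes U$ of Lemma~\ref{lem-hereditary} is exactly the compression of $\pi\rtimes U$ to the subspace $\pi(p)\H$ (here one uses $G$-invariance of $p$, so that $U$ preserves $\pi(p)\H$); consequently $\ker(\pi|_{pAp}\rtimes U)=\ker(\pi\rtimes U)\cap(pAp\rtimes_{\alpha,\un}G)$. (ii) Chasing the commuting square relating $\iota\rtimes_{\un}G$, $\iota\rtimes_\mu G$ and the two quotient maps $q_{pAp,\mu}$ and $q_{A,\mu}$, and using faithfulness of $\iota\rtimes_{\un}G$, one gets $\ker q_{pAp,\mu}\subseteq\ker q_{A,\mu}\cap(pAp\rtimes_{\alpha,\un}G)$, with equality if and only if the descent $\iota\rtimes_\mu G$ of $\iota\colon pAp\hookrightarrow A$ is injective (Remark~\ref{rem-projection}). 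Combining (i) and (ii): if $\pi\rtimes U\in(A\rtimes_{\alpha,\mu}G)\dach$ has $\pi(p)\neq 0$, then $\ker q_{pAp,\mu}\subseteq\ker q_{A,\mu}\subseteq\ker(\pi\rtimes U)$, so $\pi|_{pAp}\rtimes U\in(pAp\rtimes_{\alpha,\mu}G)\dach$; hence the right-hand side of \eqref{eq-projection-rep1} is always contained in $(pAp\rtimes_{\alpha,\mu}G)\dach$, and \eqref{eq-projection-rep1} is equivalent to the reverse inclusion.

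To prove that the projection property implies \eqref{eq-projection-rep1}, I would use Remark~\ref{rem-projection} to write $pAp\rtimes_{\alpha,\mu}G=\tilde p(A\rtimes_{\alpha,\mu}G)\tilde p$ and apply the standard bijection between $(\tilde pB\tilde p)\dach$ and $\{\sigma\in B\dach:\sigma(\tilde p)\neq 0\}$ (implemented by the $\tilde pB$-imprimitivity bimodule between $\tilde pB\tilde p$ and $\overline{B\tilde pB}$) to $B=A\rtimes_{\alpha,\mu}G$: every element of $(pAp\rtimes_{\alpha,\mu}G)\dach$ is the compression of a unique $\sigma=\pi\rtimes U\in(A\rtimes_{\alpha,\mu}G)\dach$ with $\sigma(\tilde p)=\pi(p)\neq 0$, and by fact (i) this compression equals $\pi|_{pAp}\rtimes U$; this is exactly \eqref{eq-projection-rep1}. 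For the converse, assuming \eqref{eq-projection-rep1} holds for all $G$-algebras and all $G$-invariant projections, I would compare norms on $C_c(G,pAp)$: since such an $f$ satisfies $f=\tilde p f\tilde p$, the norm of its image in $A\rtimes_{\alpha,\mu}G$ equals $\sup\{\|(\pi|_{pAp}\rtimes U)(f)\|:\pi\rtimes U\in(A\rtimes_{\alpha,\mu}G)\dach,\ \pi(p)\neq 0\}$, while $\|f\|_{pAp\rtimes_{\alpha,\mu}G}=\sup\{\|\rho(f)\|:\rho\in(pAp\rtimes_{\alpha,\mu}G)\dach\}$; by \eqref{eq-projection-rep1} these suprema run over the same family of representations, so $\iota\rtimes_\mu G$ is isometric on $C_c(G,pAp)$, that is, $\rtimes_\mu$ has the projection property, whence it is a correspondence functor by Theorem~\ref{thm-corr}. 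I expect the only point requiring genuine care to be fact (i) --- verifying that $\pi|_{pAp}\rtimes U$ really is the corner-compression of $\pi\rtimes U$ --- together with the elementary bookkeeping of corner representation theory used in the forward direction (which projection is non-zero, uniqueness of the pre-image, irreducibility of compressions); everything else is formal manipulation of ideals and quotient maps.
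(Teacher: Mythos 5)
Your proof is correct and essentially the same as the paper's: both arguments rest on the standard description of the dual of the corner $\tilde{p}(A\rtimes_{\alpha,\mu}G)\tilde{p}$ together with the computation on $C_c(G,pAp)$ identifying the compression of $\pi\rtimes U$ to $\pi(p)\H$ with $\pi|_{pAp}\rtimes U$, so that faithfulness of the surjection $\iota\rtimes_\mu G\colon pAp\rtimes_{\alpha,\mu}G\onto \tilde{p}(A\rtimes_{\alpha,\mu}G)\tilde{p}$ is equivalent to \eqref{eq-projection-rep1}. Your converse direction, phrased as an equality of sup-norms over the two families of irreducible representations, is just this same equivalence in slightly different packaging.
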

\begin{proof}
Let $\tilde{p}$ denote the image of $p$ in $\M(A\rtimes_\mu G)$ under the canonical map.
Then $\tilde{p}(A\rtimes_\mu G)\tilde{p}$ is a corner of
 $A\rtimes_{\alpha,\mu}G$ which implies that
$$(\tilde{p}(A\rtimes_\mu G)\tilde{p})\dach=\{(\pi\rtimes U)|_{\tilde{p}(A\rtimes_\mu G)\tilde{p}}: \pi\rtimes U\in (A\rtimes_{\alpha,\mu}G)\dach, \pi\rtimes U(\tilde{p})\neq 0\}. $$
A computation on the level of functions in $C_c(G, pAp)$  shows that
$(\pi\rtimes U)|_{\tilde{p}(A\rtimes_\mu G)\tilde{p}}$ is the integrated form
of $(\pi|_{pAp}, U)$.  Since $\pi\rtimes U(\tilde{p})=\pi(p)$ the homomorphism
$$\iota\rtimes G: pAp\rtimes_{\alpha,\mu}G \onto \tilde{p}(A\rtimes_{\mu}G)\tilde{p}\subseteq A\rtimes_{\alpha,\mu}G$$
will be faithful if and only if equation (\ref{eq-projection-rep1}) holds.
\end{proof}

\begin{proof}[Proof of Proposition \ref{prop-inf-corr}]
By Theorem \ref{thm-corr} it suffices to show that if all functors $\rtimes_\mu$ satisfy the
projection property, the same holds for $\rtimes_{\mu_{\inf}}$.
So we may assume that equation (\ref{eq-projection-rep1}) holds for each $\mu$.  But then
it also holds for $\mu_{\inf}$, since for every $G$\nb-invariant projection $p\in \M(A)$ we have
\begin{align*}
(pAp\rtimes_{\alpha,\mu_{\inf}}G)\dach &=
\bigcap_{\mu\in \Sigma} (pAp\rtimes_{\alpha,\mu}G)\dach \\
&=\bigcap_{\mu\in \Sigma}
\{\pi|_{pAp}\rtimes U: \pi\rtimes U\in (A\rtimes_{\alpha,\mu}G)\dach, \pi(p)\neq 0\}\\
&=\{\pi|_{pAp}\rtimes U: \pi\rtimes U\in (A\rtimes_{\alpha,\mu_{\inf}}G)\dach, \pi(p)\neq 0\}.
\end{align*}
Hence the result follows from Corollary \ref{cor-projection}.
\end{proof}

The following corollary gives a counterpart to the minimal exact Morita compatible
crossed-product functor $(A,\alpha)\mapsto A\rtimes_{\alpha,\E}G$ as constructed
in \cite{Baum-Guentner-Willett}. Since it is a correspondence functor, it allows
a descent in $KK$-theory and the results of  \S\ref{sec:descent} will apply.
We shall see below that, at least for
second countable groups
and on the category of separable
$G$\nb-algebras, the functor
$\rtimes_\E$ coincides with the minimal exact correspondence functor $\rtimes_{\E_\Cor}$ of

\begin{corollary}\label{cor-minimal-exact}
For every locally compact group $G$ there exists a smallest exact correspondence crossed-product functor
$(A,\alpha)\mapsto A\rtimes_{\alpha, \E_\Cor}G$.
\end{corollary}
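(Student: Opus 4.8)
The plan is to obtain $\rtimes_{\E_\Cor}$ as the infimum of the collection of \emph{all} crossed-product functors for $G$ which are simultaneously exact and correspondence functors, and then to verify that this infimum again lies in that collection, so that it is automatically the smallest of its members. Write $\Sigma$ for this collection. First I would check that $\Sigma$ is nonempty: the universal crossed-product functor $\rtimes_\un$ is exact, since it preserves every short exact sequence of $G$-equivariant $*$-homomorphisms, and it has the projection property---one always has $pAp\rtimes_\un G=\tilde p(A\rtimes_\un G)\tilde p$ for a $G$-invariant projection $p\in\M(A)$---so by Theorem \ref{thm-corr} it is a correspondence functor; thus $\rtimes_\un\in\Sigma$. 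As explained in the discussion preceding Proposition \ref{prop-inf-corr}, although $\Sigma$ need not be a set, the associated family of kernels $\{I_\mu=\ker(q_{A,\mu}\colon A\rtimes_{\alpha,\un}G\to A\rtimes_{\alpha,\mu}G):\mu\in\Sigma\}$ is a genuine set of ideals of the fixed \cstar{}algebra $A\rtimes_{\alpha,\un}G$, so the infimum functor $\rtimes_{\mu_{\inf}}$ of \cite[Lemma 3.7]{Baum-Guentner-Willett} is well-defined; I set $\rtimes_{\E_\Cor}:=\rtimes_{\mu_{\inf}}$.

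Next I would apply the two relevant stability results. By \cite[Theorem 3.8]{Baum-Guentner-Willett} the infimum of a family of exact crossed-product functors is exact, so $\rtimes_{\E_\Cor}$ is exact; and by Proposition \ref{prop-inf-corr} the infimum of a family of correspondence functors is again a correspondence functor, so $\rtimes_{\E_\Cor}$ is a correspondence functor. Hence $\rtimes_{\E_\Cor}$ itself belongs to $\Sigma$.

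Minimality is then built into the infimum construction: for any $\rtimes_\mu\in\Sigma$ and any $G$-algebra $(A,\alpha)$ one has $I_\mu\subseteq\overline{\sum_{\nu\in\Sigma}I_\nu}=I_{\mu_{\inf}}$, equivalently $(A\rtimes_{\alpha,\E_\Cor}G)\dach\subseteq(A\rtimes_{\alpha,\mu}G)\dach$ inside $(A\rtimes_{\alpha,\un}G)\dach$, so $A\rtimes_{\alpha,\E_\Cor}G$ is canonically a quotient of $A\rtimes_{\alpha,\mu}G$. Therefore $\rtimes_{\E_\Cor}$ is the smallest exact correspondence crossed-product functor for $G$. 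I do not anticipate a genuine obstacle here: essentially all the content has already been established in Proposition \ref{prop-inf-corr} and \cite[Theorem 3.8]{Baum-Guentner-Willett}, and the only point requiring a little care is the one just noted---that $\Sigma$ may be a proper class---which is handled exactly as in \cite[Lemma 3.7]{Baum-Guentner-Willett}, since one only ever forms the closed sum of a \emph{set} of ideals in a single \cstar{}algebra.
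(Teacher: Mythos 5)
Your proposal is correct and follows essentially the same route as the paper: define $\rtimes_{\E_\Cor}$ as the infimum of all exact correspondence functors and invoke Proposition \ref{prop-inf-corr} together with \cite[Theorem 3.8]{Baum-Guentner-Willett}. Your extra remarks (nonemptiness via $\rtimes_\un$, the set-theoretic point, and the explicit minimality statement) are fine but are just the routine details the paper leaves implicit.
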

\begin{proof}
We simply define $\rtimes_{\E_\Cor}$ as the infimum of the collection of all exact correspondence functors
$\rtimes_\mu$. By Proposition \ref{prop-inf-corr} it will be a correspondence functor and by
\cite[Theorem 3.8]{Baum-Guentner-Willett} it will be exact.
\end{proof}

The minimal exact correspondence functor $\rtimes_{\E_{\Cor}}$ enjoys
the following property:

\begin{corollary}\label{cor-min-corr}
Let $(D,\delta)$ be any unital $G$\nb-\cstar{}algebra. Then for each  $G$\nb-\cstar{}algebra $(A,\alpha)$
the inclusion $j_A: A\to A\otimes_{\max} D;\; a\mapsto a\otimes 1$  descends to an injective
\Star{}homomorphism $j_A\rtimes_{\E_{\Cor}}G:A\rtimes_{\E_{\Cor}}G\to (A\otimes_{\max}D)\rtimes_{\E_{\Cor}}G$.

In other words, we have $\rtimes_\mu =\rtimes_{\mu_D^{\max}}$ if  $\mu=\E_{\Cor}$.
If $D$ is exact, the same holds if we replace $\otimes_{\max}$ by $\otimes_{\min}$.
\end{corollary}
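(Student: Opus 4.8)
The plan is to deduce the corollary formally from Corollary~\ref{cor-tensor}, the exactness statement recalled in Remark~\ref{rem-exact-D}, and the infimum description of $\rtimes_{\E_{\Cor}}$ used in the proof of Corollary~\ref{cor-minimal-exact}. Since a correspondence functor has the ideal property, Corollary~\ref{cor-tensor} applied with $\mu=\E_{\Cor}$, $\nu=\max$, and our given unital $(D,\delta)$ produces a crossed-product functor $\rtimes_{(\E_{\Cor})_D^{\max}}$ which is \emph{again} a correspondence functor, and by construction $A\rtimes_{(\E_{\Cor})_D^{\max}}G = j_A\rtimes_{\E_{\Cor}}G\,(A\rtimes_{\E_{\Cor}}G)$ is a quotient of $A\rtimes_{\E_{\Cor}}G$; concretely, on $C_c(G,A)$ one has $\|f\|_{(\E_{\Cor})_D^{\max}}=\|j_A\circ f\|_{(A\otimes_{\max}D)\rtimes_{\E_{\Cor}}G}\le\|f\|_{\E_{\Cor}}$. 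Thus the assertion ``$\rtimes_{\E_{\Cor}}=\rtimes_{(\E_{\Cor})_D^{\max}}$'', which is exactly injectivity of $j_A\rtimes_{\E_{\Cor}}G$, reduces to the reverse inequality $\|f\|_{\E_{\Cor}}\le\|f\|_{(\E_{\Cor})_D^{\max}}$.

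For that reverse inequality I would invoke minimality. First I would check that $\rtimes_{(\E_{\Cor})_D^{\max}}$ is \emph{exact}: by Corollary~\ref{cor-minimal-exact} the functor $\rtimes_{\E_{\Cor}}$ is exact, and $D$ is unital by hypothesis, so this is precisely \cite{Baum-Guentner-Willett}*{Lemma 5.4}, as recalled in Remark~\ref{rem-exact-D}. Hence $\rtimes_{(\E_{\Cor})_D^{\max}}$ is an exact correspondence functor. But $\rtimes_{\E_{\Cor}}$ was built in the proof of Corollary~\ref{cor-minimal-exact} as the infimum of \emph{all} exact correspondence functors, so by the construction of the infimum (the defining ideal $I_{\E_{\Cor}}$ contains $I_\mu$ for every exact correspondence functor $\rtimes_\mu$) the functor $\rtimes_{\E_{\Cor}}$ is dominated by every exact correspondence functor; applying this to $\rtimes_{(\E_{\Cor})_D^{\max}}$ gives $\|f\|_{\E_{\Cor}}\le\|f\|_{(\E_{\Cor})_D^{\max}}$ for all $f\in C_c(G,A)$. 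Combined with the opposite inequality from the previous paragraph this yields $\|f\|_{\E_{\Cor}}=\|f\|_{(\E_{\Cor})_D^{\max}}$, i.e. $\rtimes_{\E_{\Cor}}=\rtimes_{(\E_{\Cor})_D^{\max}}$ and $j_A\rtimes_{\E_{\Cor}}G$ is injective, which is the first assertion.

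For the final sentence the argument is identical with $\nu=\min$ in place of $\nu=\max$: Corollary~\ref{cor-tensor} still gives that $\rtimes_{(\E_{\Cor})_D^{\min}}$ is a correspondence functor, and the only place where the exactness lemma \cite{Baum-Guentner-Willett}*{Lemma 5.4} uses $\otimes_{\max}$ rather than $\otimes_{\min}$ is that it needs the functor $A\mapsto A\otimes_\nu D$ to preserve short exact sequences, which for $\nu=\min$ holds precisely because $D$ is assumed exact. So, with $D$ unital and exact, the proof of that lemma goes through verbatim to show that $\rtimes_{(\E_{\Cor})_D^{\min}}$ is an exact correspondence functor, and the minimality argument of the second paragraph applies unchanged. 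I expect the main obstacle to be exactly this last step: one has to go back into the proof of \cite{Baum-Guentner-Willett}*{Lemma 5.4} and confirm that it uses nothing special about $\otimes_{\max}$ beyond exactness of the functor $A\mapsto A\otimes D$ (and the ideal property of $\rtimes_{\mu_D^\nu}$ supplied by Corollary~\ref{cor-tensor}), so that it transfers to $\otimes_{\min}$ under the exactness hypothesis on $D$. Everything else is a formal consequence of Corollary~\ref{cor-tensor} and the infimum characterisation of $\rtimes_{\E_{\Cor}}$.
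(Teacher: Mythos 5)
Your argument is correct and is essentially the paper's own proof: the paper likewise deduces the corollary from minimality of $\rtimes_{\E_\Cor}$ together with the fact that $\rtimes_{\mu_D^\nu}$ is an exact correspondence functor when $\rtimes_\mu$ is exact and either $\nu=\max$ (with $D$ unital) or $\nu=\min$ with $D$ exact, the quotient inequality coming from the construction in Corollary \ref{cor-tensor}. You merely spell out the details (the norm comparison and the verification of exactness) that the paper leaves as "easily verified."
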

\begin{proof}
This follows from minimality of $\rtimes_{\E_\Cor}$ and the (easily verified) fact that $\rtimes_{\mu_D^\nu}$ is exact if $\rtimes_\mu$ is exact and either: $\nu=\max$; or both $\nu=\min$ and $D$ is exact.
\end{proof}

In order to see that the functor $\rtimes_{\E_\Cor}$ coincides with the ``minimal exact Morita compatible functor''
$\rtimes_\E$, we need to recall the notion of Morita compatibility as defined in \cite{Baum-Guentner-Willett} and then
compare it with the properties of our correspondence functors.
If $G$ is a locally compact group we write $\K_G:=\K(\ell^2(\N))\otimes \K(L^2(G))$ equipped with
the $G$\nb-action $\Ad \Lambda$, where $\Lambda=1\otimes\lambda:G\to \U(\ell^2(\N)\otimes L^2(G))$
and $\lambda$ denotes the left
regular representation of $G$. Then there is a canonical isomorphism of maximal crossed products
$$\Phi: (A\otimes\K_G)\rtimes_{\alpha\otimes \Ad\Lambda,\un}G\stackrel{\cong}{\longrightarrow} (A{\rtimes}_{\alpha,\un}G)\otimes\K_G$$
given by the integrated form of the covariant homomorphism $(i_A\otimes \id_{\K_G}, i_G\otimes \Lambda)$
of $(A\otimes\K_G, G, \alpha\otimes \Ad\Lambda)$ into $\M((A{\rtimes_{\alpha,\un}}G)\otimes\K_G)$.
Then Baum, Guentner and Willett defined a crossed-product functor to be {\em Morita compatible}
if the composition of $\Phi$ with the quotient map $(A{\rtimes_{\alpha,\un}}G)\otimes\K_G\to (A{\rtimes_{\alpha,\mu}}G)\otimes\K_G$
factors through an isomorphism
$$\Phi_\mu: (A\otimes\K_G)\rtimes_{\alpha\otimes \Ad\Lambda,\mu}G\stackrel{\cong}{\longrightarrow} (A{\rtimes_{\alpha,\mu}}G)\otimes\K_G.$$

\begin{proposition}\label{prop-Morita}
Suppose that $G$ is a locally compact group and that $(A,\alpha)\mapsto A\rtimes_{\alpha,\mu}G$ is
a crossed-product functor on the category of $\sigma$-unital \cstar{}algebras. Then the following
three conditions are equivalent:
\begin{enumerate}
\item $\rtimes_{\mu}$ is Morita compatible as defined above.
\item $\rtimes_\mu$ has the full projection property.
\item $\rtimes_\mu$ is strongly Morita compatible.
\end{enumerate}
\end{proposition}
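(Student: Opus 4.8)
The plan is to establish the cycle (1)$\Rightarrow$(2)$\Leftrightarrow$(3)$\Rightarrow$(1). The equivalence (2)$\Leftrightarrow$(3) — the full projection property versus strong Morita compatibility — is already contained in Lemma~\ref{lem-Morita} (its conditions (4) and (2)), and (3)$\Rightarrow$(1) needs no $\sigma$\nb-unitality: if $\rtimes_\mu$ is strongly Morita compatible, then Corollary~\ref{cor-ext} applied with $\H=\ell^2(\N)\otimes L^2(G)$ and $U=1\otimes\lambda_G$ is exactly the isomorphism $\Phi_\mu$ demanded by the definition of Morita compatibility, as observed in Remark~\ref{rem-ext2}. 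So the real content is the implication (1)$\Rightarrow$(2), and this is the only place where $\sigma$\nb-unitality is used.

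To prove (1)$\Rightarrow$(2) I would show that Morita compatibility implies the full projection property. Given a $\sigma$\nb-unital $G$\nb-algebra $(A,\alpha)$ and a full $G$\nb-invariant projection $p\in\M(A)$ with inclusion $\iota\colon pAp\into A$, I would first reduce to a stabilised statement. Set $D:=A\otimes\K_G$ and $P:=p\otimes 1_{\K_G}$, a full $G$\nb-invariant projection in $\M(D)$, so that $\iota\otimes\id_{\K_G}$ is the inclusion of the corner $PDP=pAp\otimes\K_G$ into $D$. Both $pAp$ and these stabilisations are again $\sigma$\nb-unital, so Morita compatibility applied to $A$ and to $pAp$ gives isomorphisms
$$(A\otimes\K_G)\rtimes_\mu G\cong (A\rtimes_\mu G)\otimes\K_G\quad\text{and}\quad (pAp\otimes\K_G)\rtimes_\mu G\cong (pAp\rtimes_\mu G)\otimes\K_G,$$
under which $(\iota\otimes\id_{\K_G})\rtimes_\mu G$ is carried to $(\iota\rtimes_\mu G)\otimes\id_{\K_G}$; this is a routine check on $C_c(G,-)$ using that the $\Phi_\mu$ are integrated forms of the canonical covariant homomorphisms into $\M((A\rtimes_\mu G)\otimes\K_G)$. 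Since $-\otimes\K_G$ reflects injectivity, it then suffices to prove that $\rtimes_\mu$ has the projection property for the single projection $P\in\M(D)$.

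The point of the reduction is that $D=A\otimes\K_G$ is $\sigma$\nb-unital and already $\K_G$\nb-stabilised (so $D\otimes\K_G\cong D$ $G$\nb-equivariantly), so the $G$\nb-equivariant form of Brown's stabilisation theorem applies to the full $G$\nb-invariant projection $P$: there is a $G$\nb-invariant isometry $v\in\M(D)$ with $v^*v=1_{\M(D)}$ and $vv^*=P$, so that $\Ad v\colon D\to PDP$, $d\mapsto vdv^*$, is a $G$\nb-equivariant \Star{}isomorphism with inverse $x\mapsto v^*xv$. Let $\tilde v\in\M(D\rtimes_\mu G)$ be the image of $v$ under $i^\mu_D$ extended to $\M(D)$; then $\tilde v^*\tilde v=1$, $\tilde v\tilde v^*=\tilde P$, and on the dense subalgebra $C_c(G,D)$ the map $x\mapsto\tilde vx\tilde v^*$ sends $f$ to $[s\mapsto vf(s)v^*]$. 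Comparing on $C_c(G,D)$ then shows that the composite
$$D\rtimes_\mu G\;\xrightarrow{\,(\Ad v)\rtimes_\mu G\,}\;PDP\rtimes_\mu G\;\xrightarrow{\,\iota_P\rtimes_\mu G\,}\;D\rtimes_\mu G,$$
where $\iota_P\colon PDP\into D$ is the corner inclusion, equals $x\mapsto\tilde vx\tilde v^*$, which is injective because $\tilde v^*\tilde v=1$. As $(\Ad v)\rtimes_\mu G$ is an isomorphism, being the functorial image of one, it follows that $\iota_P\rtimes_\mu G$ is injective, which is the desired projection property.

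The main obstacle is the appeal to the equivariant stabilisation theorem to produce the $G$\nb-invariant isometry $v$; this is precisely where $\sigma$\nb-unitality is needed, and it also accounts for the presence of the factor $L^2(G)$ in $\K_G$, whose role is to make the relevant standard $G$\nb-Hilbert module absorbing so that Brown's argument can be run $G$\nb-equivariantly. Everything else — the naturality of the isomorphisms $\Phi_\mu$ under corner inclusions, and the identification of the displayed composite with conjugation by $\tilde v$ — is a straightforward computation with functions of compact support.
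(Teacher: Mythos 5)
Your proposal is correct and takes essentially the same route as the paper: (2)$\Leftrightarrow$(3) via Lemma \ref{lem-Morita}, (3)$\Rightarrow$(1) via Corollary \ref{cor-ext}, and for (1)$\Rightarrow$(2) the same two key inputs, namely the equivariant stabilisation theorem (the paper cites \cite[Corollary~2.6]{Mingo-Phillips:Triviality}) producing a $G$\nb-invariant isometry $v\in\M(A\otimes\K_G)$ with $vv^*=p\otimes 1_{\K_G}$, together with Morita compatibility applied to both $A$ and $pAp$. The only (harmless) difference is organisational: you perform the conjugation by the image of $v$ inside $\M\big((A\otimes\K_G)\rtimes_\mu G\big)$ and invoke Morita compatibility only in the naturality/reduction step, whereas the paper conjugates by $\tilde v\in\M\big((A\rtimes_{\alpha,\mu}G)\otimes\K_G\big)$ and packages everything into a single commutative diagram.
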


Note that the assumption that all $G$-algebras are $\sigma$-unital is only used in the proof (1) $\Rightarrow$ (2). All other
statements hold in full generality.

\begin{proof} The proof of (2) $\Leftrightarrow$ (3) is exactly the same as the proof of (2) $\Leftrightarrow$ (4) in Lemma \ref{lem-Morita}
and we omit it here. The proof of (3) $\Rightarrow$ (1) follows from Corollary \ref{cor-ext}.

(1) $\Rightarrow$ (2): Let $(A,\alpha)$ be a $\sigma$-unital $G$\nb-algebra and let $p\in \M(A)$ be a $G$\nb-invariant full projection.
We need to show that the inclusion $\iota: pAp\hookrightarrow A$ descends to give a faithful  inclusion
$\iota\rtimes G: pAp\rtimes_{\mu}G\hookrightarrow A\rtimes_{\mu}G$. This will be the case if and only if
$$(\iota\rtimes G)\otimes\id_{\K_G}: pAp\rtimes_{\mu}G\otimes\K_G\to A\rtimes_\mu G\otimes \K_G$$
is faithful, and we shall now deduce from Morita compatibility that this is the case.

Let $\alpha\otimes \Ad\Lambda$ be the corresponding action of $G$ on $A\otimes\K_G$.
It follows  from \cite[Corollary~2.6]{Mingo-Phillips:Triviality} that there exists a $G$\nb-invariant  partial isometry $v\in \M(A\otimes \K_G)$
such that $v^*v=1_A\otimes 1_{\K_G}$ and $vv^*=p\otimes 1_{\K_G}$. This implies that
the mapping
$$\Ad v^*: pAp\otimes \K_G\to A\otimes \K_G;  \;(a\otimes k)\mapsto v^*(a\otimes k)v; $$
is a $G$\nb-equivariant isomorphism. Hence it induces an isomorphism
$$\Ad v^*\rtimes G: (pAp\otimes \K_G)\rtimes_{\mu}G\stackrel{\cong}{\longrightarrow} (A\otimes \K_G)\rtimes_{\mu}G.$$
Let  $\tilde{v}$ denote the image of $v$ in $\M((A\rtimes_{\alpha,\mu}G)\otimes\K_G)$ under the composition
 $\Phi_\mu\circ
i_{A\otimes\K_G}$, where $i_{A\otimes\K_G}:\M(A\otimes\K_G)\to \M((A\otimes\K_G)\rtimes_{\mu}G))$ denotes the canonical map.
Then $\tilde{v}$ is a partial isometry with $\tilde{v}^*\tilde{v}=1_{A\rtimes G\otimes\K}$ and $\tilde{v}\tilde{v}^*=\tilde{p}\otimes 1_{\K_G}$
where $\tilde{p}$ denotes the image of $p$ in $\M(A\rtimes_\mu G)$.
Consider the diagram
$$
\xymatrix{
(pAp\rtimes_{\alpha,\mu} G)\otimes \K_G \ar[r]^{\iota\rtimes G\otimes \id_\K}& \tilde{p}(A\rtimes_{\alpha,\mu} G)\tilde{p}\otimes \K_G
\ar[r]^{\Ad{\tilde{v}^*}}& (A\rtimes_{\alpha,\mu} G)\otimes \K_G\\
(pAp\otimes\K_G)\rtimes_{\alpha\otimes\Ad\Lambda,\mu}G \ar[u]^{\Phi_\mu} \ar[rr]^{\Ad v^*\rtimes G} &  &
(A\otimes\K_G)\rtimes_{\alpha\otimes\Ad\Lambda,\mu}G \ar[u]^{\Phi_\mu}
}
$$
One easily checks on the generators that this diagram commutes. All maps but $\iota\rtimes G\otimes \id_\K$ are known
to be isomorphisms, hence $\iota\rtimes G\otimes \id_\K$ must be an isomorphism as well.
\end{proof}

\begin{lemma}\label{extsep}
Let $G$ be a second countable group, and say $\rtimes_\mu$ is a crossed-product functor, defined on the category of {\em separable} $G$\nb-\cstar{}algebras.  For each $G$\nb-\cstar{}algebra $A$, let $(A_i)_{i\in I}$ be the net of $G$-invariant separable sub $C^*$-algebras of $A$ ordered by inclusion.  Define $A\rtimes_{\mathrm{sep},\mu}G$ by
$$
A\rtimes_{\mathrm{sep},\mu}G:=\lim_{i\in I}A_i\rtimes_\mu G.
$$
Then $A\rtimes_{\mathrm{sep},\mu}G$ is a completion of $C_c(G,A)$ with the following properties.
\begin{enumerate}
\item $(A,\alpha)\mapsto A\rtimes_{\alpha,\mathrm{sep},\mu}G$ is  a crossed-product functor for $G$.
\item If $\rtimes_\mu$ has the ideal property, the same holds for $\rtimes_{\mathrm{sep},\mu}$.
\item If $\rtimes_\mu$ is a correspondence  functor, the same holds for $\rtimes_{\mathrm{sep},\mu}$.
\item If $\rtimes_\mu$ is exact, the same holds for $\rtimes_{\mathrm{sep},\mu}$.
\item If $\rtimes_\mu$ is Morita compatible, then the same holds for $\rtimes_{\mathrm{sep},\mu}$.
\item If $\rtimes_\nu$ is any crossed-product functor extending $\rtimes_\mu$ to all $G$\nb-\cstar{}algebras, then $\rtimes_{\mathrm{sep},\mu}\geq \rtimes_\nu$.
\end{enumerate}
\end{lemma}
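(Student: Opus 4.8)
The plan is to realise $A\rtimes_{\mathrm{sep},\mu}G$ as a genuine $C^*$-inductive limit and then, for each of the six assertions, pass the relevant property from the separable pieces $A_i\rtimes_\mu G$ to the limit. Fix a $G$-algebra $(A,\alpha)$ and let $I$ be the set of $G$-invariant separable $C^*$-subalgebras of $A$, directed by inclusion (the $C^*$-algebra generated by the union of two such subalgebras is again $G$-invariant and separable). As $G$ is second countable and $\alpha$ is continuous, the $C^*$-algebra generated by the orbit of any $a\in A$ lies in $I$, so $A=\bigcup_i A_i$ and hence $C_c(G,A)=\bigcup_i C_c(G,A_i)$. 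By functoriality of $\rtimes_\mu$ on separable algebras, each inclusion $A_i\subseteq A_j$ induces a $*$-homomorphism $A_i\rtimes_\mu G\to A_j\rtimes_\mu G$, and these are compatible; they need not be injective, so $A\rtimes_{\mathrm{sep},\mu}G:=\lim_i A_i\rtimes_\mu G$ is an honest $C^*$-inductive limit and the norm it induces on $f\in C_c(G,A_i)$ is the infimum $\inf_{A_j\supseteq A_i}\|f\|_{A_j\rtimes_\mu G}$. Since the reduced crossed-product functor commutes with inductive limits with injective connecting maps and the full one commutes with inductive limits, one has $\|f\|_{A_i\rtimes_\red G}=\|f\|_{A\rtimes_\red G}$ and $\|f\|_{A\rtimes_\un G}=\inf_{A_j}\|f\|_{A_j\rtimes_\un G}$; together with $\|\cdot\|_\red\le\|\cdot\|_{A_j\rtimes_\mu G}\le\|\cdot\|_\un$ this shows $A\rtimes_{\mathrm{sep},\mu}G$ is a completion of $C_c(G,A)$ between the reduced and full ones, and that it recovers $A\rtimes_\mu G$ when $A$ is itself separable (then $A$ is the largest element of $I$). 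For item (1), given a $G$-equivariant $*$-homomorphism $\phi\colon A\to B$ and $f\in C_c(G,A_i)$, I would choose for each $A_j\supseteq A_i$ a separable $G$-invariant $B'\subseteq B$ with $\phi(A_j)\subseteq B'$; functoriality of $\rtimes_\mu$ applied to $\phi|_{A_j}\colon A_j\to B'$ gives $\|\phi\circ f\|_{B\rtimes_{\mathrm{sep},\mu}G}\le\|\phi\circ f\|_{B'\rtimes_\mu G}\le\|f\|_{A_j\rtimes_\mu G}$, and taking the infimum over $A_j$ yields $\|\phi\circ f\|_{B\rtimes_{\mathrm{sep},\mu}G}\le\|f\|_{A\rtimes_{\mathrm{sep},\mu}G}$; so $f\mapsto\phi\circ f$ extends to $\phi\rtimes_{\mathrm{sep},\mu}G$, and compatibility with composition and identities is checked on $C_c(G,A)$.

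\textbf{Items (2) and (3).} Both rest on two observations: an injective $*$-homomorphism of $C^*$-algebras is isometric, hence an inductive limit of connecting-compatible injective $*$-homomorphisms is again injective; and a crossed product of the above kind can be computed over any cofinal subnet of the corresponding directed set. For (2), if $\rtimes_\mu$ has the ideal property and $J\subseteq A$ is $G$-invariant, then $\{A_i\cap J:i\in I\}$ is a cofinal directed family of $G$-invariant separable ideals exhausting $J$, so $J\rtimes_{\mathrm{sep},\mu}G=\lim_i(A_i\cap J)\rtimes_\mu G$; each $(A_i\cap J)\rtimes_\mu G\hookrightarrow A_i\rtimes_\mu G$ is injective, and passing to the limit gives the ideal property for $\rtimes_{\mathrm{sep},\mu}$. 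For (3) I would use Theorem~\ref{thm-corr} to replace ``correspondence functor'' by ``projection property''. Given a $G$-invariant projection $p\in\M(A)$, starting from any $A_i$ and forming the closure of the increasing union obtained by iteratively adjoining $pA_i$, $A_ip$, the relevant $G$-orbits and the generated $C^*$-algebra, one produces a cofinal subnet of those $A'\in I$ with $pA'\subseteq A'$ and $A'p\subseteq A'$; for such $A'$ the multiplier $p$ restricts to a $G$-invariant projection $p'\in\M(A')$ with $p'A'p'=pA'p$, and these corners form a cofinal family exhausting $pAp$ (using $pCp=C$ for $C\subseteq pAp$). Hence $pAp\rtimes_{\mathrm{sep},\mu}G=\lim_{A'}(p'A'p')\rtimes_\mu G$ and $A\rtimes_{\mathrm{sep},\mu}G=\lim_{A'}A'\rtimes_\mu G$, each $(p'A'p')\rtimes_\mu G\hookrightarrow A'\rtimes_\mu G$ is injective by the projection property, and passing to the limit establishes the projection property for $\rtimes_{\mathrm{sep},\mu}$.

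\textbf{Items (4) and (5).} For (4) I would invoke exactness of the $C^*$-algebraic inductive-limit functor. A $G$-equivariant short exact sequence $0\to J\to A\to A/J\to 0$ restricts to short exact sequences $0\to A_i\cap J\to A_i\to A_i/(A_i\cap J)\to 0$ of separable $G$-algebras; applying the exact functor $\rtimes_\mu$ keeps them exact and connecting-compatible, and using $A_i/(A_i\cap J)\cong(A_i+J)/J$ --- where the subalgebras $(A_i+J)/J$ run through a cofinal subnet of the separable $G$-invariant subalgebras of $A/J$, obtained by lifting countable dense subsets and saturating under $G$ --- identifies the limit of these sequences with $0\to J\rtimes_{\mathrm{sep},\mu}G\to A\rtimes_{\mathrm{sep},\mu}G\to(A/J)\rtimes_{\mathrm{sep},\mu}G\to 0$, which is therefore exact. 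For (5), with $\K_G=\K(\ell^2(\N))\otimes\K(L^2(G))$ and action $\Ad\Lambda$, the subalgebras $A_i\otimes\K_G$ are cofinal among the separable $G$-invariant subalgebras of $A\otimes\K_G$ (approximate elements by elementary tensors and collect the $A$-legs), so $(A\otimes\K_G)\rtimes_{\mathrm{sep},\mu}G=\lim_i(A_i\otimes\K_G)\rtimes_\mu G$; since tensoring by $\K_G$ commutes with inductive limits one also has $(A\rtimes_{\mathrm{sep},\mu}G)\otimes\K_G=\lim_i(A_i\rtimes_\mu G)\otimes\K_G$, and the Morita isomorphisms $\Phi_\mu\colon(A_i\otimes\K_G)\rtimes_\mu G\cong(A_i\rtimes_\mu G)\otimes\K_G$ of \cite{Baum-Guentner-Willett}, being natural in $A_i$, pass to an isomorphism in the limit, which one checks on $C_c(G,A\otimes\K_G)$ to be the corresponding map $\Phi_{\mathrm{sep},\mu}$.

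\textbf{Item (6) and the main obstacle.} If $\rtimes_\nu$ is a crossed-product functor on all $G$-algebras restricting to $\rtimes_\mu$ on separable ones, then for $f\in C_c(G,A_i)$ functoriality of $\rtimes_\nu$ applied to $A_i\hookrightarrow A$ gives $\|f\|_{A\rtimes_\nu G}\le\|f\|_{A_i\rtimes_\nu G}=\|f\|_{A_i\rtimes_\mu G}$; taking the infimum over all $A_i\in I$ containing the range of $f$ yields $\|f\|_{A\rtimes_\nu G}\le\|f\|_{A\rtimes_{\mathrm{sep},\mu}G}$, that is, $\rtimes_{\mathrm{sep},\mu}\geq\rtimes_\nu$. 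I expect the bulk of the work, and the only genuinely delicate point, to be the cofinality arguments --- above all the iterative construction in (3) of separable $C^*$-subalgebras stable under left and right multiplication by the multiplier $p$ --- together with the clean use of exactness of $C^*$-inductive limits in (4); everything else consists of the numerous but routine verifications that all maps in sight are compatible with the connecting maps and agree on $C_c(G,-)$.
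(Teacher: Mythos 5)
Your proposal is correct and follows essentially the same route as the paper: realise $\rtimes_{\mathrm{sep},\mu}$ as a $C^*$-inductive limit, verify the relevant cofinality statements (the nets $A_i\cap J$, the $p$-stable separable subalgebras, $\pi(A_i)$ in the quotient, and $A_i\otimes\K_G$), note that injectivity and exactness pass to direct limits, and use the universal property (equivalently, your norm estimate) for item (6). The only cosmetic difference is in (3), where the paper takes in one step the $C^*$-algebra generated by $A_i$ and $A_ip$ rather than your iterative closure, and in your slightly more explicit justification that the limit norm sits between the reduced and universal norms.
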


\begin{proof}
Note first that as $G$ is second countable any $f\in C_c(G,A)$ has image in some second countable $G$-invariant subalgebra of $A$, and thus $A\rtimes_{\mathrm{sep},\mu}G$ contains $C_c(G,A)$ as a dense subset as claimed.  It is automatically smaller than the maximal completion, and it is larger than the reduced as the canonical maps
$$
A_i\rtimes_\mu G \to A_i\rtimes_\red G \to A\rtimes_\red G
$$
give a compatible system, whence there is a map $A\rtimes_{\textrm{sep},\mu}G \to A\rtimes_\red G$ (which is equal to the identity on $C_c(G,A)$) by the universal property of the direct limit.
\medskip
\\
(1) Functoriality follows from functoriality of the direct limit construction.
\medskip
\\
(2) If $I$ is an ideal in $A$, then for each separable $G$-invariant $C^*$-subalgebra $A_i$ of $A$ as in the definition of $\rtimes_{\mathrm{sep},\mu}$, define $I_i=I\cap A_i$.  Then the directed system $(I_i)_{i\in I}$ is cofinal in the one used to define $I\rtimes_{\mathrm{sep},\mu}G$, whence
$$
I\rtimes_{\mathrm{sep},\mu}G=\lim_i(I_i\rtimes_\mu G).
$$
The result now follows as each map $I_i\rtimes_\mu G\to A_i\rtimes_\mu G$ is injective by the ideal property for $\rtimes_\mu$, and injectivity passes to direct limits.
\medskip
\\
(3) We prove the projection property.  Let $p$ be a $G$-invariant projection in the multiplier algebra of $A$.  The argument is similar to that of part (2).  The only additional observation needed is that if we set $B_i$ to be the $C^*$-subalgebra of $A$ generated by $A_ip$ and $A_i$, then the net $(B_i)_{i\in I}$ is cofinal in the net defining $A\rtimes_{\mathrm{sep},\mu}G$.  Moreover, $p$ is in the multiplier algebra of each $B_i$, whence each of the injections $pB_ip\to B_i$ induces an injection on $\rtimes_\mu$ crossed products by the projection property for $\rtimes_\mu$.  The result follows now on passing to the direct limit of the inclusions
$$
pB_ip\rtimes_\mu G \to B_i \rtimes_\mu G.
$$
(4) This is similar to parts (2) and (3).  Given a short exact sequence
$$
0\to I\to A \to B\to 0
$$
with quotient map $\pi:A\to B$, let $(A_i)$ be the net of separable $G$-invariant subalgebras  of $A$, and consider the net of short exact sequences
$$
0\to A_i\cap I \to A_i \to \pi(A_i)\to 0.
$$
The nets $(A_i\cap I)$ and $(\pi(A_i))$ are cofinal in the nets defining the $\rtimes_{\mathrm{sep},\mu}$ crossed products for $I$ and $B$ respectively.  On the other hand, exactness of $\mu$ gives that all the sequences
$$
0\to (A_i\cap I )\rtimes_\mu G\to A_i \rtimes_\mu G \to \pi(A_i)\rtimes_\mu G\to 0
$$
are exact.  The result follows as exactness passes to direct limits.
\medskip
\\
(5) This again follows a similar pattern: the only point to observe is that the collection of separable $G$-invariant $C^*$-algebras of $A\otimes \mathcal{K}_G$ of the form $A_i\otimes \mathcal{K}_G$, where $A_i$ is a separable $G$-invariant $C^*$-subalgebra of $A$, is cofinal in the direct limit defining $(A\otimes \mathcal{K}_G)\rtimes_{\mathrm{sep},\mu}G$.
\medskip
\\
(6) From functoriality of $\rtimes_\nu$, there is a compatible system of \Star{}homomorphisms
$$
A_i\rtimes_{\mu} G =A_i\rtimes_{\nu} G \to A\rtimes_\nu G.
$$
From the universal property of the direct limit there is a \Star{}homomorphism $A\rtimes_{\mathrm{sep},\mu}G \to A\rtimes_\nu G$ which is clearly the identity on $C_c(G,A)$, completing the proof.
\end{proof}

\begin{example}\label{sepex}
Every non-amenable second countable group has a crossed product $\rtimes_{\mu}$ for which
the crossed-product functor $\rtimes_{\mathrm{sep},\mu}$ (in which we restrict $\rtimes_\mu$ to the category
of separable $G$-algebras) does not coincide with $\rtimes_{\mu}$.
 For this, we use the construction of \S\ref{subsec:counter}.  Let $\H$ be a Hilbert space with uncountable Hilbert space dimension.
Let $\mathcal  S$ denote the set of all separable subalgebras of $\K(\H)$, all  equipped with the trivial $G$-action.
We define a crossed-product functor $\rtimes_{\mu}$ by defining $A\rtimes_{\mu}G$ as the completion
of $C_c(G,A)$ by the norm
$$\| f\|_\mu:=\max\big\{ \|f\|_\red, \sup\{\|\phi\circ f\|_{B\rtimes_{\un}G} : B\in\mathcal S, \phi\in \mathrm{Hom}^G(A,B)\}\big\},$$
where $\mathrm{Hom}^G(A,B)$ denotes the set of $G$-equivariant \Star{}homomorphisms from $A$ to $B$.
As explained in \S\ref{subsec:counter}, this is a functor.

Now consider $A=\K(\H)$ with the trivial $G$-action. Then
$\K(\H)\rtimes_{\mu}G=\K(\H)\rtimes_{\red}G\cong \K(\H)\otimes C_\red^*(G)$, since there are no nontrivial homomorphisms
from $\K(\H)$ into any of its separable subalgebras. On the other hand we have
$B_i\rtimes_{\mu}G\cong B\rtimes_{\un}G$ for every separable subalgebra $B\subseteq \K(\H)$, which
implies $\K(\H)\rtimes_{\mathrm{sep},\mu}G\cong \K(\H)\rtimes_{\un}G\cong \K(\H)\otimes C^*(G)$.
\end{example}

\begin{corollary}
The following functors from the category of separable $G$\nb-\cstar{}algebras to the category of \cstar{}algebras are the same.
\begin{enumerate}
\item The restriction to separable $G$\nb-algebras of the minimum $\rtimes_{\E_\Cor}$ over all exact correspondence functors defined for all
$G$\nb-\cstar{}algebras.
\item The restriction to separable $G$\nb-algebras of the minimum $\rtimes_{\E}$ over all exact Morita compatible functors defined for all
$G$\nb-\cstar{}algebras.
\item The minimum over all exact correspondence functors defined for separable $G$\nb-\cstar{}algebras.
\item The minimum over all exact Morita compatible functors defined for separable $G$\nb-\cstar{}algebras.
\end{enumerate}
\end{corollary}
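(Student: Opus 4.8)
The plan is to prove the corollary purely by bookkeeping: each of the four functors is an infimum of a suitable class of crossed-product functors, and I would show that these classes coincide once one translates between ``defined on all $G$-algebras'' and ``defined on separable $G$-algebras'' and between ``Morita compatible'' and ``correspondence functor''. The ingredients are Theorem~\ref{thm-corr}, Lemma~\ref{lem-projection}, Proposition~\ref{prop-Morita}, Proposition~\ref{prop-inf-corr}, \cite[Theorem~3.8]{Baum-Guentner-Willett}, Remark~\ref{rem:klq bg id} and Lemma~\ref{extsep}; no new estimates are required. The first point I would record is that forming the infimum of a collection of crossed-product functors is a pointwise operation, since $A\rtimes_{\alpha,\mu_{\inf}}G=(A\rtimes_{\alpha,\un}G)/\overline{\sum_\mu I_\mu}$ for each fixed $G$-algebra $A$; hence it commutes with restricting the category. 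Therefore the functor in (1) is the infimum, over all exact correspondence functors $\rtimes_\mu$ defined on \emph{all} $G$-algebras, of the restrictions $\rtimes_\mu|_{\mathrm{sep}}$, and (2) is the analogous infimum of the restrictions of all exact Morita compatible functors on all $G$-algebras.

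Next I would identify the classes of functors occurring in (3) and (4) on the category of separable $G$-algebras. There every algebra is $\sigma$-unital, so Proposition~\ref{prop-Morita} applies and gives, on that category, that Morita compatible $\Leftrightarrow$ strongly Morita compatible $\Leftrightarrow$ full projection property. Combining this with Lemma~\ref{lem-projection} (projection property $\Leftrightarrow$ full projection property \emph{plus} the ideal property), Theorem~\ref{thm-corr} (correspondence functor $\Leftrightarrow$ projection property), and the fact that every exact functor enjoys the ideal property (Remark~\ref{rem:klq bg id}), I would conclude that a functor on separable $G$-algebras is \emph{exact and Morita compatible} exactly when it is \emph{exact and a correspondence functor}. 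Thus the classes of functors used in (3) and (4) are literally the same, so (3)$=$(4); and by Proposition~\ref{prop-inf-corr} together with \cite[Theorem~3.8]{Baum-Guentner-Willett} this common infimum is again an exact correspondence functor.

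Finally I would link the ``all $G$-algebras'' and ``separable $G$-algebras'' pictures via the $\mathrm{sep}$-construction of Lemma~\ref{extsep}. In one direction, restricting an exact correspondence (resp.\ exact Morita compatible) functor defined on all $G$-algebras to the separable ones is again an exact correspondence (resp.\ Morita compatible) functor, since exactness, the projection property and Morita compatibility are all tested on separable algebras; so every functor in the collection defining (1) appears in the collection defining (3), and likewise for (2) and (4). In the other direction, given an exact correspondence functor $\rtimes_\nu$ on separable $G$-algebras, Lemma~\ref{extsep}(1),(3),(4) shows that $\rtimes_{\mathrm{sep},\nu}$ is an exact correspondence functor on all $G$-algebras, and for separable $A$ the net of separable $G$-invariant subalgebras of $A$ has $A$ itself as largest member, so the direct limit is trivial and $\rtimes_{\mathrm{sep},\nu}|_{\mathrm{sep}}=\rtimes_\nu$. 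Hence the collection of functors-on-separable-algebras defining (1) equals the one defining (3), and taking infima gives (1)$=$(3); the identical argument using Lemma~\ref{extsep}(4),(5) gives (2)$=$(4). Together with (3)$=$(4) this shows all four functors agree.

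There is no real analytic obstacle; the points I would be careful about are that Proposition~\ref{prop-Morita} is genuinely needed and only holds in the $\sigma$-unital setting (which is precisely why the statement is phrased for separable $G$-algebras), and that the $\mathrm{sep}$-extension returns the original functor \emph{on the nose} when restricted to separable algebras, for which one uses that its defining direct system is eventually constant equal to $A$ whenever $A$ is already separable.
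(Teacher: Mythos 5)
Your argument is correct and relies on exactly the same ingredients as the paper's proof: Proposition \ref{prop-Morita} together with exactness (hence the ideal property) to identify exact Morita compatible with exact correspondence functors in the separable/$\sigma$-unital setting, Lemma \ref{extsep} to pass between functors defined on separable and on all $G$\nb-algebras, and the pointwise nature of the infimum construction. The only difference is organizational: you show that the four defining classes of functors correspond to one another under restriction and the $\mathrm{sep}$-extension (so the infima agree), whereas the paper runs a cycle of inequalities $\rtimes_1\geq\rtimes_2\geq\rtimes_4$, $\rtimes_1\geq\rtimes_3\geq\rtimes_4$, $\rtimes_4\geq\rtimes_1$, applying Proposition \ref{prop-Morita} only to the globally defined extension $\rtimes_{\mathrm{sep},4}$ rather than directly to functors given only on separable algebras.
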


\begin{proof}
Label the crossed products (defined on separable $G$\nb-\cstar{}algebras) appearing in the points above as $\rtimes_1$, $\rtimes_2$, $\rtimes_3$ and $\rtimes_4$.  By (2) $\Rightarrow$ (1) of Proposition \ref{prop-Morita} (which holds in full generality) we clearly have
$$
\rtimes_1\geq \rtimes_2\geq \rtimes _4, \text{ and } \rtimes_1\geq \rtimes_3\geq \rtimes_4.
$$
It thus suffices to show that $\rtimes_4\geq \rtimes_1$.  Indeed, let $\rtimes_{\mathrm{sep},4}$ be the extension given by Lemma \ref{extsep}.  Then $\rtimes_{\mathrm{sep},4}$ is an exact and Morita compatible functor on the category of $\sigma$-unital $G$\nb-\cstar{}algebras, whence it is an exact correspondence functor on this category by Proposition \ref{prop-Morita}.  Hence in particular $\rtimes_4$ was actually an exact correspondence functor on the category of separable $G$\nb-algebras to begin with, and so $\rtimes_{\mathrm{sep},4}$ is an exact correspondence functor on the category of all $G$\nb-algebras.  It is thus one of the functors that $\rtimes_1$ is the minimum over, so $\rtimes_1\leq \rtimes_{\mathrm{sep},4}$ on the category of all $G$\nb-algebras, and in particular $\rtimes_1\leq \rtimes_4$ on the category of separable $G$\nb-algebras as required.
\end{proof}

\begin{remark} The above corollary shows that (at least for second countable groups and separable $G$-algebras) the
reformulation of the Baum-Connes conjecture in \cite{Baum-Guentner-Willett} is equivalent to the statement that
$$ \as^{\E_\Cor}_{(A,G)}: K_*^{\top}(G;A)\mapsto K_*(A\rtimes_{\alpha, \E_\Cor}G)$$
is always an isomorphism, where $\rtimes_{\E_\Cor}$ is the minimal exact correspondence
functor. By the results of \S\ref{sec:descent} we can use the full force of equivariant $KK$-theory
to study this version of the conjecture.
\end{remark}

\section{Remarks and questions}\label{sec:questions}

\subsection{Crossed products associated to ${UC_{b}}(G)$}\label{sec:cub}

We only know one general construction of exact correspondence functors.  Fix a unital $G$\nb-algebra $(D,\delta)$.  Let $(A,\alpha)$ be a $G$\nb-algebra.  The \Star{}homomorphism
$$
A\to A\otimes_{{\max}} D,\quad a\mapsto a\otimes 1
$$
induces an injective \Star{}homomorphism
$$
C_c(G,A)\to (A\otimes_{\max} D)\rtimes_{\alpha\otimes \delta,\un}G.
$$
Recall from Section \ref{subsec-tensor} that the crossed product $A\rtimes_{\un^{\max}_D}G$ is by definition the closure of the image of $C_c(G,A)$ inside $(A\otimes D)\rtimes_{\alpha\otimes \delta,\un}G$.

The crossed product $\rtimes_{\un_D^{\max}}$ is always exact by \cite[Lemma 5.4]{Baum-Guentner-Willett}, and always a correspondence functor by Corollary \ref{cor-tensor}.  Here we will show that the collection
$$
\{\rtimes_{\un_D^{\max}}~:~(D,\delta) \text{ a $G$\nb-algebra}\}
$$
of exact correspondence functors -- which are the \emph{only} exact correspondence functors we know for general $G$ -- has a concrete minimal element.

\begin{lemma}\label{small}
Let $UC_b(G)$ denote the \cstar{}algebra of bounded, left-uniformly continuous complex-valued functions on $G$, equipped with the $G$\nb-action induced by translation.

Then for any unital $G$\nb-algebra $D$, $\rtimes_{\un_D^{\max}}\geq \rtimes_{\un^{\max}_{UC_b(G)}}$.
\end{lemma}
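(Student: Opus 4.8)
The plan is to reduce the inequality to the \emph{cp map property} of the full crossed product, using that every unital $G$\nb-algebra carries a $G$\nb-equivariant unital completely positive map into $UC_b(G)$.

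\textbf{A monotonicity principle.} First I would prove: if $D,E$ are unital $G$\nb-algebras and $\psi\colon D\to E$ is a $G$\nb-equivariant unital completely positive map, then $\rtimes_{\un_D^{\max}}\geq\rtimes_{\un_E^{\max}}$. Indeed $\mathrm{id}_A\otimes\psi\colon A\otimes_{\max}D\to A\otimes_{\max}E$ is $G$\nb-equivariant, unital and completely positive, and sends $a\otimes 1_D$ to $a\otimes 1_E$. Since $\rtimes_\un$ is a correspondence functor (it has the projection property, so Theorem~\ref{thm-corr} applies), it has the cp map property, so $\mathrm{id}_A\otimes\psi$ descends to a completely positive map $(A\otimes_{\max}D)\rtimes_\un G\to(A\otimes_{\max}E)\rtimes_\un G$ which on $C_c(G,A\otimes_{\max}D)$ is $g\mapsto(\mathrm{id}_A\otimes\psi)\circ g$; the explicit construction in the proof of Theorem~\ref{thm-corr} presents it as a composition of descents of \Star{}homomorphisms and of compressions by projections, hence as a contraction. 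Evaluating at $[s\mapsto f(s)\otimes 1_D]$ with $f\in C_c(G,A)$ --- an element of $(A\otimes_{\max}D)\rtimes_\un G$ of norm $\|f\|_{\un_D^{\max}}$ --- it returns $[s\mapsto f(s)\otimes 1_E]$, of norm $\|f\|_{\un_E^{\max}}$. Contractivity thus gives $\|f\|_{\un_E^{\max}}\leq\|f\|_{\un_D^{\max}}$ for all $f\in C_c(G,A)$, that is, $\rtimes_{\un_D^{\max}}\geq\rtimes_{\un_E^{\max}}$.

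\textbf{An equivariant ucp map into $UC_b(G)$.} Given a unital $G$\nb-algebra $(D,\delta)$, pick any state $\omega$ on $D$ (there is one, as $D\neq0$) and define $\psi_\omega\colon D\to\contb(G)$ by $\psi_\omega(d)(s):=\omega(\delta_{s^{-1}}(d))$. Then $\|\psi_\omega(d)\|_\infty\leq\|d\|$; each functional $d\mapsto\omega(\delta_{s^{-1}}(d))$ is a state on $D$, so $\psi_\omega$ is unital and completely positive; and $\psi_\omega(\delta_t(d))(s)=\omega(\delta_{s^{-1}t}(d))=\psi_\omega(d)(t^{-1}s)$ shows that $\psi_\omega$ intertwines $\delta$ with the translation action. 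Finally $\psi_\omega(d)\in UC_b(G)$, since $\sup_s|\psi_\omega(d)(t^{-1}s)-\psi_\omega(d)(s)|\leq\|\delta_t(d)-d\|\to0$ as $t\to e$ by norm continuity of $\delta$, so the translation orbit of $\psi_\omega(d)$ is norm continuous. Applying the monotonicity principle with $E=UC_b(G)$ and $\psi=\psi_\omega$ then yields $\rtimes_{\un_D^{\max}}\geq\rtimes_{\un^{\max}_{UC_b(G)}}$, which is the assertion of the lemma.

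\textbf{Main obstacle.} The only delicate point is extracting a genuine \emph{norm inequality} from the cp map property, which as stated provides merely a bounded completely positive extension: one has to observe that when the input map is contractive the descent produced in the proof of Theorem~\ref{thm-corr} is again a contraction (it is there a composition of $\Star$\nb-homomorphism descents and compressions). The remaining verifications are routine --- that $\mathrm{id}_A\otimes\psi$ is completely positive on the maximal tensor product is standard, and that $\psi_\omega$ takes values in $UC_b(G)$ and not only in $\contb(G)$ is the short estimate above (if the paper's convention for ``left\nb-uniformly continuous'' is the opposite one, replace $\delta_{s^{-1}}$ by $\delta_s$ throughout).
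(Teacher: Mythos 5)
Your proof is correct and follows essentially the same route as the paper: a state on $D$ gives a $G$\nb-equivariant unital (completely) positive map $D\to UC_b(G)$, and the cp map property of $\rtimes_\un$ from Theorem \ref{thm-corr} then transfers $[s\mapsto f(s)\otimes 1_D]$ to $[s\mapsto f(s)\otimes 1_{UC_b(G)}]$, yielding the norm comparison. The contractivity issue you flag as the delicate point is sidestepped in the paper by observing that, since the map is unital on the $D$\nb-leg, the composite restricted to $A\rtimes_{\un_D^{\max}}G$ extends the identity on $C_c(G,A)$ and is therefore a \Star{}homomorphism between the two completions, hence automatically contractive.
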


\begin{proof}
Consider any state $\phi:D\to \C$, and note that for any $d\in D$, the function $d_\phi:G\to \C$ defined by $d_\phi:g\mapsto \phi(\delta_g(d))$ is bounded and uniformly continuous, hence an element of $UC_b(G)$.  Consider the map
$$
\Phi:D\to UC_b(G),\quad d\mapsto d_\phi.
$$
This is clearly equivariant, unital and positive.  Moreover, positive maps of $C^*$-algebras with commutative codomains are automatically completely positive: indeed, post-composing with multiplicative linear functionals reduces this to showing that a state is a completely positive map to $\C$, and this follows from the GNS-construction which shows in particular that a state is a compression of a \Star{}homomorphism by a one-dimensional projection.  Hence $\Phi$ is completely positive.  Now consider the diagram
$$
\xymatrix{ (A\otimes_{\max}D)\rtimes_{\un}G \ar[rr]^-{(1\otimes \Phi)\rtimes_\un G}& & (A\otimes_{\max}UC_b(G))\rtimes_{\un}G \\
 A\rtimes_{\un_D^{\max}}G \ar[u]^{\iota_D} & & A\rtimes_{\un_{UC_b(G)}^{\max}}G \ar[u]^{\iota_{UC_b(G)}} },
$$
where the vertical maps are the injections given by definition of the crossed products on the bottom row, and the top line exists by Theorem \ref{thm-corr} and the fact that $1\otimes \Phi$ is completely positive.  As $\Phi$ is unital, the composition
$$
(1\otimes \Phi)\rtimes_\un G\circ \iota_D: A\rtimes_{\un_D^{\max}}G \to  \iota_{UC_b(G)}(A\rtimes_{\un_{UC_b(G)}^{\max}}G)
$$
identifies with a \Star{}homomorphism from $A\rtimes_{\un_D^{\max}}G$ to  $A\rtimes_{\un_{UC_b(G)}^{\max}}G$ that extends the identity map on $C_c(G,A)$.  This gives the desired conclusion.
\end{proof}

The crossed product $\rtimes_{\un_{UC_b(G)}^{\max}}$ has another interesting property.  Indeed, recall the following from \cite[Section 3]{ad:amen}.

\begin{definition}\label{ameninf}
A locally compact group $G$ is \emph{amenable at infinity} if it admits an amenable action on a compact topological space.
\end{definition}

\begin{lemma}\label{lem-ameninf}
Say $G$ is amenable at infinity.  Then for any $G$\nb-algebra $(A,\alpha)$,
$$
A\rtimes_{\alpha,\un_{UC_b(G)}^{\max}}G=A\rtimes_{\alpha,\red}G.
$$
\end{lemma}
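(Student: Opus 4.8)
The plan is to deduce this from Lemma~\ref{small} together with the classical fact that amenable actions collapse the full and reduced crossed products. Since $\rtimes_{\un_{UC_b(G)}^{\max}}$ is a crossed-product functor, the canonical quotient map $A\rtimes_{\alpha,\un_{UC_b(G)}^{\max}}G\onto A\rtimes_{\alpha,\red}G$ always exists, so it suffices to produce a $*$\nb-homomorphism in the other direction restricting to the identity on $C_c(G,A)$; equivalently, to show $\rtimes_{\un_{UC_b(G)}^{\max}}\leq \rtimes_{\red}$. By hypothesis (Definition~\ref{ameninf}) there is an amenable $G$\nb-action on some compact Hausdorff space $X$; set $D:=\cont(X)$, a unital $G$\nb-algebra (the action is norm-continuous since $X$ is compact and $G$ is locally compact). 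By Lemma~\ref{small} we have $\rtimes_{\un_D^{\max}}\geq \rtimes_{\un_{UC_b(G)}^{\max}}$, so it is enough to prove $\rtimes_{\un_D^{\max}}=\rtimes_{\red}$ for this particular $D$.

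For this, fix a $G$\nb-algebra $(A,\alpha)$. As $\cont(X)$ is nuclear we have $A\otimes_{\max}\cont(X)=A\otimes\cont(X)$ with the diagonal $G$\nb-action, and since the action on $X$ is amenable, $A\otimes\cont(X)$ is an amenable $G$\nb-algebra; hence by Anantharaman-Delaroche's theorem the canonical surjection $(A\otimes\cont(X))\rtimes_{\un}G\to (A\otimes\cont(X))\rtimes_{\red}G$ is an isomorphism. By definition $A\rtimes_{\alpha,\un_D^{\max}}G$ is the closure of $C_c(G,A)$ inside $(A\otimes\cont(X))\rtimes_{\un}G$ along $f\mapsto[s\mapsto f(s)\otimes 1]$, hence equally the closure of $C_c(G,A)$ inside $(A\otimes\cont(X))\rtimes_{\red}G$ along the same map. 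That map is the reduced descent of the injective $G$\nb-equivariant $*$\nb-homomorphism $j_A\colon A\to A\otimes\cont(X)$, $a\mapsto a\otimes 1$. Since the reduced crossed-product functor sends injective equivariant $*$\nb-homomorphisms to injective ones (routine from the spatial description of $\|\cdot\|_\red$ via a regular representation induced from a faithful representation), $j_A\rtimes_{\red}G\colon A\rtimes_{\alpha,\red}G\to (A\otimes\cont(X))\rtimes_{\red}G$ is isometric, and its image is exactly the closure just described. Thus the identity on $C_c(G,A)$ extends to an isomorphism $A\rtimes_{\alpha,\red}G\congto A\rtimes_{\alpha,\un_D^{\max}}G$, i.e.\ $\rtimes_{\un_D^{\max}}=\rtimes_{\red}$.

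Combining the two steps yields $\rtimes_{\red}=\rtimes_{\un_D^{\max}}\geq\rtimes_{\un_{UC_b(G)}^{\max}}\geq\rtimes_{\red}$, forcing equality, which is the assertion. The only non-formal inputs are the two facts borrowed from the literature: (i) an amenable $G$\nb-action on a compact space $X$ makes $A\otimes\cont(X)$ an amenable $G$\nb-algebra, so its full and reduced crossed products agree — this permanence statement is the precise content that makes ``amenable at infinity'' useful here; and (ii) the reduced crossed product preserves injectivity of equivariant $*$\nb-homomorphisms. I expect step (i) — checking that the amenability/permanence result really does apply to $A\otimes\cont(X)$ for an \emph{arbitrary} coefficient algebra $A$, not just $\cont(X)$ itself — to be the only point requiring genuine care; the rest is bookkeeping with the definition of $\rtimes_{\un_D^{\max}}$ and with Lemma~\ref{small}.
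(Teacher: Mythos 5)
Your proof is correct, but it reaches the conclusion by a route that differs from the paper's in one essential step. The paper works with $UC_b(G)$ itself: it invokes \cite[Proposition 3.4]{ad:amen} to conclude that amenability at infinity forces the $G$\nb-action on the spectrum $X$ of $UC_b(G)$ to be amenable, so that $A\otimes_{\max}UC_b(G)$ is a $G$\nb-$C(X)$-algebra over an amenable $G$\nb-space and \cite[Theorem 5.4]{ad:amen} gives $(A\otimes_{\max}UC_b(G))\rtimes_\un G=(A\otimes_{\max}UC_b(G))\rtimes_\red G$; the statement then follows by exactly the bookkeeping you carry out explicitly (injectivity/isometry of the reduced descent of $a\mapsto a\otimes 1$, so that the closure of $C_c(G,A)$ is a copy of $A\rtimes_{\alpha,\red}G$). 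You instead take an arbitrary compact amenable $G$\nb-space $X$ straight from Definition \ref{ameninf}, set $D=\cont(X)$, prove $\rtimes_{\un_D^{\max}}=\rtimes_\red$ from the same Anantharaman-Delaroche input, and then sandwich: Lemma \ref{small} gives $\rtimes_\red=\rtimes_{\un_D^{\max}}\geq\rtimes_{\un_{UC_b(G)}^{\max}}$, while $\rtimes_{\un_{UC_b(G)}^{\max}}\geq\rtimes_\red$ because it is a crossed-product functor in the paper's sense (Corollary \ref{cor-tensor}). What your route buys is that you never need to know that the action on the spectrum of $UC_b(G)$ is itself amenable, i.e.\ you avoid Proposition 3.4 of \cite{ad:amen}, trading it for the minimality statement of Lemma \ref{small}, which the paper has just proved anyway; what the paper's route buys is a one-step argument that moreover identifies the concrete algebra $A\otimes_{\max}UC_b(G)$ as one with full equal to reduced crossed product. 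Both arguments rest on the same two nontrivial inputs you flag: Theorem 5.4 of \cite{ad:amen} applied with an arbitrary coefficient algebra $A$, and injectivity of the reduced descent of an injective equivariant homomorphism. Your appeal to nuclearity of $\cont(X)$ is harmless but not needed: $A\otimes_{\max}\cont(X)$ is already a $C(X)$-algebra, so the theorem applies to it directly.
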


\begin{proof}
If $G$ is amenable at infinity, then \cite{ad:amen}*{Proposition 3.4} implies that the action of $G$ on the spectrum $X$ of $UC_b(G)$ is amenable.  Hence for any $G$\nb-algebra $A$, the tensor product $A\otimes_{\max} UC_b(G)$ is a $G$-$C(X)$ algebra for an amenable $G$\nb-space $X$.  The result follows from \cite[Theorem 5.4]{ad:amen}, which implies that
$$
(A\otimes_{\max} UC_b(G))\rtimes_\un G=(A\otimes_{\max} UC_b(G))\rtimes_\red G. \eqno\qedhere
$$
\end{proof}

A group that is amenable at infinity is always exact \cite[Theorem 7.2]{ad:amen}.  For discrete groups, the converse is true by \cite{Ozawa:examen}, but this is an open question in general; nonetheless, many exact groups, for example all almost connected groups \cite[Proposition 3.3]{ad:amen}, are known to be amenable at infinity.

To summarise, we have shown that $\rtimes_{\un_{UC_b(G)}^{\max}}$ is an exact correspondence functor; that it is minimal among a large family of exact correspondence functors; and that for many (and possibly all) exact groups, it is equal to the reduced crossed product.  The following question is thus very natural.

\begin{question}\label{ques}
Is the crossed product $\rtimes_{\un^{\max}_{UC_b(G)}}$ equal to the minimal exact correspondence crossed product $\rtimes_{\E_\Cor}$?
\end{question}

There are other natural questions one could ask about  $\rtimes_{\un^{\max}_{UC_b(G)}}$: for example, is it a KLQ functor `in disguise'?  One could also ask this about any of the functors $\rtimes_{\un^{\max}_D}$.

\subsection{Questions about $L^p$ functors}

Recall from Section \ref{sec:lp} that $E_p$ denotes the weak{*}-closure of $B(G)\cap L^p(G)$ in $B(G)$ for $p\in [2,\infty]$, and $E_0$ is the weak{*}-closure of $B(G)\cap C_0(G)$.  The following questions about these ideals and the corresponding group algebras and crossed products seem natural and interesting.
\begin{enumerate}
\item Say $G$ is an exact group.  Are all of the functors $\rtimes_{E_p}$ exact?  More generally, if $G$ is an exact group, are all KLQ-crossed products
exact?  Note that Example \ref{ex-ideal-property} shows that any non-amenable exact group admits a non-exact crossed-product functor.
\item Are there non-exact groups and $p\in\{0\}\cup(2,\infty)$ such that $\rtimes_{E_p}$ is exact?
\item For which locally compact groups does the canonical quotient $q:C^*_{E_0}(G)\to C^*_{\red}(G)$ induce an isomorphism on $K$-theory (one could also ask this for other $p$)?  This cannot be true in general by Example \ref{ex:spn1}, but we do not know any discrete groups for which it fails.
\item For which groups $G$ are all the exotic group \cstar{}algebras $C^*_{E_p}(G)$ different?  Example \ref{ex:nodiff} shows that this cannot be true for general non-amenable groups, but it could in principal be true for all non-amenable discrete groups.
\item For which groups is $E_0$ the weak{*}\nb-closure of $\cup_{p<\infty} E_p$?  This holds for all simple Lie groups with finite center by results of Cowling \cite{Cowling}, and we showed it holds for a fairly large class of discrete groups in Lemma \ref{lem:e0}.  Conceivably, it could hold in full generality.  More generally, one can ask: is $E_q$ is the weak{*-} closure of $\cup_{p<q}E_p$ for all $q\in [2,\infty)$?  Similarly, is it true that $E_p=\cap_{q>p}E_q$ for all $p\in[2,\infty)$?  Okayasu \cite[Corollary 3.10]{Okayasu:Free-group} has shown that this is true for ${\Free_2}$ and Cowling, Haagerup and Howe \cite{chh:l2} have shown that $E_2=\cap_{p>2}E_p$ is always true.  Not much else seems to be known here, however.
\end{enumerate}

\vskip 2pc

\begin{bibdiv}
 \begin{biblist}

  \bib{ad:amen}{article}{
    AUTHOR = {Anantharaman-Delaroche, Claire},
     TITLE = {Amenability and exactness for dynamical systems and their \cstar{}algebras},
   JOURNAL = {Trans. Amer. Math. Soc.},
  FJOURNAL = {Transactions of the American Mathematical Society},
    VOLUME = {354},
      YEAR = {2002},
     PAGES = {4153--4178},
     NUMBER={10},
}

 \bib{Austin:comp}{article}{
    AUTHOR = {Austin, Tim},
     TITLE = {Amenable groups with very poor compression into {L}ebesgue spaces},
   JOURNAL = {Duke Math. J.},
  FJOURNAL = {Duke Mathematical Journal},
    VOLUME = {159},
      YEAR = {2011},
     PAGES = {187--222},
     NUMBER={2},
}

\bib{Baum-Connes-Higson:BC}{article}{
  author={Baum, Paul},
  author={Connes, Alain},
  author={Higson, Nigel},
  title={Classifying space for proper actions and \(K\)\nobreakdash-theory of group \(C^*\)\nobreakdash-algebras},
  conference={
    title={\(C^*\)\nobreakdash-Algebras: 1943--1993},
    address={San Antonio, TX},
    date={1993},
  },
  book={
    series={Contemp. Math.},
    volume={167},
    publisher={Amer. Math. Soc.},
    place={Providence, RI},
  },
  date={1994},
  pages={240--291},
  review={\MRref{1292018}{96c:46070}},
  doi={10.1090/conm/167/1292018},
}

\bib{Baum-Guentner-Willett}{article}{
  author={Baum, Paul},
 author={Guentner, Erik},
 author={Willett, Rufus}
  title={Expanders, exact crossed products, and the Baum-Connes conjecture},
  status={To appear, \emph{Annals of K-theory}},
  note={\arxiv {1311.2343}},
  date={2013},
}

\bib{Bekka-delaHarpe-Valette}{book}{
    AUTHOR = {Bekka, Bachir}
    author={de la Harpe, Pierre}
    author={Valette, Alain},
     TITLE = {Kazhdan's property (T)},
    SERIES = {New Mathematical Monographs},
    VOLUME = {11},
 PUBLISHER = {Cambridge University Press},
      YEAR = {2008},
     PAGES = {xiii+472},
      ISBN = {978-0-521-88720-5},
}

\bib{Brown-Guentner:New_completions}{article}{
  author={Brown, Nathanial P.},
  author={Guentner, Erik},
  title={New $C^*$-completions of discrete groups and related spaces},
  journal={Bull. Lond. Math. Soc.},
  volume={45},
  year={2013},
  pages={1181-1193},
}

\bib{Brown-Ozawa}{book}{
    AUTHOR = {Brown, Nathanial P.}
    author={Ozawa, Narutaka},
     TITLE = {{$C^*$}-algebras and finite-dimensional approximations},
    SERIES = {Graduate Studies in Mathematics},
    VOLUME = {88},
 PUBLISHER = {American Mathematical Society, Providence, RI},
      YEAR = {2008},
     PAGES = {xvi+509},
      ISBN = {978-0-8218-4381-9; 0-8218-4381-8},
   MRCLASS = {46L05 (43A07 46-02 46L10)},
  MRNUMBER = {2391387 (2009h:46101)},
MRREVIEWER = {Mikael R{\o}rdam},
}

\bib{Buss-Echterhoff:Exotic_GFPA}{article}{
  author={Buss, Alcides},
  author={Echterhoff, Siegfried},
  title={Universal and exotic generalized fixed-point algebras for weakly proper actions and duality},
 journal={Indiana Univ. Math. J.},
  date={2014},
   volume ={63},
   pages={1659--1701},
   doi={10.1512/iumj.2014.63.5405},
}

\bib{Buss-Echterhoff:Imprimitivity}{article}{
  author={Buss, Alcides},
  author={Echterhoff, Siegfried},
  title={Imprimitivity  theorems for weakly proper actions of locally compact groups},
    journal={Ergodic Theory Dynam. Systems},
    doi={10.1017/etds.2014.36},
}

\bib{CCJJV}{book}{
    AUTHOR = {Pierre-Alain Cherix}
    author={Michael Cowling}
    author={Paul Jolissaint}
    author={Pierre Julg}
    author={Alain Valette},
     TITLE = {Groups with the {H}aagerup property ({G}romov's a-{T}-menability)}
    SERIES = {Progress in Mathematics},
    VOLUME = {197},
 PUBLISHER = {Birkh\"{a}user, Basel, Boston, Berlin},
      YEAR = {2001},
}

\bib{Combes:Crossed_Morita}{article}{
  author={Combes, Fran\c{c}ois},
  title={Crossed products and Morita equivalence},
  journal={Proc. London Math. Soc. (3)},
  volume={49},
  date={1984},
  number={2},
  pages={289--306},
  issn={0024-6115},
  review={\MRref{748991}{86c:46081}},
}

\bib{Cowling}{article}{
  author={Cowling, Michael},
  title={The {K}unze-{S}tein phenomenon},
  journal={Ann. of Math.},
  volume={107},
  date={1984},
  pages={209--324},
,
}

\bib{chh:l2}{article}{
  author={Cowling, Michael}
  author={Haagerup, Uffe}
  author={Howe, Roger},
  title={Almost ${L^2}$ matrix coefficients},
  journal={J. Reine Angew. Math.},
  volume={387},
  date={1988},
  pages={97--100},
}

\bib{Cuntz:Kamenable}{article}{
    AUTHOR = {Cuntz, Joachim},
     TITLE = {{$K$}-theoretic amenability for discrete groups},
   JOURNAL = {J. Reine Angew. Math.},
  FJOURNAL = {Journal f\"ur die Reine und Angewandte Mathematik},
    VOLUME = {344},
      YEAR = {1983},
     PAGES = {180--195},
      ISSN = {0075-4102},
     CODEN = {JRMAA8},
   MRCLASS = {46L80 (19K99)},
  MRNUMBER = {716254 (86e:46064)},
MRREVIEWER = {Autorreferat},
       DOI = {10.1515/crll.1983.344.180},
       URL = {http://dx.doi.org/10.1515/crll.1983.344.180},
}


\bib{Echterhoff:Morita_twisted}{article}{
  author={Echterhoff, Siegfried},
  title={Morita equivalent twisted actions and a new version of the Packer--Raeburn stabilization trick},
  journal={J. London Math. Soc. (2)},
  volume={50},
  date={1994},
  number={1},
  pages={170--186},
  issn={0024-6107},
  review={\MRref{1277761}{96a:46118}},
  doi={10.1112/jlms/50.1.170},
}


\bib{Echterhoff-Kaliszewski-Quigg-Raeburn:Categorical}{article}{
  author={Echterhoff, Siegfried},
  author={Kaliszewski, Steven P.},
  author={Quigg, John},
  author={Raeburn, Iain},
  title={A categorical approach to imprimitivity theorems for $C^*$\nobreakdash -dynamical systems},
  journal={Mem. Amer. Math. Soc.},
  volume={180},
  date={2006},
  number={850},
  pages={viii+169},
  issn={0065-9266},
  review={\MRref {2203930}{2007m:46107}},
}

\bib{Fell:Weak_Containment}{article}{
    AUTHOR = {Fell, J. M. G.},
     TITLE = {The dual spaces of {$C^{\ast} $}-algebras},
   JOURNAL = {Trans. Amer. Math. Soc.},
  FJOURNAL = {Transactions of the American Mathematical Society},
    VOLUME = {94},
      YEAR = {1960},
     PAGES = {365--403},
      ISSN = {0002-9947},
   MRCLASS = {46.65},
  MRNUMBER = {0146681 (26 \#4201)},
MRREVIEWER = {R. Arens},
}

\bib{Guentner-Kaminker:equi}{article}{
    AUTHOR = {Guentner, Erik}
    author={Kaminker, Jerome},
     TITLE = {Exactness and uniform embeddability of discrete groups},
   JOURNAL = {J. London Math. Soc.},
  FJOURNAL = {Journal of the London Mathematical Society},
    VOLUME = {70},
      YEAR = {2004},
     PAGES = {703--718},
}

\bib{Howe}{book}{
    AUTHOR = {Howe, Roger},
    AUTHOR={Tan, Eng Chye}
     TITLE = {Non-abelian harmonic analysis},
    SERIES = {Universitext},
 PUBLISHER = {Springer-Verlag},
   ADDRESS = {New York, NY},
      YEAR = {1992},
     PAGES = {xv+357},
      ISBN = {0-387-97768-6; 3-540-97768-6},
}

\bib{Higson-Kasparov:Etheory}{article}{
    AUTHOR = {Higson, Nigel}
    author={Kasparov, Gennadi},
     TITLE = {{$E$}-theory and {$KK$}-theory for groups which act properly
              and isometrically on {H}ilbert space},
   JOURNAL = {Invent. Math.},
  FJOURNAL = {Inventiones Mathematicae},
    VOLUME = {144},
      YEAR = {2001},
    NUMBER = {1},
     PAGES = {23--74},
      ISSN = {0020-9910},
     CODEN = {INVMBH},
   MRCLASS = {19K35 (19L47 46L80)},
  MRNUMBER = {1821144 (2002k:19005)},
MRREVIEWER = {Emmanuel C. Germain},
       DOI = {10.1007/s002220000118},
       URL = {http://dx.doi.org/10.1007/s002220000118},
}
\bib{Higson-Lafforgue-Skandalis}{article}{
    AUTHOR = {Higson, Nigel},
    AUTHOR={Lafforgue, Vincent}
    author={Skandalis, Georges}
     TITLE = {Counterexamples to the {B}aum-{C}onnes conjecture},
   JOURNAL = {Geom. Funct. Anal.},
  FJOURNAL = {Geometric and Functional Analysis},
    VOLUME = {12},
      YEAR = {2002},
    NUMBER = {2},
     PAGES = {330--354},
      ISSN = {1016-443X},
     CODEN = {GFANFB},
   MRCLASS = {19K56 (22A22 46L80 46L85 58J22)},
  MRNUMBER = {1911663 (2003g:19007)},
MRREVIEWER = {Alain Valette},
       DOI = {10.1007/s00039-002-8249-5},
       URL = {http://dx.doi.org/10.1007/s00039-002-8249-5},
}

\bib{IT}{article}{
    AUTHOR = {Imai, Sh{\=o} and Takai, Hiroshi},
     TITLE = {On a duality for {$C^{\ast} $}-crossed products by a locally
              compact group},
   JOURNAL = {J. Math. Soc. Japan},
      VOLUME = {30},
      YEAR = {1978},
    NUMBER = {3},
     PAGES = {495--504},
       DOI = {10.2969/jmsj/03030495},
       URL = {http://dx.doi.org/10.2969/jmsj/03030495},
}
\bib{Jolissaint}{article}{
    AUTHOR = {Jolissaint, Paul},
     TITLE = {Notes on ${C_0}$ representations and the {H}aagerup property},
   JOURNAL = {Bull. Belg. Math. Soc. Simon Stevin},
  FJOURNAL = {Bulletin of the Belgian Mathematical Society, Simon Stevin},
    VOLUME = {21},
      YEAR = {2014},
	PAGES = {263--274},
   }

\bib{Julg-Valette:Kamenable}{article}{
    AUTHOR = {Julg, Pierre},
    author=  {Valette, Alain},
     TITLE = {{$K$}-theoretic amenability for {${\rm SL}_{2}({\bf Q}_{p})$}, and the action on the associated tree},
   JOURNAL = {J. Funct. Anal.},
  FJOURNAL = {Journal of Functional Analysis},
    VOLUME = {58},
      YEAR = {1984},
    NUMBER = {2},
     PAGES = {194--215},
      ISSN = {0022-1236},
     CODEN = {JFUAAW},
   MRCLASS = {22E50 (18F25 19K99 46L80 46M20 58G12)},
  MRNUMBER = {757995 (86b:22030)},
MRREVIEWER = {Alan L. T. Paterson},
       DOI = {10.1016/0022-1236(84)90039-9},
       URL = {http://dx.doi.org/10.1016/0022-1236(84)90039-9},
}

\bib{Kaliszewski-Landstad-Quigg:Exotic}{article}{
  author={Kaliszewski, Steven P.},
  author={Landstad, Magnus B.},
  author={Quigg, John},
  title={Exotic group $C^*$\nobreakdash-algebras in noncommutative duality},
  journal={New York J. Math.},
  volume={19},
  date={2013},
  pages={689--711},
  issn={1076-9803},
  review={\MRref{3141810}{}},
  eprint={http://nyjm.albany.edu/j/2013/19_689.html},
}

\bib{Kaliszewski-Landstad-Quigg:Exotic-coactions}{article}{
  author={Kaliszewski, Steven P.},
  author={Landstad, Magnus B.},
  author={Quigg, John},
  title={Exotic coactions},
  status={preprint},
  date={2013},
  note={\arxiv{1305.5489}},
}

\bib{Kasparov:Novikov}{article}{
  author={Kasparov, Gennadi G.},
  title={Equivariant \(KK\)-theory and the Novikov conjecture},
  journal={Invent. Math.},
  volume={91},
  date={1988},
  number={1},
  pages={147--201},
  issn={0020-9910},
  review={\MRref {918241}{88j:58123}},
  doi={10.1007/BF01404917},
}

\bib{Kasparov-Skandalis:Bolic}{article}{
  author={Kasparov, Gennadi G.},
  author={Skandalis, Georges},
  title={Groups acting properly on ``bolic'' spaces and the Novikov conjecture},
  journal={Ann. of Math. (2)},
  volume={158},
  date={2003},
  number={1},
  pages={165--206},
  issn={0003-486X},
  review={\MRref {1998480}{2004j:58023}},
}

\bib{Mingo-Phillips:Triviality}{article}{
  author={Mingo, James A.},
  author={Phillips, William J.},
  title={Equivariant triviality theorems for Hilbert \(C^*\)\nobreakdash-modules},
  journal={Proc. Amer. Math. Soc.},
  volume={91},
  date={1984},
  number={2},
  pages={225--230},
  issn={0002-9939},
  review={\MRref{740176}{85f:46111}},
  eprint={http://www.jstor.org/stable/2044632},
}

\bib{Niblo-Campbell:cat}{article}{
    AUTHOR = {Campbell, Sarah},
    AUTHOR={Niblo, Graham}
     TITLE = {Hilbert space compression and exactness for discrete groups},
   JOURNAL = {J. Funct. Anal.},
    VOLUME = {222},
      YEAR = {2005},
    NUMBER = {2},
     PAGES = {292--305},
}

\bib{Okayasu:Free-group}{article}{
    AUTHOR = {Okayasu, Rui},
     TITLE = {Free group {C}*-algebras associated with {$\ell\sb p$}},
   JOURNAL = {Internat. J. Math.},
    VOLUME = {25},
      YEAR = {2014},
    NUMBER = {7},
     PAGES = {1450065 (12 pages)},
      ISSN = {0129-167X},
       DOI = {10.1142/S0129167X14500657},
      review={\MRref{3238088}{}},
}

\bib{Ozawa:examen}{article}{
    AUTHOR = {Ozawa, Narutaka},
     TITLE = {Amenable actions and exactness for discrete groups},
   JOURNAL = {C. R. Acad. Sci. Paris S\'{e}r {I} Math.},
    VOLUME = {330},
      YEAR = {2000},
     PAGES = {691--695}
}

\bib{PR}{article}{
  AUTHOR = {Packer, Judith A. and Raeburn, Iain},
     TITLE = {Twisted crossed products of {$C^*$}-algebras},
   JOURNAL = {Math. Proc. Cambridge Philos. Soc.},
    VOLUME = {106},
      YEAR = {1989},
    NUMBER = {2},
     PAGES = {293--311},
             DOI = {10.1017/S0305004100078129},
       URL = {http://dx.doi.org/10.1017/S0305004100078129},
}

\bib{Pierrot:sl}{article}{
    AUTHOR = {Pierrot, F.},
     TITLE = {Induction parabolique et ${K}$-th\'{e}orie de ${C^*}$-alg\`{e}bres maximales},
   JOURNAL = {C. R. Acad. Sci. Paris S\'{e}r {I} Math.},
    VOLUME = {332},
    NUMBER={9},
      YEAR = {2001},
     PAGES = {805--808}
}

\bib{Prudhon:sp}{article}{
    AUTHOR = {Prudhon, N.},
     TITLE = {${K}$-theory for ${S}p(n,1)$},
   JOURNAL = {J. Funct. Anal.},
    VOLUME = {221},
      YEAR = {2005},
    NUMBER = {7},
     PAGES = {226--249},,
}

\bib{Tu:Kamenable}{article}{
    AUTHOR = {Tu, Jean-Louis},
     TITLE = {The {B}aum-{C}onnes conjecture and discrete group actions on
              trees},
   JOURNAL = {$K$-Theory},
  FJOURNAL = {$K$-Theory. An Interdisciplinary Journal for the Development,
              Application, and Influence of $K$-Theory in the Mathematical
              Sciences},
    VOLUME = {17},
      YEAR = {1999},
    NUMBER = {4},
     PAGES = {303--318},
      ISSN = {0920-3036},
     CODEN = {KTHEEO},
   MRCLASS = {19K35 (46L80)},
  MRNUMBER = {1706113 (2000h:19003)},
MRREVIEWER = {Yuri A. Kordyukov},
       DOI = {10.1023/A:1007751625568},
       URL = {http://dx.doi.org/10.1023/A:1007751625568},
}

\bib{Williams:Crossed}{book}{
    AUTHOR = {Williams, Dana P.},
     TITLE = {Crossed products of {$C{^\ast}$}-algebras},
    SERIES = {Mathematical Surveys and Monographs},
    VOLUME = {134},
 PUBLISHER = {American Mathematical Society},
   ADDRESS = {Providence, RI},
      YEAR = {2007},
     PAGES = {xvi+528},
      ISBN = {978-0-8218-4242-3; 0-8218-4242-0},
 review={\MRref{2288954}{2007m:46003}},
 }
\bib{Wiersma}{article}{
 Author = {Wiersma, Matthew},
 title = {$L^p$-Fourier and Fourier-Stieltjes algebras for locally compact groups},
   status={eprint},
  note={\arxiv {1409.2787v1}},
  date={2014},
}
 \end{biblist}
\end{bibdiv}

\end{document}